\theoremstyle{plain}
\newtheorem{thm}{Theorem}[subsection]
\newtheorem{cor}[thm]{Corollary}
\newtheorem{lem}[thm]{Lemma}
\newtheorem{prop}[thm]{Proposition}
\newtheorem*{thm*}{Theorem}
\newtheorem*{cor*}{Corollary}
\newtheorem*{lem*}{Lemma}
\newtheorem*{prop*}{Proposition}
\newtheorem*{conj*}{Conjecture}
\theoremstyle{definition}
\newtheorem{defn}[thm]{Definition}
\newtheorem{ex}[thm]{Example}
\newtheorem{rem}[thm]{Remark}
\newtheorem*{defn*}{Definition}
\newtheorem*{ex*}{Example}
\newtheorem*{rem*}{Remark}
\newtheorem*{alg*}{Algorithm}
\newtheorem*{asi*}{Aside}
\theoremstyle{remark}
\newtheorem*{claim}{Claim}
\newtheorem*{ackno}{Acknowledgements}
\def\blfootnote{\xdef\@thefnmark{}\@footnotetext}
\title{Ideal Polytopes for Representations of $GL_n(\mathbb{C})$}
\author{Teresa L\"udenbach}
\date{}
\begin{document}

\maketitle

\begin{abstract}
\blfootnote{
\emph{Date:} 25th August 2022 }
\blfootnote{\emph{Keywords:} Tropical geometry, representation theory, polytopes. \\
This work was supported by the Engineering and Physical Sciences Research Council {[EP/S021590/1]}. \\
The EPSRC Centre for Doctoral Training in Geometry and Number Theory {(The London School of Geometry and Number Theory)}, University College London, and King's College London.
}
In this paper we use the superpotential $\mathcal{W}_q$ for the flag variety $GL_n/B$ and particular coordinate systems that we call ‘ideal coordinates for $\mathbf{i}$’, to construct polytopes $\mathcal{P}^{\mathbf{i}}_{\lambda}$ inside $\mathbb{R}^{R_+}$, associated to highest weight representations $V_\lambda$ of $GL_n(\mathbb{C})$.
Here $\mathbf{i}$ is a reduced expression of the longest element of the Weyl group and $R_+$ is the set of positive roots of $GL_n$.
The lattice points of $\mathcal{P}^{\mathbf{i}}_{\lambda}$ can be used to encode a basis of the representation $V_{\lambda}$. In particular, for a specific choice of $\mathbf{i}$, the polytope $\mathcal{P}^{\mathbf{i}}_{\lambda}$ is unimodularly equivalent to a Gelfand-Tsetlin polytope.
The construction of the polytopes involves tropicalisation of the superpotential, namely $\mathcal{P}^{\mathbf{i}}_{\lambda}$ is $\left\{ \mathrm{Trop}\left(\mathcal{W}^{\mathbf{i}}_{t^\lambda}\right) \geq 0 \right\}$ written in a tropical version of the ideal coordinates for $\mathbf{i}$.

Using work of Judd (\cite{Judd2018}) we have that there is a unique `positive’ critical point of $\mathcal{W}_{t^\lambda}$ over the field of Puiseux series.
Its coordinates, in terms of the ideal coordinates for $\mathbf{i}$, are positive in the sense of having positive leading term.
The remarkable property of our new polytopes relates to the tropical version of this critical point, which, for every choice of $\mathbf{i}$, gives a point in $\mathbb{R}^{R_+}$ that lies in the interior of the polytope $\mathcal{P}^{\mathbf{i}}_{\lambda}$.
We prove that this tropical critical point is independent of the reduced expression $\mathbf{i}$, and that it is given by a pattern called the ‘ideal filling’ for $\lambda$ that was introduced by Judd.

Finally, combining these results with work of Rietsch (\cite{Rietsch2008}) relating critical points of the superpotential with Toeplitz matrices, we show that for a totally positive lower-triangular Toeplitz matrix over the field of Puiseux series factorised into simple root subgroups, the valuations of the factors give an ideal filling.
\end{abstract}

\tableofcontents
\listoffigures
\listoftables

\section{Introduction}

Representations of Lie groups are often described in terms of their weights - the characters arising in the action of a maximal torus. A standard way to depict these weights is by embedding the character lattice into a real vector space and viewing the weights as lattice points in their convex hull, the so-called `weight polytope’ of the representation. For an irreducible representation the weights along the boundary of the weight polytope (including the highest weight) all have one-dimensional weight spaces. The weight spaces corresponding to interior points can be higher-dimensional. Accordingly, a better `picture’ of the representation may be given by a higher-dimensional polytope that projects onto the weight polytope, such that the lattice points in a fibre parametrise a basis of the corresponding weight space.

A famous example of such a construction is given by the Gelfand-Tsetlin polytope of a representation of $GL_n$, given first in \cite{GelfandTsetlin1950} (e.g. Figure \ref{fig A Gelfand-Tsetlin polytope}).
More recent examples relate to Lusztig's canonical basis and its combinatorial and geometric construction (\cite{Luszig1990}, \cite{Lusztig1990_2}), as well as Kashiwara's crystal basis operators (\cite{Kashiwara1991}). Of particular interest on the crystal basis side are the string polytopes introduced by Littelmann in \cite{Littelmann1998}. On the canonical basis side there is another parametrisation due to Lusztig (\cite{Lusztig1994}). His parametrisation uses coordinate charts on the Langlands dual flag variety and the notion of tropicalisation that he introduced.
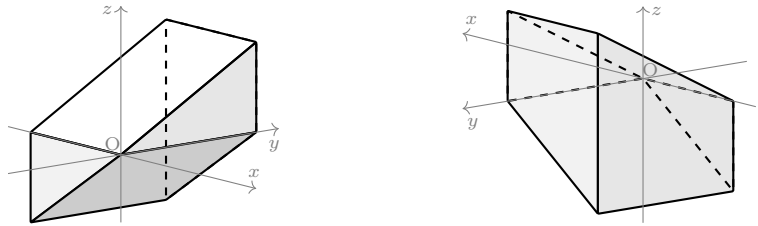
\begin{figure}[ht]
\centering
\begin{tikzpicture}[scale=0.6]
    \filldraw[fill=black!10, thick, rounded corners=0.5] (0,0) -- (3,2.5) -- (3,0.5) -- cycle; % $x=0$
    \filldraw[fill=black!5, thick, rounded corners=0.5] (0,0) -- (-2,-1.5) -- (-2,0.5) -- cycle; % $y=0$
    \filldraw[fill=black!20, thick, rounded corners=0.5] (0,0) -- (3,0.5) -- (1,-1) -- (-2,-1.5) -- cycle; % $x=z$
    \draw[thick, dashed, rounded corners=0.5] (3,0.5) -- (3,2.5) -- (1,3) -- (1,-1) -- cycle; % $y=1$
    \draw[thick, rounded corners=0.5] (0,0) -- (-2,0.5) -- (1,3) -- (3,2.5) -- cycle; % $y=z$
    \draw[black!50, ->] (-2.5,0.625) -- (3,-0.75); % x axis
        \node[black!50] at (2.95,-0.4) {\scriptsize{$x$}};
    \draw[black!50, ->] (-2.5,-0.417) -- (3.5,0.583); % y axis
        \node[black!50] at (3.4,0.2) {\scriptsize{$y$}};
    \draw[black!50, ->] (0,-1.5) -- (0,3.3); % z axis
        \node[black!50] at (-0.3,3.2) {\scriptsize{$z$}};
        \node[black!50] at (-0.18, 0.25) {\scriptsize{O}};
\end{tikzpicture}
\hspace{2cm}
\begin{tikzpicture}[scale=0.6]
    \filldraw[fill=black!5, thick, rounded corners=0.5] (0,-2) -- (0,2) -- (-2,2.5) -- (-2,0.5) -- cycle; % $y=1$
    \filldraw[fill=black!10, thick, rounded corners=0.5] (0,-2) -- (0,2) -- (3,0.5) -- (3,-1.5) -- cycle; % $x=-1$
    \draw[thick, dashed, rounded corners=0.5] (3,0.5) -- (1,1) -- (3,-1.5) -- cycle; % $y=0$
    \draw[thick, dashed, rounded corners=0.5] (1,1) -- (-2,0.5) -- (-2,2.5) -- cycle; % $x=0$
    \draw[black!50, ->] (3.5,0.375) -- (-3,2); % x axis
        \node[black!50] at (-2.8,2.3) {\scriptsize{$x$}};
    \draw[black!50, ->] (3.3,1.38) -- (-3,0.33); % y axis
        \node[black!50] at (-2.77,0) {\scriptsize{$y$}};
    \draw[black!50, ->] (1,-2.2) -- (1,2.6); % z axis
        \node[black!50] at (1.3,2.5) {\scriptsize{$z$}};
        \node[black!50] at (1.16, 1.2) {\scriptsize{O}};
\end{tikzpicture}
\caption{A Gelfand-Tsetlin polytope} \label{fig A Gelfand-Tsetlin polytope}
\end{figure}

Building on the work of Lusztig and Kashiwara, Berenstein and Kazhdan in \cite{BerensteinKazhdan2000}, \cite{BerensteinKazhdan2007}, `geometrised' such polytopes via their theory of geometric crystals.
Their construction includes a function that encodes all of the walls of these polytopes.
This function turns out to agree with the superpotential of a flag variety, which was later independently constructed by Rietsch in the the context of mirror symmetry (\cite{Rietsch2008}, see also Chhaibi's work in \cite{ChhaibiThesis13}).
Another example of this are the polytopes constructed using the mirror symmetry of Grassmannians by Rietsch and Williams in \cite{RietschWilliams2019}, which relate to fundamental representations of $GL_n$ and cluster duality (\cite{FockGoncharov2009}).

Rietsch showed in \cite{Rietsch2008} that the critical points of the superpotential describe the spectrum of the quantum cohomology ring of $G/P$ in a presentation given by Dale Peterson.
The idea to consider the critical points of the superpotential, $\mathcal{W}_q$, was transported to the setting of geometric crystals and polytopes by Judd in \cite{Judd2018}, working in the case of full flag varieties $SL_n/B$. He considered an analogue, $\mathcal{W}_{t^{\lambda}}$, of $\mathcal{W}_q$ over the field of Puiseux series, associated to a dominant weight $\lambda$ of $SL_n$. He showed that there is a unique critical point of $\mathcal{W}_{t^{\lambda}}$ with positive leading term coefficients. By tropicalisation he showed that this special point gives rise to a canonical point in the interior of the string polytope for the representation of $SL_n$ with highest weight $\lambda$. We will refer to this point as the tropical critical point of the superpotential.

As part of Judd's study of these objects, he investigated when the tropical critical point is integral. To do so, he introduced a combinatorial object called an ideal filling for a dominant weight $\lambda$ and formed a close connection to the tropical critical point. Such a ideal filling for $\lambda$ is given by assigning non-negative real numbers $n_{ij}$ to boxes in an upper triangular form such that $n_{ij}=\max\{n_{i+1 \; j}, n_{i \; j-1} \} $ for $j-i\geq 2$, together with a constraint on the $n_{ij}$ determined by $\lambda$. For example if $n=4$ the arrangement looks like Figure \ref{fig Ideal filling arrangement for n=4}.
\begin{figure}[ht!]
\centering
\begin{tikzpicture}
    %box lines
    %horizontal
    \draw (0,0) -- (2.4,0);
    \draw (0,-0.8) -- (2.4,-0.8);
    \draw (0.8,-1.6) -- (2.4,-1.6);
    \draw (1.6,-2.4) -- (2.4,-2.4);

    %vertical
    \draw (0,0) -- (0,-0.8);
    \draw (0.8,0) -- (0.8,-1.6);
    \draw (1.6,0) -- (1.6,-2.4);
    \draw (2.4,0) -- (2.4,-2.4);

    %nij labels
    \node at (0.4,-0.4) {\small{$n_{12}$}};

    \node at (1.2,-0.4) {\small{$n_{13}$}};
    \node at (1.2,-1.2) {\small{$n_{23}$}};

    \node at (2,-0.4) {\small{$n_{14}$}};
    \node at (2,-1.2) {\small{$n_{24}$}};
    \node at (2,-2) {\small{$n_{34}$}};
\end{tikzpicture}
\caption{Ideal filling arrangement for $n=4$} \label{fig Ideal filling arrangement for n=4}
\end{figure}

Since the ideal filling for $\lambda$ is a combinatorial reformulation of the tropical critical point, we should expect it to also have a polytope in some special toric chart. Unfortunately, building a polytope in these `ideal filling coordinates' was not part of Judd's work. This poses the following question: can we find a toric chart for which the tropical critical point is exactly the ideal filling for $\lambda$? In answer, the main result of our work is the construction of such a coordinate system.

Initially our `ideal filling coordinates' are indexed by a specific choice of reduced expression for $w_0$, the longest element of the Weyl group. Namely our coordinates correspond to factorising a lower-triangular unipotent matrix into simple root subgroups according to the reduced expression $s_{i_1} \cdots s_{i_N}$ given by
    $$\mathbf{i}_0 = (i_1, \ldots, i_N) :=
    (1,2, \ldots, n-1, 1,2 \ldots, n-2, \ldots, 1, 2, 1)
    $$
where we refer to Sections \ref{subsec Notation and definitions} and \ref{sec The ideal coordinates} for the precise definitions. However we later generalise our `ideal coordinates' to arbitrary reduced expressions. Remarkably this gives rise to family of polytopes that all have the same tropical critical point (see Proposition \ref{prop trop crit pt independent of i}).

As a corollary we have the following theorem which gives an interpretation of ideal fillings using Toeplitz matrices over the field of Puiseux series:
    $$\mathcal{K}= \bigcup_{n\in \mathbb{Z}_{>0}} \mathbb{C}((t^{\frac{1}{n}}))
    $$
\begin{thm} \label{thm Toeplitz mat and ideal fillings}
Let $\phi_i: SL_2 \to GL_n$ be the homomorphism corresponding to the $i$-th simple root of $GL_n$ and take
    $$\mathbf{y}_i(z) = \phi_i\begin{pmatrix} 1 & 0 \\ z & 1 \end{pmatrix}.
    $$
Let $\mathbf{i}=(i_1, \ldots, i_N)$ stand for an arbitrary reduced expression $s_{i_1} \cdots s_{i_N}$ for $w_0$, where $N=\binom{n}{2}$. Then we have an ordering on the set of positive roots $R_+=\{\alpha_{ij}=\epsilon_i-\epsilon_j \ | \ 1\leq i<j \leq n \}$ given by
    $$\alpha^{\mathbf{i}}_j=\begin{cases}
    \alpha_{i_1, i_1+1} & \text{for } j=1, \\
    s_{i_1}\cdots s_{i_{j-1}}\alpha_{i_j, i_j+1} & \text{for } j=2, \ldots N.
    \end{cases}
    $$
Now take $m_{\alpha^{\mathbf{i}}_1},\ldots, m_{\alpha^{\mathbf{i}}_N}$ to be Puiseux series with positive leading coefficients and non-negative valuations $\mu_{\alpha}=\mathrm{Val}_{\mathbf{K}}(m_{\alpha})$ (defined in Section \ref{subsec The basics of tropicalisation} as the exponent of the first non-zero term). If the product
$\mathbf{y}_{i_1}\left(m_{\alpha^{\mathbf{i}}_1}^{-1}\right)\cdots  \mathbf{y}_{i_N}\left(m_{\alpha^{\mathbf{i}}_N}^{-1}\right)$
is a Toeplitz matrix, then the valuations $\mu_{\alpha}$ form an ideal filling:
\begin{center}
    \begin{tikzpicture}[scale=1.35]
        %box lines
        %horizontal
        \draw (0,0) -- (1.7,0);
        \draw (2.3,0) -- (3.2,0);
        \draw (0,-0.8) -- (1.7,-0.8);
        \draw (2.3,-0.8) -- (3.2,-0.8);
        \draw (0.8,-1.6) -- (1.7,-1.6);
        \draw (2.3,-1.6) -- (3.2,-1.6);
        \draw (2.3,-2.4) -- (3.2,-2.4);
        \draw (2.4,-3.2) -- (3.2,-3.2);

        %vertical
        \draw (0,0) -- (0,-0.8);
        \draw (0.8,0) -- (0.8,-1.6);
        \draw (1.6,0) -- (1.6,-1.7);
        \draw (2.4,0) -- (2.4,-1.7);
        \draw (2.4,-2.3) -- (2.4,-3.2);
        \draw (3.2,0) -- (3.2,-1.7);
        \draw (3.2,-2.3) -- (3.2,-3.2);

        %nij labels
        \node at (0.42,-0.4) {\scriptsize{$\mu_{\alpha_{12}}$}};
        \node at (1.22,-0.4) {\scriptsize{$\mu_{\alpha_{13}}$}};
        \node at (2.82,-0.4) {\scriptsize{$\mu_{\alpha_{1n}}$}};
        \node at (2.02,-0.4) {\scriptsize{$\cdots$}};

        \node at (1.22,-1.2) {\scriptsize{$\mu_{\alpha_{23}}$}};
        \node at (2.02,-1.2) {\scriptsize{$\cdots$}};
        \node at (2.82,-1.2) {\scriptsize{$\mu_{\alpha_{2n}}$}};

        \node at (2.82,-1.9) {\scriptsize{$\vdots$}};
        \node at (2,-1.9) {\scriptsize{$\ddots$}};

        \node at (2.82,-2.8) {\scriptsize{$\mu_{\alpha_{n-1,n}}$}};
    \end{tikzpicture}
\end{center}
Moreover every ideal filling with rational entries arises in this way.
\end{thm}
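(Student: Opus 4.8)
\emph{Proof plan.} The idea is to recognise the Toeplitz hypothesis as saying that the given element of $U^-(\mathcal{K})$ is a critical point of a superpotential $\mathcal{W}_{t^\lambda}$, and then to read off the valuations of its ideal coordinates via Proposition \ref{prop trop crit pt independent of i}. First I would unwind the definitions: by construction the ideal coordinates for $\mathbf{i}$ of an element $u\in U^-$ are precisely the parameters $m_{\alpha^{\mathbf{i}}_1},\dots,m_{\alpha^{\mathbf{i}}_N}$ appearing in its factorisation $u=\mathbf{y}_{i_1}(m_{\alpha^{\mathbf{i}}_1}^{-1})\cdots\mathbf{y}_{i_N}(m_{\alpha^{\mathbf{i}}_N}^{-1})$, so the hypothesis hands us a point $u\in U^-(\mathcal{K})$ whose ideal coordinates have positive leading coefficient and non-negative valuations $\mu_\alpha$, and which moreover happens to be a lower-triangular Toeplitz matrix. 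Being a product of elements $\mathbf{y}_i(x)$ with $x$ of positive leading coefficient, $u$ is totally positive over $\mathcal{K}$ (Lusztig positivity), so I may invoke Rietsch's description of the critical points of the superpotential in terms of totally positive Toeplitz matrices (\cite{Rietsch2008}): such a matrix is exactly the positive critical point of $\mathcal{W}_q$ for the quantum parameters $q=q(u)$ read off from its generating function.

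The step that takes real work is to check that the positivity and non-negativity hypotheses on the $m_\alpha$ force $q$ to be positive over $\mathcal{K}$ with non-negative valuations, so that $q$ can be written as $t^{\lambda}$ up to positive scalars for a rational dominant weight $\lambda$ of $GL_n$; since only the valuations of the $q_i$ enter the tropicalisation, $u$ is then the unique positive critical point of $\mathcal{W}_{t^\lambda}$, uniqueness being Judd's theorem (\cite{Judd2018}). Once $\lambda$ is identified the conclusion is immediate: the tropical critical point of $\mathcal{W}^{\mathbf{i}}_{t^\lambda}$, written in the tropical ideal coordinates, is by definition the vector $(\mathrm{Val}_{\mathbf{K}}(m_\alpha))_\alpha=(\mu_\alpha)_\alpha$, and by Proposition \ref{prop trop crit pt independent of i} this point does not depend on $\mathbf{i}$ and is equal to the ideal filling for $\lambda$; hence the $\mu_\alpha$ form an ideal filling.

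For the converse I would run the argument backwards. Given an ideal filling with rational entries $n_{ij}$, one first recovers the rational dominant weight $\lambda$ of $GL_n$ for which it is the ideal filling (the $\lambda$-constraint in the definition of an ideal filling recovers $\lambda$ from the $n_{ij}$), so that by Proposition \ref{prop trop crit pt independent of i} the pattern $(n_{ij})$ is the tropical critical point of $\mathcal{W}_{t^\lambda}$. The honest positive critical point $p_\lambda$ of $\mathcal{W}_{t^\lambda}$ over $\mathcal{K}$ --- which exists and is unique, with ideal coordinates of positive leading coefficient, by \cite{Judd2018} --- therefore has ideal coordinates $m_\alpha\in\mathcal{K}$ with $\mathrm{Val}_{\mathbf{K}}(m_\alpha)=n_\alpha$; equivalently $p_\lambda=\mathbf{y}_{i_1}(m_{\alpha^{\mathbf{i}}_1}^{-1})\cdots\mathbf{y}_{i_N}(m_{\alpha^{\mathbf{i}}_N}^{-1})$, which is Toeplitz by Rietsch's correspondence again. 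Thus every rational ideal filling arises in the stated way, for each reduced expression $\mathbf{i}$.

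I expect the main obstacle to be the bookkeeping around Rietsch's correspondence over the Puiseux field: matching the factorisation conventions used for the ideal coordinates to Rietsch's parametrisation of the critical points, and, above all, confirming that the weight attached to a Toeplitz matrix satisfying the stated positivity and valuation conditions is genuinely rational dominant, so that $\mathcal{W}_{t^\lambda}$ and Proposition \ref{prop trop crit pt independent of i} apply. A secondary point is the surjectivity needed for the converse, namely that every rational ideal filling is the ideal filling of some rational dominant weight.
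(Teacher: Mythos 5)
Your proposal is correct and follows essentially the same route as the paper, which proves this theorem precisely by combining Rietsch's Toeplitz characterisation of the critical points of the superpotential with Proposition \ref{prop GLn version of Jamie's 6.2}, Corollary \ref{cor nu'_i coords ideal filling for lambda} and Proposition \ref{prop trop crit pt independent of i}; the points you flag as needing care (matching factorisation conventions, dominance of $\lambda$, and the restriction to rational fillings over ordinary Puiseux series) are exactly the bookkeeping the paper leaves implicit. The only slip is one of attribution: the identification of the tropical critical point with the ideal filling for $\lambda$ comes from Proposition \ref{prop GLn version of Jamie's 6.2} together with Corollary \ref{cor nu'_i coords ideal filling for lambda}, while Proposition \ref{prop trop crit pt independent of i} supplies only the independence of the reduced expression.
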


This result follows by combining the theorem of Rietsch that the critical points of the superpotential are given by Toeplitz matrices \cite[non-T-equivariant case of Theorem 4.1]{Rietsch2008}, with our result that the factorisations of the positive critical point lead to ideal fillings (combination of Proposition \ref{prop GLn version of Jamie's 6.2}, Corollary \ref{cor nu'_i coords ideal filling for lambda} and Proposition \ref{prop trop crit pt independent of i}).
Additionally, the result extends from $\mathbb{Q}$-valued to $\mathbb{R}$-valued ideal fillings if we instead work over the field of generalised Puiseux series (defined in Section \ref{subsec The basics of tropicalisation}).

Our work is structured as follows: in Section \ref{sec Mirror symmetry for G/B applied to representation theory} we introduce the mirror to the flag variety $G/B$ and present the first of two coordinate systems. We do this since the key to constructing a polytope using the superpotential, is to express the superpotential in some torus coordinate chart. It will later transpire that this first system gives rise to string polytopes. The second coordinate system, the `ideal' coordinates, will be best suited to the tropical critical point. We begin Section \ref{sec The ideal coordinates} by defining this system for the reduced expression given by $\mathbf{i}_0$ and devote much of the section to constructing the ideal coordinates from the string coordinates.

Then, following Judd (\cite{Judd2018}), in Section \ref{sec Givental-type quivers and critical points} we introduce quivers as defined by Givental in \cite{Givental1997}. These will provide a framework within which to neatly hold the information of the toric charts, highest weight and superpotential. They will also give a helpful description of the critical point conditions for a given highest weight.

Finally, in Section \ref{sec The tropical viewpoint} we consider everything we have developed up until this point through the lens of tropical geometry. This is where we will discuss the polytopes mentioned above and prove, for a given highest weight $\lambda$, that the ideal filling and tropical critical point coincide.
We conclude by generalising the ideal coordinates to arbitrary reduced expressions and presenting our new family of polytopes, as well as the conjecture mentioned above.

\begin{ackno}
I wish to thank my supervisor Konni Rietsch for suggesting this idea to me, and particularly for her helpful explanations and patience throughout.
\end{ackno}

\section[Mirror symmetry for G/B applied to representation theory]{Mirror symmetry for $G/B$ applied to representation theory} \label{sec Mirror symmetry for G/B applied to representation theory}

\subsection{Notation and definitions} \label{subsec Notation and definitions}

Let $\mathbb{K}$ be a field of characteristic $0$, containing a positive semifield. Unless otherwise stated we take $G=GL_n(\mathbb{K})$ with $B$, $B_-$ the Borel subgroups of upper and lower triangular matrices. Let $U$, $U_-$ be their respective unipotent radicals, that is the subgroups of upper and lower triangular matrices with all diagonal entries equal to $1$, and let $T=B \cap B_-$ be the diagonal matrices in $G$. The Langlands dual group to $G$ is denoted $G^{\vee}=GL_n(\mathbb{K})$, and may be taken together with the corresponding subgroups $B^{\vee}$, $B^{\vee}_-$, $U^{\vee}$, $U^{\vee}_-$ and $T^{\vee}$ in $G^{\vee}$.

For $i=1,\ldots , n$, we write $\epsilon_i$, $\epsilon^{\vee}_i$ for the standard characters and cocharacters of $T$. Let $X^*(T) = \mathrm{Hom}(T,\mathbb{K}^*)$, $X_*(T)= \mathrm{Hom}(\mathbb{K}^*, T)$ the respective character and cocharacter lattices, dually paired in the standard way by
    \begin{equation*} \label{eqn bilinear form pair lattices}
    \langle \ , \ \rangle : X^*(T) \times X_*(T) \to \mathrm{Hom}(\mathbb{K}^*, \mathbb{K}^*) \cong \mathbb{Z}.
    \end{equation*}
We take $\alpha_{ij} = \epsilon_i - \epsilon_j \in X^*(T)$, $\alpha_{ij}^{\vee} = \epsilon^{\vee}_i - \epsilon^{\vee}_j \in X^*(T^{\vee})= X_*(T)$. Additionally for each $i \in I=\{1, \ldots, n-1\}$ we write $\alpha_i=\alpha_{i, i+1}$, $\alpha_i^{\vee}=\alpha_{i, i+1}^{\vee}$. Then the roots and positive roots of $G$ are
    $$R = \{\alpha_{ij} \ \vert \ i \neq j\} \text{ and } R_+ = \{\alpha_{ij} \ \vert \ i < j\}
    $$
respectively and the simple roots of $G$ are $\{ \alpha_i \ \vert \ i \in I\}$. The Cartan matrix is $A=(a_{ij})$ defined by $a_{ij}=\langle \alpha_j, \alpha_i^{\vee}\rangle$.

The fundamental weights of $G$ are given by $\omega_i = \epsilon_1 + \cdots + \epsilon_i$. Additionally we denote the set of dominant integral weights by
    $${X^*(T)}^+=\{\lambda \in X^*(T) \ \vert \ \langle \lambda, \alpha_{ij}^{\vee} \rangle \geq 0 \ \forall i <j\}.
    $$
For $\lambda \in {X^*(T)}^+$, let $V_{\lambda}$ denote the irreducible representation with highest weight $\lambda$.

Note that we may identify $X^*(T)\otimes \mathbb{R} = X_*(T^{\vee})\otimes \mathbb{R}$ with the dual, $\mathfrak{h}^*_{\mathbb{R}}$, of the Lie algebra of the split real torus of $G$.

The Weyl group of $G$ is the symmetric group, $W=N_G(T)/T = S_n$, generated by the simple reflections $s_i$ for $i \in I$. It acts on $X^*(T)$ by permuting the roots and we denote the action of $w \in W$ on $\alpha \in X^*(T)$ by $w\alpha$.

Associated to each simple root $\alpha_i$ there is a homomorphism $\phi_i: SL_2 \to G$, explicitly
    $$\phi_i : \begin{pmatrix} a & b \\ c & d \end{pmatrix} \mapsto
    \begin{pmatrix} 1 & & & & & \\ & \ddots & & & & \\ & & a & b & & \\ & & c & d & & \\ & & & & \ddots & \\ & & & & & 1\end{pmatrix}
    \quad \text{with } a \text{ in position } (i,i),
    $$
and we have a number of $1$-parameter subgroups of $G$ respectively defined by
    $$\begin{aligned}
    & \mathbf{x}_i(z) = \phi_i\begin{pmatrix} 1 & z \\ 0 & 1 \end{pmatrix}, \quad &&
        \mathbf{y}_i(z) = \phi_i\begin{pmatrix} 1 & 0 \\ z & 1 \end{pmatrix}, \\
    & \mathbf{x}_{-i}(z) = \phi_i\begin{pmatrix} z^{-1} & 0 \\ 1 & z \end{pmatrix}, \quad &&
        \mathbf{t}_i(t) = \phi_i\begin{pmatrix} t & 0 \\ 0 & t^{-1} \end{pmatrix}.
    \end{aligned}$$
for $z \in \mathbb{K}$, $t \in \mathbb{K}^*$ and $i \in I$.
The simple reflections in the Weyl group, $s_i \in W$, are given explicitly by $s_i=\bar{s}_i T$ where
    $$\bar{s}_i = \mathbf{x}_i(-1)\mathbf{y}_i(1)\mathbf{x}_i(-1) = \phi_i\begin{pmatrix} 0 & -1 \\ 1 & 0 \end{pmatrix}.
    $$
More generally we may write each $w \in W$ as a product with a minimal number of factors, $w=s_{i_1}\cdots s_{i_m}$. We get a representative of $w$ in $N_G(T)$ by taking $\bar{w}=\bar{s}_{i_1}\cdots \bar{s}_{i_m}$. Here $m$ is called the length of $w$, denoted $l(w)$, and the choice of expression $s_{i_1}\cdots s_{i_m}$ is said to be reduced. In particular it is well known that $\bar{w}$ is independent of this choice. For ease of notation we will often let $\mathbf{i}=(i_1, \ldots, i_m)$ stand for the reduced expression ${s}_{i_1}\cdots {s}_{i_m}$.

In a similar way for $G^{\vee}$ we have, for each $i \in I$, a homomorphism $\phi_i^{\vee}: PGL_2 \to G^{\vee}$ and $\mathbf{x}^{\vee}_i(z)$, $\mathbf{y}^{\vee}_i(z)$, $\mathbf{x}^{\vee}_{-i}(z)$, $\mathbf{t}^{\vee}_i(t)$ defined analogously. The Weyl group of $G^{\vee}$ is again the symmetric group and we use the same notation as above.

With this in mind, we make an observation which will be used frequently; given a reduced expression, say $s_{i_1}\cdots s_{i_m}$, we can construct matrices in $G^{\vee}$ which are indexed by $\mathbf{i}=(i_1,\ldots,i_m)$. We do this by taking products of the matrices defined above. An explicit example is given by the following map:
    $$\mathbf{x}^{\vee}_{\mathbf{i}} : (\mathbb{K}^*)^N \to U^{\vee} \cap B_-^{\vee} \bar{w}_0 B_-^{\vee}, \quad (z_1, \ldots, z_N)\mapsto \mathbf{x}^{\vee}_{i_1}(z_1) \cdots \mathbf{x}^{\vee}_{i_N}(z_N).
     $$

\subsection{Landau-Ginzburg models} \label{subsec Landau-Ginzburg models}

The mirror to the flag variety $G/B$ is a pair $(Z,\mathcal{W})$, called a Landau-Ginzburg model, where $Z \subset G^{\vee}$ is an affine variety and $\mathcal{W}:Z\to \mathbb{K}^*$ is a holomorphic function called the superpotential. In order to give a more precise description we first recall Bruhat decomposition, namely that $G$ may be written as a disjoint union of Bruhat cells $B \bar{w} B$ (see \cite[Theorem 8.3.8]{SpringerLAG}):
    $$G= \bigsqcup_{w \in W} B \bar{w} B.
    $$
Similarly we may write $G/B$ as
    $$G/B= \bigsqcup_{w \in W} B \bar{w} B/B \quad \text{with} \quad \mathrm{dim}(B \bar{w} B/B)= l(w).
    $$
We note that the cells $B \bar{w} B$ do not depend on the choice of representative $\bar{w}$.

These Bruhat cells give rise to a partial ordering of Weyl group elements, known as the Bruhat order (see \cite[Theorem 8.5.4]{SpringerLAG}); for $v,w \in W$ we say $v\leq w$ if $B \bar{v} B \subseteq \overline{B \bar{w} B}$. With respect to this ordering there is a unique maximal element $w_0 \in W$ and we set $N=l(w_0)$.

Additionally, we use the Bruhat order to define open Richardson varieties. These are given by intersecting opposite Bruhat cells; for $v,w \in W$ such that $v\leq w$ we have
    $$\mathcal{R}_{v,w}:=(B_- \bar{v}B \cap B \bar{w} B)/B \subset G/B.
    $$
It is well known that $\mathcal{R}_{v,w}$ is smooth, irreducible and has dimension $l(w)-l(v)$, \cite{KazhdanLustig1980}. On the dual side we have
    $$\mathcal{R}^{\vee}_{v,w}:= (B^{\vee}_- \bar{v}B^{\vee} \cap B^{\vee} \bar{w} B^{\vee})/B^{\vee} \subset G^{\vee}/B^{\vee}.
    $$

We now return to  $(Z, \mathcal{W})$, the Landau-Ginzburg model for $G/B$, and define the subvariety
    $$Z:= B_-^{\vee}\cap B^{\vee}\bar{w}_0B^{\vee}\subset G^{\vee}.
    $$
In order to define the superpotential $\mathcal{W}$, we let $\chi:U^{\vee} \to \mathbb{K}$ be the sum of above-diagonal elements
    $$\chi(u) := \sum_{i=1}^{n-1} u_{i\, i+1}, \quad u=(u_{ij})\in U^{\vee}.
    $$
Then the superpotential is given by
    $$\mathcal{W}:Z \to \mathbb{K}^*, \quad u_1d\bar{w}_0u_2 \mapsto \chi(u_1)+ \chi(u_2)
    $$
where $u_1, u_2 \in U^{\vee}$ and $d \in T^{\vee}$.
This map will appear frequently in subsequent sections.

The motivation for introducing the Landau-Ginzburg model is to study the representation theory of $G$ using the mirror to $G/B$. It is natural then to equip $Z$ with highest weight and weight maps. The highest weight map recovers the original torus factor, $d$, as follows:
    $$\mathrm{hw}:Z \to T^{\vee}, \quad u_1d\bar{w}_0u_2 \mapsto d.
    $$
For the weight map we first note that each element $b \in Z$ may be written as $b=[b]_-[b]_0$ with $[b]_- \in U_-^{\vee}$, $[b]_0\in T^{\vee}$. Then the weight map is given by the projection
    $$
    \mathrm{wt}:Z \to T^{\vee}, \quad b=u_1d\bar{w}_0u_2 \mapsto [b]_0.
    $$
We will often write the above decomposition of $b$ as $b=[b]_-t_R$ to remind us that the torus factor is taken on the right.

\subsection{The string coordinates} \label{subsec The string coordinates}

In order to make the connection with representation theory we restrict our attention to various toric charts $T^{\vee} \times (\mathbb{K}^*)^N \to Z$, indexed by reduced expressions for $w_0$. The first chart we want to consider is useful for reconstructing the string polytope via the superpotential.
These `string coordinates', which we introduce in this section, were used by Judd \cite{Judd2018} following Chhaibi \cite{ChhaibiThesis13}, who was in turn inspired by the work of Berenstein and Kazhdan \cite{BerensteinKazhdan2000}, \cite{BerensteinKazhdan2007}.

The toric chart in question is defined by the composition of a number of maps which we write here for overview and then define in detail:
\begin{center}
\begin{tikzcd}[column sep=1.25cm]
    T^{\vee} \times (\mathbb{K}^*)^N \arrow[r, " {\left(id, \, \mathbf{x}_{-\mathbf{i}}^{\vee}\right)} "] & T^{\vee}\times (B_-^{\vee} \cap U^{\vee} \bar{w}_0 U^{\vee}) \arrow[r, " \tau "] & T^{\vee}\times (U^{\vee} \cap B_-^{\vee} \bar{w}_0 B_-^{\vee}) \arrow[r, " \Phi "] & Z \\
\end{tikzcd}
\end{center}

\vspace{-0.7cm}
If we let $\mathbf{i}=(i_1, \ldots, i_N)$ stand for a reduced expression $s_{i_1} \cdots s_{i_N}$ for $w_0$, then first map constructs a matrix indexed by $\mathbf{i}$ as follows:
    $$\mathbf{x}_{-\mathbf{i}}^{\vee} : (\mathbb{K}^*)^N \to B_-^{\vee}\cap U^{\vee}\bar{w}_0 U^{\vee}\,, \quad (z_1, \ldots, z_N)\mapsto \mathbf{x}_{-i_1}^{\vee}(z_1) \cdots \mathbf{x}_{-i_N}^{\vee}(z_N).
    $$

The second map, $\tau$, may be written as the composition of a twist map $\eta^{w_0,e}$ and an involution $\iota$.  We present $\tau$ in this way since the involution will be helpful later. The twist map is defined to be
    $$\eta^{w_0,e}:B_-^{\vee}\cap U^{\vee}\bar{w}_0 U^{\vee} \to U^{\vee} \cap B_-^{\vee} \bar{w}_0 B_-^{\vee}\,, \quad b \mapsto [(\bar{w}_0b^T)^{-1}]_+.
    $$
Here $b^T$ is the transpose of $b$ and $[g]_+$ is given by decomposing $g=[g]_- [g]_0 [g]_+ $ such that $[g]_- \in U_-^{\vee}$, $[g]_0\in T^{\vee} $, $[g]_+ \in U^{\vee}$. The involution is given by
    $$\iota:G^{\vee} \to G^{\vee}, \quad g\mapsto (\bar{w}_0g^{-1}\bar{w}_0^{-1})^T.
    $$
We note that this map preserves $U^{\vee}$. It remains to define $\tau$ by applying the composition $\iota \circ \eta^{w_0,e}$ to the second factor:
    $$\tau : T^{\vee}\times (B_-^{\vee} \cap U^{\vee} \bar{w}_0 U^{\vee}) \to T^{\vee}\times (U^{\vee} \cap B_-^{\vee} \bar{w}_0 B_-^{\vee}), \ (d,u) \mapsto \left(d,\iota(\eta^{w_0,e}(u))\right).
    $$

The final map in the definition of the string toric chart is an isomorphism which allows us to factorise elements of $Z$:
    $$\Phi :T^{\vee}\times (U^{\vee} \cap B_-^{\vee} \bar{w}_0 B_-^{\vee}) \to Z, \quad (d,u_1) \mapsto u_1 d \bar{w}_0 u_2.$$
Here $u_2 \in U^{\vee}$ is the unique element such that $u_1 d\bar{w}_0 u_2 \in Z$. Then, the string toric chart on $Z$ (corresponding to $\mathbf{i}$) is defined to be
    \begin{equation} \label{eqn string coord chart defn}
    \varphi_{\mathbf{i}}:T^{\vee} \times (\mathbb{K}^*)^N \to Z, \quad \varphi_{\mathbf{i}}(d,\boldsymbol{z}) = \Phi\circ \tau\left(d, \mathbf{x}_{-\mathbf{i}}^{\vee}(\boldsymbol{z})\right) = \Phi\left(d, \iota \left(\eta^{w_0,e}\left(\mathbf{x}_{-\mathbf{i}}^{\vee}(\boldsymbol{z})\right)\right)\right).
    \end{equation}

Later in this work we will need the specific toric chart corresponding to
    $$\mathbf{i}_0 = (i_1, \ldots, i_N) := (1,2, \ldots, n-1, 1,2 \ldots, n-2, \ldots, 1, 2, 1)
    $$
so, unless otherwise stated, from now on we will take
    $$
    u:=\mathbf{x}_{-\mathbf{i}_0}^{\vee}(\boldsymbol{z}), \quad u_1 := \iota(\eta^{w_0,e}(u)), \quad b:= u_1 d \bar{w}_0 u_2.
    $$
With this notation the composition of maps defining $\varphi_{\mathbf{i}_0}$, the string toric chart corresponding to $\mathbf{i}_0$, may be visualised as follows:
\vspace{-0.3cm} \begin{center}
\begin{tikzcd}[row sep=0.3em, column sep=1.35cm]
    T^{\vee} \times (\mathbb{K}^*)^N \arrow[r, " {\left(id, \, \mathbf{x}_{-\mathbf{i}_0}^{\vee}\right)} "] & T^{\vee}\times (B_-^{\vee} \cap U^{\vee} \bar{w}_0 U^{\vee}) \arrow[r, " \tau "] & T^{\vee}\times (U^{\vee} \cap B_-^{\vee} \bar{w}_0 B_-^{\vee}) \arrow[r, " \Phi "] & Z \\
    (d,\boldsymbol{z}) \arrow[r, mapsto] & (d,u) \arrow[r, mapsto] & (d,u_1) \arrow[r, mapsto] & b \\
\end{tikzcd}
\end{center}

\begin{ex}[Dimension 3] \label{ex dim 3 z coords} The reduced expression is $\mathbf{i}_0=(1,2,1)$.
We will start with $\left(d, (z_1, z_2, z_3 )\right)$ and apply the sequence of maps defined above.

Applying $\mathbf{x}_{-\mathbf{i}_0}^{\vee}$ to $(z_1,z_2,z_3)$ gives the matrix $u$:
    $$u=\mathbf{x}_{-\mathbf{i}_0}^{\vee}(z_1, z_2, z_3) = \begin{pmatrix}
        \frac{1}{z_1z_3} & & \\ \frac{1}{z_3}+\frac{z_1}{z_2} & \frac{z_1z_3}{z_2} & \\ 1 & z_3 & z_2
    \end{pmatrix}, \quad
    \bar{w}_0=\begin{pmatrix} & & 1 \\ & -1 & \\ 1 & & \end{pmatrix}.
    $$
We recall the definition of second map: $\tau\left(d, (z_1, z_2, z_3 )\right) = \left(d, \iota \circ \eta^{w_0,e}(u)\right)$. To see this in action we first evaluate $\eta^{w_0,e}(u)$ and then apply $\iota$ to obtain the matrix $u_1$:
    $$\begin{aligned}
    \eta^{w_0,e}(u)
        &= \left[ \left( \bar{w}_0 \begin{pmatrix} \frac{1}{z_1z_3} & \frac{z_1}{z_2}+\frac{1}{z_3} & 1 \\ & \frac{z_1z_3}{z_2} & z_3 \\ & & z_2  \end{pmatrix} \right)^{-1} \right]_+
        = \left[ \begin{pmatrix} 1 & z_1 +\frac{z_2}{z_3} & z_1z_3 \\ -\frac{1}{z_1} & -\frac{z_2}{z_1z_3} & \\ \frac{1}{z_2} & & \end{pmatrix} \right]_+ \\
        &= \begin{pmatrix} 1 & z_1 +\frac{z_2}{z_3} & z_1z_3 \\ 0 & 1 & z_3 \\ 0 & 0 & 1 \end{pmatrix}
    \end{aligned}
    $$
    $$u_1=\iota\left(\eta^{w_0,e}(\mathbf{x}_{-\mathbf{i}_0}^{\vee}(\boldsymbol{z}))\right) = \left(\bar{w}_0 \begin{pmatrix} 1 & z_1 +\frac{z_2}{z_3} & z_1z_3 \\ 0 & 1 & z_3 \\ 0 & 0 & 1 \end{pmatrix}^{-1} \bar{w}_0^{-1}\right)^T
        = \begin{pmatrix} 1 & z_3 & z_2 \\ & 1 & z_1 +\frac{z_2}{z_3} \\ & & 1\end{pmatrix}
    $$
Of note, we can factorise this matrix using a different reduced expression; $\mathbf{i}'_0=(2,1,2)$. We obtain
    \begin{equation*}
    \tau\left(d, \mathbf{x}_{-\mathbf{i}_0}^{\vee}\left(z_1, z_2, z_3 \right)\right)
    = \left(d, u_1\right) = \left(d, \mathbf{x}_{\mathbf{i}'_0}^{\vee}\left(z_1, z_3, \frac{z_2}{z_3}\right)\right).
    \end{equation*}

It remains to apply the map $\Phi$. We recall that $u_2 \in U^{\vee}$ will be the unique element such that $b=u_1 d\bar{w}_0 u_2 \in Z$.
$$\begin{aligned}
    b&= \Phi\left(d, u_1\right) = u_1 d \bar{w}_0u_2 \\
    &= \begin{pmatrix} 1 & z_3 & z_2 \\ & 1 & z_1 +\frac{z_2}{z_3} \\ & & 1\end{pmatrix}
        \begin{pmatrix} d_1& &  \\ & d_2 & \\ & & d_3 \end{pmatrix}
        \begin{pmatrix} & & 1 \\ & -1 & \\ 1 & & \end{pmatrix}
        \begin{pmatrix} 1 & \frac{d_2}{d_3} \frac{z_3}{z_2} & \frac{d_1}{d_3}\frac{1}{z_1z_3} \\ & 1 & \frac{d_1}{d_2}\left(\frac{1}{z_3}+\frac{z_2}{z_1z_3^2}\right) \\ & & 1\end{pmatrix} \\
    &=\begin{pmatrix} d_3 z_2 & & \\ d_3 \left(z_1+\frac{z_2}{z_3} \right) & d_2 \frac{z_1z_3}{z_2} & \\ d_3 & d_2 \frac{z_3}{z_2} & d_1 \frac{1}{z_1z_3} \end{pmatrix} \\
    \end{aligned}$$
From our construction of $b$ we see that the superpotential is
    $$\mathcal{W}(d,\boldsymbol{z}) = z_1 +\frac{z_2}{z_3} + z_3 + \frac{d_1}{d_2}\left(\frac{1}{z_3} + \frac{z_2}{z_1z_3^2}\right) + \frac{d_2}{d_3}\frac{z_3}{z_2}
    $$
and the weight matrix is
    \begin{equation} \label{eqn dim 3 wt matrix z coords}
    \mathrm{wt}(b)=\begin{pmatrix} d_3z_2 & & \\ & d_2\frac{z_1z_3}{z_2} & \\ & & d_1\frac{1}{z_1z_3} \end{pmatrix}.
    \end{equation}
\end{ex}

\subsection{The form of the weight matrix} \label{subsec The form of the weight matrix}

In this section we generalise the formula for the weight matrix $\mathrm{wt}(b)=t_R$ in terms of the string coordinates, $(d, \boldsymbol{z})$, as in the above example, Equation (\ref{eqn dim 3 wt matrix z coords}). We begin with the two factorisations of $b$ that we have already seen:
    $$b=u_1d \bar{w}_0 u_2 = [b]_- t_R \quad \text{where} \quad [b]_- \in U^{\vee}_-.
    $$
Now recalling the involution $\iota$, we see this acts on elements $b\in Z$ as
    $$\iota(b)=\iota(u_1) \bar{w}_0 d^{-1} \iota(u_2).
    $$
Defining $\tilde{u}_i:=\iota(u_i)$ and $\tilde{b}:=\iota(b)$ gives $\tilde{b}=\tilde{u}_1 \bar{w}_0 d^{-1} \tilde{u}_2$. Moreover we can write
    $$\tilde{b}= \left[\tilde{b}\right]_-\tilde{t}_R \quad \text{where} \quad \left[\tilde{b}\right]_- \in U^{\vee}_-, \ \tilde{t}_R \in T^{\vee}.
    $$
Then
    $$\tilde{b}=\iota(b) = \left(\bar{w}_0 b^{-1}\bar{w}_0^{-1}\right)^T
        = \left(\bar{w}_0 \left([b]_- t_R \right)^{-1}\bar{w}_0^{-1}\right)^T
        = \left(\bar{w}_0[b]_-^{-1}\bar{w}_0^{-1}\right)^T \bar{w}_0t_R^{-1}\bar{w}_0^{-1}.
    $$
Thus $\tilde{t}_R = \bar{w}_0t_R^{-1}\bar{w}_0^{-1}$ or equivalently, $t_R = \bar{w}_0\tilde{t}_R^{-1}\bar{w}_0^{-1}$.

\begin{prop} \label{prop form of t_R using d,u}
Let $N=\binom{n}{2}$ and recall the definition $u:=\mathbf{x}_{-\mathbf{i}_0}^{\vee}(\boldsymbol{z})$ where
    $$\mathbf{i}_0 = (i_1, \ldots, i_N) := (1,2, \ldots, n-1, 1,2 \ldots, n-2, \ldots, 1, 2, 1).
    $$
Then $t_R = \bar{w}_0d[u]_0\bar{w}_0^{-1}$.
\end{prop}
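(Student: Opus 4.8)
The plan is to trace the torus factor $t_R$ through the chain of maps defining $\varphi_{\mathbf{i}_0}$ and to exploit the transpose-type relation $t_R = \bar{w}_0\tilde{t}_R^{-1}\bar{w}_0^{-1}$ established just above the statement. Recall $b = u_1 d\bar{w}_0 u_2 = [b]_- t_R$ with $u_1 = \iota(\eta^{w_0,e}(u))$, and set $\tilde{b} = \iota(b) = \tilde{u}_1\bar{w}_0 d^{-1}\tilde{u}_2$ where $\tilde{u}_i = \iota(u_i)$. Since $\iota$ preserves $U^{\vee}$, the factor $\tilde{u}_1 = \iota(u_1) = \eta^{w_0,e}(u)$ lies in $U^{\vee}$, and the definition of the twist map gives $\eta^{w_0,e}(u) = [(\bar{w}_0 u^T)^{-1}]_+$. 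So the key object to analyse is the triangular decomposition of $(\bar{w}_0 u^T)^{-1}$.

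First I would compute the torus factor of $\tilde{b}$ directly. Write $\bar{w}_0 u^T = [\,\cdot\,]_- [\,\cdot\,]_0 [\,\cdot\,]_+$; then $(\bar{w}_0 u^T)^{-1} = [\,\cdot\,]_+^{-1} [\,\cdot\,]_0^{-1} [\,\cdot\,]_-^{-1}$, so $\tilde{u}_1 = \eta^{w_0,e}(u)$ together with the torus and lower-triangular parts accounts for $(\bar{w}_0 u^T)^{-1}$. The idea is that in the product $\tilde{b} = \tilde{u}_1 \bar{w}_0 d^{-1} \tilde{u}_2$, extracting the right torus factor $\tilde{t}_R$ amounts to reading off the diagonal of $\tilde{u}_1^{-1}\tilde{b}$ on the appropriate side; since $\bar{w}_0 d^{-1}$ contributes $\bar{w}_0 d^{-1}\bar{w}_0^{-1}$ as a diagonal matrix (conjugation by $\bar{w}_0$ reverses the order of diagonal entries), and $\tilde{u}_2 \in U^{\vee}$ contributes trivially to the torus factor on the right, the torus part should be governed by the diagonal of $\bar{w}_0 u^T$, i.e. by $[u]_0$ up to the $\bar{w}_0$-twist. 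Concretely I expect to obtain $\tilde{t}_R = \bar{w}_0 d^{-1}\bar{w}_0^{-1}\cdot \big(\text{diagonal part coming from } (\bar{w}_0 u^T)^{-1}\big)$, and the crucial point is that $[(\bar{w}_0 u^T)^{-1}]_0 = \bar{w}_0 [u]_0^{-1}\bar{w}_0^{-1}$ because the transpose does not change the diagonal and conjugation by $\bar{w}_0$ permutes it. Putting this together yields $\tilde{t}_R = \bar{w}_0 d^{-1}[u]_0^{-1}\bar{w}_0^{-1}$, and then $t_R = \bar{w}_0 \tilde{t}_R^{-1}\bar{w}_0^{-1} = \bar{w}_0 \big(\bar{w}_0 d^{-1}[u]_0^{-1}\bar{w}_0^{-1}\big)^{-1}\bar{w}_0^{-1} = \bar{w}_0\bar{w}_0^{-1}[u]_0 d\,\bar{w}_0\bar{w}_0^{-1}$... care is needed with the order here, and tracking the noncommutativity of $\bar{w}_0$ with the (commuting) diagonal factors carefully will give $t_R = \bar{w}_0 d[u]_0 \bar{w}_0^{-1}$ since $d$ and $[u]_0$ commute in $T^{\vee}$.

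The main obstacle, and the step deserving the most care, is justifying that the right torus factor $\tilde t_R$ of the product $\tilde u_1 \bar w_0 d^{-1}\tilde u_2$ is exactly the diagonal part of $\tilde u_1\bar w_0 d^{-1}$ read on the right, i.e. that the $U^{\vee}$-factor $\tilde u_2$ genuinely drops out and no lower-triangular contribution from $\tilde u_1$ interferes with the torus factor. This requires knowing that the decomposition $\tilde b = [\tilde b]_- \tilde t_R$ with $[\tilde b]_- \in U^{\vee}_-$ is the one relevant here, and that $\tilde u_1 \in U^{\vee}$ combined with $\bar w_0 d^{-1}$ and $\tilde u_2 \in U^{\vee}$ can be renormalised into $(\text{lower-triangular})\cdot(\text{diagonal})$ with the diagonal being precisely $\bar w_0 d^{-1}[u]_0^{-1}\bar w_0^{-1}$; one can verify this by a dimension/uniqueness argument for the $U^{\vee}_- T^{\vee}$-factorisation on the big cell, or by checking it explicitly in the $n=3$ case of Example \ref{ex dim 3 z coords} and then arguing inductively. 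I would also double-check signs and the exact placement of $\bar w_0$ versus $\bar w_0^{-1}$ against the displayed weight matrix \eqref{eqn dim 3 wt matrix z coords}, where $t_R = \mathrm{diag}(d_3 z_2, d_2 z_1 z_3/z_2, d_1/(z_1 z_3))$: comparing with $[u]_0 = \mathrm{diag}(1/(z_1 z_3), z_1 z_3/z_2, z_2)$ from that example, one sees $\bar w_0 d[u]_0\bar w_0^{-1} = \mathrm{diag}(d_3 z_2, d_2 z_1 z_3/z_2, d_1/(z_1 z_3))$, which confirms the claimed formula and pins down the correct conventions.
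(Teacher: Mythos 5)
Your overall strategy (reduce to computing $\tilde{t}_R$ from $\tilde{b}=\tilde{u}_1\bar{w}_0d^{-1}\tilde{u}_2$ and use $t_R=\bar{w}_0\tilde{t}_R^{-1}\bar{w}_0^{-1}$) is viable, but the step you flag as ``crucial'' is false. You claim $\left[(\bar{w}_0u^T)^{-1}\right]_0=\bar{w}_0[u]_0^{-1}\bar{w}_0^{-1}$ on the grounds that transposition preserves the diagonal and conjugation by $\bar{w}_0$ permutes it. This confuses the diagonal entries of a matrix with the torus factor of its Gauss decomposition $g=[g]_-[g]_0[g]_+$: the latter is given by ratios of leading principal minors, and $\bar{w}_0u^T$ is not triangular (it is ``anti-triangular''), so its torus factor has nothing to do with its diagonal. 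The correct statement — which is the content of Lemma \ref{lem [(bar w_0u^T)^-1]_0=I} and genuinely requires the specific factorisation $u=\mathbf{x}^{\vee}_{-i_1}(z_1)\cdots\mathbf{x}^{\vee}_{-i_N}(z_N)$ — is that $\left[(\bar{w}_0u^T)^{-1}\right]_0=I$. You can check this in the $n=3$ example: the leading principal minors of $(\bar{w}_0u^T)^{-1}$ are all $1$, whereas $\bar{w}_0[u]_0^{-1}\bar{w}_0^{-1}=\mathrm{diag}(1/z_2,\,z_2/(z_1z_3),\,z_1z_3)\neq I$.

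This is not a harmless slip: feeding your claimed torus factor through your own chain of substitutions gives $\tilde{t}_R=\bar{w}_0d^{-1}[u]_0^{-1}\bar{w}_0^{-1}$ and hence $t_R=\bar{w}_0^2[u]_0d\,\bar{w}_0^{-2}=d[u]_0$ (since $\bar{w}_0^2$ is diagonal and so acts trivially on $T^{\vee}$ by conjugation), which is the claimed answer with the diagonal in the \emph{wrong order} — it disagrees with the weight matrix (\ref{eqn dim 3 wt matrix z coords}) that you yourself check at the end. The correct intermediate identity is $\tilde{t}_R=d^{-1}[u]_0^{-1}$ with no $\bar{w}_0$-conjugation, which then does give $t_R=\bar{w}_0d[u]_0\bar{w}_0^{-1}$. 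So the missing ingredient is a genuine proof that $\left[(\bar{w}_0u^T)^{-1}\right]_0=I$; the paper obtains this by evaluating $\left\langle u^T\cdot v^-_{\omega_i},\,v^+_{\omega_i}\right\rangle=1$ on each fundamental representation, and then extracts $\tilde{t}_R$ by pairing $u^{-1}\cdot v^+_{\lambda}$ with $v^+_{\lambda}$ in two ways. Your concluding numerical check confirms the statement of the proposition but does not repair the argument.
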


The proof of Proposition \ref{prop form of t_R using d,u} will require the following lemma:
\begin{lem} \label{lem [(bar w_0u^T)^-1]_0=I}
Let $\mathbf{i}=(i_1, \ldots, i_N)$ stand for a reduced expression $s_{i_1} \cdots s_{i_N}$ for $w_0$ and take $\boldsymbol{z}\in \mathbb{K}^N$. If $A= \mathbf{x}_{-i_1}^{\vee}(z_1) \cdots \mathbf{x}_{-i_N}^{\vee}(z_N)$ and $I$ denotes the identity matrix, then
    $$\left[\left(\bar{w}_0A^T\right)^{-1}\right]_0=I.$$
\end{lem}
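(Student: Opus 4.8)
The plan is to analyze the matrix $\bar{w}_0 A^T$ directly and extract its torus factor in the Gaussian-type decomposition $g = [g]_-[g]_0[g]_+$. Since $A = \mathbf{x}_{-i_1}^{\vee}(z_1)\cdots\mathbf{x}_{-i_N}^{\vee}(z_N)$ and each $\mathbf{x}_{-i}^{\vee}(z) = \phi_i^{\vee}\left(\begin{smallmatrix} z^{-1} & 0 \\ 1 & z \end{smallmatrix}\right)$ is lower triangular, $A$ lies in $B_-^{\vee}$; hence $A^T$ is upper triangular. The first observation to record is the diagonal part $[A]_0$: multiplying the diagonals of the factors, $[A]_0$ is the diagonal matrix whose entries are monomials in the $z_k$, and correspondingly $[A^T]_0 = [A]_0$. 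The key point is that conjugation by $\bar{w}_0$ reverses the order of diagonal entries (since $w_0$ is the longest element, acting as $\epsilon_i \mapsto \epsilon_{n+1-i}$), and combining this with the inversion in $\left(\bar{w}_0 A^T\right)^{-1}$ should make the diagonal collapse to the identity — but one has to be careful because $A^T$ is not just its diagonal part, so I would not argue purely on diagonals.

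The cleaner route is as follows. First I would show that $\bar{w}_0 A^T \in U^{\vee} \bar{w}_0 B_-^{\vee}$, equivalently that $A^T \in \bar{w}_0^{-1} U^{\vee} \bar{w}_0 B_-^{\vee}$; since $A \in B_-^{\vee} \cap U^{\vee}\bar{w}_0 U^{\vee}$ by the codomain of $\mathbf{x}_{-\mathbf{i}}^{\vee}$, transposing and using that transpose swaps $U^{\vee}$ and $U_-^{\vee}$ and fixes $\bar{w}_0$ up to a sign/torus factor that does not affect the cell, one gets $A^T \in B^{\vee} \cap U_-^{\vee}\bar{w}_0 U_-^{\vee}$, so $\bar{w}_0 A^T$ lies in a cell of the form $\bar{w}_0 B^{\vee} \cap (\text{something})$. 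Working out which Bruhat cell $\bar{w}_0 A^T$ lies in, one finds it is in the \emph{open} cell $B_-^{\vee}\bar{w}_0 B_-^{\vee}$ relative to the opposite decomposition, or more to the point in $U^{\vee}\bar{w}_0 U^{\vee}$ after extracting the torus — this is exactly the statement that the Gaussian decomposition $g=[g]_-[g]_0[g]_+$ exists for $g = (\bar{w}_0 A^T)^{-1}$ with $[g]_0 = I$. Concretely I would compute $(\bar{w}_0 A^T)^{-1} = (A^T)^{-1}\bar{w}_0^{-1} = (A^{-1})^T \bar{w}_0^{-1}$ and observe that $(A^{-1})^T$ is upper triangular with the \emph{reciprocal} diagonal $[A]_0^{-1}$, while right multiplication by $\bar{w}_0^{-1}$ reverses columns; the resulting anti-triangular-then-permuted matrix has a Gaussian decomposition whose torus factor is forced to be trivial because the relevant minors that compute $[g]_0$ are, one entry at a time, equal to $1$.

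The cleanest way to make that last sentence rigorous, and the step I expect to be the main obstacle, is to recall the formula for the torus factor: for $g$ in the big cell, $[g]_0 = \mathrm{diag}(p_1(g), p_2(g)/p_1(g), \ldots)$ where $p_k(g)$ is the principal $k\times k$ minor (upper-left). So I would reduce the lemma to showing $p_k\left((\bar{w}_0 A^T)^{-1}\right) = 1$ for all $k$. Using $\bar{w}_0$ explicitly as the (signed) antidiagonal permutation matrix, the principal $k\times k$ minor of $M\bar{w}_0^{-1}$ equals (up to an overall sign that works out to $+1$ by a standard computation with $w_0$) the minor of $M$ taken on rows $1,\dots,k$ and columns $n-k+1,\dots,n$; applied to $M = (A^{-1})^T$, which is upper triangular, this is the minor of an upper-triangular matrix on the top-left rows and top-right columns, and for $M$ upper triangular with $1$'s coming from the structure of $A^{-1}$... — here one needs that $A^{-1}$, the inverse of the product of the $\mathbf{x}_{-i}^{\vee}$'s, has the right anti-minors equal to $1$, which follows because $A \in U^{\vee}\bar{w}_0 U^{\vee}$ means $A = v_1 \bar{w}_0 v_2$ with $v_i$ unipotent, whence $A^{-1} = v_2^{-1}\bar{w}_0^{-1} v_1^{-1}$ and its relevant minors are $\pm 1$ by unipotence. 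Assembling: the principal minors of $(\bar{w}_0 A^T)^{-1}$ are all $1$, so its torus factor is $I$. I would present the argument in the order: (1) locate $A$ and $A^T$ in their Bruhat cells; (2) write $(\bar{w}_0 A^T)^{-1}$ and identify it in the big cell so $[\,\cdot\,]_0$ is defined; (3) reduce $[\,\cdot\,]_0 = I$ to vanishing/unit statements about principal minors; (4) evaluate those minors using the factorization $A = v_1\bar{w}_0 v_2$ and the explicit form of $\bar{w}_0$. The genuinely delicate part is step (4), tracking the signs and the exact index sets of the minors under conjugation/transposition by $w_0$.
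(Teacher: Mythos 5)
Your proposal is, in substance, the paper's own proof. The principal minors $p_k\bigl((\bar{w}_0A^T)^{-1}\bigr)$ to which you reduce the claim are exactly the matrix coefficients $\bigl\langle A^T\cdot v^-_{\omega_k},\, v^+_{\omega_k}\bigr\rangle$ in the fundamental representations $V_{\omega_k}=\bigwedge^k\mathbb{C}^n$ that the paper evaluates in two ways (once directly from the product form of $A^T$, once from $A^T=\bar{w}_0^{-1}UDL$, yielding $\omega_k(D)$), and your anti-minor $\Delta^{[1,k]}_{[n-k+1,n]}$ is that same coefficient in coordinates. The one step you defer --- the overall sign in your step (4), which you correctly identify as the delicate point --- is precisely what the paper's representation-theoretic phrasing pins down: with the representative built from $\bar{s}_i=\phi_i\bigl(\begin{smallmatrix}0&-1\\1&0\end{smallmatrix}\bigr)$ one has $\bar{w}_0^{-1}\cdot v^-_{\omega_k}=v^+_{\omega_k}$ with coefficient exactly $1$ (no sign), and likewise $\bigl\langle A^T\cdot v^-_{\omega_k},\,v^+_{\omega_k}\bigr\rangle=1$ on the nose, so the normalisation of the representative $\bar{w}_0$ is where your ``$\pm$'' gets resolved and this should be carried out rather than asserted. (A minor quibble: in your step (2) you briefly conflate the \emph{existence} of the Gauss decomposition of $(\bar{w}_0A^T)^{-1}$ with the triviality of its torus factor; existence follows once the principal minors are shown to be nonzero, which your minor computation delivers anyway.)
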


\begin{proof}[Proof of Proposition \ref{prop form of t_R using d,u}]

Since $t_R = \bar{w}_0\tilde{t}_R^{-1}\bar{w}_0^{-1}$ we will in fact prove that $\tilde{t}_R= d^{-1}[u]_0^{-1}$.
Recalling the definition
        $$u_1 := \iota(\eta^{w_0,e}(u))
        $$
we see that
        $$\tilde{u}_1 := \iota(u_1)=\eta^{w_0,e}(u)=\left[\left(\bar{w}_0u^T\right)^{-1}\right]_+.$$
We have
    $$\left(u^T\right)^{-1}  = \left(\bar{w}_0u^T\right)^{-1}\bar{w}_0= \left[\left(\bar{w}_0 u^T\right)^{-1}\right]_- \left[\left(\bar{w}_0u^T\right)^{-1}\right]_0 \tilde{u}_1 \bar{w}_0.
    $$
In particular by Lemma \ref{lem [(bar w_0u^T)^-1]_0=I} we see that
    $$\left(u^T\right)^{-1} \in U^{\vee}_- \tilde{u}_1\bar{w}_0 \quad
    \Rightarrow \quad u^{-1} \in \bar{w}_0^{-1} \tilde{u}_1^T U^{\vee}_+.
    $$

Thus if $\lambda\in {X^*(T)}^+$ is a dominant integral weight and we denote a corresponding highest weight vector by $v^+_{\lambda} \in V_{\lambda}$, then
    $$ u^{-1} \cdot v^+_{\lambda} = \bar{w}_0^{-1} \tilde{u}_1^T \cdot v^+_{\lambda}.
    $$
This expression allows for two computations of the coefficient of the highest weight vector in $u^{-1} \cdot v^+_{\lambda}$. Firstly, since $u=[u]_0[u]_-$ we have
    $$\left\langle u^{-1}\cdot v^+_{\lambda}, v^+_{\lambda} \right\rangle = \lambda\left([u]_0^{-1}\right).
    $$
Secondly, we rewrite $\bar{w}_0^{-1} \tilde{u}_1^T$ using $\tilde{b}=\tilde{u}_1 \bar{w}_0 d^{-1} \tilde{u}_2=\left[\tilde{b}\right]_-\tilde{t}_R$;
    $$\bar{w}_0^{-1} \tilde{u}_1^T = \bar{w}_0^{-1} \left(\tilde{b}\tilde{u}_2^{-1}d\bar{w}_0^{-1}\right)^T = \bar{w}_0^{-1} \bar{w}_0 d^T \left(\tilde{u}_2^{-1}\right)^T \tilde{b}^T = d \left(\tilde{u}_2^{-1}\right)^T \tilde{t}_R \left[\tilde{b}\right]_-^T.
    $$
Then we see that the result follows from the second computation;
    $$\begin{aligned} \lambda\left([u]_0^{-1}\right)=\left\langle u^{-1}\cdot v^+_{\lambda}, v^+_{\lambda} \right\rangle =
    \left\langle \bar{w}_0^{-1}\tilde{u}_1^T\cdot v^+_{\lambda}, \ v^+_{\lambda} \right\rangle
        &=\big\langle d \left(\tilde{u}_2^{-1}\right)^T \tilde{t}_R \left[\tilde{b}\right]_-^T  \cdot v^+_{\lambda}, \ v^+_{\lambda} \big\rangle \\
        &=\lambda(\tilde{t}_R) \lambda(d) \big\langle \left(\tilde{u}_2^{-1}\right)^T \left[\tilde{b}\right]_-^T\cdot v^+_{\lambda}, \ v^+_{\lambda} \big\rangle =\lambda(\tilde{t}_R) \lambda(d).
    \end{aligned}
    $$
\end{proof}

\begin{proof}[Proof of Lemma \ref{lem [(bar w_0u^T)^-1]_0=I}]
Taking the decomposition
    $$\left(\bar{w}_0A^T\right)^{-1} = \left[\left(\bar{w}_0 A^T\right)^{-1}\right]_- \left[\left(\bar{w}_0A^T\right)^{-1}\right]_0 \left[\left(\bar{w}_0 A^T\right)^{-1}\right]_+
    $$
and defining
    $$U:=\left[\left(\bar{w}_0 A^T\right)^{-1}\right]_+^{-1} \in U^{\vee}_+ \ , \quad D:= \left[\left(\bar{w}_0A^T\right)^{-1}\right]_0^{-1} \in T^{\vee} \ , \quad L:= \left[\left(\bar{w}_0 A^T\right)^{-1}\right]_-^{-1} \in U^{\vee}_-
    $$
we see that
    $$
    \left(\bar{w}_0A^T\right)^{-1} = L^{-1} D^{-1} U^{-1} \quad \Rightarrow \quad A^T = \bar{w}_0^{-1} UDL.
    $$
We will show that $D=I$.

We recall
    $$\mathbf{x}_{-i}^{\vee}(z)^T = \phi^{\vee}_i\begin{pmatrix} z^{-1} & 1 \\ 0 & z \end{pmatrix} \ , \quad \bar{s}_i^{-1} = \phi_i\begin{pmatrix} 0 & 1 \\ -1 & 0 \end{pmatrix}.
    $$

For any fundamental representation $V_{\omega_i}=\bigwedge^i\mathbb{C}^n$, applying $\bar{w}_0^{-1}$ to a lowest weight vector $v^-_{\omega_i} = v_{n-i+1}\wedge \cdots \wedge v_n $ gives the corresponding highest weight vector $v^+_{\omega_i} = v_1\wedge \cdots \wedge v_i$, so
    $$\left\langle \bar{w}_0^{-1} \cdot v^-_{\omega_i} , \ v^+_{\omega_i} \right\rangle = 1.
    $$
Similarly applying $A^T=\mathbf{x}_{-i_N}^{\vee}(z_N)^T \cdots \mathbf{x}_{-i_1}^{\vee}(z_1)^T $ gives
    $$\left\langle A^T \cdot v^-_{\omega_i} , \ v^+_{\omega_i} \right\rangle = \left\langle \mathbf{x}_{-i_N}^{\vee}(z_N)^T \cdots \mathbf{x}_{-i_1}^{\vee}(z_1)^T \cdot v^-_{\omega_i} , \ v^+_{\omega_i} \right\rangle = 1.
    $$

We may also evaluate the coefficient of the highest weight vector in $A^T \cdot v^-_{\lambda}$ using the expression $A^T = \bar{w}_0^{-1} UDL$;
    $$\begin{aligned}
    \left\langle A^T \cdot v^-_{\omega_i} , \ v^+_{\omega_i} \right\rangle
    = \left\langle \bar{w}_0^{-1} UDL \cdot v^-_{\omega_i} , \ v^+_{\omega_i} \right\rangle
    = \left\langle \bar{w}_0^{-1} UD \cdot v^-_{\omega_i} , \ v^+_{\omega_i} \right\rangle
        &= \omega_i(D) \left\langle \bar{w}_0^{-1} U \cdot v^-_{\omega_i} , \ v^+_{\omega_i} \right\rangle \\
        &= \omega_i(D) \left\langle \bar{w}_0^{-1} \cdot v^-_{\omega_i} , \ v^+_{\omega_i} \right\rangle = \omega_i(D).
    \end{aligned}
    $$
Thus $\omega_i(D)=1$ for all $i$, so $D=I$.
\end{proof}

Now that we better understand the weight matrix $\mathrm{wt}(b) = t_R$, we complete this section by expressing it directly in terms of $(d,\boldsymbol{z})$.

\begin{cor} \label{cor wt matrix in z string coords}
The weight matrix $t_R$, given in terms of the string coordinates $(d,\boldsymbol{z})$, is the diagonal matrix with entries
\begin{equation} \label{eqn general formula wt matrix z coords}
    \left( t_R \right)_{n-j+1, n-j+1} =  \frac{d_j \prod_{\substack{1\leq m \leq N \\ i_m=j-1}} z_m}{\prod_{\substack{1\leq m \leq N \\ i_m=j}} z_m}, \quad j=1, \ldots, n.
    \end{equation}
\end{cor}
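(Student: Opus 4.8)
The plan is to deduce this directly from Proposition \ref{prop form of t_R using d,u}, which already identifies $t_R = \bar{w}_0 \, d\,[u]_0 \, \bar{w}_0^{-1}$ with $u = \mathbf{x}_{-\mathbf{i}_0}^{\vee}(\boldsymbol{z})$. Only two small computations remain: describing the effect of conjugation by $\bar{w}_0$ on a diagonal matrix, and computing the diagonal torus factor $[u]_0$ explicitly in terms of $\boldsymbol{z}$.

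For the first point, since $d[u]_0$ is diagonal, conjugating by $\bar{w}_0$ permutes its diagonal entries according to the underlying permutation $w_0 \in S_n$ (the sign entries appearing in $\bar{w}_0$ cancel in the conjugation). As $w_0$ is the longest element it acts by $k \mapsto n+1-k$, so $(t_R)_{kk} = (d[u]_0)_{n+1-k,\,n+1-k}$; putting $k = n-j+1$ gives $(t_R)_{n-j+1,\,n-j+1} = (d[u]_0)_{jj} = d_j \, ([u]_0)_{jj}$. Everything therefore reduces to the single value $([u]_0)_{jj}$.

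For the second point, recall $\mathbf{x}_{-i}^{\vee}(z) = \phi_i^{\vee}\begin{pmatrix} z^{-1} & 0 \\ 1 & z \end{pmatrix}$, which is lower triangular with diagonal part $\mathbf{t}_i^{\vee}(z^{-1})$, i.e. the diagonal matrix carrying $z^{-1}$ in position $i$, $z$ in position $i+1$, and $1$ elsewhere. Since a product of lower-triangular matrices is again lower triangular with diagonal equal to the entrywise product of the diagonals, $[u]_0 = \prod_{m=1}^{N} \mathbf{t}_{i_m}^{\vee}(z_m^{-1})$; reading off the $j$-th diagonal entry, the factor indexed by $m$ contributes $z_m^{-1}$ precisely when $i_m = j$, contributes $z_m$ precisely when $i_m = j-1$, and contributes $1$ otherwise. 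Hence $([u]_0)_{jj} = \big(\prod_{i_m = j-1} z_m\big)\big/\big(\prod_{i_m = j} z_m\big)$, and multiplying by $d_j$ yields the asserted formula (\ref{eqn general formula wt matrix z coords}).

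There is no serious obstacle here: once Proposition \ref{prop form of t_R using d,u} is in hand the argument is a direct computation. The only points needing care are checking that conjugation by $\bar{w}_0$ reverses the diagonal while the sign entries play no role, and keeping the index shift between $j$ and $n-j+1$ consistent; as a sanity check one can verify that for $n=3$ and $\mathbf{i}_0 = (1,2,1)$ the formula reproduces $(t_R)_{33} = d_1/(z_1 z_3)$, $(t_R)_{22} = d_2 z_1 z_3/z_2$, $(t_R)_{11} = d_3 z_2$, matching Equation (\ref{eqn dim 3 wt matrix z coords}) from Example \ref{ex dim 3 z coords}.
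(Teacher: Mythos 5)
Your proposal is correct and follows essentially the same route as the paper: both deduce the formula from Proposition \ref{prop form of t_R using d,u} by computing $[u]_0$ as the entrywise product of the diagonal parts of the lower-triangular factors $\mathbf{x}_{-i_m}^{\vee}(z_m)$ and then applying the index reversal coming from conjugation by $\bar{w}_0$. Your write-up is slightly more explicit about why the sign entries of $\bar{w}_0$ cancel in the conjugation, a point the paper leaves implicit, but the substance is identical.
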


\begin{proof}
We recall the matrix $u = \mathbf{x}_{-i_1}^{\vee}(z_1) \cdots \mathbf{x}_{-i_N}^{\vee}(z_N)$ where
    $$\mathbf{i}_0 = (i_1, \ldots, i_N) := (1,2, \ldots, n-1, 1,2 \ldots, n-2, \ldots, 1, 2, 1)
    $$
and
    $$\mathbf{x}_{-i}^{\vee}(z) = \phi_i\begin{pmatrix} z^{-1} & 0 \\ 1 & z \end{pmatrix}.
    $$
We see that
    \begin{align}
    \left( [u]_0 \right)_{jk}
    & = \begin{cases}
        \prod_{m=1}^{N} \left( \mathbf{x}_{-i_m}^{\vee}(z_m) \right)_{jj} & \text{if} \ j=k\\
        0 & \text{if} \ j\neq k
    \end{cases} \nonumber \\
    & = \begin{cases}
        \frac{\prod_{\substack{1\leq m \leq N \\ i_m=j-1}} z_m}{\prod_{\substack{1\leq m \leq N \\ i_m=j}} z_m} & \text{if} \ j=k\\
        0 & \text{if} \ j\neq k \label{eqn wt matrix z coords}
    \end{cases}
    \end{align}
noting that $\prod_{\substack{1\leq m \leq N \\ i_m=0}} z_m = 1 = \prod_{\substack{1\leq m \leq N \\ i_m=n}} z_m$.
Thus since $t_R = \bar{w}_0d[u]_0\bar{w}_0^{-1}$ we see that it is the diagonal matrix with entries
    $$\left( t_R \right)_{n-j+1, n-j+1} =  \frac{d_j \prod_{\substack{1\leq m \leq N \\ i_m=j-1}} z_m}{\prod_{\substack{1\leq m \leq N \\ i_m=j}} z_m}, \quad j=1, \ldots, n.
    $$
\end{proof}

\section{The ideal coordinates} \label{sec The ideal coordinates}

Our preferred coordinate system on $Z$, which we will call the ideal coordinate system, is far more natural than the string toric chart since it is easier to define. We begin by recalling the reduced expression
    $$\mathbf{i}_0 = (i_1, \ldots, i_N) := (1,2, \ldots, n-1, 1,2 \ldots, n-2, \ldots, 1, 2, 1)$$
and consider the map
    \begin{equation*}
    \left(\mathbb{K}^*\right)^N \times T^{\vee} \to Z\,, \quad \left((m_1, \ldots, m_N), t_R\right) \mapsto \mathbf{y}_{i_1}^{\vee}\left(\frac{1}{m_1}\right) \cdots \mathbf{y}_{i_N}^{\vee}\left(\frac{1}{m_N}\right) t_R.
    \end{equation*}
We recall that $Z$ has two projections to $T^{\vee}$, given by the highest weight and weight maps. In the previous coordinate system the highest weight map was obvious due to the form of $b=u_1d\bar{w}_0 u_2$, whereas the weight required more effort to compute. In this new system the weight is much more straightforward.

We could consider this system with coordinates $(\boldsymbol{m},t_R)$, but instead, for ease of later application, we wish to work with coordinates $(d, \boldsymbol{m})$ which we call the ideal coordinates\footnote{
    The choice to work with the inverted coordinates $\frac{1}{m_i}$ is motivated by the main theorem of Section \ref{sec Givental-type quivers and critical points}, Theorem \ref{thm crit points, sum at vertex is nu_i}, which also gives rise to the name for this coordinate system.
}:
    $$\psi : T^{\vee} \times \left(\mathbb{K}^*\right)^N \to Z\,, \quad \left(d, (m_1, \ldots, m_N)\right) \mapsto \mathbf{y}_{i_1}^{\vee}\left(\frac{1}{m_1}\right) \cdots \mathbf{y}_{i_N}^{\vee}\left(\frac{1}{m_N}\right) t_R(d,\boldsymbol{m}).
    $$
Here $t_R(d,\boldsymbol{m})$ is the weight matrix given now in terms of the coordinates $(d, \boldsymbol{m})$. An explicit description of the map
    $$T^{\vee} \times \left(\mathbb{K}^*\right)^N \to T^{\vee} \,, \quad \left(d,\boldsymbol{m}\right) \mapsto t_R(d,\boldsymbol{m})
    $$
will be given shortly.

First, we present the main theorem of this section. For a given highest weight matrix, $d$, this theorem describes the coordinate change from the string coordinates to the ideal coordinates, allowing us to move freely between the two systems:
\begin{thm} \label{thm coord change}
To change from the string coordinates, $(d,\boldsymbol{z})$, to the ideal coordinates, $(d,\boldsymbol{m})$, we first let
    $$s_k := \sum_{j=1}^{k-1}(n-j)
    $$
then for $k=1,\ldots, n-1$, $a=1,\ldots, n-k$ we have
    $$m_{s_k+a} = \begin{cases}
            z_{1+s_{n-a}} &\text{if } k=1, \\
            \frac{z_{k+s_{n-k-a+1}}}{z_{k-1+s_{n-k-a+1}}} &\text{otherwise}.
            \end{cases}
    $$
\end{thm}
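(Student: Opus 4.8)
The plan is to compare the two factorisations of a general element $b \in Z$ associated to the same highest weight $d$: the string factorisation $b = u_1 d \bar{w}_0 u_2$ coming from $\varphi_{\mathbf{i}_0}(d,\boldsymbol{z})$, and the ideal factorisation $b = \mathbf{y}^{\vee}_{i_1}(1/m_1)\cdots \mathbf{y}^{\vee}_{i_N}(1/m_N)\, t_R$ coming from $\psi(d,\boldsymbol{m})$. Both sit inside $Z = B^{\vee}_- \cap B^{\vee}\bar{w}_0 B^{\vee}$, and the key structural fact is that the unipotent lower-triangular part $[b]_-$ must agree. From Example \ref{ex dim 3 z coords} and the analysis of Section \ref{subsec The form of the weight matrix} we know that $u_1 = \iota(\eta^{w_0,e}(u))$ is an \emph{upper}-triangular unipotent matrix which can be read off explicitly from the $z$-coordinates, and likewise the product $\mathbf{y}^{\vee}_{i_1}(1/m_1)\cdots\mathbf{y}^{\vee}_{i_N}(1/m_N)$ is a \emph{lower}-triangular unipotent matrix whose entries are polynomial in the $1/m_k$. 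So the strategy is: compute the transpose (or apply $\iota$) to bring $u_1$ into lower-triangular unipotent form, then equate this with the lower-triangular unipotent factor $[b]_-$ of the ideal factorisation entry-by-entry, and solve the resulting system for the $m_k$ in terms of the $z_j$.

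The main technical step is to make the bookkeeping of the index set work out. The reduced expression $\mathbf{i}_0 = (1,2,\ldots,n-1,\,1,2,\ldots,n-2,\,\ldots,\,1,2,\,1)$ is organised into $n-1$ descending ``blocks'': the $k$-th block (for $k=1,\ldots,n-1$) has length $n-k$ and consists of the letters $1,2,\ldots,n-k$. The offsets $s_k = \sum_{j=1}^{k-1}(n-j)$ record where each block starts, so position $s_k + a$ in $\mathbf{i}_0$ carries the letter $i_{s_k+a} = a$. I would first verify that the $z$-factorisation $u = \mathbf{x}^{\vee}_{-\mathbf{i}_0}(\boldsymbol{z})$, after applying $\eta^{w_0,e}$ and $\iota$, produces a matrix $u_1$ (equivalently its transpose) that admits a \emph{different} block structure — as already hinted in Example \ref{ex dim 3 z coords}, where $u_1 = \mathbf{x}^{\vee}_{\mathbf{i}'_0}(z_1, z_3, z_2/z_3)$ for $\mathbf{i}'_0 = (2,1,2)$. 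In general I expect $u_1$ (up to transpose) to factor as $\mathbf{x}^{\vee}$ (or $\mathbf{y}^{\vee}$) along the ``reversed/shifted'' reduced expression, and the arguments of those one-parameter subgroups will be precisely the ratios $z_{k+s_{n-k-a+1}}/z_{k-1+s_{n-k-a+1}}$ appearing in the statement. Tracking which $z$-index lands in which slot of the ideal factorisation — i.e.\ justifying the reindexing $a \mapsto n-a$ in the first block and the appearance of $s_{n-k-a+1}$ in the others — is the combinatorial heart of the proof.

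Concretely I would proceed as follows. First, compute $\tilde{u}_1 := \iota(u_1) = \eta^{w_0,e}(u) = [(\bar{w}_0 u^T)^{-1}]_+$ and, using the shape of $\mathbf{x}^{\vee}_{-i}$, derive a closed form for its matrix entries in terms of the $z_j$; this is an induction on $n$ (or on the number of blocks), peeling off one block of $\mathbf{i}_0$ at a time and tracking how the $\mathbf{x}_{-i}^{\vee}$'s interact with $\bar{w}_0$. Second, compute the lower-triangular unipotent product $\prod_k \mathbf{y}^{\vee}_{i_k}(1/m_k)$ along $\mathbf{i}_0$ and read off its entries as polynomials in the $1/m_k$; again an induction on blocks. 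Third, transpose appropriately so both are in the same triangular type, equate $[b]_- = u_1^{\pm}$, and invert the resulting relations: the top row (or first superdiagonal) of the matched matrices will directly give $m_{s_1 + a}$ in terms of a single $z$, yielding the $k=1$ case, and the deeper diagonals will force the ratio formula for general $k$. Finally, I would cross-check the answer against the $n=3$ computation in Example \ref{ex dim 3 z coords}: there $N=3$, $s_1 = 0$, $s_2 = 2$, and the formula should reproduce $m_1 = z_1$, $m_2 = z_3$, $m_3 = z_2/z_3$, matching the observed identity $u_1 = \mathbf{x}^{\vee}_{(2,1,2)}(z_1, z_3, z_2/z_3)$.

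The step I expect to be the main obstacle is the second one together with the index matching in the third: verifying that the lower-triangular unipotent product $\prod_k \mathbf{y}^{\vee}_{i_k}(1/m_k)$ along $\mathbf{i}_0$ has entries that factor exactly as needed to be equated with $u_1$ via the stated substitution. Because the $\mathbf{y}^{\vee}_{i}$ for non-adjacent $i$ commute but adjacent ones satisfy a braid-type relation, the entries of this product are not simply monomials — they are sums over lattice paths in the Givental-type quiver — and one must check that, after the substitution $m_{s_k+a} = z_{\bullet}/z_{\bullet}$, all the ``cross terms'' either cancel or telescope so that the result matches the (comparatively simpler) entries of $u_1$. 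I would handle this by exploiting the subword/quiver description that will be set up in Section \ref{sec Givental-type quivers and critical points}, or, more elementarily, by an induction that reduces the $n$-case to the $(n-1)$-case after stripping the last block $(1)$ and the first letters of each earlier block — essentially the same inductive skeleton used in the proof of Corollary \ref{cor wt matrix in z string coords}.
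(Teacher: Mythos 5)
There is a genuine gap, and your own sanity check already exposes it. The theorem (and the paper's Example \ref{ex b and maps in ideal coords dim 3}) gives $m_1=z_3$, $m_2=z_1$, $m_3=z_2/z_1$ for $n=3$, whereas you predict $m_1=z_1$, $m_2=z_3$, $m_3=z_2/z_3$. The values you quote are the arguments of the factorisation $u_1=\mathbf{x}^{\vee}_{(2,1,2)}(z_1,z_3,z_2/z_3)$ — these are the intermediate coordinates the paper calls $p_1,p_2,p_3$, not the ideal coordinates $m_k$. The underlying conceptual error is the step ``transpose appropriately\ldots and equate $[b]_-=u_1^{\pm}$'': the lower-unipotent factor $[b]_-$ of $b=u_1d\bar{w}_0u_2$ is \emph{not} the transpose (or $\iota$-image) of $u_1$. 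In the $n=3$ example, $u_1^T$ has entries $z_3$, $z_2$, $z_1+z_2/z_3$, while $[b]_-$ has entries $\frac{1}{z_2}$, $\frac{1}{z_1}$, $\frac{z_1}{z_2}+\frac{1}{z_3}$. Passing from $u_1$ to $[b]_-$ requires determining $u_2$, i.e.\ performing the Gauss decomposition of $u_1d\bar{w}_0$; the entries of $[b]_-$ are ratios of flag minors of $u_1$, and this ``twist'' is exactly what the Chamber Ansatz computes. Your plan collapses this nontrivial birational transformation into a transpose, which is why the indices come out permuted and the ratios come out wrong.

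The paper's proof is structured around precisely the two factorisations you conflate. It applies the generalised Chamber Ansatz of Marsh--Rietsch twice: once to factor $u_1=\mathbf{x}^{\vee}_{i'_1}(p_1)\cdots\mathbf{x}^{\vee}_{i'_N}(p_N)$ along $\mathbf{i}'_0$ with the $p_k$ given by chamber minors of $u^T$, and once to factor $bB^{\vee}=u_1\bar{w}_0B^{\vee}$ as $\mathbf{y}^{\vee}_{i_1}(1/m_1)\cdots\mathbf{y}^{\vee}_{i_N}(1/m_N)B^{\vee}$ with the $1/m_k$ given by chamber minors of $u_1$ (Lemma \ref{lem form of u_1 and b factorisations}). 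A planar-network/path argument (Lemma \ref{lem our Chamber Ansatz minors are monomial}) shows all the relevant minors are monomial, so both coordinate changes $\boldsymbol{z}\mapsto\boldsymbol{p}$ and $\boldsymbol{p}\mapsto\boldsymbol{m}$ are monomial; the theorem follows by composing them and telescoping (Lemmas \ref{lem coord change for u_1 p_i in terms of z_i} and \ref{lem coord change for b m_i in terms of p_i}). Your worry about ``cross terms'' in $\prod_k\mathbf{y}^{\vee}_{i_k}(1/m_k)$ is real but is resolved there by the minor formulas rather than by entrywise matching; to repair your proposal you would need to replace the transpose step by the twist map $u_1\mapsto[u_1d\bar{w}_0]_-$ and supply the minor computation — at which point you have rederived the Chamber Ansatz argument.
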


The string and ideal coordinate systems are related by repeated application of a theorem known as the Chamber Ansatz, which we will discuss in Section \ref{subsec The Chamber Ansatz}. We will then develop our understanding of the relation between the string and ideal coordinates in Sections \ref{subsec Chamber Ansatz minors} and \ref{subsec The coordinate change}, culminating in the proof of Theorem \ref{thm coord change}. To complete the current section we present an example followed by a further application of this theorem.

\begin{ex} \label{ex b and maps in ideal coords dim 3}
In dimension 3 the the coordinate change is
    $$m_1=z_3, \quad m_2 = z_1, \quad m_3= \frac{z_2}{z_1}   \quad \qquad z_1=m_2, \quad z_2=m_2m_3, \quad z_3=m_1.
    $$
In Example \ref{ex dim 3 z coords} we saw that the matrix $b$ was given by
    $$b= \Phi\left(d, \mathbf{x}_{\mathbf{i}'_0}^{\vee}\left(z_1, z_3, \frac{z_2}{z_3}\right)\right) = \begin{pmatrix} d_3 z_2 & & \\ d_3 \left(z_1+\frac{z_2}{z_3} \right) & d_2 \frac{z_1z_3}{z_2} & \\ d_3 & d_2 \frac{z_3}{z_2} & d_1 \frac{1}{z_1z_3} \end{pmatrix}.
    $$
With the new coordinates we have
    $$b =\begin{pmatrix} d_3 m_2m_3 & & \\ d_3 \left(m_2+\frac{m_2m_3}{m_1} \right) & d_2 \frac{m_1}{m_3} & \\ d_3  & d_2 \frac{m_1}{m_2m_3} & d_1 \frac{1}{m_1m_2} \end{pmatrix}
    = \mathbf{y}_{\mathbf{i}_0}^{\vee}\left(\frac{1}{m_1}, \frac{1}{m_2}, \frac{1}{m_3} \right)
        \begin{pmatrix} d_3 m_2m_3 & & \\ & d_2 \frac{m_1}{m_3} & \\ & & d_1 \frac{1}{m_1m_2} \end{pmatrix}.
    $$
Additionally the superpotential is now given by
    $$\mathcal{W}(d,\boldsymbol{m}) = m_1 + m_2 + \frac{m_2m_3}{m_1} + \frac{d_2}{d_3} \frac{m_1}{m_2m_3} + \frac{d_1}{d_2} \left(\frac{m_3}{{m_1}^2}+\frac{1}{m_1}\right).
    $$
We again also give the weight matrix:
    $$\mathrm{wt}(b) = \begin{pmatrix} d_3 m_2m_3 & & \\ & d_2 \frac{m_1}{m_3} & \\ & & d_1 \frac{1}{m_1m_2} \end{pmatrix}.
    $$
\end{ex}

We now present an application of Theorem \ref{thm coord change} on the weight matrix, namely we describe $\mathrm{wt}(b)=t_R$ in terms of the ideal coordinates.

\begin{cor} \label{cor wt matrix in m ideal coords}
The weight matrix $t_R$, given in terms of the ideal coordinates $(d, \boldsymbol{m})$, is the diagonal matrix with entries
\begin{equation} \label{eqn general formula wt matrix m coords}
    \left( t_R \right)_{n-j+1, n-j+1} =  \frac{d_j \prod\limits_{k=1, \ldots,j-1} m_{s_k+(j-k)}}{\prod\limits_{r=1, \ldots, n-j} m_{s_j+r}}, \quad j=1, \ldots, n, \ \text{with} \ r=k-j.
    \end{equation}
\end{cor}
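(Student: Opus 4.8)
The plan is to derive the formula by direct substitution, rather than redoing the computation of Proposition \ref{prop form of t_R using d,u} from scratch: I would combine Corollary \ref{cor wt matrix in z string coords}, which gives $t_R$ in the string coordinates $(d,\boldsymbol z)$, with the change of coordinates of Theorem \ref{thm coord change}, which expresses each $m$-coordinate as a Laurent monomial in the $z$-coordinates. Since Theorem \ref{thm coord change} identifies the two toric charts $\varphi_{\mathbf i_0}$ and $\psi$ on $Z$, and $t_R(d,\boldsymbol m)=\mathrm{wt}(\psi(d,\boldsymbol m))=[\psi(d,\boldsymbol m)]_0$ by the very definition of $\psi$, the weight matrix is unchanged under the coordinate change. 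Hence it suffices to verify that the claimed right-hand side of Equation (\ref{eqn general formula wt matrix m coords}), after substituting $m_{s_k+a}$ in terms of $\boldsymbol z$ via Theorem \ref{thm coord change}, reduces to the right-hand side of Equation (\ref{eqn general formula wt matrix z coords}); since the coordinate change is a bijection of tori, checking this as an identity in $\boldsymbol z$ is the same as the desired identity in $\boldsymbol m$.

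First I would record which positions of $\mathbf i_0$ carry a given label. As $\mathbf i_0$ is the concatenation of the blocks $(1,2,\dots,n-k)$ for $k=1,\dots,n-1$, and $s_k$ is exactly the number of letters preceding block $k$, the label $j$ occurs precisely at the positions $\{s_k+j : k=1,\dots,n-j\}$, so $\prod_{i_m=j}z_m=\prod_{k=1}^{n-j}z_{s_k+j}$ (and the empty products for $j=0$ and $j=n$ equal $1$, as in the proof of Corollary \ref{cor wt matrix in z string coords}). Then I substitute. In the numerator $\prod_{k=1}^{j-1}m_{s_k+(j-k)}$ of (\ref{eqn general formula wt matrix m coords}) the $k=1$ term is $z_{1+s_{n-j+1}}$, while for $2\le k\le j-1$ the "otherwise" case of Theorem \ref{thm coord change} gives $m_{s_k+(j-k)}=z_{k+s_{n-j+1}}/z_{k-1+s_{n-j+1}}$, because the inner index $n-k-a+1$ with $a=j-k$ collapses to the constant $n-j+1$; the product therefore telescopes to the single factor $z_{j-1+s_{n-j+1}}$. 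In the denominator $\prod_{r=1}^{n-j}m_{s_j+r}$ the same substitution (now with $k=j$, $a=r$) turns the product into $\prod_{b=1}^{n-j}z_{j+s_b}/z_{j-1+s_b}$ after re-indexing $b=n-j-r+1$. Multiplying, the surviving numerator $z_{j-1+s_{n-j+1}}\cdot\prod_{b=1}^{n-j}z_{j-1+s_b}$ equals $\prod_{b=1}^{n-j+1}z_{j-1+s_b}=\prod_{i_m=j-1}z_m$ and the denominator is $\prod_{i_m=j}z_m$, so (\ref{eqn general formula wt matrix m coords}) becomes (\ref{eqn general formula wt matrix z coords}); the boundary cases $j=1$ (empty numerator; the denominator re-indexes to $\prod_{i_m=1}z_m$) and $j=n$ (empty denominator; the numerator telescopes to $z_{n-1}=\prod_{i_m=n-1}z_m$) are handled identically.

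The argument is elementary; the only real work is the bookkeeping — tracking the block structure of $\mathbf i_0$ through the two cases of Theorem \ref{thm coord change}, checking that the auxiliary index $n-k-a+1$ specialises to $n-j+1$ in the numerator while sweeping out $1,\dots,n-j$ in the denominator, and confirming that the telescoping and the edge cases $j=1$ and $j=n$ all fit together. I do not expect any conceptual obstacle.
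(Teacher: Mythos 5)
Your proposal is correct and follows essentially the same route as the paper: both start from Corollary \ref{cor wt matrix in z string coords}, substitute the monomial coordinate change of Theorem \ref{thm coord change}, observe that the inner index $n-k-a+1$ collapses to the constant $n-j+1$ so the numerator telescopes to $z_{j-1+s_{n-j+1}}$, and match the remaining factors with $\prod_{i_m=j-1}z_m$ and $\prod_{i_m=j}z_m$. The only difference is the direction of substitution (you push $z$'s into the $m$-formula, the paper rewrites the $z$-formula in terms of $m$'s), which is immaterial since the change of coordinates is a monomial isomorphism of tori.
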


\begin{proof}
Recalling Corollary \ref{cor wt matrix in z string coords}, we see that we need to show
    \begin{equation} \label{eqn t_R quotients of z and nu'}
    \frac{\prod_{\substack{1\leq m \leq N \\ i_m=j-1}} z_m}{\prod_{\substack{1\leq m \leq N \\ i_m=j}} z_m}
        = \frac{\prod\limits_{k=1, \ldots,j-1} m_{s_k+(j-k)}}{\prod\limits_{r=1, \ldots, n-j} m_{s_j+r}}, \quad \text{where} \ r=k-j.
    \end{equation}

The denominator of the left hand side of (\ref{eqn t_R quotients of z and nu'}) is
    $$\prod_{\substack{1\leq m \leq N \\ i_m=j}} z_m = \prod_{\substack{m=j+s_r \\ r=1, \ldots, n-j}} z_m = \prod_{r=1, \ldots, n-j} z_{j+s_{n-j-r+1}}.
    $$
Similarly the numerator is
    $$\begin{aligned}
    \prod_{\substack{1\leq m \leq N \\ i_m=j-1}} z_m
        &= \begin{cases}
            1 & \text{if } j=1 \\
            \prod\limits_{\substack{m=j-1+s_r \\ r=1, \ldots, n-j+1}} z_m & \text{otherwise}
        \end{cases} \\
        &= \begin{cases}
            1 & \text{if } j=1 \\
            \prod\limits_{r=0, \ldots, n-j} z_{j-1+s_{n-j-r+1}} & \text{otherwise}.
        \end{cases}
    \end{aligned}
    $$
Consequently if $j=1$ the left hand side of (\ref{eqn t_R quotients of z and nu'}) becomes
    $$\frac{\prod_{\substack{1\leq m \leq N \\ i_m=j-1}} z_m}{\prod_{\substack{1\leq m \leq N \\ i_m=j}} z_m} = \frac{1}{\prod\limits_{r=1, \ldots, n-j} z_{j+s_{n-j-r+1}}} = \frac{1}{\prod\limits_{r=1, \ldots, n-j} m_{s_j+r}}
    $$
as desired.

If $j\geq 2$ then the left hand side of (\ref{eqn t_R quotients of z and nu'}) is
    $$\frac{\prod_{\substack{1\leq m \leq N \\ i_m=j-1}} z_m}{\prod_{\substack{1\leq m \leq N \\ i_m=j}} z_m} = \frac{\prod\limits_{r=0, \ldots, n-j} z_{j-1+s_{n-j-r+1}}}{\prod\limits_{r=1, \ldots, n-j} z_{j+s_{n-j-r+1}}} \\
        = z_{j-1+s_{n-j+1}} \prod_{r=1, \ldots, n-j} \frac{1}{m_{s_j+r}}.
    $$
It remains to show that
    $$\prod\limits_{k=1, \ldots,j-1} m_{s_k+(j-k)} = z_{j-1+s_{n-j+1}}.
    $$
Indeed from the coordinate change formula in Theorem \ref{thm coord change}, for $j\geq 2$ we see
    $$m_{s_k+(j-k)} = \frac{z_{k+s_{n-k-(j-k)+1}}}{z_{k-1+s_{n-k-(j-k)+1}}} = \frac{z_{k+s_{n-j+1}}}{z_{k-1+s_{n-j+1}}}.
    $$
So the product becomes telescopic and, as desired, we obtain
    $$\prod\limits_{k=1, \ldots,j-1} m_{s_k+(j-k)} = z_{1+s_{n-j+1}}\prod\limits_{k=2, \ldots,j-1} \frac{z_{k+s_{n-j+1}}}{z_{k-1+s_{n-j+1}}}
    = z_{j-1+s_{n-j+1}}.
    $$
\end{proof}

\subsection{The Chamber Ansatz} \label{subsec The Chamber Ansatz}

In order to prove Theorem \ref{thm coord change} we require a sequence of lemmas, the first of which (Lemma \ref{lem form of u_1 and b factorisations}) makes use of the afore mentioned Chamber Ansatz. In this section we introduce the Chamber Ansatz and then state and prove Lemma \ref{lem form of u_1 and b factorisations}.

\begin{defn}
Let $J=\{j_1 < \cdots < j_l\}\subseteq [1,n]$ and $K=\{k_1 < \cdots < k_l\}\subseteq [1,n]$. The pair $(J,K)$ is called admissible if $j_s\leq k_s$ for $s=1, \ldots, l$.
\end{defn}

For such an admissible pair $(J,K)$, we denote by $\Delta^{J}_K$ the $l \times l$ minor with row and column sets defined by $J$ and $K$ respectively.

We now state a specific case of the Generalised Chamber Ansatz presented by Marsh and Rietsch in \cite[Theorem 7.1]{MarshRietsch2004}:
\begin{thm}[Chamber Ansatz]
Let $B=z\bar{w}_0 B^{\vee}_+\in \mathcal{R}^{\vee}_{e,w_0}$, where $z \in U^{\vee}_+$. Let $\mathbf{w}=(w_{(0)}, w_{(1)}, \ldots, w_{(n)})$ be a sequence of partial products for $w_0$ defined by its sequence of factors
    $$\left(w_{(1)}, w_{(1)}^{-1}w_{(2)}, \ldots, w_{(N-1)}^{-1}w_{(N)}\right) = (s_{i_1}, \ldots, s_{i_N}).
    $$
Then there is an element
    $$g= \mathbf{y}_{i_1}(t_1) \mathbf{y}_{i_2}(t_2)\cdots \mathbf{y}_{i_N}(t_N) \in U^{\vee}_- \cap B^{\vee}_+ \bar{w}_0 B^{\vee}_+$$
such that $B = g B^{\vee}_+$. Moreover for $k=1, \ldots, N$ we have
    $$t_k = \frac{\prod_{j\neq i_k} \Delta^{\omega^{\vee}_j}_{w_{(k)}\omega^{\vee}_j}(z)^{-a_{j,i_k}}}{\Delta^{\omega^{\vee}_{i_k}}_{w_{(k)}\omega^{\vee}_{i_k}}(z) \Delta^{\omega^{\vee}_{i_k}}_{w_{(k-1)}\omega^{\vee}_{i_k}}(z)}.
    $$
\end{thm}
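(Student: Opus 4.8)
The plan is to deduce this as the special case $v=e$, $w=w_0$ of the Generalised Chamber Ansatz \cite[Theorem 7.1]{MarshRietsch2004}; the real work is then the dictionary that turns their statement, which is phrased for an arbitrary reductive group in terms of generalised minors, into the elementary-minor form stated here for $GL_n$.

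First I would pin down the chart. Since $B=z\bar{w}_0B^{\vee}_+$ lies in $\mathcal{R}^{\vee}_{e,w_0}$, which sits inside the big cell $U^{\vee}_-B^{\vee}_+/B^{\vee}_+$, the coset $B$ has a unique representative $g\in U^{\vee}_-$; and since $gB^{\vee}_+=z\bar{w}_0B^{\vee}_+\subseteq B^{\vee}_+\bar{w}_0B^{\vee}_+$ we get $g\in U^{\vee}_-\cap B^{\vee}_+\bar{w}_0B^{\vee}_+$. For the reduced word $(i_1,\ldots,i_N)$, the product map $(t_1,\ldots,t_N)\mapsto\mathbf{y}_{i_1}(t_1)\cdots\mathbf{y}_{i_N}(t_N)$ identifies $(\mathbb{K}^*)^N$ with a dense open subvariety of $U^{\vee}_-\cap B^{\vee}_+\bar{w}_0B^{\vee}_+$ (in fact it is an open immersion; this is standard, see the discussion in \cite{MarshRietsch2004}), so on a dense open set the $t_k$ are recovered as regular functions of $g$ and hence as rational functions of $B$. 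This produces the asserted factorisation $g=\mathbf{y}_{i_1}(t_1)\cdots\mathbf{y}_{i_N}(t_N)$ generically, which is where the formulae are to hold; the denominators in the formula for $t_k$ are exactly the minors that must be nonzero to remain on this chart.

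Next I would translate the minors. Over $GL_n$, for $u,v\in W$ the generalised minor $\Delta_{u\omega^{\vee}_j,\,v\omega^{\vee}_j}$ used in \cite{MarshRietsch2004} is literally the ordinary $j\times j$ minor with row set $u\{1,\ldots,j\}$ and column set $v\{1,\ldots,j\}$, because $V_{\omega^{\vee}_j}=\bigwedge^j\mathbb{K}^n$ and $\overline{u}\cdot v^+_{\omega^{\vee}_j}$, $\overline{v}\cdot v^+_{\omega^{\vee}_j}$ are, up to scalar, the extremal weight vectors of weights $u\omega^{\vee}_j$ and $v\omega^{\vee}_j$. Specialising the outer index to $e$ gives $\Delta_{\omega^{\vee}_j,\,w_{(k)}\omega^{\vee}_j}=\Delta^{[1,j]}_{w_{(k)}[1,j]}$, which is precisely the $\Delta^{\omega^{\vee}_j}_{w_{(k)}\omega^{\vee}_j}$ of the statement (with the pair $([1,j],w_{(k)}[1,j])$ admissible automatically, since $[1,j]$ is the minimal $j$-element subset). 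One then checks that the minors appearing in \cite[Theorem 7.1]{MarshRietsch2004} are exactly the $\mathbf{i}$-chamber minors of the word, and that the Cartan numbers $a_{j,i_k}=\langle\alpha_{i_k},\alpha^{\vee}_j\rangle$ in the exponents reduce, in type $A$, to the contributions of the two neighbours $j=i_k\pm1$.

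The hard part --- and the reason to cite rather than reprove it --- is the core computation of \cite{MarshRietsch2004}: that evaluating the chamber minors $\Delta^{\omega^{\vee}_j}_{w_{(k)}\omega^{\vee}_j}$ on $z$ produces a triangular system of Laurent-monomial equations in $(t_1,\ldots,t_N)$ whose unique solution is the displayed ratio. This is proved by induction on $k$, using how $\mathbf{y}_i(t)$ acts on the extremal vectors $\overline{w_{(k)}}\cdot v^+_{\omega^{\vee}_j}$ together with the fact that $w_{(k)}=s_{i_1}\cdots s_{i_k}$ records the first $k$ letters of the word: one shows that $\Delta^{\omega^{\vee}_{i_k}}_{w_{(k)}\omega^{\vee}_{i_k}}$ differs from $\Delta^{\omega^{\vee}_{i_k}}_{w_{(k-1)}\omega^{\vee}_{i_k}}$ by precisely the new parameter $t_k$, up to a monomial in $t_1,\ldots,t_{k-1}$, whereas $\Delta^{\omega^{\vee}_j}_{w_{(k)}\omega^{\vee}_j}$ for $j\neq i_k$ involves only $t_1,\ldots,t_{k-1}$; inverting this triangular system yields the closed formula. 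With the $GL_n$ dictionary above in place, matching the general formula of \cite[Theorem 7.1]{MarshRietsch2004} to the one stated here is routine bookkeeping.
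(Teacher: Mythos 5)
The paper does not prove this theorem at all — it simply states it as a specific case of the Generalised Chamber Ansatz and cites Marsh--Rietsch, which is exactly the route you take; your additional work spelling out the $GL_n$ dictionary (generalised minors as ordinary flag minors on $\bigwedge^j\mathbb{K}^n$, admissibility of $([1,j],w_{(k)}[1,j])$, and the type-$A$ reduction of the Cartan exponents) is correct and fills in what the paper leaves implicit. No issues.
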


Each $\Delta^{\omega^{\vee}_{i_k}}_{w_{(k)}\omega^{\vee}_{i_k}}$, where $\omega^{\vee}_{i_k}$ ranges through the set of fundamental weights, is called a (standard) chamber minor. As above, it is given by the $i_k\times i_k$ minor with $\omega^{\vee}_{i_k}$ encoding the row set and $w_{(k)}\omega^{\vee}_{i_k}$ encoding the column set. We note that these row and column sets form admissible pairs.

Much of the information in the Chamber Ansatz may be read from an associated pseudoline arrangement; it may be viewed as a singular braid diagram and is called an ansatz arrangement. In dimension $n$, for the case we are considering, the ansatz arrangement consists of $n$ pseudolines. These are numbered from bottom to top on the left side of the arrangement.

Each factor $g_k = \mathbf{y}_{i_k}^{\vee}(t_k)$ of $g$ gives rise to a crossing of the pseudolines at level $i_k$. We label each chamber in the diagram with the labels of the strands passing below it and associate to the chamber with label $S$ the flag minor $\Delta^{\left[1,|S|\right]}_{S}$.

If $A_k$, $B_k$, $C_k$ and $D_k$ are the minors corresponding to the chambers surrounding the $k$-th singular point, with $A_k$ and $D_k$ above and below it and $B_k$ and $C_k$ to the left and right, then the Chamber Ansatz gives
    $$
\begin{tikzpicture}[baseline=0.43cm]
    %dots and stars
    \node at (1,0.5) {$\bullet$};

    \draw (0,1) -- (0.75,1) -- (1.25,0) -- (2,0);
    \draw (0,0) -- (0.75,0) -- (1.25,1) -- (2,1);
    %\labels
    \node at (1,-0.25) {\scriptsize{$D_k$}};
    \node at (0.3,0.5) {\scriptsize{$B_k$}};
    \node at (1.7,0.5) {\scriptsize{$C_k$}};
    \node at (1,1.25) {\scriptsize{$A_k$}};
\end{tikzpicture}
    \qquad \quad
    t_k = \frac{A_k D_k}{B_k C_k}.
    $$

Let $N=\binom{n}{2}$ and again take
    $$\begin{gathered}
    u:=\mathbf{x}_{-\mathbf{i}_0}^{\vee}(\boldsymbol{z}), \quad u_1 := \iota(\eta^{w_0,e}(u)), \quad b:= u_1 d \bar{w}_0 u_2, \\
    \mathbf{i}_0 = (i_1, \ldots, i_N) := (1,2, \ldots, n-1, 1,2 \ldots, n-2, \ldots, 1, 2, 1).
    \end{gathered}
    $$
We will also need
    $$\mathbf{i}'_0 = (i'_1, \ldots, i'_N) := (n-1, n-2, \ldots, 1, n-1, n-2, \ldots, 2, \ldots, n-1, n-2, n-1)
    $$
and we use the superscript `$\mathrm{op}$' to denote taking such an expression in reverse, for example
    $$ {\mathbf{i}'_0}^{\mathrm{op}}:=(i'_N, \ldots, i'_1) = (n-1, n-2, n-1, \ldots, n-3, n-2, n-1).
    $$

\begin{ex}[Ansatz arrangements for {$\mathbf{i}_0$}, {${\mathbf{i}'_0}^{\mathrm{op}}$} in dimension {$4$}] Since $n=4$, we have $N=6$. For $\mathbf{i}_0 = (1,2,3,1,2,1)$, the sequence of partial products for $w_0$ is given by
    $$\mathbf{w} = (w_{(0)}, w_{(1)}, \ldots, w_{(6)}) = (e, s_1, s_1s_2, s_1s_2s_3, s_1s_2s_3s_1, s_1s_2s_3s_1s_2, s_1s_2s_3s_1s_2s_1).
    $$
The ansatz arrangement for $\mathbf{i}_0$ is given in Figure \ref{fig ansatz arrangement i_0 dim 4}.

For ${\mathbf{i}'_0}^{\mathrm{op}} = (3,2,3,1,2,3)$, the sequence of partial products for $w_0$ is
    $$\mathbf{w} = (w_{(0)}, w_{(1)}, \ldots, w_{(6)}) = (e, s_3, s_3s_2, s_3s_2s_3, s_3s_2s_3s_1, s_3s_2s_3s_1s_2, s_3s_2s_3s_1s_2s_3).
    $$
The ansatz arrangement for ${\mathbf{i}'_0}^{\mathrm{op}}$ is given in Figure \ref{fig ansatz arrangement i'_0^op dim 4}.

\begin{figure}[ht!]
\centering
\begin{minipage}[b]{0.47\textwidth}
    \centering
\begin{tikzpicture}[scale=0.85]
    % pseudolines
    \draw (0,4) -- (2.75,4) -- (3.25,3) -- (4.75,3) -- (5.25,2) -- (5.75,2) -- (6.25,1) -- (7,1);
    \draw (0,3) -- (1.75,3) -- (2.25,2) -- (3.75,2) -- (4.25,1) -- (5.75,1) -- (6.25,2) -- (7,2);
    \draw (0,2) -- (0.75,2) -- (1.25,1) -- (3.75,1) -- (4.25,2) -- (4.75,2) -- (5.25,3) -- (7,3);
    \draw (0,1) -- (0.75,1) -- (1.25,2) -- (1.75,2) -- (2.25,3) -- (2.75,3) -- (3.25,4) -- (7,4);

    % dots and stars
    \node at (1,1.5) {$\bullet$};
    \node at (2,2.5) {$\bullet$};
    \node at (3,3.5) {$\bullet$};
    \node at (4,1.5) {$\bullet$};
    \node at (5,2.5) {$\bullet$};
    \node at (6,1.5) {$\bullet$};

    % pseudoline labels
    \node at (-0.3,4) {$4$};
    \node at (-0.3,3) {$3$};
    \node at (-0.3,2) {$2$};
    \node at (-0.3,1) {$1$};

    \node at (7.3,4) {$1$};
    \node at (7.3,3) {$2$};
    \node at (7.3,2) {$3$};
    \node at (7.3,1) {$4$};

    % chamber labels
    \node at (1.5,3.5) {$123$};
    \node at (5,3.5) {$234$};

    \node at (1,2.5) {$12$};
    \node at (3.5,2.5) {$23$};
    \node at (6,2.5) {$34$};

    \node at (0.5,1.5) {$1$};
    \node at (2.5,1.5) {$2$};
    \node at (5,1.5) {$3$};
    \node at (6.5,1.5) {$4$};
\end{tikzpicture}
\caption{The ansatz arrangement for $\mathbf{i}_0$ in dimension 4} \label{fig ansatz arrangement i_0 dim 4}
\end{minipage}
    \hfill
\begin{minipage}[b]{0.47\textwidth}
    \centering
\begin{tikzpicture}[scale=0.85]
    % pseudolines
    \draw (0,4) -- (0.75,4) -- (1.25,3) -- (1.75,3) -- (2.25,2) -- (3.75,2) -- (4.25,1) -- (7,1);
    \draw (0,3) -- (0.75,3) -- (1.25,4) -- (2.75,4) -- (3.25,3) -- (4.75,3) -- (5.25,2) -- (7,2);
    \draw (0,2) -- (1.75,2) -- (2.25,3) -- (2.75,3) -- (3.25,4) -- (5.75,4) -- (6.25,3) -- (7,3);
    \draw (0,1) -- (3.75,1) -- (4.25,2) -- (4.75,2) -- (5.25,3) -- (5.75,3) -- (6.25,4) -- (7,4);

    % dots and stars
    \node at (1,3.5) {$\bullet$};
    \node at (2,2.5) {$\bullet$};
    \node at (3,3.5) {$\bullet$};
    \node at (4,1.5) {$\bullet$};
    \node at (5,2.5) {$\bullet$};
    \node at (6,3.5) {$\bullet$};

    % pseudoline labels
    \node at (-0.3,4) {$4$};
    \node at (-0.3,3) {$3$};
    \node at (-0.3,2) {$2$};
    \node at (-0.3,1) {$1$};

    \node at (7.3,4) {$1$};
    \node at (7.3,3) {$2$};
    \node at (7.3,2) {$3$};
    \node at (7.3,1) {$4$};

    % chamber labels
    \node at (0.5,3.5) {$123$};
    \node at (2,3.5) {$124$};
    \node at (4.5,3.5) {$134$};
    \node at (6.5,3.5) {$234$};

    \node at (1,2.5) {$12$};
    \node at (3.5,2.5) {$14$};
    \node at (6,2.5) {$34$};

    \node at (2,1.5) {$1$};
    \node at (5.5,1.5) {$4$};
\end{tikzpicture}
\caption{The ansatz arrangement for ${\mathbf{i}'_0}^{\mathrm{op}}$ in dimension 4} \label{fig ansatz arrangement i'_0^op dim 4}
\end{minipage}
\end{figure}

\end{ex}

We are now ready to state the first lemma needed for the proof of Theorem \ref{thm coord change}:

\begin{lem} \label{lem form of u_1 and b factorisations}
We can factorise $u_1$ and $b$ as follows:
        \begin{align}
        u_1 &= \mathbf{x}_{i'_1}^{\vee}(p_1) \cdots \mathbf{x}_{i'_N}^{\vee}(p_N) \label{eqn form of u_1}, \\
        b &=\mathbf{y}_{i_1}^{\vee}\left(\frac{1}{m_1}\right) \cdots \mathbf{y}_{i_N}^{\vee}\left(\frac{1}{m_N}\right) t_R. \label{eqn form of b}
        \end{align}
The $p_i$ and $m_i$ are given by the Chamber Ansatz in terms of chamber minors:
    \begin{align}
    p_{N-k+1} &= \frac{\prod_{j\neq i'_{N-k+1}} \Delta^{\omega^{\vee}_j}_{w_{(k)}\omega^{\vee}_j}(u^T)^{-a_{j,i'_{N-k+1}}}}{\Delta^{\omega^{\vee}_{i'_{N-k+1}}}_{w_{(k)}\omega^{\vee}_{i'_{N-k+1}}}(u^T) \Delta^{\omega^{\vee}_{i'_{N-k+1}}}_{w_{(k-1)}\omega^{\vee}_{i'_{N-k+1}}}(u^T)}, \quad k=1, \ldots, N, \label{eqn p_i in terms of chamber minors} \\
    \frac{1}{m_k} &= \frac{\prod_{j\neq i_k} \Delta^{\omega^{\vee}_j}_{w_{(k)}\omega^{\vee}_j}(u_1)^{-a_{j,i_k}}}{\Delta^{\omega^{\vee}_{i_k}}_{w_{(k)}\omega^{\vee}_{i_k}}(u_1) \Delta^{\omega^{\vee}_{i_k}}_{w_{(k-1)}\omega^{\vee}_{i_k}}(u_1)}, \quad k=1, \ldots, N. \label{eqn 1/m_i in terms of chamber minors}
    \end{align}
\end{lem}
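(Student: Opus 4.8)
The plan is to derive both factorisations from the Chamber Ansatz above, applied to suitable flags in the Richardson variety $\mathcal{R}^{\vee}_{e,w_0}$. We use it once to pass from $u$ to $u_1$, giving (\ref{eqn form of u_1}), and once more to pass from $u_1$ to $b$, giving (\ref{eqn form of b}); this is the ``repeated application'' mentioned above. In both cases the mechanism is the same: exhibit the relevant unipotent element as the \emph{unique} element of $U^{\vee}_-\cap B^{\vee}_+\bar{w}_0 B^{\vee}_+$ representing a flag of the shape $z\,\bar{w}_0 B^{\vee}_+$ with $z\in U^{\vee}_+$, whereupon the Chamber Ansatz both provides the $\mathbf{y}^{\vee}$-factorisation and computes the factors as the stated ratios of chamber minors of $z$.

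I would treat (\ref{eqn form of b}) first, as it is the more direct half. Since $d\in T^{\vee}$ and $u_2\in U^{\vee}=U^{\vee}_+$, absorbing them on the right gives $b\,B^{\vee}_+ = u_1 d\bar{w}_0 u_2\,B^{\vee}_+ = u_1\bar{w}_0 B^{\vee}_+$, while the factorisation $b=[b]_- t_R$ gives $b\,B^{\vee}_+ = [b]_-\,B^{\vee}_+$. Hence $[b]_-\in U^{\vee}_-$ represents the flag $u_1\bar{w}_0 B^{\vee}_+$, which lies in $\mathcal{R}^{\vee}_{e,w_0}$, and $[b]_-\in B^{\vee}_+\bar{w}_0 B^{\vee}_+$ since that double coset is stable under right multiplication by $B^{\vee}_+$. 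Applying the Chamber Ansatz with $z=u_1$ and the reduced word $\mathbf{i}_0$ produces a $g=\mathbf{y}^{\vee}_{i_1}(t_1)\cdots\mathbf{y}^{\vee}_{i_N}(t_N)$ in $U^{\vee}_-\cap B^{\vee}_+\bar{w}_0 B^{\vee}_+$ with $g\,B^{\vee}_+ = u_1\bar{w}_0 B^{\vee}_+$ and with each $t_k$ equal to the Chamber Ansatz ratio in the minors of $u_1$; since $U^{\vee}_-\hookrightarrow G^{\vee}/B^{\vee}$ is injective we conclude $g=[b]_-$, so $b=[b]_- t_R = g\,t_R$, and setting $m_k:=1/t_k$ yields (\ref{eqn form of b}) together with (\ref{eqn 1/m_i in terms of chamber minors}).

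For (\ref{eqn form of u_1}) I would pass to the transpose, turning the desired $\mathbf{x}^{\vee}$-factorisation of $u_1$ into a $\mathbf{y}^{\vee}$-factorisation of $u_1^T$. Unwinding the definitions, $u_1^T=\bar{w}_0\,\tilde u_1^{-1}\,\bar{w}_0^{-1}$ with $\tilde u_1=\eta^{w_0,e}(u)=\left[(\bar{w}_0 u^T)^{-1}\right]_+$; Lemma \ref{lem [(bar w_0u^T)^-1]_0=I} removes the torus part, so $(\bar{w}_0 u^T)^{-1}=\hat L\,\tilde u_1$ with $\hat L\in U^{\vee}_-$, hence $\tilde u_1^{-1}=\bar{w}_0 u^T\hat L$ and $u_1^T=\bar{w}_0^2\,u^T\hat L\,\bar{w}_0^{-1}$. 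Using that $\bar{w}_0^2=(-1)^{n+1}I$ is scalar, that $\bar{w}_0^{-1}B^{\vee}_+=\bar{w}_0 B^{\vee}_+$, and that conjugation by $\bar{w}_0$ carries $B^{\vee}_-$ (hence $\hat L$ and the diagonal part of $u^T$) into $B^{\vee}_+$, one obtains $u_1^T\in U^{\vee}_-\cap B^{\vee}_+\bar{w}_0 B^{\vee}_+$ and $u_1^T B^{\vee}_+ = z_u\,\bar{w}_0 B^{\vee}_+$, where $z_u\in U^{\vee}_+$ is the unipotent part of $u^T$. Now applying the Chamber Ansatz with $z=z_u$ and the reduced word ${\mathbf{i}'_0}^{\mathrm{op}}$ --- which is reduced for $w_0$, being the reverse of $\mathbf{i}'_0$, and whose ansatz arrangement is illustrated in Figure \ref{fig ansatz arrangement i'_0^op dim 4} --- identifies, by uniqueness, $u_1^T=\mathbf{y}^{\vee}_{i'_N}(p_N)\cdots\mathbf{y}^{\vee}_{i'_1}(p_1)$, so transposing gives (\ref{eqn form of u_1}). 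Each $p_{N-k+1}$ is the $k$-th Chamber Ansatz coefficient, a ratio of chamber minors of $z_u$; because around every crossing of the arrangement the multiset of columns of $A_k$ and $D_k$ coincides with that of $B_k$ and $C_k$, this ratio is unaffected if $z_u$ is replaced by $u^T=z_u[u]_0$, and it may therefore be written in the minors of $u^T$ exactly as in (\ref{eqn p_i in terms of chamber minors}).

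I expect the bookkeeping in the $u_1$ case to be the main obstacle: threading the involution $\iota$, the transpose, and conjugation by $\bar{w}_0$ through the Gauss and Bruhat decompositions, and in particular pinning down that $u_1^T B^{\vee}_+$ is precisely of the form required by the Chamber Ansatz, with a ``$z$'' whose chamber minors are those of $u^T$. The role of Lemma \ref{lem [(bar w_0u^T)^-1]_0=I} is to make the torus factor vanish at the key step, and the identity $\bar{w}_0^2=(-1)^{n+1}I$ is what lets the remaining torus ambiguities be absorbed into $B^{\vee}_+$; checking the column-multiset cancellation that renders the Chamber Ansatz ratio insensitive to $[u]_0$, and matching the indexing of the partial products for ${\mathbf{i}'_0}^{\mathrm{op}}$ with the formula (\ref{eqn p_i in terms of chamber minors}), are the remaining delicate-but-routine points.
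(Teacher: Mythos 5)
Your proposal is correct and follows essentially the same route as the paper: two applications of the Marsh--Rietsch Chamber Ansatz, first to the flag $u_1^T B^{\vee}_+ = u^T\bar{w}_0 B^{\vee}_+$ with the word ${\mathbf{i}'_0}^{\mathrm{op}}$ and then to $bB^{\vee}_+ = u_1\bar{w}_0 B^{\vee}_+$ with $\mathbf{i}_0$, followed by uniqueness of the $U^{\vee}_-$-representative. The only differences are cosmetic: the paper reaches the key identity $u_1^T B^{\vee}_+ = u^T\bar{w}_0 B^{\vee}_+$ by a shorter $B^{\vee}_-$-coset manipulation rather than your explicit Gauss decomposition plus the scalarity of $\bar{w}_0^2$, and it leaves implicit your (correct) observation that the diagonal part $[u]_0$ cancels from the chamber-minor ratios because the column multisets above/below a crossing agree with those to its left/right.
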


\begin{proof}
We will use the Chamber Ansatz to prove that $u_1$ and $b$ have the factorisations given in (\ref{eqn form of u_1}) and (\ref{eqn form of b}) respectively.

To show (\ref{eqn form of u_1}) we first note that if $x= \mathbf{x}_{\mathbf{i}}^{\vee}(z_1, \ldots, z_N)$ then $x^T= \mathbf{y}_{\mathbf{i}^{\mathrm{op}}}^{\vee}(z_N, \ldots, z_1)$. Thus to apply the Chamber Ansatz we need a matrix $A \in U^{\vee}_+$ such that $u_1^T B^{\vee}_+ =  A \bar{w}_0 B^{\vee}_+$. We will extract this matrix from the definition of $u_1$:
    $$u_1 = \iota(\eta^{w_0,e}(u)) = \iota\left([(\bar{w}_0u^T)^{-1}]_+\right).
    $$
After applying the involution $\iota$ we see that
    $$B^{\vee}_-\left(\bar{w}_0u^T\right)^{-1} = B^{\vee}_- \eta^{w_0,e}(u) = B^{\vee}_- \iota(u_1) = B^{\vee}_- \left(\bar{w}_0u_1^{-1}\bar{w}_0^{-1}\right)^T = B^{\vee}_- \left(\bar{w}_0u_1^T\bar{w}_0^{-1} \right)^{-1}.
    $$
Taking the inverse gives
    $$\bar{w}_0u_1^T\bar{w}_0^{-1} B^{\vee}_- = \bar{w}_0 u^T B^{\vee}_- \quad \Rightarrow \quad u_1^T\bar{w}_0^{-1} B^{\vee}_- =  u^T B^{\vee}_-.
    $$
Then using the relation $\bar{w}_0 B^{\vee}_+ \bar{w}_0^{-1} = B^{\vee}_-$ we obtain the desired form:
    $$ u_1^T\bar{w}_0^{-1} \bar{w}_0 B^{\vee}_+ \bar{w}_0^{-1} =  u^T \bar{w}_0 B^{\vee}_+ \bar{w}_0^{-1} \quad \Rightarrow \quad u_1^T B^{\vee}_+ =  u^T \bar{w}_0 B^{\vee}_+.
    $$

We take the reduced expression for $w_0$ defined by ${\mathbf{i}'_0}^{\mathrm{op}}$ and let $\mathbf{w} = (w_{(0)}, w_{(1)}, \ldots, w_{(N)}) $ be the sequence of partial products for $w_0$ given by its sequence of factors
    $$\left(w_{(1)}, w_{(1)}^{-1}w_{(2)}, \ldots, w_{(N-1)}^{-1}w_{(N)}\right) = (s_{i'_N}, \ldots, s_{i'_1}).
    $$
Then by the Chamber Ansatz we have $u_1^T B^{\vee}_+ = u^T \bar{w}_0 B^{\vee}_+ = \mathbf{y}_{i'_N}^{\vee}(p_N) \cdots \mathbf{y}_{i'_1}^{\vee}(p_1) B^{\vee}_+$ with
    $$p_{N-k+1} = \frac{\prod_{j\neq i'_{N-k+1}} \Delta^{\omega^{\vee}_j}_{w_{(k)}\omega^{\vee}_j}(u^T)^{-a_{j,i'_{N-k+1}}}}{\Delta^{\omega^{\vee}_{i'_{N-k+1}}}_{w_{(k)}\omega^{\vee}_{i'_{N-k+1}}}(u^T) \Delta^{\omega^{\vee}_{i'_{N-k+1}}}_{w_{(k-1)}\omega^{\vee}_{i'_{N-k+1}}}(u^T)}, \quad k=1, \ldots, N
    $$
which is exactly in the form of (\ref{eqn p_i in terms of chamber minors}). Moreover since both $u_1^T \in U^{\vee}_- $ and $\mathbf{y}_{i'_N}^{\vee}(t'_1) \cdots \mathbf{y}_{i'_1}^{\vee}(t'_N) \in U^{\vee}_-$ this determines $u_1$ completely; $u_1 = \mathbf{x}_{i'_1}^{\vee}(p_1) \cdots \mathbf{x}_{i'_N}^{\vee}(p_N)$ as desired.

To give the factorisation in (\ref{eqn form of b}) we note that by definition $b B^{\vee}_+ = u_1 \bar{w}_0 B^{\vee}_+$ with $u_1 \in U^{\vee}_+$. For this second application of the Chamber Ansatz we let $w_0$ be described by $\mathbf{i}_0$. We again take $\mathbf{w} = (w_{(0)}, w_{(1)}, \ldots, w_{(N)})$ to be the respective sequence of partial products for $w_0$ defined by its sequence of factors
    $$\left(w_{(1)}, w_{(1)}^{-1}w_{(2)}, \ldots, w_{(N-1)}^{-1}w_{(N)}\right) = (s_{i_1}, \ldots, s_{i_N}).
    $$
Then by the Chamber Ansatz we have $b B^{\vee}_+ = u_1 \bar{w}_0 B^{\vee}_+ = \mathbf{y}_{i_1}^{\vee}\left(\frac{1}{m_1}\right) \cdots \mathbf{y}_{i_N}^{\vee}\left(\frac{1}{m_N}\right)B^{\vee}_+$ with
    $$\frac{1}{m_k} = \frac{\prod_{j\neq i_k} \Delta^{\omega^{\vee}_j}_{w_{(k)}\omega^{\vee}_j}(u_1)^{-a_{j,i_k}}}{\Delta^{\omega^{\vee}_{i_k}}_{w_{(k)}\omega^{\vee}_{i_k}}(u_1) \Delta^{\omega^{\vee}_{i_k}}_{w_{(k-1)}\omega^{\vee}_{i_k}}(u_1)}, \quad k=1, \ldots, N
    $$
which proves (\ref{eqn 1/m_i in terms of chamber minors}).
\end{proof}

\subsection{Chamber Ansatz minors} \label{subsec Chamber Ansatz minors}

We wish to further describe the coordinate changes defined by our two applications of the Chamber Ansatz. In this section we show they are monomial by considering the required minors of $u^T$ and $u_1$. Similar to how the ansatz arrangement tells us which quotients of minors to take when applying Chamber Ansatz, we may use a planar acyclic directed graph to easily compute these minors, and in particular to confirm that they are all monomial (see \cite[Proposition 4.2]{FominZelevinsky1999}, generalising \cite[Theorem 2.4.4]{BerensteinFominZelevinsky1996}). Note that the following description is slightly different to that given by Fomin and Zelevinsky in \cite{FominZelevinsky1999}, since we do not need the same level of generality.

Let $\mathbf{i}= (i_1, \ldots, i_N)$ define some reduced expression for $w_0$ and consider
    $$\mathbf{x}_{\mathbf{i}}^{\vee}(\boldsymbol{z}) = \mathbf{x}_{i_1}^{\vee}(z_1) \cdots \mathbf{x}_{i_N}^{\vee}(z_N).
    $$
For particular choices of admissible pairs $(J,K)$, we wish to compute the %(flag)
minors
    $$\Delta^J_K\left(\mathbf{x}_{\mathbf{i}}^{\vee}(\boldsymbol{z})\right) \text{ or } \ \Delta^J_K\left(\mathbf{x}_{-\mathbf{i}}^{\vee}(\boldsymbol{z})^T\right).
    $$
In the second case it will be helpful to express $\mathbf{x}_{-\mathbf{i}}^{\vee}(\boldsymbol{z})^T$ as a product of matrices $\mathbf{x}_i$ and $\mathbf{t}_i$. To do this we notice
    $$\mathbf{x}_{-i}^{\vee}(z)^T = \phi_i^{\vee}\begin{pmatrix} z^{-1} & 1 \\ 0 & z \end{pmatrix} = \phi_i^{\vee} \left(\begin{pmatrix} z^{-1} & 0 \\ 0 & z \end{pmatrix} \begin{pmatrix} 1 & z \\ 0 & 1 \end{pmatrix} \right)
    = \mathbf{t}_{i}^{\vee}(z^{-1}) \mathbf{x}_{i}^{\vee}(z).
    $$
In particular we may rewrite $\mathbf{x}_{-\mathbf{i}}^{\vee}(\boldsymbol{z})^T $ as follows:
    $$\mathbf{x}_{-\mathbf{i}}^{\vee}(\boldsymbol{z})^T =  \mathbf{x}_{-i_N}^{\vee}(z_N)^T \cdots \mathbf{x}_{-i_1}^{\vee}(z_1)^T
    = \mathbf{t}_{i_N}^{\vee}(z_N^{-1}) \mathbf{x}_{i_N}^{\vee}(z_N) \cdots \mathbf{t}_{i_1}^{\vee}(z_1^{-1}) \mathbf{x}_{i_1}^{\vee}(z_1).
    $$

To construct the graph, $\Gamma$, corresponding to a matrix $x= \mathbf{x}_{\mathbf{i}}^{\vee}(\boldsymbol{z})$ or $\mathbf{x}_{-\mathbf{i}}^{\vee}(\boldsymbol{z})^T$ we begin with $n$ parallel horizontal lines. We add vertices to the ends of each line and number them from bottom to top on both sides. Then for each factor $\mathbf{t}_{i_k}^{\vee}(z_k^{-1})$, $\mathbf{x}_{i_k}^{\vee}(z_k)$ of $x$ we include a labelled line segment and vertices at height $i_k$ defined in Figure \ref{Labelled line segments in graphs for computing Chamber Ansatz minors}.
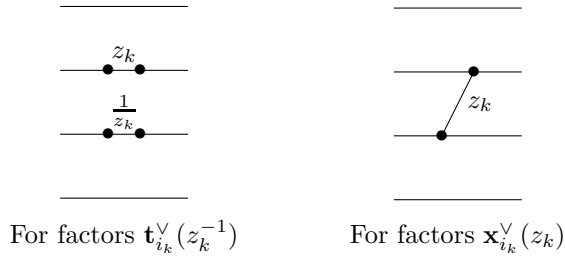
\begin{figure}[hb!]
\centering
\begin{minipage}[b]{0.3\textwidth}
\centering
    \begin{tikzpicture}[scale=0.85]
        \node at (0.75,2) {$\bullet$};
        \node at (1.25,2) {$\bullet$};
        \node at (0.75,1) {$\bullet$};
        \node at (1.25,1) {$\bullet$};
        \draw (0,3) -- (2,3);
        \draw (0,2) -- (2,2);
        \draw (0,1) -- (2,1);
        \draw (0,0) -- (2,0);
        \node at (1,1.35) {$\frac{1}{z_k}$};
        \node at (1,2.25) {$z_k$};
        \node at (1,-0.6) {For factors $\mathbf{t}_{i_k}^{\vee}(z_k^{-1})$};
    \end{tikzpicture}
\end{minipage}
\begin{minipage}[b]{0.3\textwidth}
\centering
    \begin{tikzpicture}[scale=0.85]
        \node at (0.75,1) {$\bullet$};
        \node at (1.25,2) {$\bullet$};
        \draw (0,3) -- (2,3);
        \draw (0,2) -- (2,2);
        \draw (0,1) -- (2,1);
        \draw (0,0) -- (2,0);
        \draw (0.75,1) -- (1.25,2);
        \node at (1.35,1.5) {$z_k$};
        \node at (1,-0.6) {For factors $\mathbf{x}_{i_k}^{\vee}(z_k)$};
    \end{tikzpicture}
\end{minipage}
\caption{Labelled line segments in graphs for computing Chamber Ansatz minors} \label{Labelled line segments in graphs for computing Chamber Ansatz minors}
\end{figure}

Each line segment is viewed as a labelled edge of $\Gamma$, oriented left to right. For an edge $e$, the labelling, called the weight of $e$ and denoted $w(e)$, is given by the diagrams above and taken to be $1$ if left unspecified. The weight $w(\pi)$ of an oriented path $\pi$ is defined to be the product of weights $w(e)$ taken over all edges $e$ in $\pi$.

The set of vertices of the graph $\Gamma$ is given by the endpoints of all line segments. Those vertices appearing as the leftmost (resp. rightmost) endpoints of the horizontal lines are the sources (resp. sinks) of $\Gamma$.

With this notation, \cite[Proposition 4.2]{FominZelevinsky1999} becomes the following:
\begin{thm} \label{thm minors from paths in graph}
For an admissible pair $(J,K)$ of size $l$
    $$ \Delta^J_K\left(\mathbf{x}_{\mathbf{i}'}^{\vee}(\boldsymbol{z})\right) = \sum_{\pi_1,\ldots, \pi_l} w(\pi_1) \cdots w(\pi_l)
    $$
where the sum is taken over all families of l vertex-disjoint paths $\{\pi_1,\ldots, \pi_l\}$ connecting the sources labelled by $J$ with the sinks labelled by $K$.
\end{thm}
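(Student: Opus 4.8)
The plan is to reduce the statement to the Lindström--Gessel--Viennot lemma for planar acyclic networks, exactly in the spirit of \cite{FominZelevinsky1999} and \cite{BerensteinFominZelevinsky1996}. The central bookkeeping device is the \emph{path matrix} $M(\Gamma)$ of the network $\Gamma$: the $n\times n$ matrix whose $(j,k)$-entry is $\sum_{\pi\colon j\to k} w(\pi)$, the sum of weights of oriented paths from the source labelled $j$ to the sink labelled $k$. The theorem will follow once we know two things: that $M(\Gamma)$ is the matrix $\mathbf{x}_{\mathbf{i}'}^{\vee}(\boldsymbol z)$ (respectively $\mathbf{x}_{-\mathbf{i}}^{\vee}(\boldsymbol z)^T$) we started with, and that the minors of a planar path matrix are computed by vertex-disjoint path families with no sign cancellation.

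First I would check the single-brick case. Inspecting Figure \ref{Labelled line segments in graphs for computing Chamber Ansatz minors}, the brick attached to a factor $\mathbf{x}_{i_k}^{\vee}(z_k)$ has path matrix equal to $\mathbf{x}_{i_k}^{\vee}(z_k)$: the identity, plus the single entry $z_k$ in position $(i_k,i_k+1)$ contributed by the one diagonal edge. The brick attached to a factor $\mathbf{t}_{i_k}^{\vee}(z_k^{-1})$ has path matrix the diagonal matrix $\mathbf{t}_{i_k}^{\vee}(z_k^{-1})$, the two weighted horizontal edges contributing the entries $z_k^{-1}$ and $z_k$ and every other horizontal line contributing $1$. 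This is just matching the figure against $\phi_i^{\vee}\begin{pmatrix} 1 & z \\ 0 & 1\end{pmatrix}$ and $\phi_i^{\vee}\begin{pmatrix} z^{-1} & 0 \\ 0 & z\end{pmatrix}$. Next I would establish multiplicativity: if $\Gamma=\Gamma_1\ast\Gamma_2$ is obtained by gluing the sinks of $\Gamma_1$ to the sources of $\Gamma_2$, then $M(\Gamma)=M(\Gamma_1)\,M(\Gamma_2)$, since every source-to-sink path in $\Gamma$ factors uniquely through a unique middle vertex, and grouping the weight sum by that vertex is precisely the definition of the matrix product. Concatenating all the bricks of $\Gamma$ left to right, together with the identity $\mathbf{x}_{-i}^{\vee}(z)^T=\mathbf{t}_i^{\vee}(z^{-1})\mathbf{x}_i^{\vee}(z)$ recorded above, then gives $M(\Gamma)=\mathbf{x}_{\mathbf{i}'}^{\vee}(\boldsymbol z)$, respectively $\mathbf{x}_{-\mathbf{i}}^{\vee}(\boldsymbol z)^T$.

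Finally I would invoke the Lindström--Gessel--Viennot lemma: for an acyclic network, $\det\!\big(M(\Gamma)^J_K\big)=\sum_{\sigma}\mathrm{sgn}(\sigma)\sum w(\pi_1)\cdots w(\pi_l)$, the inner sum over families of vertex-disjoint paths from $J$ to $K$ realizing the permutation $\sigma$. The feature particular to our setting is that $\Gamma$ is \emph{planar}, drawn in a horizontal strip with sources $1<\dots<n$ on the left boundary and sinks $1<\dots<n$ on the right boundary, both read bottom to top. In such a picture two vertex-disjoint paths cannot cross, so a vertex-disjoint family from $J=\{j_1<\dots<j_l\}$ to $K=\{k_1<\dots<k_l\}$ must join $j_s$ to $k_s$ for each $s$; only $\sigma=\mathrm{id}$ occurs, with sign $+1$. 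Hence $\Delta^J_K(\mathbf{x}_{\mathbf{i}'}^{\vee}(\boldsymbol z))=\det\!\big(M(\Gamma)^J_K\big)$ equals the claimed sum over vertex-disjoint path families. Admissibility $j_s\le k_s$ is exactly the condition under which such families can exist at all (the bricks only move strands upward), so restricting to admissible pairs loses nothing.

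The routine parts --- the two brick computations and multiplicativity --- are short. The only place demanding genuine care is the sign collapse in the LGV sum: I would make sure the figure's bottom-to-top labelling of boundary vertices is aligned with the row/column indexing of the minor, so that planarity really forces $\sigma=\mathrm{id}$. I do not expect a substantive obstacle, as this is the standard dictionary between planar networks and totally nonnegative matrices; the role of this section is only to record it in the exact form needed for Lemma \ref{lem form of u_1 and b factorisations} and the coordinate change of Theorem \ref{thm coord change}.
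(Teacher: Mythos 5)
Your proof is correct, but note that the paper does not actually prove this statement: it is imported verbatim (in the notation of this setting) from \cite[Proposition 4.2]{FominZelevinsky1999}, which in turn generalises \cite[Theorem 2.4.4]{BerensteinFominZelevinsky1996}. What you have written out is precisely the standard argument underlying that citation: identify the path matrix of each elementary brick with the corresponding factor $\mathbf{x}_{i_k}^{\vee}(z_k)$ or $\mathbf{t}_{i_k}^{\vee}(z_k^{-1})$ (using $\mathbf{x}_{-i}^{\vee}(z)^T=\mathbf{t}_i^{\vee}(z^{-1})\mathbf{x}_i^{\vee}(z)$ for the second family of graphs), observe that concatenation of networks corresponds to matrix multiplication, and then apply the Lindstr\"om--Gessel--Viennot lemma, with planarity collapsing the signed sum to the identity permutation. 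The one point you flag as needing care --- that vertex-disjointness forces $\sigma=\mathrm{id}$ --- does go through here: every edge points strictly rightward and the diagonal edges only meet the horizontal lines at their endpoints, so two paths in the strip can intersect only at vertices of $\Gamma$, and a family realising a nontrivial permutation would have to contain two crossing, hence non-vertex-disjoint, paths. Your closing remark on admissibility is also consistent with the paper: both matrices in question are upper triangular, so nothing is lost by restricting to admissible pairs. In short, you have supplied the proof the paper delegates to the literature; there is no gap.
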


To prove that the minors of $u^T$, $u_1$ appearing in our applications of the Chamber Ansatz are monomial we must show that in each case there is only one possible family of paths $\{\pi_1,\ldots, \pi_l\}$. Before showing this we give two examples to clarify the above construction.

\begin{ex}
We take $n=4$, so $N=6$, and we wish to compute the minors $\Delta^J_K\left(u_1\right)$ where
    $$u_1=\mathbf{x}_{\mathbf{i}'_0}^{\vee}\left(z_1, z_4, z_6, \frac{z_2}{z_4}, \frac{z_5}{z_6}, \frac{z_3}{z_5}\right) = \begin{pmatrix}
        1 & z_6 & z_5 & z_3 \\
         & 1 & z_4+\frac{z_5}{z_6} & z_2+\frac{z_3z_4}{z_5}+\frac{z_3}{z_6} \\
         &  & 1 & z_1+\frac{z_2}{z_4}+\frac{z_3}{z_5} \\
         &  &  & 1
    \end{pmatrix}
    $$
with $\mathbf{i}'_0 = (3,2,1,3,2,3)$. The graph for $u_1$ is given in Figure \ref{The graph for u_1 when n=4}.
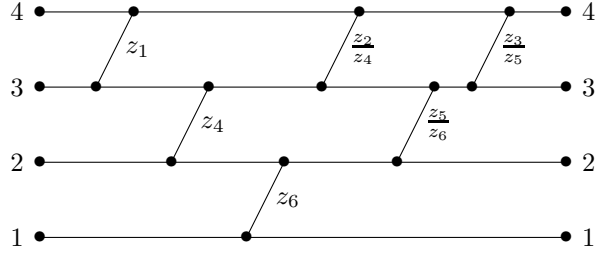
\begin{figure}[ht!]
\centering
\begin{tikzpicture}
    % pseudolines
    \draw (0,4) -- (7,4);
    \draw (0,3) -- (7,3);
    \draw (0,2) -- (7,2);
    \draw (0,1) -- (7,1);

    \draw (0.75,3) -- (1.25,4);
    \draw (1.75,2) -- (2.25,3);
    \draw (2.75,1) -- (3.25,2);
    \draw (3.75,3) -- (4.25,4);
    \draw (4.75,2) -- (5.25,3);
    \draw (5.75,3) -- (6.25,4);
    % dots and stars
    \node at (0,1) {$\bullet$};
    \node at (0,2) {$\bullet$};
    \node at (0,3) {$\bullet$};
    \node at (0,4) {$\bullet$};
    \node at (7,1) {$\bullet$};
    \node at (7,2) {$\bullet$};
    \node at (7,3) {$\bullet$};
    \node at (7,4) {$\bullet$};

    \node at (0.75,3) {$\bullet$};
    \node at (1.25,4) {$\bullet$};
    \node at (1.75,2) {$\bullet$};
    \node at (2.25,3) {$\bullet$};
    \node at (2.75,1) {$\bullet$};
    \node at (3.25,2) {$\bullet$};

    \node at (3.75,3) {$\bullet$};
    \node at (4.25,4) {$\bullet$};
    \node at (4.75,2) {$\bullet$};
    \node at (5.25,3) {$\bullet$};
    \node at (5.75,3) {$\bullet$};
    \node at (6.25,4) {$\bullet$};

    % pseudoline labels
    \node at (-0.3,4) {$4$};
    \node at (-0.3,3) {$3$};
    \node at (-0.3,2) {$2$};
    \node at (-0.3,1) {$1$};

    \node at (7.3,4) {$4$};
    \node at (7.3,3) {$3$};
    \node at (7.3,2) {$2$};
    \node at (7.3,1) {$1$};

    % weight labels
    \node at (1.3,3.5) {$z_1$};
    \node at (2.3,2.5) {$z_4$};
    \node at (3.3,1.5) {$z_6$};
    \node at (4.3,3.5) {$\frac{z_2}{z_4}$};
    \node at (5.3,2.5) {$\frac{z_5}{z_6}$};
    \node at (6.3,3.5) {$\frac{z_3}{z_5}$};
\end{tikzpicture}
\caption{The graph for $u_1$ when $n=4$} \label{The graph for u_1 when n=4}
\end{figure}

Computing the minor $\Delta^1_4(u_1)$ directly from the matrix $u_1$ is trivial. If we use the graph instead, we see that it is given by the weight of one path with three diagonal edges:
    $$\Delta^1_4(u_1) = z_6 \frac{z_5}{z_6} \frac{z_3}{z_5} = z_3.
    $$
The minor $\Delta^{\{1,2,3\}}_{\{2,3,4\}}(u_1)$ takes far more effort to compute directly, but using the graph makes the computation simple. This minor is the product of weights of three paths, each with one diagonal edge. We highlight the paths with parentheses:
    $$\Delta^{\{1,2,3\}}_{\{2,3,4\}}(u_1) = (z_6) (z_4) (z_1) = z_1z_4z_6.
    $$
\end{ex}

\begin{ex}
Again we take $n=4$, so $N=6$ and we wish to compute the minors $\Delta^J_K\left(u^T\right)$ where
    $$u^T= \left(\mathbf{x}_{-\mathbf{i}_0}^{\vee}(z_1, \ldots, z_6)\right)^T
        = \begin{pmatrix} \frac{1}{z_1z_4z_6} & \frac{z_1z_4(z_4z_6+z_5) + z_2z_5}{z_2z_4z_5z_6} & \frac{z_3z_5 +z_6(z_2z_5+z_3z_4)}{z_3z_5z_6} & 1 \\  & \frac{z_1z_4z_6}{z_2z_5} & \frac{z_6(z_2z_5+z_3z_4)}{z_3z_5} & z_6 \\ & & \frac{z_2z_5}{z_3} & z_5 \\ & & & z_3  \end{pmatrix}
    $$
with $\mathbf{i}_0 = (1,2,3,1,2,1)$. The graph for
    $$u^T = \mathbf{t}_{1}^{\vee}(z_6^{-1})\mathbf{x}_{1}^{\vee}(z_6) \mathbf{t}_{2}^{\vee}(z_5^{-1})\mathbf{x}_{2}^{\vee}(z_5) \mathbf{t}_{1}^{\vee}(z_4^{-1})\mathbf{x}_{1}^{\vee}(z_4) \mathbf{t}_{3}^{\vee}(z_3^{-1})\mathbf{x}_{3}^{\vee}(z_3) \mathbf{t}_{2}^{\vee}(z_2^{-1})\mathbf{x}_{2}^{\vee}(z_2) \mathbf{t}_{1}^{\vee}(z_1^{-1}) \mathbf{x}_{1}^{\vee}(z_1)
    $$
is given in Figure \ref{The graph for u^T when n=4}.

\begin{figure}[ht!]
\centering
\begin{tikzpicture}
    \draw (0,4) -- (10,4);
    \draw (0,3) -- (10,3);
    \draw (0,2) -- (10,2);
    \draw (0,1) -- (10,1);

    \draw (1.25,1) -- (1.75,2);
    \draw (2.75,2) -- (3.25,3);
    \draw (4.25,1) -- (4.75,2);
    \draw (5.75,3) -- (6.25,4);
    \draw (7.25,2) -- (7.75,3);
    \draw (8.75,1) -- (9.25,2);
    % dots and stars
    \node at (0,1) {$\bullet$};
    \node at (0,2) {$\bullet$};
    \node at (0,3) {$\bullet$};
    \node at (0,4) {$\bullet$};
    \node at (10,1) {$\bullet$};
    \node at (10,2) {$\bullet$};
    \node at (10,3) {$\bullet$};
    \node at (10,4) {$\bullet$};

    \node at (0.75,1) {$\bullet$};
    \node at (1.25,1) {$\bullet$};
    \node at (0.75,2) {$\bullet$};
    \node at (1.25,2) {$\bullet$};
    \node at (1.75,2) {$\bullet$};
    \node at (2.25,2) {$\bullet$};
    \node at (2.75,2) {$\bullet$};
    \node at (2.25,3) {$\bullet$};
    \node at (2.75,3) {$\bullet$};
    \node at (3.25,3) {$\bullet$};

    \node at (3.75,1) {$\bullet$};
    \node at (4.25,1) {$\bullet$};
    \node at (3.75,2) {$\bullet$};
    \node at (4.25,2) {$\bullet$};
    \node at (4.75,2) {$\bullet$};
    \node at (5.25,3) {$\bullet$};
    \node at (5.75,3) {$\bullet$};
    \node at (5.25,4) {$\bullet$};
    \node at (5.75,4) {$\bullet$};
    \node at (6.25,4) {$\bullet$};

    \node at (6.75,2) {$\bullet$};
    \node at (7.25,2) {$\bullet$};
    \node at (6.75,3) {$\bullet$};
    \node at (7.25,3) {$\bullet$};
    \node at (7.75,3) {$\bullet$};
    \node at (8.25,1) {$\bullet$};
    \node at (8.75,1) {$\bullet$};
    \node at (8.25,2) {$\bullet$};
    \node at (8.75,2) {$\bullet$};
    \node at (9.25,2) {$\bullet$};

    % pseudoline labels
    \node at (-0.3,4) {$4$};
    \node at (-0.3,3) {$3$};
    \node at (-0.3,2) {$2$};
    \node at (-0.3,1) {$1$};

    \node at (10.3,4) {$4$};
    \node at (10.3,3) {$3$};
    \node at (10.3,2) {$2$};
    \node at (10.3,1) {$1$};

    % weight labels
    \node at (1,1.3) {$\frac{1}{z_6}$};
    \node at (1,2.2) {$z_6$};
    \node at (1.8,1.5) {$z_6$};
    \node at (2.5,2.3) {$\frac{1}{z_5}$};
    \node at (2.5,3.2) {$z_5$};
    \node at (3.3,2.5) {$z_5$};
    \node at (4,1.3) {$\frac{1}{z_4}$};
    \node at (4,2.2) {$z_4$};
    \node at (4.8,1.5) {$z_4$};
    \node at (5.5,3.3) {$\frac{1}{z_3}$};
    \node at (5.5,4.2) {$z_3$};
    \node at (6.3,3.5) {$z_3$};
    \node at (7,2.3) {$\frac{1}{z_2}$};
    \node at (7,3.2) {$z_2$};
    \node at (7.8,2.5) {$z_2$};
    \node at (8.5,1.3) {$\frac{1}{z_1}$};
    \node at (8.5,2.2) {$z_1$};
    \node at (9.3,1.5) {$z_1$};
\end{tikzpicture}
\caption{The graph for $u^T$ when $n=4$} \label{The graph for u^T when n=4}
\end{figure}

We again give example computations, with parentheses highlighting products of multiple paths:
    $$\begin{aligned}
    &\Delta^{\{1,2\}}_{\{1,4\}}(u^T) = \left( \frac{1}{z_6} \frac{1}{z_4} \frac{1}{z_1} \right) \left( z_6 \frac{1}{z_5} z_5 \frac{1}{z_3} z_3 \right) = \frac{1}{z_1z_4},  \\
    &\Delta^{\{1,2\}}_{\{3,4\}}(u^T) = \left( \frac{1}{z_6} \frac{1}{z_4} z_4 \frac{1}{z_2} z_2 \right) \left( z_6 \frac{1}{z_5} z_5 \frac{1}{z_3} z_3 \right) = 1.
    \end{aligned}
    $$
\end{ex}

\begin{lem} \label{lem our Chamber Ansatz minors are monomial}
All minors in both applications of the Chamber Ansatz in the proof of Lemma \ref{lem form of u_1 and b factorisations} are monomial and consequently the resulting coordinate changes are monomial.
\end{lem}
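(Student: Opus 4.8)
The engine of the proof is Theorem~\ref{thm minors from paths in graph}: a minor $\Delta^J_K$ of a matrix of the form $\mathbf{x}^{\vee}_{\mathbf i}(\boldsymbol z)$ or $\mathbf{x}^{\vee}_{-\mathbf i}(\boldsymbol z)^T$ equals the sum, over families of $l$ vertex-disjoint paths in the associated graph $\Gamma$, of the path weights, and each path weight is a Laurent monomial in the edge weights; hence $\Delta^J_K$ is a single Laurent monomial exactly when there is only one such family. So the plan is to run through the minors that actually appear and check uniqueness of the defining path family in each case. Reading off (\ref{eqn p_i in terms of chamber minors}) and (\ref{eqn 1/m_i in terms of chamber minors}) in Lemma~\ref{lem form of u_1 and b factorisations}, the only minors that occur are the chamber minors $\Delta^{\omega^{\vee}_j}_{w_{(k)}\omega^{\vee}_j}(u^T)$ and $\Delta^{\omega^{\vee}_j}_{w_{(k)}\omega^{\vee}_j}(u_1)$ for $1\le j\le n-1$, whose row set is the initial segment $[1,j]$ and whose column set is $w_{(k)}[1,j]$, with $w_{(k)}$ running over the partial products of ${\mathbf{i}'_0}^{\mathrm{op}}$ in the first case and of $\mathbf{i}_0$ in the second. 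The relevant graphs are those of Figures~\ref{The graph for u^T when n=4} and \ref{The graph for u_1 when n=4}: in the $u_1$-graph the slanted edges sit at the levels prescribed by $\mathbf{i}'_0$, in the $u^T$-graph they sit at the levels prescribed by $\mathbf{i}_0^{\mathrm{op}}$, and the $\mathbf{t}^{\vee}_i$-gadgets contribute only parallel horizontal edges, so they never change which path families exist, only their weights.

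The heart of the argument is uniqueness of the path family for a fixed chamber minor. Since the row set $[1,j]$ is an initial segment of $\{1,\dots ,n\}$, any family $\{\pi_1,\dots ,\pi_j\}$ of vertex-disjoint paths in the planar graph from the sources $[1,j]$ to $K=w_{(k)}[1,j]=\{k_1<\dots <k_j\}$ is automatically non-crossing, so $\pi_s$ runs from source $s$ to sink $k_s$ and the $\pi_s$ are stacked bottom to top. I would then peel them off from the bottom: $\pi_1$ starts at level $1$, gains height only along slanted edges, lies below every other path, and must terminate at level $k_1$. An induction on the position in the reduced word shows this leaves no freedom, the point being that $\pi_1$ can use a slanted edge only if it lies to the right of its current position and using the ``wrong'' one makes it impossible to reach level $k_1$ below the higher paths; the combinatorial input needed here is the description of $K=w_{(k)}[1,j]$ as the set of labels of the $j$ strands below the $k$-th chamber of the corresponding pseudoline arrangement (Figures~\ref{fig ansatz arrangement i_0 dim 4} and \ref{fig ansatz arrangement i'_0^op dim 4}), together with the complementarity $i'_m=n-i_m$ relating $\mathbf{i}_0$ to $\mathbf{i}'_0$ (and $\mathbf{i}_0^{\mathrm{op}}$ to ${\mathbf{i}'_0}^{\mathrm{op}}$); after the top-to-bottom flip this complementarity induces, one lands in the classical situation of a chamber minor of a wiring diagram read in its own $\mathbf{x}_{\mathbf i}$-coordinates, where uniqueness of the path family is exactly what underlies the monomial chamber weights of \cite{BerensteinFominZelevinsky1996, FominZelevinsky1999}. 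Deleting $\pi_1$ and repeating with $\pi_2$, then $\pi_3$, and so on forces the whole family, so every chamber minor is a monomial in the edge weights; the $\mathbf{t}^{\vee}_i$-gadgets are inert for this count, so the same argument handles the $u^T$-graph.

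To turn this into the statement of the lemma the order of the two applications matters. The edge weights of the $u^T$-graph are the explicit monomials $z_m^{\pm 1}$, so every $\Delta^{\omega^{\vee}_j}_{w_{(k)}\omega^{\vee}_j}(u^T)$ is a monomial in $\boldsymbol z$; substituting into (\ref{eqn p_i in terms of chamber minors}) shows each $p_i$ is a monomial in $\boldsymbol z$, i.e.\ the change of coordinates (\ref{eqn form of u_1}) is monomial. Consequently the edge weights $p_1,\dots ,p_N$ of the $u_1$-graph are themselves monomials in $\boldsymbol z$, so every $\Delta^{\omega^{\vee}_j}_{w_{(k)}\omega^{\vee}_j}(u_1)$ is a monomial in $\boldsymbol z$; substituting into (\ref{eqn 1/m_i in terms of chamber minors}) shows each $\tfrac{1}{m_k}$, hence each $m_k$, is a monomial in $\boldsymbol z$, i.e.\ the change of coordinates (\ref{eqn form of b}) is monomial. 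This gives the lemma.

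The crux, and the only place where real work is required, is the uniqueness claim in the second paragraph: exhibiting one non-crossing family realizing a chamber column set is immediate, but ruling out every other one needs the precise chamber-label bookkeeping, and carrying it correctly through the flip $i\mapsto n-i$ together with the $\mathrm{op}$-reversal is fiddly. A fully self-contained alternative that avoids the flip is a direct induction on $k$: assuming the families attached to $w_{(0)},\dots ,w_{(k-1)}$ are already forced, insert the single crossing $s_{i_k}$ and check that it leaves at most one admissible way to extend them to $w_{(k)}$. This is longer to write but does not import the chamber-minor monomiality of \cite{BerensteinFominZelevinsky1996, FominZelevinsky1999}, and I expect it to be the safer route to write down in full.
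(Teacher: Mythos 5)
Your overall framework matches the paper's: both arguments run on Theorem \ref{thm minors from paths in graph} and reduce monomiality to uniqueness of the vertex-disjoint path family, and your last paragraph about the order of the two applications (first $u^T$ to make the $p_i$ monomial, then $u_1$ with edge weights $p_i$) is exactly right. But the uniqueness step, which you correctly identify as the crux, is where your argument has a genuine gap. The claim that the bottom path $\pi_1$ is forced, and your fallback appeal to ``the classical situation of a chamber minor of a wiring diagram read in its own $\mathbf{x}_{\mathbf{i}}$-coordinates, where uniqueness of the path family is exactly what underlies the monomial chamber weights'', rest on a false premise: chamber minors of $\mathbf{x}^{\vee}_{\mathbf{i}}(\boldsymbol{z})$ are not monomial in general. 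Already for $n=3$ and $\mathbf{i}=(1,2,1)$ the bottom chamber $\{2\}$ gives $\Delta^{1}_{2}\left(\mathbf{x}^{\vee}_1(z_1)\mathbf{x}^{\vee}_2(z_2)\mathbf{x}^{\vee}_1(z_3)\right)=z_1+z_3$: the path from source $1$ to sink $2$ may use either level-$1$ slanted edge, so it is not forced. Neither \cite{BerensteinFominZelevinsky1996} nor \cite{FominZelevinsky1999} asserts such monomiality; the Chamber Ansatz generically expresses the $t_k$ as ratios of minors that are honest sums of several monomials.

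What actually makes the minors monomial here is a coincidence specific to $\mathbf{i}_0$ and $\mathbf{i}'_0$: the reduced expression defining the graph differs from the one defining the ansatz arrangement, and the two interact favourably. The paper first proves (via the reflection $i\mapsto n-i$ composed with the $\mathrm{op}$-reversal, plus an induction peeling off the $1$-string) that the chamber labels for $\mathbf{i}_0$ are intervals $\{a,\ldots,b\}$ while those for ${\mathbf{i}'_0}^{\mathrm{op}}$ are of the form $\{1,\ldots,a\}$, $\{b,\ldots,n\}$ or their union; it then uses the nested structure of the graphs for $u_1=\mathbf{x}^{\vee}_{\mathbf{i}'_0}(\boldsymbol{p})$ and $u^T$ --- for instance, in the $u_1$-graph there is exactly one slanted edge joining levels $1$ and $2$, after which exactly one joining levels $2$ and $3$, and so on --- to force the path family for column sets of precisely these shapes. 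Your sketch supplies neither ingredient: you never establish the special form of the column sets, and your ``peeling'' induction needs, but does not prove, the uniqueness of the relevant slanted edges, which fails for general $\mathbf{i}$. Your proposed alternative induction on $k$ over the partial products $w_{(k)}$ would meet the same obstruction unless it too exploits the specific form of $\mathbf{i}_0$ and $\mathbf{i}'_0$.
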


\begin{proof}
In each application of the Chamber Ansatz, the relevant minors are those flag minors with column sets given by the chamber labels of the corresponding ansatz arrangements.
\begin{claim}
Let
    $$\begin{gathered}
    \mathbf{i}_0 = (i_1, \ldots, i_N) := (1,2, \ldots, n-1, 1,2 \ldots, n-2, \ldots, 1, 2, 1), \\
    \mathbf{i}'_0 = (i'_1, \ldots, i'_N) := (n-1, n-2, \ldots, 1, n-1, n-2, \ldots, 2, \ldots, n-1, n-2, n-1), \\
    {\mathbf{i}'_0}^{\text{op}} := (i'_N, \ldots, i'_1).
    \end{gathered}
    $$
Then
\begin{enumerate}
    \item Chamber labels of the ansatz arrangement for $\mathbf{i}_0$ are of the form $\{a, \ldots, b\}$.

    \item Chamber labels of the ansatz arrangement for ${\mathbf{i}'_0}^{\text{op}}$ are of the form $\{1, \ldots, a\}$, $\{b, \ldots, n\}$ or $\{1, \ldots, a\} \cup \{b, \ldots, n\}$.
\end{enumerate}
\end{claim}

Note that flag minors of $u_1$ and $u^T$ correspond to chamber labels of the ansatz arrangement for $\mathbf{i}_0$ and ${\mathbf{i}'_0}^{\text{op}}$ respectively.

\begin{proof}[Proof of Claim]
We may construct the reduced expression ${\mathbf{i}'_0}^{\text{op}}$ from $\mathbf{i}_0$ in two steps; first replace each $i_k$ in $\mathbf{i}_0$ with $n-i_k$ (this gives ${\mathbf{i}'_0}$) and then reverse the order. In terms of the ansatz arrangement, both of these operations result in reflections. Viewing the ansatz arrangement for $\mathbf{i}_0$ in the the plane, with the origin at the centre of the arrangement, we see that the first step above reflects the ansatz arrangement in the horizontal axis. Note that this causes each chamber label $S$ to change in the following way
    $$S \mapsto \{1, \ldots, n\}\setminus S.
    $$
In the second step we reverse the order of the reduced expression which gives a refection of the arrangement in the vertical axis and in particular there is no further change to the chamber labels. It follows that the two statements in the claim are equivalent and so we will consider only the $\mathbf{i}_0$ case.

Given the ansatz arrangement for $\mathbf{i}_0$ in dimension $n$, if we ignore the first $n-1$ crossings and the top pseudoline after this point (i.e. we remove the 1-string), the remaining graph has the form of the ansatz arrangement in dimension $n-1$, with labelling $2, \ldots n$ rather than $1, \ldots, n-1$. This is a consequence of the form of $\mathbf{i}_0$, namely that the reduced expression $\mathbf{i}_0$ in dimension $n-1$ is given by the last $\binom{n-1}{2}$ entries of the expression $\mathbf{i}_0$ in dimension $n$.

Since, by definition of the ansatz arrangement, the leftmost chamber labels are always given by sets of consecutive integers, it follows by induction that all chamber labels are of this form.
\end{proof}

We now use the graphs for $u_1$ and $u^T$ to see that the relevant flag minors are all monomial, namely by using Theorem \ref{thm minors from paths in graph} and showing that there is only one possible family of paths in each case.
\begin{enumerate}
    \item Minors of $u_1$:
    \begin{enumerate}
        \item Column sets of the form $\{1, \ldots, b\}$: Since $u_1 \in U^{\vee}_+$ these minors always equal $1$. We can see this from the graph for $u_1$ since the paths must be horizontal and the lack of non-trivial torus factors means that all horizontal edges have weight $1$.

        \item Column sets of the form $\{a, \ldots, b\}$ with $a>1$: Note that there is only one edge connecting the bottom two horizontal lines. After travelling up this edge there is only one possible path to the third line and so on. Thus there is only one path from the source $1$ to the sink $a$.

        In order for the paths in our family to remain vertex disjoint, the path from the source $2$ to the sink $a+1$ must take the first opportunity to travel upwards and indeed every possible opportunity to travel upwards until it reaches the line at height $a+1$. This imposes the same restriction on the path from the source $3$ to sink $a+2$ and so on for all paths in this family. In particular there is only one possible family of paths, thus these minors are monomial.
        For example see Figure \ref{Example family of paths for the proof of Lem CA minors monomial, u_1 case}.
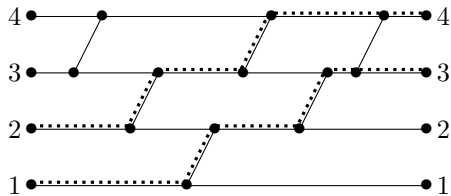
\begin{figure}[ht!]
\centering
\begin{tikzpicture}[scale=0.75]
    % paths
    \draw[very thick, dotted] (0,2.06) -- (1.71,2.06) -- (2.21,3.06) -- (3.71,3.06) -- (4.21,4.06) -- (7,4.06);
    \draw[very thick, dotted] (0,1.06) -- (2.71,1.06) -- (3.21,2.06) -- (4.71,2.06) -- (5.21,3.06) -- (7,3.06);

    % pseudolines
    \draw (0,4) -- (7,4);
    \draw (0,3) -- (7,3);
    \draw (0,2) -- (7,2);
    \draw (0,1) -- (7,1);

    \draw (0.75,3) -- (1.25,4);
    \draw (1.75,2) -- (2.25,3);
    \draw (2.75,1) -- (3.25,2);
    \draw (3.75,3) -- (4.25,4);
    \draw (4.75,2) -- (5.25,3);
    \draw (5.75,3) -- (6.25,4);
    % dots and stars
    \node at (0,1) {$\bullet$};
    \node at (0,2) {$\bullet$};
    \node at (0,3) {$\bullet$};
    \node at (0,4) {$\bullet$};
    \node at (7,1) {$\bullet$};
    \node at (7,2) {$\bullet$};
    \node at (7,3) {$\bullet$};
    \node at (7,4) {$\bullet$};

    \node at (0.75,3) {$\bullet$};
    \node at (1.25,4) {$\bullet$};
    \node at (1.75,2) {$\bullet$};
    \node at (2.25,3) {$\bullet$};
    \node at (2.75,1) {$\bullet$};
    \node at (3.25,2) {$\bullet$};

    \node at (3.75,3) {$\bullet$};
    \node at (4.25,4) {$\bullet$};
    \node at (4.75,2) {$\bullet$};
    \node at (5.25,3) {$\bullet$};
    \node at (5.75,3) {$\bullet$};
    \node at (6.25,4) {$\bullet$};

    % pseudoline labels
    \node at (-0.3,4) {$4$};
    \node at (-0.3,3) {$3$};
    \node at (-0.3,2) {$2$};
    \node at (-0.3,1) {$1$};

    \node at (7.3,4) {$4$};
    \node at (7.3,3) {$3$};
    \node at (7.3,2) {$2$};
    \node at (7.3,1) {$1$};

    % \node at (3.5,4.6) {$\vdots$}
\end{tikzpicture}
\caption{Example family of paths for the proof of Lemma \ref{lem our Chamber Ansatz minors are monomial}, $u_1$ case} \label{Example family of paths for the proof of Lem CA minors monomial, u_1 case}
\end{figure}
    \end{enumerate}

    \item Minors of $u^T$:
    \begin{enumerate}
        \item Column sets of the form $\{1, \ldots, a\}$: These minors are monomial since they correspond to horizontal paths in the graph for $u^T$.

        \item Column sets of the form $\{b, \ldots, n\}$:
        We first note that there is only one edge from the line at height $n-1$ to the $n$-th horizontal line. Before this point there is only one edge from the line at height $n-2$ to the line at height $n-1$. Working backwards in this way we see there is only one possible path from each source which ends at the sink $n$, and in particular only one such path from the source $n-b+1$.

        Similarly, in order to have vertex distinct paths there is now only one possible way to reach the sink $n-1$ from the source $n-b$. Continuing in this way we see that there is only one possible family of paths and so these minors are monomial.
        For example see Figure \ref{Example family of paths for the proof of Lem CA minors monomial, u^T case}.
\begin{figure}[ht!]
\centering
\begin{tikzpicture}[scale=0.75]
    % paths
    \draw[very thick, dotted] (0,2.06) -- (2.71,2.06) -- (3.21,3.06) -- (5.71,3.06) -- (6.21,4.06) -- (10,4.06);
    \draw[very thick, dotted] (0,1.06) -- (4.21,1.06) -- (4.71,2.06) -- (7.21,2.06) -- (7.71,3.06) -- (10,3.06);

    % pseudolines
    \draw (0,4) -- (10,4);
    \draw (0,3) -- (10,3);
    \draw (0,2) -- (10,2);
    \draw (0,1) -- (10,1);

    \draw (1.25,1) -- (1.75,2);
    \draw (2.75,2) -- (3.25,3);
    \draw (4.25,1) -- (4.75,2);
    \draw (5.75,3) -- (6.25,4);
    \draw (7.25,2) -- (7.75,3);
    \draw (8.75,1) -- (9.25,2);
    % dots and stars
    \node at (0,1) {$\bullet$};
    \node at (0,2) {$\bullet$};
    \node at (0,3) {$\bullet$};
    \node at (0,4) {$\bullet$};
    \node at (10,1) {$\bullet$};
    \node at (10,2) {$\bullet$};
    \node at (10,3) {$\bullet$};
    \node at (10,4) {$\bullet$};

    \node at (0.75,1) {$\bullet$};
    \node at (1.25,1) {$\bullet$};
    \node at (0.75,2) {$\bullet$};
    \node at (1.25,2) {$\bullet$};
    \node at (1.75,2) {$\bullet$};
    \node at (2.25,2) {$\bullet$};
    \node at (2.75,2) {$\bullet$};
    \node at (2.25,3) {$\bullet$};
    \node at (2.75,3) {$\bullet$};
    \node at (3.25,3) {$\bullet$};

    \node at (3.75,1) {$\bullet$};
    \node at (4.25,1) {$\bullet$};
    \node at (3.75,2) {$\bullet$};
    \node at (4.25,2) {$\bullet$};
    \node at (4.75,2) {$\bullet$};
    \node at (5.25,3) {$\bullet$};
    \node at (5.75,3) {$\bullet$};
    \node at (5.25,4) {$\bullet$};
    \node at (5.75,4) {$\bullet$};
    \node at (6.25,4) {$\bullet$};

    \node at (6.75,2) {$\bullet$};
    \node at (7.25,2) {$\bullet$};
    \node at (6.75,3) {$\bullet$};
    \node at (7.25,3) {$\bullet$};
    \node at (7.75,3) {$\bullet$};
    \node at (8.25,1) {$\bullet$};
    \node at (8.75,1) {$\bullet$};
    \node at (8.25,2) {$\bullet$};
    \node at (8.75,2) {$\bullet$};
    \node at (9.25,2) {$\bullet$};

    % pseudoline labels
    \node at (-0.3,4) {$4$};
    \node at (-0.3,3) {$3$};
    \node at (-0.3,2) {$2$};
    \node at (-0.3,1) {$1$};

    \node at (10.3,4) {$4$};
    \node at (10.3,3) {$3$};
    \node at (10.3,2) {$2$};
    \node at (10.3,1) {$1$};
\end{tikzpicture}
\caption{Example family of paths for the proof of Lemma \ref{lem our Chamber Ansatz minors are monomial}, $u^T$ case} \label{Example family of paths for the proof of Lem CA minors monomial, u^T case}
\end{figure}
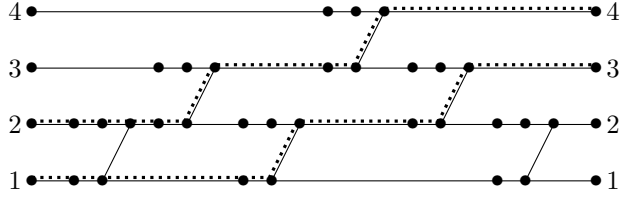

        \item Column sets of the form $\{1, \ldots, a\} \cup \{b, \ldots, n\}$: We see that these minors are all monomial by combining the previous two cases.
    \end{enumerate}
\end{enumerate}
\end{proof}

\subsection{The coordinate change} \label{subsec The coordinate change}

In this section we address two final lemmas needed for the proof of Theorem \ref{thm coord change}, both detailing coordinate changes. We then recall and prove this theorem.

\begin{lem} \label{lem coord change for u_1 p_i in terms of z_i}
For $k=1,\ldots, n-1$, $a=1,\ldots, n-k$ we have
    $$p_{s_k+a} =\begin{cases}
        z_{1+s_{a}} & \text{if } k=1 \\
        \frac{z_{k+s_{a}}}{z_{k-1+s_{a+1}}} & \text{otherwise}
        \end{cases}
    \qquad \text{where} \ \
    s_k := \sum_{j=1}^{k-1}(n-j).
    $$
\end{lem}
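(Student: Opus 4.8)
The plan is to obtain the formula by a direct computation built on Lemmas~\ref{lem form of u_1 and b factorisations} and~\ref{lem our Chamber Ansatz minors are monomial}. Recall from Lemma~\ref{lem form of u_1 and b factorisations} that each $p_i$ is the Chamber Ansatz quantity $t_m = A_m D_m/(B_m C_m)$ attached to the appropriate crossing of the ansatz arrangement for ${\mathbf{i}'_0}^{\mathrm{op}}$ applied to $u^T$, and from Lemma~\ref{lem our Chamber Ansatz minors are monomial} that every chamber minor $\Delta^{[1,\ell]}_S(u^T)$ occurring here is a monomial in $z_1,\dots,z_N$. So the whole task is to evaluate these monomial minors, substitute them into the ratio, cancel, and re-index. (To avoid clashing with the block index $k$ of the statement I write $m$ for the crossing index.)

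First I would fix the index dictionary. Unwinding the correspondence from Lemma~\ref{lem form of u_1 and b factorisations} that sends crossing $m$ to $p_{N-m+1}$, the coefficient $p_{s_k+a}$ is attached to the crossing numbered $N-(s_k+a)+1$, which sits at level $i'_{s_k+a}=n-a$; a short check shows that block $k=1$ of the $p$-indexing corresponds exactly to the last $n-1$ crossings of the arrangement, namely the last crossings at levels $n-1,n-2,\dots,1$. Next I would read the four chamber labels surrounding each relevant crossing off the ansatz arrangement for ${\mathbf{i}'_0}^{\mathrm{op}}$; by the Claim in the proof of Lemma~\ref{lem our Chamber Ansatz minors are monomial} these labels are of the form $\{1,\dots,a\}$, $\{b,\dots,n\}$ or $\{1,\dots,a\}\cup\{b,\dots,n\}$. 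I would then compute the needed flag minors of $u^T$ from the graph of Section~\ref{subsec Chamber Ansatz minors} via Theorem~\ref{thm minors from paths in graph}, using the path geometry already established in that proof: a column set $\{1,\dots,a\}$ forces horizontal paths and contributes a product of torus weights on the lowest $a$ lines; a column set $\{b,\dots,n\}$ forces each path to ascend at the first opportunity, contributing one ascending edge per path; and the mixed case is the product of the two. In particular all the rightmost chamber minors $\Delta^{[1,\ell]}_{\{n-\ell+1,\dots,n\}}(u^T)$ equal $1$, as already noted inside the proof of Lemma~\ref{lem [(bar w_0u^T)^-1]_0=I}. Substituting the resulting monomials into $t_m = A_m D_m/(B_m C_m)$ and cancelling, a generic crossing (block $k\geq 2$) leaves a ratio $z_\bullet/z_\bullet$, whereas for a block $k=1$ crossing the right chamber $C_m$ is a rightmost chamber, so its minor is $1$ and the ratio collapses to a single $z_\bullet$ — this is exactly the source of the case split in the statement. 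Translating the surviving subscripts through $s_k=\sum_{j=1}^{k-1}(n-j)$ then produces $z_{1+s_a}$ when $k=1$ and $z_{k+s_a}/z_{k-1+s_{a+1}}$ otherwise; the $n=3$ case can be matched against Examples~\ref{ex dim 3 z coords} and~\ref{ex b and maps in ideal coords dim 3}, and the $n=4$ case against the explicit $u_1=\mathbf{x}_{\mathbf{i}'_0}^{\vee}(z_1,z_4,z_6,z_2/z_4,z_5/z_6,z_3/z_5)$ computed at the start of Section~\ref{subsec Chamber Ansatz minors}, as a sanity check.

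The main obstacle is not any single minor but the bookkeeping: matching the crossing index, the position $s_k+a$, and the $z$-subscripts $1+s_a$, $k+s_a$, $k-1+s_{a+1}$ consistently, keeping track of which of the four chambers around a crossing lies where, and handling the extremal values of $a$ and $a+1$ together with the convention $s_1=0$. A tempting shortcut is induction on $n$, since deleting the top pseudoline of the arrangement recovers the dimension $n-1$ picture (as used in the proof of Lemma~\ref{lem our Chamber Ansatz minors are monomial}); however the minors of $u^T$ do not restrict cleanly under this deletion, so I expect the direct graph computation above to be the cleaner and safer route.
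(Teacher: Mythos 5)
Your proposal is correct and is essentially the proof the paper gives: apply the Chamber Ansatz for ${\mathbf{i}'_0}^{\mathrm{op}}$ at each crossing, evaluate the (monomial) chamber minors of $u^T$ from the path graph of Theorem \ref{thm minors from paths in graph}, cancel in $A_mD_m/(B_mC_m)$, and re-index via $s_k$, with the paper organising the bookkeeping through an explicit $(k,a)$-labelling of the chambers and a separate treatment of $a=1$ (where the chamber above the arrangement is $\{1,\dots,n\}$ with minor $1$). One small correction: for $k=1$ the collapse to a single $z_\bullet$ comes from \emph{both} the right chamber $C_m$ and the below chamber $D_m$ being rightmost chambers with minor $1$ (both carry the empty product over $i_m=k-1=0$), not from $C_m$ alone.
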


\begin{proof}
In the proof of Lemma \ref{lem form of u_1 and b factorisations} we used the Chamber Ansatz with ${\mathbf{i}'_0}^{\mathrm{op}}$ to show that $u_1 = \mathbf{x}_{i'_1}^{\vee}(p_1) \cdots \mathbf{x}_{i'_N}^{\vee}(p_N)$ where
    $$
    p_k = \frac{\prod_{j\neq i'_k} \Delta^{\omega^{\vee}_j}_{w_{(N-k+1)}\omega^{\vee}_j}(u^T)^{-a_{j,i'_k}}}{\Delta^{\omega^{\vee}_{i_k}}_{w_{(N-k+1)}\omega^{\vee}_{i'_k}}(u^T) \Delta^{\omega^{\vee}_{i'_k}}_{w_{(N-k)}\omega^{\vee}_{i'_k}}(u^T)}, \quad k=1, \ldots, N.
    $$
Since this expression is quite unpleasant, we instead work diagrammatically. In particular, we will use the ansatz arrangement for ${\mathbf{i}'_0}^{\mathrm{op}}$ and the graph for $u^T$.

To give a visual aid we recall Figure \ref{fig ansatz arrangement i'_0^op dim 4}, the dimension $4$ example of the ansatz arrangement for ${\mathbf{i}'_0}^{\mathrm{op}} = (i'_N, \ldots, i'_1)=(3,2,3,1,2,3)$:
\begin{center}
    \begin{tikzpicture}[scale=0.85]
            % pseudolines
            \draw (0,4) -- (0.75,4) -- (1.25,3) -- (1.75,3) -- (2.25,2) -- (3.75,2) -- (4.25,1) -- (7,1);
            \draw (0,3) -- (0.75,3) -- (1.25,4) -- (2.75,4) -- (3.25,3) -- (4.75,3) -- (5.25,2) -- (7,2);
            \draw (0,2) -- (1.75,2) -- (2.25,3) -- (2.75,3) -- (3.25,4) -- (5.75,4) -- (6.25,3) -- (7,3);
            \draw (0,1) -- (3.75,1) -- (4.25,2) -- (4.75,2) -- (5.25,3) -- (5.75,3) -- (6.25,4) -- (7,4);

            % dots and stars
            \node at (1,3.5) {$\bullet$};
            \node at (2,2.5) {$\bullet$};
            \node at (3,3.5) {$\bullet$};
            \node at (4,1.5) {$\bullet$};
            \node at (5,2.5) {$\bullet$};
            \node at (6,3.5) {$\bullet$};

            % pseudoline labels
            \node at (-0.3,4) {$4$};
            \node at (-0.3,3) {$3$};
            \node at (-0.3,2) {$2$};
            \node at (-0.3,1) {$1$};

            \node at (7.3,4) {$1$};
            \node at (7.3,3) {$2$};
            \node at (7.3,2) {$3$};
            \node at (7.3,1) {$4$};

            % chamber labels
            \node at (0.5,3.5) {$123$};
            \node at (2,3.5) {$124$};
            \node at (4.5,3.5) {$134$};
            \node at (6.5,3.5) {$234$};

            \node at (1,2.5) {$12$};
            \node at (3.5,2.5) {$14$};
            \node at (6,2.5) {$34$};

            \node at (2,1.5) {$1$};
            \node at (5.5,1.5) {$4$};
    \end{tikzpicture}
\end{center}

We now define a new labelling of the chambers of the ansatz arrangement for ${\mathbf{i}'_0}^{\mathrm{op}}$ in terms of pairs $(k,a)$. This is motivated by two facts:
\begin{enumerate}
    \item Any integer $1 \leq m \leq N:=\binom{n}{2}$ may be written as
        $$m = s_k +a
        $$
    for some unique pair $(k,a)$, with $k=1,\ldots, n-1$, $a=1,\ldots, n-k$.
    \item For such a pair $(k,a)$, the label of the chamber to the \emph{left} of the $\left(N-s_k-a+1\right)$-th crossing is given by
        $$\begin{cases}
        \{1, \ldots, k \} & \text{if } k+a=n, \\
        \{1, \ldots, k \}\cup\{k+a+1, \ldots, n \} & \text{if } k+a\neq n.
        \end{cases}
        $$
\end{enumerate}
It follows that we assign the pair $(k,a)$ to the chamber on the left of the $\left(N-s_k-a+1\right)$-th crossing. The rightmost chambers are labelled consistently, taking $k=0$. We leave the chambers above and below the pseudoline arrangement unlabelled.

Continuing with our dimension $4$ example, the chamber pairs $(k,a)$ are shown in Figure \ref{fig ansatz arrangement i'_0^op dim 4 (k,a) labels}. In general we consider the $(N-s_k-a+1)$-th crossing. The pairs $(k,a)$ for each surrounding chamber are given, where they are defined, by the diagram in Figure \ref{fig chamber pair labels i'_0^op ansatz arrangement}.
\begin{figure}[hb]
\centering
\begin{minipage}[b]{0.47\textwidth}
    \centering
\begin{tikzpicture}[scale=0.85]
    % pseudolines
    \draw (0,4) -- (0.75,4) -- (1.25,3) -- (1.75,3) -- (2.25,2) -- (3.75,2) -- (4.25,1) -- (7,1);
    \draw (0,3) -- (0.75,3) -- (1.25,4) -- (2.75,4) -- (3.25,3) -- (4.75,3) -- (5.25,2) -- (7,2);
    \draw (0,2) -- (1.75,2) -- (2.25,3) -- (2.75,3) -- (3.25,4) -- (5.75,4) -- (6.25,3) -- (7,3);
    \draw (0,1) -- (3.75,1) -- (4.25,2) -- (4.75,2) -- (5.25,3) -- (5.75,3) -- (6.25,4) -- (7,4);

    % dots and stars
    \node at (1,3.5) {$\bullet$};
    \node at (2,2.5) {$\bullet$};
    \node at (3,3.5) {$\bullet$};
    \node at (4,1.5) {$\bullet$};
    \node at (5,2.5) {$\bullet$};
    \node at (6,3.5) {$\bullet$};

    % pseudoline labels
    \node at (-0.3,4) {$4$};
    \node at (-0.3,3) {$3$};
    \node at (-0.3,2) {$2$};
    \node at (-0.3,1) {$1$};

    \node at (7.3,4) {$1$};
    \node at (7.3,3) {$2$};
    \node at (7.3,2) {$3$};
    \node at (7.3,1) {$4$};

    % chamber labels
    \node at (0.3,3.5) {$(3,1)$};
    \node at (2,3.5) {$(2,1)$};
    \node at (4.5,3.5) {$(1,1)$};
    \node at (6.7,3.5) {$(0,1)$};

    \node at (1,2.5) {$(2,2)$};
    \node at (3.5,2.5) {$(1,2)$};
    \node at (6,2.5) {$(0,2)$};

    \node at (2,1.5) {$(1,3)$};
    \node at (5.5,1.5) {$(0,3)$};
\end{tikzpicture}
\caption{The ansatz arrangement for ${\mathbf{i}'_0}^{\mathrm{op}}$ in dimension 4 with $(k,a)$ labelling} \label{fig ansatz arrangement i'_0^op dim 4 (k,a) labels}
\end{minipage}
    \hfill
\begin{minipage}[b]{0.47\textwidth}
    \centering
\begin{tikzpicture}
    %dots and stars
    \node at (1,0.5) {$\bullet$};

    \draw[<-, >=stealth, densely dashed, rounded corners=15] (1.08,0.52) -- (1.7, 0.9) -- (2.2,1.85);
    \draw (-0.3,1) -- (0.75,1) -- (1.25,0) -- (2.3,0);
    \draw (-0.3,0) -- (0.75,0) -- (1.25,1) -- (2.3,1);
    %\labels
    \node at (1,-0.3) {{$(k-1, a+1)$}};
    \node at (0.2,0.5) {{$(k, a)$}};
    \node at (2.1,0.5) {{$(k-1, a)$}};
    \node at (1,1.3) {{$(k, a-1)$}};
    \node at (1,2) {{$\left(N-s_k-a+1\right)$-th crossing}};
\end{tikzpicture}
\caption{Labelling of chamber pairs $(k,a)$ for ${\mathbf{i}'_0}^{\mathrm{op}}$ ansatz arrangement} \label{fig chamber pair labels i'_0^op ansatz arrangement}
\end{minipage}
\end{figure}

With this notation we return to the coordinate change given by the Chamber Ansatz. It requires us to compute the minors corresponding to the chamber labels, which we will do in terms of the pairs $(k,a)$. To help us with this we recall Figure \ref{The graph for u^T when n=4}, namely that in dimension $4$ the graph for
    $$u^T = \mathbf{t}_{1}^{\vee}(z_6^{-1})\mathbf{x}_{1}^{\vee}(z_6) \mathbf{t}_{2}^{\vee}(z_5^{-1})\mathbf{x}_{2}^{\vee}(z_5) \mathbf{t}_{1}^{\vee}(z_4^{-1})\mathbf{x}_{1}^{\vee}(z_4) \mathbf{t}_{3}^{\vee}(z_3^{-1})\mathbf{x}_{3}^{\vee}(z_3) \mathbf{t}_{2}^{\vee}(z_2^{-1})\mathbf{x}_{2}^{\vee}(z_2) \mathbf{t}_{1}^{\vee}(z_1^{-1}) \mathbf{x}_{1}^{\vee}(z_1)
    $$
is given by
\begin{center}
\begin{tikzpicture}[scale=0.85]
    \draw (0,4) -- (10,4);
    \draw (0,3) -- (10,3);
    \draw (0,2) -- (10,2);
    \draw (0,1) -- (10,1);

    \draw (1.25,1) -- (1.75,2);
    \draw (2.75,2) -- (3.25,3);
    \draw (4.25,1) -- (4.75,2);
    \draw (5.75,3) -- (6.25,4);
    \draw (7.25,2) -- (7.75,3);
    \draw (8.75,1) -- (9.25,2);
    % dots and stars
    \node at (0,1) {$\bullet$};
    \node at (0,2) {$\bullet$};
    \node at (0,3) {$\bullet$};
    \node at (0,4) {$\bullet$};
    \node at (10,1) {$\bullet$};
    \node at (10,2) {$\bullet$};
    \node at (10,3) {$\bullet$};
    \node at (10,4) {$\bullet$};

    \node at (0.75,1) {$\bullet$};
    \node at (1.25,1) {$\bullet$};
    \node at (0.75,2) {$\bullet$};
    \node at (1.25,2) {$\bullet$};
    \node at (1.75,2) {$\bullet$};
    \node at (2.25,2) {$\bullet$};
    \node at (2.75,2) {$\bullet$};
    \node at (2.25,3) {$\bullet$};
    \node at (2.75,3) {$\bullet$};
    \node at (3.25,3) {$\bullet$};

    \node at (3.75,1) {$\bullet$};
    \node at (4.25,1) {$\bullet$};
    \node at (3.75,2) {$\bullet$};
    \node at (4.25,2) {$\bullet$};
    \node at (4.75,2) {$\bullet$};
    \node at (5.25,3) {$\bullet$};
    \node at (5.75,3) {$\bullet$};
    \node at (5.25,4) {$\bullet$};
    \node at (5.75,4) {$\bullet$};
    \node at (6.25,4) {$\bullet$};

    \node at (6.75,2) {$\bullet$};
    \node at (7.25,2) {$\bullet$};
    \node at (6.75,3) {$\bullet$};
    \node at (7.25,3) {$\bullet$};
    \node at (7.75,3) {$\bullet$};
    \node at (8.25,1) {$\bullet$};
    \node at (8.75,1) {$\bullet$};
    \node at (8.25,2) {$\bullet$};
    \node at (8.75,2) {$\bullet$};
    \node at (9.25,2) {$\bullet$};

    % pseudoline labels
    \node at (-0.3,4) {$4$};
    \node at (-0.3,3) {$3$};
    \node at (-0.3,2) {$2$};
    \node at (-0.3,1) {$1$};

    \node at (10.3,4) {$4$};
    \node at (10.3,3) {$3$};
    \node at (10.3,2) {$2$};
    \node at (10.3,1) {$1$};

    % weight labels
    \node at (1,1.3) {$\frac{1}{z_6}$};
    \node at (1,2.2) {$z_6$};
    \node at (1.8,1.5) {$z_6$};
    \node at (2.5,2.3) {$\frac{1}{z_5}$};
    \node at (2.5,3.2) {$z_5$};
    \node at (3.3,2.5) {$z_5$};
    \node at (4,1.3) {$\frac{1}{z_4}$};
    \node at (4,2.2) {$z_4$};
    \node at (4.8,1.5) {$z_4$};
    \node at (5.5,3.3) {$\frac{1}{z_3}$};
    \node at (5.5,4.2) {$z_3$};
    \node at (6.3,3.5) {$z_3$};
    \node at (7,2.3) {$\frac{1}{z_2}$};
    \node at (7,3.2) {$z_2$};
    \node at (7.8,2.5) {$z_2$};
    \node at (8.5,1.3) {$\frac{1}{z_1}$};
    \node at (8.5,2.2) {$z_1$};
    \node at (9.3,1.5) {$z_1$};
\end{tikzpicture}
\end{center}

To compute the minors corresponding to the chamber labels, we first use (\ref{eqn wt matrix z coords}) from Section \ref{subsec The form of the weight matrix} to see that for $k=1,\ldots, n-1$ we have
    $$\Delta^{\{1, \ldots, k\}}_{\{1, \ldots, k\}}(u^T) = \left(\prod_{\substack{1\leq m \leq N \\ i_m \in \{1, \ldots, k\}}} \frac{1}{z_m} \right) \left( \prod_{\substack{1\leq m \leq N \\ i_m \in \{1, \ldots, k-1\}}} z_m \right)
        = \prod_{\substack{1\leq m \leq N \\ i_m = k}} \frac{1}{z_m}.
    $$
Note that if $a=1$ then the chamber in the ansatz arrangement above the relevant crossings has label $\{1, \ldots, n\}$, with corresponding minor
    $$\Delta^{\{1, \ldots, n\}}_{\{1, \ldots, n\}}(u^T) = \left(\prod_{\substack{1\leq m \leq N \\ i_m \in \{1, \ldots, n-1\}}} \frac{1}{z_m} \right) \left( \prod_{\substack{1\leq m \leq N \\ i_m \in \{1, \ldots, n-1\}}} z_m \right)
        = 1.
    $$

For the remaining minors, using the graph for $u^T$ and Theorem \ref{thm minors from paths in graph} we see that
    $$\Delta^{\{1, \ldots, n-a\}}_{\{1, \ldots, k\}\cup\{k+a+1, \ldots, n \}}(u^T) = \Delta^{\{1, \ldots, k\}}_{\{1, \ldots, k\}}(u^T) \Delta^{\{k+1, \ldots, n-a\}}_{\{k+a+1, \ldots, n \}}(u^T).
    $$

The minors $\Delta^{\{k+1, \ldots, n-a\}}_{\{k+a+1, \ldots, n \}}(u^T)$ correspond to paths in the graph for $u^T$ which form `staircases' so their weights have contributions from both horizontal and diagonal edges.

The proof of Lemma \ref{lem our Chamber Ansatz minors are monomial} implies that on each path there are no horizontal edges with non-trivial weight after the last diagonal edge has been traversed. Additionally there is only one horizontal edge with non-trivial weight between each diagonal `step', namely the edge directly preceding the diagonal in Figure \ref{Labelled line segments in graphs in the proof of Lem coord change for u_1 p_i in terms of z_i}.
\begin{figure}[ht!]
\centering
\begin{tikzpicture}[scale=0.85]
    \draw (0,4) -- (3,4);
    \draw (0,3) -- (3,3);
    \draw (0,2) -- (3,2);
    \draw (0,1) -- (3,1);

    \draw (1.5,2) -- (2,3);

    % dots and stars
    \node at (1,2) {$\bullet$};
    \node at (1.5,2) {$\bullet$};
    \node at (1,3) {$\bullet$};
    \node at (1.5,3) {$\bullet$};
    \node at (2,3) {$\bullet$};

    % weight labels
    \node at (1.25,2.35) {$\frac{1}{z_m}$};
    \node at (1.25,3.25) {$z_m$};
    \node at (2.1,2.5) {$z_m$};
\end{tikzpicture}
\caption{Labelled line segments in graphs in the proof of Lemma \ref{lem coord change for u_1 p_i in terms of z_i}} \label{Labelled line segments in graphs in the proof of Lem coord change for u_1 p_i in terms of z_i}
\end{figure}
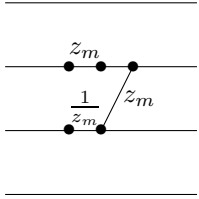

In particular, on each path the contributions from the diagonal edges and the horizontal edges directly preceding them will always cancel. So, roughly speaking, for each path we only need to consider the contributions from the horizontal edges which occur well before the first diagonal `step'; that is, if the first diagonal edge in the path occurs with weight $z_p$ then the only factors in the evaluation of the minor will be $\frac{1}{z_m}$, ${z_m}$ with $m>p$ (note that the ordering of the $i_m$ is reversed in the graph of $u^T$ due to taking the transpose):
    $$\begin{aligned}
    \Delta^{\{k+1, \ldots, n-a\}}_{\{k+a+1, \ldots, n \}}(u^T)
        &= \left( \prod_{\substack{m > i_m+\sum_{j=1}^{a-1} (n-j) \\ i_m\in \{k+1, \ldots, n-a\}}} \frac{1}{z_m} \right) \left( \prod_{\substack{m > i_m+\sum_{j=1}^{a-1} (n-j) \\  i_m\in \{k, \ldots, n-a-1\}}} z_m \right) \\
        &= \left( \prod_{\substack{m > n-a+s_a \\ i_m=n-a}} \frac{1}{z_m} \right) \left( \prod_{\substack{m > k+s_a \\ i_m=k}} z_m \right).
    \end{aligned}
    $$
Noticing that the last occurrence of $i_m=n-a$ in the reduced expression $\mathbf{i}_0$, is exactly when $m=n-a+s_{a}$, we obtain
    $$\Delta^{\{k+1, \ldots, n-a\}}_{\{k+a+1, \ldots, n \}}(u^T)
        = \prod_{\substack{m > k+s_{a} \\ i_m=k}} z_m.
    $$
Additionally we see that this minor is equal to $1$ if $k=0$, since $i_m\geq 1$ by definition.

Combining the above we obtain
    $$\begin{aligned}
    \Delta^{\{1, \ldots, n-a\}}_{\{1, \ldots, k\}\cup\{k+a+1, \ldots, n \}}(u^T)
        &= \Delta^{\{1, \ldots, k\}}_{\{1, \ldots, k\}}(u^T) \Delta^{\{k+1, \ldots, n-a\}}_{\{k+a+1, \ldots, n \}}(u^T) \\
        &= \left( \prod_{\substack{1\leq m \leq N \\ i_m = k}} \frac{1}{z_m} \right) \left( \prod_{\substack{m > k+s_{a} \\ i_m=k}} z_m \right) =  \prod_{\substack{m \leq k+s_{a} \\ i_m=k}} \frac{1}{z_m}.
    \end{aligned}
    $$

We now use the Chamber Ansatz to compute the $p_{s_k+a}$ coordinates, considering separately the case when $a=1$.

For $k=1,\ldots, n-1$, $a=2,\ldots, n-k$ we have
    \begin{equation}
    p_{s_k+a}
        = \frac{\left(\prod\limits_{\substack{m\leq k+s_{a-1} \\ i_{m}=k}} \frac{1}{z_{m}}\right) \left(\prod\limits_{\substack{m\leq k-1+s_{a+1} \\ i_{m}=k-1}} \frac{1}{z_{m}}\right)}{\left(\prod\limits_{\substack{m\leq k+s_{a} \\ i_m=k}} \frac{1}{z_m}\right) \left(\prod\limits_{\substack{ m\leq k-1+s_{a} \\ i_{m}=k-1}} \frac{1}{z_{m}}\right)}
        = \frac{\prod\limits_{\substack{k+s_{a-1} < m\leq k+s_{a} \\ i_m=k}} z_m }{\prod\limits_{\substack{k-1+s_{a} < m\leq k-1+s_{a+1} \\ i_{m}=k-1}} z_m}, \label{eqn t' coord general case}
    \end{equation}
We note that each of the products on the last line has exactly one term, namely when $m$ is equal to the upper bound. Thus we obtain
    $$p_{s_k+a} = \begin{cases}
        z_{k+s_{a}} & \text{if } k=1, \\
        \frac{z_{k+s_{a}}}{z_{k-1+s_{a+1}}} & \text{otherwise}.
        \end{cases}
    $$

We apply a similar argument in the case when $a=1$. For $k=1,\ldots, n-1$ the minor corresponding to the chamber above the pseudoline arrangement is equal to $1$ so we have
    $$\begin{aligned}
    p_{s_k+a}
        = \frac{\prod\limits_{\substack{m\leq k-1+s_{a+1} \\ i_{m}=k-1}} \frac{1}{z_{m}}}{\left(\prod\limits_{\substack{m\leq k+s_{a} \\ i_m=k}} \frac{1}{z_m}\right) \left(\prod\limits_{\substack{ m\leq k-1+s_{a} \\ i_{m}=k-1}} \frac{1}{z_{m}}\right)}
        &= \frac{\prod\limits_{\substack{m\leq k \\ i_m=k}} z_m }{\prod\limits_{\substack{k-1< m\leq k-1+(n-1) \\ i_{m}=k-1}} z_m} \\
        &= \begin{cases}
            z_{k} & \text{if } k=1 \\
            \frac{z_{k}}{z_{k-1+(n-1)}} & \text{otherwise}
            \end{cases}\\
        &= \begin{cases}
            z_{k+s_{a}} & \text{if } k=1 \\
            \frac{z_{k+s_{a}}}{z_{k-1+s_{a+1}}} & \text{otherwise}
            \end{cases}
    \end{aligned}
    $$
since $s_{a+1}=s_2=n-1$ and $s_{a}=s_1=0$.

Note that the need to consider the $a=1$ case separately is highlighted in (\ref{eqn t' coord general case}); if $a=1$ the numerator would be
    $$\prod\limits_{\substack{k+s_{0} < m\leq k+s_{1} \\ i_m=k}} z_m = \prod\limits_{\substack{k < m\leq k \\ i_m=k}} z_m =1
    $$
whereas in fact we should have $z_k=z_{k+s_{a}}$ in the numerator.

Combining the above we obtain the desired coordinate change:
    $$p_{s_k+a} =\begin{cases}
        z_{k+s_{a}} & \text{if } k=1, \\
        \frac{z_{k+s_{a}}}{z_{k-1+s_{a+1}}} & \text{otherwise}.
    \end{cases}
    $$
\end{proof}

\begin{lem} \label{lem coord change for b m_i in terms of p_i}
For $k=1,\ldots, n-1$, $a=1,\ldots, n-k$ we have
    $$\frac{1}{m_{s_k+a}}= \begin{cases}
        \prod\limits_{\substack{r=1,\ldots, k}} \frac{1}{p_{s_{r+1}-a+1}} &\text{if } k=1 \\
        \frac{\prod\limits_{\substack{r=1,\ldots, k-1}} p_{s_{r+1}-a} }{\prod\limits_{\substack{r=1,\ldots, k}} p_{s_{r+1}-a+1}} &\text{otherwise}
    \end{cases}
    \qquad \text{where} \ \
    s_k := \sum_{j=1}^{k-1}(n-j).
    $$
\end{lem}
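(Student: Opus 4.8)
The plan is to run the second application of the Chamber Ansatz from the proof of Lemma~\ref{lem form of u_1 and b factorisations} diagrammatically, in close analogy with the computation of the $p_i$ in the proof of Lemma~\ref{lem coord change for u_1 p_i in terms of z_i}. By (\ref{eqn form of b}) and (\ref{eqn 1/m_i in terms of chamber minors}), $\tfrac{1}{m_{s_k+a}}$ equals a ratio $\tfrac{A\,D}{B\,C}$ of flag minors of $u_1 = \mathbf{x}^{\vee}_{\mathbf{i}'_0}(\boldsymbol{p})$, where $A,B,C,D$ are the minors attached to the four chambers lying above, below, left and right of the $(s_k+a)$-th crossing of the ansatz arrangement for $\mathbf{i}_0$. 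By Lemma~\ref{lem our Chamber Ansatz minors are monomial} each such minor is a monomial, computable from the graph for $u_1$ via Theorem~\ref{thm minors from paths in graph}; so the whole argument reduces to (i) reading off the four chamber labels and (ii) evaluating the corresponding minors as explicit monomials in the $p_i$.

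For (i) I would trace the pseudolines of the $\mathbf{i}_0$-arrangement. Just before the $k$-th block the strands from bottom to top are $k,k+1,\ldots,n,k-1,\ldots,1$, and the first $a-1$ crossings of the block move strand $k$ to height $a$; the $(s_k+a)$-th crossing (at level $a$) then swaps it past strand $k+a$. Hence the surrounding chambers carry the consecutive-integer labels $\{k,\ldots,k+a\}$ (above), $\{k,\ldots,k+a-1\}$ (left), $\{k+1,\ldots,k+a\}$ (right) and $\{k+1,\ldots,k+a-1\}$ (below, empty when $a=1$). Writing $Q(c,l):=\Delta^{\{1,\ldots,l\}}_{\{c,c+1,\ldots,c+l-1\}}(u_1)$ for the flag minor on consecutive columns, this gives
\[
\frac{1}{m_{s_k+a}} \;=\; \frac{Q(k,a+1)\,Q(k+1,a-1)}{Q(k,a)\,Q(k+1,a)}.
\]

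For (ii), by the argument within the proof of Lemma~\ref{lem our Chamber Ansatz minors are monomial} the unique family of vertex-disjoint paths in the graph for $u_1$ computing $Q(c,l)$ joins source $i$ to sink $c+i-1$, each path rising by $c-1$ and taking, at each level, the first diagonal edge that keeps the family disjoint. Since the blocks of $\mathbf{i}'_0$ place the level-$l$ diagonal of the $r$-th block at the crossing $s_r + (n-l)$, one checks that the $i$-th path uses exactly the diagonals labelled $p_{s_{r+1}-i+1}$, $r=1,\ldots,c-1$, so that
\[
Q(c,l) \;=\; \prod_{i=1}^{l}\ \prod_{r=1}^{c-1} p_{s_{r+1}-i+1}.
\]
Plugging this into the displayed ratio, the factors telescope: $Q(k,a+1)/Q(k,a)=\prod_{r=1}^{k-1}p_{s_{r+1}-a}$ and $Q(k+1,a-1)/Q(k+1,a)=\big(\prod_{r=1}^{k}p_{s_{r+1}-a+1}\big)^{-1}$, which is exactly the claimed formula. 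The $k=1$ case is the degenerate instance: there $Q(1,a+1)=Q(1,a)=1$ since those columns of the unipotent $u_1$ coincide with its first rows, and the numerator product is empty, leaving $1/m_{s_1+a}=1/p_{s_2-a+1}$.

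The main obstacle will be the two combinatorial identifications: pinning down the four chamber labels around an arbitrary crossing of the $\mathbf{i}_0$-arrangement, and --- the technical heart --- proving the closed form for $Q(c,l)$ by matching the diagonal-edge labels of the (already known to be unique) path family in the graph for $u_1$ with the index set $\{s_{r+1}-i+1 : 1\le i\le l,\ 1\le r\le c-1\}$. Both are best handled by induction on the block structure, removing the top strand as in the Claim inside the proof of Lemma~\ref{lem our Chamber Ansatz minors are monomial}; once $Q(c,l)$ is established the final cancellation is purely formal.
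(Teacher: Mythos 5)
Your proposal is correct and follows essentially the same route as the paper: both identify the four chamber labels around the $(s_k+a)$-th crossing of the $\mathbf{i}_0$-arrangement as the consecutive intervals $\{k,\ldots,k+a\}$, $\{k,\ldots,k+a-1\}$, $\{k+1,\ldots,k+a\}$, $\{k+1,\ldots,k+a-1\}$, evaluate the corresponding flag minors of $u_1$ via the unique vertex-disjoint path family in its graph (your $Q(c,l)=\prod_{i=1}^{l}\prod_{r=1}^{c-1}p_{s_{r+1}-i+1}$ is exactly the paper's $\Delta^{\{1,\ldots,a\}}_{\{k+1,\ldots,k+a\}}(u_1)$ formula), and finish with the same telescoping cancellation. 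The only difference is notational bookkeeping — the paper uses a $(k,a)$-labelling of chambers where you trace strands directly.
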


\begin{proof}

In the proof of Lemma \ref{lem form of u_1 and b factorisations} we used the Chamber Ansatz with $\mathbf{i}_0$ to show that  $b B^{\vee}_+ = \mathbf{y}_{i_1}\left(\frac{1}{m_1}\right) \cdots \mathbf{y}_{i_N}\left(\frac{1}{m_N}\right)B^{\vee}_+$ where
    $$
    \frac{1}{m_k} = \frac{\prod_{j\neq i_k} \Delta^{\omega_j}_{w_{(k)}\omega_j}(u_1)^{-a_{j,i_k}}}{\Delta^{\omega_{i_k}}_{w_{(k)}\omega_{i_k}}(u_1) \Delta^{\omega_{i_k}}_{w_{(k-1)}\omega_{i_k}}(u_1)}, \quad k=1, \ldots, N.
    $$
Similar to the proof of the first coordinate change, since this expression is unpleasant we again work diagrammatically. In particular, we will use the ansatz arrangement for $\mathbf{i}'_0$ and the graph for $u_1$.

As before, to give a visual aid we recall Figure \ref{fig ansatz arrangement i_0 dim 4}, the dimension $4$ example of the ansatz arrangement for $\mathbf{i}_0=(1,2,3,1,2,1)$:
\begin{center}
    \begin{tikzpicture}[scale=0.85]
        % pseudolines
        \draw (0,4) -- (2.75,4) -- (3.25,3) -- (4.75,3) -- (5.25,2) -- (5.75,2) -- (6.25,1) -- (7,1);
        \draw (0,3) -- (1.75,3) -- (2.25,2) -- (3.75,2) -- (4.25,1) -- (5.75,1) -- (6.25,2) -- (7,2);
        \draw (0,2) -- (0.75,2) -- (1.25,1) -- (3.75,1) -- (4.25,2) -- (4.75,2) -- (5.25,3) -- (7,3);
        \draw (0,1) -- (0.75,1) -- (1.25,2) -- (1.75,2) -- (2.25,3) -- (2.75,3) -- (3.25,4) -- (7,4);

        % dots and stars
        \node at (1,1.5) {$\bullet$};
        \node at (2,2.5) {$\bullet$};
        \node at (3,3.5) {$\bullet$};
        \node at (4,1.5) {$\bullet$};
        \node at (5,2.5) {$\bullet$};
        \node at (6,1.5) {$\bullet$};

        % pseudoline labels
        \node at (-0.3,4) {$4$};
        \node at (-0.3,3) {$3$};
        \node at (-0.3,2) {$2$};
        \node at (-0.3,1) {$1$};

        \node at (7.3,4) {$1$};
        \node at (7.3,3) {$2$};
        \node at (7.3,2) {$3$};
        \node at (7.3,1) {$4$};

        % chamber labels
        \node at (1.5,3.5) {$123$};
        \node at (5,3.5) {$234$};

        \node at (1,2.5) {$12$};
        \node at (3.5,2.5) {$23$};
        \node at (6,2.5) {$34$};

        \node at (0.5,1.5) {$1$};
        \node at (2.5,1.5) {$2$};
        \node at (5,1.5) {$3$};
        \node at (6.5,1.5) {$4$};
    \end{tikzpicture}
\end{center}

Similar to the $u^T$ case, we give new labelling of the chambers of the ansatz arrangement in terms of pairs $(k,a)$. This time, for $k=1,\ldots, n-1$, $a=1,\ldots, n-k$, the label of the chamber to the \emph{right} of the $(s_k+a)$-th crossing is given by
    $$\{ k+1, \ldots, k+a \}.
    $$
It is then natural to assign the pair $(k,a)$ to the chamber on the right of the $(s_k+a)$-th crossing. The leftmost chambers will be labelled consistently, taking $k=0$. Again we leave the chambers above and below the pseudoline arrangement unlabelled.

Continuing our dimension $4$ example, the chamber pairs $(k,a)$ are shown in Figure \ref{fig ansatz arrangement i_0 dim 4 (k,a) labels}. In general, the pairs $(k,a)$ for each chamber surrounding the $(s_k+a)$-th crossing, where there are defined, are given by the diagram in Figure \ref{fig chamber pair labels i_0 ansatz arrangement}.
\begin{figure}[ht!]
\centering
\begin{minipage}[b]{0.47\textwidth}
    \centering
\begin{tikzpicture}[scale=0.85]
    % pseudolines
    \draw (0,4) -- (2.75,4) -- (3.25,3) -- (4.75,3) -- (5.25,2) -- (5.75,2) -- (6.25,1) -- (7,1);
    \draw (0,3) -- (1.75,3) -- (2.25,2) -- (3.75,2) -- (4.25,1) -- (5.75,1) -- (6.25,2) -- (7,2);
    \draw (0,2) -- (0.75,2) -- (1.25,1) -- (3.75,1) -- (4.25,2) -- (4.75,2) -- (5.25,3) -- (7,3);
    \draw (0,1) -- (0.75,1) -- (1.25,2) -- (1.75,2) -- (2.25,3) -- (2.75,3) -- (3.25,4) -- (7,4);

    % dots and stars
    \node at (1,1.5) {$\bullet$};
    \node at (2,2.5) {$\bullet$};
    \node at (3,3.5) {$\bullet$};
    \node at (4,1.5) {$\bullet$};
    \node at (5,2.5) {$\bullet$};
    \node at (6,1.5) {$\bullet$};

    % pseudoline labels
    \node at (-0.3,4) {$4$};
    \node at (-0.3,3) {$3$};
    \node at (-0.3,2) {$2$};
    \node at (-0.3,1) {$1$};

    \node at (7.3,4) {$1$};
    \node at (7.3,3) {$2$};
    \node at (7.3,2) {$3$};
    \node at (7.3,1) {$4$};

    % chamber labels
    \node at (1.5,3.5) {$(0,3)$};
    \node at (5,3.5) {$(1,3)$};

    \node at (1,2.5) {$(0,2)$};
    \node at (3.5,2.5) {$(1,2)$};
    \node at (6,2.5) {$(2,2)$};

    \node at (0.3,1.5) {$(0,1)$};
    \node at (2.5,1.5) {$(1,1)$};
    \node at (5,1.5) {$(2,1)$};
    \node at (6.7,1.5) {$(3,1)$};
\end{tikzpicture}
\caption{The ansatz arrangement for $\mathbf{i}_0$ in dimension 4 with $(k,a)$ labelling} \label{fig ansatz arrangement i_0 dim 4 (k,a) labels}
\end{minipage}
    \hfill
\begin{minipage}[b]{0.47\textwidth}
    \centering
\begin{tikzpicture}
    %dots and stars
    \node at (1,0.5) {$\bullet$};

    \draw[<-, >=stealth, densely dashed, rounded corners=15] (1.08,0.52) -- (2, 0.9) -- (2.2,1.85);
    \draw (-0.3,1) -- (0.75,1) -- (1.25,0) -- (2.3,0);
    \draw (-0.3,0) -- (0.75,0) -- (1.25,1) -- (2.3,1);
    %\labels
    \node at (1,-0.3) {{$(k, a-1)$}};
    \node at (-0.2,0.5) {{$(k-1, a)$}};
    \node at (1.9,0.5) {{$(k, a)$}};
    \node at (1,1.3) {{$(k-1, a+1)$}};
    \node at (1,2) {{$(s_k+a)$-th crossing}};
\end{tikzpicture}
\caption{Labelling of chamber pairs $(k,a)$ for $\mathbf{i}_0$ ansatz arrangement} \label{fig chamber pair labels i_0 ansatz arrangement}
\end{minipage}
\end{figure}

Before applying the Chamber Ansatz we again compute minors in terms of the pairs $(k,a)$. We have just seen that in dimension $4$ the matrix $u_1$ is given by
    $$u_1=\mathbf{x}_{\mathbf{i}'_0}^{\vee}\left(z_1, z_4, z_6, \frac{z_2}{z_4}, \frac{z_5}{z_6}, \frac{z_3}{z_5}\right).
    $$
This is quite messy however, so for ease of computation we will continue to work for with the $p_j$ coordinates; $u_1=\mathbf{x}_{\mathbf{i}'_0}^{\vee}(p_1, \ldots, p_6)$. The corresponding graph for $u_1$ is given in Figure \ref{The graph for u_1 when n=4, p coords}.
\begin{figure}[ht!]
\centering
    \begin{tikzpicture}[scale=0.85]
        % pseudolines
        \draw (0,4) -- (7,4);
        \draw (0,3) -- (7,3);
        \draw (0,2) -- (7,2);
        \draw (0,1) -- (7,1);

        \draw (0.75,3) -- (1.25,4);
        \draw (1.75,2) -- (2.25,3);
        \draw (2.75,1) -- (3.25,2);
        \draw (3.75,3) -- (4.25,4);
        \draw (4.75,2) -- (5.25,3);
        \draw (5.75,3) -- (6.25,4);
        % dots and stars
        \node at (0,1) {$\bullet$};
        \node at (0,2) {$\bullet$};
        \node at (0,3) {$\bullet$};
        \node at (0,4) {$\bullet$};
        \node at (7,1) {$\bullet$};
        \node at (7,2) {$\bullet$};
        \node at (7,3) {$\bullet$};
        \node at (7,4) {$\bullet$};

        \node at (0.75,3) {$\bullet$};
        \node at (1.25,4) {$\bullet$};
        \node at (1.75,2) {$\bullet$};
        \node at (2.25,3) {$\bullet$};
        \node at (2.75,1) {$\bullet$};
        \node at (3.25,2) {$\bullet$};

        \node at (3.75,3) {$\bullet$};
        \node at (4.25,4) {$\bullet$};
        \node at (4.75,2) {$\bullet$};
        \node at (5.25,3) {$\bullet$};
        \node at (5.75,3) {$\bullet$};
        \node at (6.25,4) {$\bullet$};

        % pseudoline labels
        \node at (-0.3,4) {$4$};
        \node at (-0.3,3) {$3$};
        \node at (-0.3,2) {$2$};
        \node at (-0.3,1) {$1$};

        \node at (7.3,4) {$4$};
        \node at (7.3,3) {$3$};
        \node at (7.3,2) {$2$};
        \node at (7.3,1) {$1$};

        % weight labels
        \node at (1.3,3.5) {$p_1$};
        \node at (2.3,2.5) {$p_2$};
        \node at (3.3,1.5) {$p_3$};
        \node at (4.3,3.5) {$p_4$};
        \node at (5.3,2.5) {$p_5$};
        \node at (6.3,3.5) {$p_6$};
    \end{tikzpicture}
\caption{The graph for $u_1$ when $n=4$, $\boldsymbol{p}$ coordinates} \label{The graph for u_1 when n=4, p coords}
\end{figure}

From the proof of Lemma \ref{lem our Chamber Ansatz minors are monomial} we recall that there is only one family of vertex disjoint paths in the graph for $u_1$ from the set of sources $\{1, \ldots, a\}$ to the set of sinks $\{k+1, \ldots, k+a\}$. Within this family, the weight of the path from $b \in \{1,\ldots,a\}$ to $k+b$ is given by
    $$\prod_{\substack{m= (n-b)+\sum_{j=2}^r (n-j) \\ r=1,\ldots, k}} p_m = \prod_{\substack{r=1,\ldots, k}} p_{s_{r+1}-b+1}.
    $$

Taking the product over this family, that is, the product of the weights of the paths from $\{1, \ldots, a \}$ to $\{k+1, \ldots, k+a\}$, we obtain the desired minor
    $$\Delta^{\{1, \ldots, a \}}_{\{k+1, \ldots, k+a\}}(u_1)
        = \prod_{\substack{r=1,\ldots, k \\ b=1, \ldots, a}} p_{s_{r+1}-b+1}.
    $$

Note that since $u_1 \in U^{\vee}_+$, the minors corresponding to the leftmost chambers and the chamber above the pseudoline arrangement are all equal to $1$.

We now use the Chamber Ansatz to compute the $\frac{1}{m_j}$ coordinates.
For $k=1,\ldots, n-1$, $a=1,\ldots, n-k$ we have
    \begin{align}
    \frac{1}{m_{s_k+a}}
        &= \frac{\left(\prod\limits_{\substack{r=1,\ldots, k-1 \\ b=1, \ldots, a+1}} p_{s_{r+1}-b+1}\right) \left(\prod\limits_{\substack{r=1,\ldots, k \\ b=1, \ldots, a-1}} p_{s_{r+1}-b+1}\right)}{\left(\prod\limits_{\substack{r=1,\ldots, k-1 \\ b=1, \ldots, a}} p_{s_{r+1}-b+1}\right) \left(\prod\limits_{\substack{r=1,\ldots, k \\ b=1, \ldots, a}} p_{s_{r+1}-b+1}\right)} \notag \\
        &= \begin{cases}
        \prod\limits_{\substack{r=1,\ldots, k}} \frac{1}{p_{s_{r+1}-a+1}} &\text{if } k=1, \\
        \frac{\prod\limits_{\substack{r=1,\ldots, k-1}} p_{s_{r+1}-a} }{\prod\limits_{\substack{r=1,\ldots, k}} p_{s_{r+1}-a+1}} &\text{otherwise.}
    \end{cases} \label{eqn m coords in terms of p}
    \end{align}

\end{proof}

We now ready to prove Theorem \ref{thm coord change}, the statement of which we recall here:
\begin{thm*}
For $k=1,\ldots, n-1$, $a=1,\ldots, n-k$ we have
    $$m_{s_k+a} = \begin{cases}
            z_{1+s_{n-a}} &\text{if } k=1 \\
            \frac{z_{k+s_{n-k-a+1}}}{z_{k-1+s_{n-k-a+1}}} &\text{otherwise}
            \end{cases}
    \qquad \text{where} \ \
    s_k := \sum_{j=1}^{k-1}(n-j).
    $$
\end{thm*}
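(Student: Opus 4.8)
The proof will be a straightforward composition of the two coordinate changes already established in this subsection, namely Lemma~\ref{lem coord change for u_1 p_i in terms of z_i} (expressing the $p_j$ in terms of the $z_i$) and Lemma~\ref{lem coord change for b m_i in terms of p_i} (expressing $\frac{1}{m_{s_k+a}}$ as a quotient of products of the $p_j$). The plan is to substitute the first formula into the second and then simplify: for $k=1$ the result drops out immediately, while for $k\ge 2$ the two products collapse telescopically to single monomials in the $z_i$.

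First I would reindex. Every $p$-factor occurring in Lemma~\ref{lem coord change for b m_i in terms of p_i} has the shape $p_{s_{r+1}-a+1}$ or $p_{s_{r+1}-a}$, and to apply Lemma~\ref{lem coord change for u_1 p_i in terms of z_i} these subscripts must first be written in the canonical form $s_{k'}+a'$. Using $s_{r+1}=s_r+(n-r)$ gives $s_{r+1}-a+1=s_r+\bigl((n-r)-a+1\bigr)$ and $s_{r+1}-a=s_r+\bigl((n-r)-a\bigr)$, and the ranges $1\le r\le k$ (resp. $1\le r\le k-1$) together with $1\le a\le n-k$ force the second slot into the admissible interval $[1,n-r]$, so Lemma~\ref{lem coord change for u_1 p_i in terms of z_i} may be invoked with $k'=r$ and $a'=(n-r)-a+1$ (resp. $a'=(n-r)-a$). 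Concretely $p_{s_{r+1}-a+1}=z_{1+s_{n-a}}$ when $r=1$ and $p_{s_{r+1}-a+1}=\frac{z_{r+s_{n-r-a+1}}}{z_{r-1+s_{n-r-a+2}}}$ when $r\ge 2$, with the analogous statement for $p_{s_{r+1}-a}$.

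For $k=1$ this already yields $\frac{1}{m_{s_1+a}}=\frac{1}{p_{s_2-a+1}}=\frac{1}{z_{1+s_{n-a}}}$, giving the first case of the theorem. For $k\ge 2$ I would plug these expressions into $\frac{1}{m_{s_k+a}}=\frac{\prod_{r=1}^{k-1}p_{s_{r+1}-a}}{\prod_{r=1}^{k}p_{s_{r+1}-a+1}}$ and observe that both products telescope: in the denominator the factor $z_{r-1+s_{n-r-a+2}}$ coming from term $r$ coincides, via $n-r-a+2=n-(r-1)-a+1$, with the numerator $z_{(r-1)+s_{n-(r-1)-a+1}}$ of term $r-1$ (the $r=2$ factor cancelling the $r=1$ boundary term), so the denominator collapses to $z_{k+s_{n-k-a+1}}$, and the numerator collapses the same way to $z_{k-1+s_{n-k-a+1}}$. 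Inverting gives $m_{s_k+a}=\frac{z_{k+s_{n-k-a+1}}}{z_{k-1+s_{n-k-a+1}}}$, as claimed.

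The only genuine work here is the bookkeeping: checking that each reindexed subscript really lands in the admissible range so that Lemma~\ref{lem coord change for u_1 p_i in terms of z_i} applies, tracking which consecutive factors cancel, and verifying that the $r=1$ boundary terms of the two products line up correctly with the first telescoping step — precisely the kind of $a=1$ / $k=1$ edge-case verification already carried out in the proofs of the two lemmas. I anticipate no conceptual obstacle beyond this routine index manipulation.
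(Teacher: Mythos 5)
Your proposal is correct and follows essentially the same route as the paper's own proof: rewrite each factor $p_{s_{r+1}-a}$, $p_{s_{r+1}-a+1}$ in the canonical form $p_{s_{k'}+a'}$ with $k'=r$ and $a'=n-r-a$ (resp. $n-r-a+1$), check these land in the admissible ranges, substitute Lemma \ref{lem coord change for u_1 p_i in terms of z_i}, and let both products telescope to $z_{k-1+s_{n-k-a+1}}$ and $z_{k+s_{n-k-a+1}}$. The paper organises the telescoping by computing products of consecutive pairs and tracks how $(k',a')$ shifts to $(k'+1,a'-1)$ as $r$ increases, but this is only a presentational difference from your term-by-term cancellation.
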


\begin{proof}

To obtain the $\frac{1}{m_{j}}$ coordinates in terms of the $z_j$, we need to compose the two coordinate transformations from Lemmas \ref{lem coord change for u_1 p_i in terms of z_i} and \ref{lem coord change for b m_i in terms of p_i}. To do this we write the factors $p_{s_{r+1}-a}$, $p_{s_{r+1}-a+1}$ of the products in (\ref{eqn m coords in terms of p}) in the form $p_{s_{k'}+{a'}}$ for a suitable pairs $(k',a')$;
\begin{enumerate}
    \item Firstly, given a pair $(k,a)$ with $k>1$, for each $r=1,\ldots, k-1$, we wish to find the pair $(k',a')$, $k'\in \{1, \ldots, n-1\}$, $a' \in \{1,\ldots, n-k'\}$ such that
        $$s_{k'}+a'=s_{r+1}-a.
        $$
    Indeed since
        $$s_{r+1}-a = s_{r}+n-r-a
        $$
    we take $k'=r$, $a'=n-r-a$, noting that $k'$ and $a'$ satisfy the necessary conditions
        $$ \begin{aligned}
        k'&=r \in \{1, \ldots, k-1\}\subseteq \{1, \ldots, n-1\}, \\
        a'&=n-r-a = n-k'-a \in \{k-k', \ldots, n-k'-1\} \subseteq \{1, \ldots, n-k'\}.
        \end{aligned}
        $$

    \item Similarly given a pair $(k,a)$ for any $k$, i.e. $1 \leq k \leq n-1$, for each $r=1,\ldots, k$, we wish to find these pairs $(k',a')$, $k'\in \{1, \ldots, n-1\}$, $a' \in \{1,\ldots, n-k'\}$ such that
        $$s_{k'}+a'=s_{r+1}-a+1 = s_{r}+n-r-a+1.
        $$
    We take $k'=r$, $a'=n-r-a+1$, again noting that $k'$ and $a'$ satisfy the necessary conditions
        $$ \begin{aligned}
        k'&=r \in \{1, \ldots, k\}\subseteq \{1, \ldots, n-1\}, \\
        a'&=n-r-a+1 = n-k'-a+1 \in \{k-k'+1, \ldots, n-k'\} \subseteq \{1, \ldots, n-k'\}.
        \end{aligned}
        $$
\end{enumerate}

In order to evaluate the products in (\ref{eqn m coords in terms of p}) we need to consider what happens to the pair $(k',a')$ when we increase $r$ by $1$.
\begin{enumerate}
    \item In the first case: for $r\in\{1, \ldots, k-2\}$
        $$s_{r+2}-a = s_{r+1}+n-(r+1)-a = s_{k'}+a'+ n-k'-1 = s_{k'+1}+a'-1.
        $$
    Note that $k'+1$ and $a'-1$ satisfy the necessary conditions
        $$ \begin{aligned}
        k'+1 &=r+1 \in \{2, \ldots, k-1\}\subseteq \{1, \ldots, n-1\}, \\
        a'-1 & =n-r-a-1 \in \{k-(k'+1), \ldots, n-(k'+1)-1\} \subseteq \{1, \ldots, n-(k'+1)\}.
        \end{aligned}
        $$

    \item Similarly in the second case: for $r \in\{1, \ldots, k-1\}$
        $$s_{r+2}-a+1 = s_{r+1}+n-(r+1)-a+1 = s_{k'}+a'+ n-k'-1 = s_{k'+1}+a'-1.
        $$
    Again we note that $k'+1$ and $a'-1$ satisfy the necessary conditions
        $$ \begin{aligned}
        k'+1 &=r+1 \in \{2, \ldots, k\}\subseteq \{1, \ldots, n-1\}, \\
        a'-1 &=n-r-a \in \{k-(k'+1)+1, \ldots, n-(k'+1)\} \subseteq \{1, \ldots, n-(k'+1)\}.
        \end{aligned}
        $$
\end{enumerate}

Now recalling
    $$p_{s_k+a} =\begin{cases}
        z_{k+s_{a}} & \text{if } k=1 \\
        \frac{z_{k+s_{a}}}{z_{k-1+s_{a+1}}} & \text{otherwise}
    \end{cases}
    $$
we have, for $r>1$ (so that $k'>1$)
    $$\begin{aligned}
    p_{s_{r+1}-a}p_{s_{r+2}-a}
        &= p_{s_{k'}+a'}p_{s_{k'+1}+a'-1} &\quad \text{where } k'=r,\ a'=n-r-a\\
        &= \frac{z_{k'+s_{a'}}}{z_{k'-1+s_{a'+1}}}\frac{z_{k'+1+s_{a'-1}}}{z_{k'+s_{a'}}} \\
        &= \frac{z_{k'+1+s_{a'-1}}}{z_{k'-1+s_{a'+1}}}.
    \end{aligned}
    $$
Consequently the following product becomes telescopic, so we have
    $$\prod\limits_{\substack{r=1,\ldots, k-1}} p_{s_{r+1}-a} = z_{k-1+s_{n-(k-1)-a}}.
    $$

Similarly in the second case
    $$\begin{aligned}
    p_{s_{r+1}-a+1}p_{s_{r+2}-a+1}
        &= p_{s_{k'}+a'}p_{s_{k'+1}+a'-1} &\quad \text{where } k'=r-1,\ a'=n-r-a+1\\
        &= \frac{z_{k'+1+s_{a'-1}}}{z_{k'-1+s_{a'+1}}}.
    \end{aligned}
    $$
Again we have a telescopic product, giving
    $$\prod\limits_{\substack{r=1,\ldots, k}} p_{s_{r+1}-a+1} = z_{k+s_{n-k-a+1}}.
    $$

Thus we have
    $$\frac{1}{m_{s_k+a}} = \begin{cases}
        \frac{1}{z_{k+s_{n-k-a+1}}} &\text{if } k=1 \\
        \frac{z_{k-1+s_{n-k-a+1}}}{z_{k+s_{n-k-a+1}}} &\text{otherwise}
    \end{cases}
    $$
which gives the desired coordinate change: for $k=1,\ldots, n-1$, $a=1,\ldots, n-k$
    $$m_{s_k+a} = \begin{cases}
            z_{1+s_{n-a}} &\text{if } k=1 \\
            \frac{z_{k+s_{n-k-a+1}}}{z_{k-1+s_{n-k-a+1}}} &\text{otherwise.}
            \end{cases}
    $$
\end{proof}

\section{Givental-type quivers and critical points} \label{sec Givental-type quivers and critical points}

In this section we recall an earlier Landau-Ginzburg model for the full flag variety, defined on a torus by Givental \cite{Givental1997}. We relate our tori from Sections \ref{sec Mirror symmetry for G/B applied to representation theory} and \ref{sec The ideal coordinates}, as well as the superpotential defined in Section \ref{subsec Landau-Ginzburg models}, to Givental's torus and his formulation of the superpotential. We then use this to start describing the critical points of the superpotential. The key result in this section is Theorem \ref{thm crit points, sum at vertex is nu_i}, describing the $\boldsymbol{m}$ coordinates of such a critical point. This formula was conjectured by Konstanze Rietsch and checked in a particular case by Zainab Al-Sultani (\cite{ZainabMastersThesis}).

\subsection{The Givental superpotential}

In this section we recall Givental's construction from \cite{Givental1997}. We begin by considering a quiver, which consists of $n(n+1)/2$ vertices in lower triangular form together with arrows going up and left. We label the vertices with $v_{ij}$ for $1\leq j \leq i \leq n$ in the same way as we would for matrix entries and denote the set of such vertices by $\mathcal{V}$. The vertices $v_{ii}$ are star vertices and all others are dot vertices. We denote the sets of star and dot vertices respectively by
    $$\mathcal{V}^* = \left\{ v_{ii} \ | \ 1 \leq i \leq n \right\}, \quad \mathcal{V}^{\bullet} = \left\{ v_{ij} \ | \ 1 \leq j < i \leq n \right\}.
    $$

The set of arrows of the quiver is denoted $\mathcal{A}=\mathcal{A}_{\mathrm{v}} \cup \mathcal{A}_{\mathrm{h}}$. The vertical arrows $a_{ij}\in \mathcal{A}_{\mathrm{v}}$ are labelled such that $h(a_{ij})=v_{ij}$ where $h(a) \in \mathcal{V}$ denotes the head of the arrow $a$. The horizontal arrows $b_{ij} \in \mathcal{A}_{\mathrm{h}}$ are labelled such that $t(b_{ij})=v_{ij}$ where $t(a)\in \mathcal{V}$ denotes the tail of the arrow $a$. For example when $n=4$ the quiver is given in Figure \ref{fig Quiver for n=4}.
\begin{figure}[ht]
\centering
\begin{minipage}[b]{0.47\textwidth}
    \centering
\begin{tikzpicture}
    %dots and stars
    \node (41) at (0,0) {$\bullet$};
    \node (31) at (0,1.5) {$\bullet$};
    \node (21) at (0,3) {$\bullet$};
    \node (11) at (0,4.5) {$\boldsymbol{*}$};
        \node at (0.2,4.8) {$v_{11}$};
    \node (42) at (1.5,0) {$\bullet$};
    \node (32) at (1.5,1.5) {$\bullet$};
    \node (22) at (1.5,3) {$\boldsymbol{*}$};
        \node at (1.7,3.3) {$v_{22}$};
    \node (43) at (3,0) {$\bullet$};
    \node (33) at (3,1.5) {$\boldsymbol{*}$};
        \node at (3.2,1.8) {$v_{33}$};
    \node (44) at (4.5,0) {$\boldsymbol{*}$};
        \node at (4.7,0.3) {$v_{44}$};

    %arrows
    \draw[->] (41) -- (31);
    \draw[->] (31) -- (21);
    \draw[->] (21) -- (11);
    \draw[->] (42) -- (32);
    \draw[->] (32) -- (22);
    \draw[->] (43) -- (33);

    \draw[->] (22) -- (21);
    \draw[->] (33) -- (32);
    \draw[->] (32) -- (31);
    \draw[->] (44) -- (43);
    \draw[->] (43) -- (42);
    \draw[->] (42) -- (41);

    %dot vertex labels
    \node at (-0.4,3) {$v_{21}$};
    \node at (-0.4,1.5) {$v_{31}$};
    \node at (-0.4,0) {$v_{41}$};

    \node at (1.1,1.8) {$v_{32}$};
    \node at (1.5,-0.3) {$v_{42}$};
    \node at (3,-0.3) {$v_{43}$};

    %arrow labels
    \node at (-0.3,3.75) {$a_{11}$};
    \node at (-0.3,2.25) {$a_{21}$};
    \node at (-0.3,0.75) {$a_{31}$};

    \node at (0.75,2.7) {$b_{22}$};
    \node at (0.75,1.2) {$b_{32}$};
    \node at (0.75,-0.3) {$b_{42}$};

    \node at (1.2,2.25) {$a_{22}$};
    \node at (1.2,0.75) {$a_{32}$};
    \node at (2.7,0.75) {$a_{33}$};

    \node at (2.25,1.2) {$b_{33}$};
    \node at (2.25,-0.3) {$b_{43}$};
    \node at (3.75,-0.3) {$b_{44}$};
\end{tikzpicture}
\caption{Quiver when $n=4$} \label{fig Quiver for n=4}
\end{minipage}
    \hfill
\begin{minipage}[b]{0.47\textwidth}
    \centering
\begin{tikzpicture}
    %dots and stars
    \node (41) at (0,0) {};
    \node (31) at (0,1.5) {};
    \node (42) at (1.5,0) {};
    \node (32) at (1.5,1.5) {};

    %arrows
    \draw[->] (41) -- (31);
    \draw[->] (42) -- (32);

    \draw[->] (32) -- (31);
    \draw[->] (42) -- (41);

    %arrow labels
    \node at (-0.4,0.75) {$a_2$};
    \node at (1.9,0.75) {$a_3$};

    \node at (0.75,1.75) {$a_4$};
    \node at (0.75,-0.3) {$a_1$};
\end{tikzpicture}
\caption{Subquiver for box relations} \label{fig quiver box relations}
\end{minipage}
\end{figure}

We consider three tori which are defined in terms of the quiver, of which we introduce the first two now. The first torus is $(\mathbb{K}^*)^{\mathcal{V}}$ with coordinates $x_v$ for $v \in \mathcal{V}$, we call this the vertex torus. The second torus, $\bar{\mathcal{M}} \subset (\mathbb{K}^*)^{N-n+1}$, corresponds to the arrows of the quiver and so will be called the arrow torus. It is given by
    $$ \bar{\mathcal{M}}:= \left\{ (r_a)_{a \in \mathcal{A}} \in (\mathbb{K}^*)^{\mathcal{A}} \ | \ r_{a_1}r_{a_2}=r_{a_3}r_{a_4} \text{ when } a_1,a_2,a_3,a_4 \text{ form a square as in Figure \ref{fig quiver box relations}} \right\}.
    $$
These two tori are related by the following surjection, given coordinate-wise:
    \begin{equation} \label{eqn quiver tori relation map}
    (\mathbb{K}^*)^{\mathcal{V}} \to \bar{\mathcal{M}}, \quad r_a = \frac{x_{h(a)}}{x_{t(a)}}.
    \end{equation}
Note that we get a point in the preimage of $\boldsymbol{r}_{\mathcal{A}}:=(r_a)_{a \in \mathcal{A}}\in \bar{\mathcal{M}}$ by first setting $x_{v_{nn}}=1$. Then for $v\neq v_{nn}$ we take $x_v=\prod_{a \in P_v} r_a$ where $P_v$ is any path from $v_{nn}$ to $v$. This map is well-defined since the preimage of $(1)_{a \in \mathcal{A}}$ is the set $\{ (c)_{v\in \mathcal{V}} \ | \ c \in \mathbb{K}^* \}$.

An analogy of this surjection is the map
    $$T^{\vee} \to (\mathbb{K}^*)^{n-1}
    $$
given by the simple roots $\alpha^{\vee}_1, \ldots, \alpha^{\vee}_{n-1}$ of $G^{\vee}$.

It will be more convenient for our purposes to use arrow coordinates rather than vertex coordinates. If we were to work with $SL_n$ then the arrow torus, $\bar{\mathcal{M}}$, would suffice, however we wish to work with $GL_n$ and so need to keep track of which fibre of the map (\ref{eqn quiver tori relation map}) we are in. Consequently we now introduce our third torus, $\mathcal{M}$, which we call the quiver torus:
    $$\mathcal{M} := \left\{ ( \boldsymbol{x}_{\mathcal{V}^*} , \boldsymbol{r}_{\mathcal{A}_{\mathrm{v}}}) \in (\mathbb{K}^*)^{\mathcal{V}^*} \times (\mathbb{K}^*)^{\mathcal{A}_{\mathrm{v}}} \right\}
    \hookrightarrow (\mathbb{K}^*)^{\mathcal{V}} \times \bar{\mathcal{M}}.
    $$
The quiver torus is isomorphic to the vertex torus, $(\mathbb{K}^*)^{\mathcal{V}}$, and we will work with these tori interchangeably. This isomorphism is a consequence of the following observations:

We note that if we choose the star vertex coordinate $x_{v_{nn}}$, then we are taking a particular lift of $\boldsymbol{r}_{\mathcal{A}} \in \bar{\mathcal{M}}$ such that in this fibre the map (\ref{eqn quiver tori relation map}) restricts to an isomorphism. Moreover, if we describe the coordinates of the star vertices and vertical arrows, then the coordinates of the horizontal arrows, and thus also of the dot vertices, are given uniquely using (\ref{eqn quiver tori relation map}) and the box relations; namely we use the fact that
    \begin{equation} \label{eqn relation in quiver from stars}
    r_{b_{i+1,i+1}} = \frac{x_{v_{ii}}}{x_{v_{i+1, i+1}}}\frac{1}{r_{a_{ii}}}, \quad i=1, \ldots, n-1
    \end{equation}
together with the relations $r_{a_1} r_{a_2} = r_{a_3} r_{a_4}$ when the arrows $a_1$, $a_2$, $a_3$, $a_4$ form a square as in Figure \ref{fig quiver box relations}.

We now recall the definition of Givental's superpotential. On the vertex torus this can be defined as
    $$\mathcal{F} :(\mathbb{K}^*)^{\mathcal{V}} \to \mathbb{K}, \quad \boldsymbol{x}_{\mathcal{V}} \mapsto \sum_{a \in \mathcal{A}} \frac{x_{h(a)}}{x_{t(a)}}.
    $$
This factors naturally through the arrow torus via (\ref{eqn quiver tori relation map}) and the following map:
    $$\bar{\mathcal{F}} : \bar{\mathcal{M}} \to \mathbb{K}, \quad \boldsymbol{r}_{\mathcal{A}} \mapsto \sum_{a \in \mathcal{A}} r_a.
    $$

We can now define the highest weight and weight maps on the vertex torus. The highest weight map is given by
    $$\kappa :(\mathbb{K}^*)^{\mathcal{V}} \to T^{\vee}, \quad \boldsymbol{x}_{\mathcal{V}} \mapsto (x_{v_{ii}})_{i=1, \ldots, n}.
    $$
The weight map \cite{JoeKim2003} on the vertex torus is defined in two steps; firstly, for $i=1, \ldots,n$ we let $\mathcal{D}_i := \{ v_{i,1}, v_{i+1,2}, \ldots, v_{n, n-i+1} \}$ be the $i$-th diagonal and let
    \begin{equation} \label{eqn xi for wt map defn}
    \Xi_i := \prod_{v\in \mathcal{D}_i} x_v \quad \text{with} \quad \Xi_{n+1}:=1.
    \end{equation}
Then the weight map is given by
    \begin{equation}\label{eqn defn gamma and t}
    \gamma :(\mathbb{K}^*)^{\mathcal{V}}\to T^{\vee}, \quad \boldsymbol{x}_{\mathcal{V}} \mapsto (t_i)_{i=1, \ldots, n} \quad \text{where} \quad t_i = \frac{\Xi_i}{\Xi_{i+1}}.
    \end{equation}

Unlike the superpotential $\mathcal{F}$, the maps $\kappa$ and $\gamma$ do not directly factor through $\bar{\mathcal{M}}$ but thanks to the map (\ref{eqn quiver tori relation map}) we have commutative diagrams:
\begin{equation*}
\begin{tikzcd}
(\mathbb{K}^*)^{\mathcal{V}} \arrow[d] \arrow[r, "\kappa"] & T^{\vee} \arrow[d, "{(\alpha^{\vee}_1, \ldots, \alpha^{\vee}_{n-1})}"] \\
\bar{\mathcal{M}} \arrow[r, "q"] & (\mathbb{K}^*)^{n-1}
\end{tikzcd}
\hspace{1.5cm}
\begin{tikzcd}
(\mathbb{K}^*)^{\mathcal{V}} \arrow[d] \arrow[r, "\gamma"] & T^{\vee} \arrow[d, "{(\alpha^{\vee}_1, \ldots, \alpha^{\vee}_{n-1})}"] \\
\bar{\mathcal{M}} \arrow[r] & (\mathbb{K}^*)^{n-1}
\end{tikzcd}
\end{equation*}
The maps along the bottom making the diagrams commute exist and are unique. For example, the map on $\bar{\mathcal{M}}$ corresponding to $\kappa$ is the map called $q$ defined by Givental as follows:
    $$q : \bar{\mathcal{M}} \to (\mathbb{K}^*)^{n-1}, \quad \boldsymbol{r}_{\mathcal{A}} \mapsto \left( r_{a_{11}}r_{b_{22}}, \, \ldots, \, r_{a_{n-1, n-1}}r_{b_{nn}} \right).
    $$

\begin{rem}
In Givental's version of (Fano) mirror symmetry, the arrow torus $\bar{\mathcal{M}}$ (taken over $\mathbb{C}$), is viewed as a family of varieties via the map $q$. Each fibre $\bar{\mathcal{M}}_{\boldsymbol{q}}$, being a torus, comes equipped with a natural holomorphic volume form $\omega_{\boldsymbol{q}}$. He proves a version of mirror symmetry relating the A-model connection, built out of Gromov-Witten invariants of the flag variety, to period integrals $S(\boldsymbol{q})$ on the family $(\bar{\mathcal{M}}_{\boldsymbol{q}}, \bar{\mathcal{F}}_{\boldsymbol{q}}, \omega_{\boldsymbol{q}} )$, defined using the superpotential:
    $$S(\boldsymbol{q}):= \int_{\Gamma} e^{\bar{\mathcal{F}}_{\boldsymbol{q}}} \omega_{\boldsymbol{q}}.
    $$
\end{rem}

\subsection[The quiver torus as another toric chart on Z]{The quiver torus as another toric chart on $Z$} \label{subsec The quiver torus as another toric chart on Z}

In this section we recall a map $\mathcal{M} \to Z$ from \cite{Rietsch2008} which allows us to relate the Givental superpotential on the arrow torus, as well as the highest weight and weight maps on the vertex torus, to their analogues on $Z$. In particular our toric charts from the previous sections factor through this map.

A nice way to describe how the string and ideal toric charts factor through $\mathcal{M}$ is by decorating the arrows and vertices of the quiver. Indeed, using our previous work relating the string and ideal coordinate systems, we may easily describe a monomial map from the torus $T^{\vee}\times (\mathbb{K}^*)^N$ of string coordinates to the quiver torus $\mathcal{M}$. Composing this with the map $\mathcal{M} \to Z$ will recover our string toric chart.
We then use the coordinate change given in Theorem \ref{thm coord change} to decorate the quiver with the ideal coordinates.

In order to make sense of the way the string toric chart factors through $\mathcal{M}$, we choose an ordering of the $N$ vertical arrow coordinates. Starting at the lower left corner of the quiver and moving up each column in succession we obtain
    $$\boldsymbol{r}_{\mathcal{A}_{\mathrm{v}}} := \left(r_{a_{n-1, 1}}, r_{a_{n-2, 1}}, \ldots, r_{a_{1, 1}}, r_{a_{n-1, 2}}, r_{a_{n-2, 2}}, r_{a_{2, 2}}, \ldots, r_{a_{n-1, n-2}}, r_{a_{n-2, n-2}}, r_{a_{n-1, n-1}}\right).
    $$
Similarly, we give an ordering of the star vertex coordinates:
    $$\boldsymbol{x}_{\mathcal{V}^*}:=\left(x_{v_{11}}, x_{v_{22}}, \ldots, x_{v_{nn}}\right).
    $$
Then for the reduced expression
    $$\mathbf{i}'_0 = (i'_1, \ldots, i'_N) := (n-1, n-2, \ldots, 1, n-1, n-2, \ldots, 2, \ldots, n-1, n-2, n-1)
    $$
we have a map $\theta_{\mathbf{i}'_0}:\mathcal{M} \to Z$ given by
    $$\left(\boldsymbol{x}_{\mathcal{V}^*} , \boldsymbol{r}_{\mathcal{A}_{\mathrm{v}}} \right)
    \mapsto
    \Phi\left(\kappa_{\vert_{\mathcal{V}^*}}\left(\boldsymbol{x}_{\mathcal{V}^*}\right) , \mathbf{x}^{\vee}_{\mathbf{i}'_0}\left(\boldsymbol{r}_{\mathcal{A}_{\mathrm{v}}}\right)\right)
    = \mathbf{x}^{\vee}_{\mathbf{i}'_0}\left(\boldsymbol{r}_{\mathcal{A}_{\mathrm{v}}}\right) \kappa_{\vert_{\mathcal{V}^*}}\left(\boldsymbol{x}_{\mathcal{V}^*}\right) \bar{w}_0 u_2
    $$
where $\kappa_{\vert_{\mathcal{V}^*}}$ is the restriction of $\tilde{\kappa}$ to the star vertex coordinates, $u_2 \in U^{\vee}$ is the unique element such that $\theta_{\mathbf{i}'_0}\left(\boldsymbol{r}_{\mathcal{A}_{\mathrm{v}}}\right) \in Z$ and $\mathbf{x}^{\vee}_{\mathbf{i}}$, $\Phi$ are the maps from Section \ref{subsec The string coordinates}.

Next we recall the following definitions, also from Section \ref{subsec The string coordinates}:
    $$b:=u_1d\bar{w}_0 u_2 \in Z, \quad u_1:=\iota(\eta^{w_0,e}(u)), \quad u:=\mathbf{x}^{\vee}_{-\mathbf{i}_0}(\boldsymbol{z}),
    $$
together with the fact that by Lemmas \ref{lem form of u_1 and b factorisations} and \ref{lem coord change for u_1 p_i in terms of z_i} we may factorise $u_1$ as
    $$
    u_1 = \mathbf{x}_{i'_1}^{\vee}(p_1) \cdots \mathbf{x}_{i'_N}^{\vee}(p_N) \\
    $$
where, for $k=1,\ldots, n-1$, $a=1,\ldots, n-k$, we have
    $$p_{s_k+a} =\begin{cases}
        z_{1+s_{a}} & \text{if } k=1 \\
        \frac{z_{k+s_{a}}}{z_{k-1+s_{a+1}}} & \text{otherwise}
        \end{cases}
    \qquad \text{where} \ \
    s_k := \sum_{j=1}^{k-1}(n-j).
    $$
Thus taking $\boldsymbol{r}_{\mathcal{A}_{\mathrm{v}}}=(p_1, \ldots, p_N)$ we see that $\mathbf{x}^{\vee}_{\mathbf{i}'_0}\left(\boldsymbol{r}_{\mathcal{A}_{\mathrm{v}}}\right)=u_1$ and letting the star vertex coordinates be given by
    $$x_{v_{ii}}=d_i, \quad i=1,\ldots, n
    $$
we obtain $\kappa_{\vert_{\mathcal{V}^*}}\left(\boldsymbol{x}_{\mathcal{V}^*}\right) = d$. This quiver decoration in dimension $4$ is given in Figure \ref{fig Quiver decoration, n=4, z coordinates}.

Due to the relations in the quiver, namely (\ref{eqn relation in quiver from stars}) and the box relations, this decoration extends to all the vertices and arrows of the quiver. In particular, we can extend $\theta_{\mathbf{i}'_0}$ to a map
    $$\bar{\theta}_{\mathbf{i}'_0}: (\mathbb{K}^*)^{\mathcal{V}} \times \bar{\mathcal{M}} \to Z
    $$
given by first taking the projection onto $\mathcal{M}$ and then applying $\theta_{\mathbf{i}'_0}$. Thus decorating the quiver in this way and then applying the map $\bar{\theta}_{\mathbf{i}'_0}$ (or $\theta_{\mathbf{i}'_0}$) gives the string toric chart factored through $(\mathbb{K}^*)^{\mathcal{V}} \times \bar{\mathcal{M}}$ (respectively $\mathcal{M}$), that is
    $$
    T^{\vee} \times (\mathbb{K}^*)^N \to (\mathbb{K}^*)^{\mathcal{V}} \times \bar{\mathcal{M}} \to Z.
    $$

We summarise the results from this section:
\begin{lem}[{\cite[Theorem 9.2 and Lemma 9.3]{Rietsch2008}}]
With the above notation, we have the following:
    $$\mathcal{W} \circ \bar{\theta}_{\mathbf{i}'_0} = \mathcal{F} \circ \mathrm{pr}, \quad \mathrm{hw} \circ \bar{\theta}_{\mathbf{i}'_0} = \kappa \circ \mathrm{pr}, \quad \mathrm{wt} \circ \bar{\theta}_{\mathbf{i}'_0} = \gamma \circ \mathrm{pr}.
    $$
where $\mathrm{pr}$ is the projection of $(\mathbb{K}^*)^{\mathcal{V}} \times \bar{\mathcal{M}}$ onto the first factor.
\end{lem}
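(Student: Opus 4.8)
The plan is to recognise this lemma as the $GL_n$ incarnation of \cite[Theorem 9.2 and Lemma 9.3]{Rietsch2008} and to verify the three identities directly in the coordinates set up above, so that the translation of conventions becomes transparent. Since $\bar{\theta}_{\mathbf{i}'_0}$ is by definition $\theta_{\mathbf{i}'_0}$ precomposed with the projection onto $\mathcal{M}$, since $\mathcal{F}$ factors through $\bar{\mathcal{M}}$ via the quotient map (\ref{eqn quiver tori relation map}) and hence through $\mathcal{M}$, since $\kappa$ depends only on the star-vertex coordinates, and since $\gamma$ descends through the isomorphism $\mathcal{M}\cong(\mathbb{K}^*)^{\mathcal{V}}$, it will suffice to establish, as maps out of $\mathcal{M}$,
    $$\mathrm{hw}\circ\theta_{\mathbf{i}'_0}=\kappa|_{\mathcal{V}^*},\qquad \mathrm{wt}\circ\theta_{\mathbf{i}'_0}=\gamma,\qquad \mathcal{W}\circ\theta_{\mathbf{i}'_0}=\bar{\mathcal{F}},$$
where $\gamma$, $\bar{\mathcal{F}}$ are understood pulled back to $\mathcal{M}$ through the isomorphism $\mathcal{M}\cong(\mathbb{K}^*)^{\mathcal{V}}$ and the projection $\mathcal{M}\to\bar{\mathcal{M}}$. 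Throughout I would keep the quiver decorated as in this section, so that the vertical arrow coordinates are $\boldsymbol{r}_{\mathcal{A}_{\mathrm{v}}}=(p_1,\dots,p_N)$ with $u_1=\mathbf{x}^{\vee}_{\mathbf{i}'_0}(\boldsymbol{p})$ (Lemmas \ref{lem form of u_1 and b factorisations} and \ref{lem coord change for u_1 p_i in terms of z_i}), the star coordinates are $x_{v_{ii}}=d_i$, and the remaining coordinates are fixed by (\ref{eqn relation in quiver from stars}) and the box relations of Figure \ref{fig quiver box relations}.

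The highest weight identity is essentially immediate: by construction $\theta_{\mathbf{i}'_0}(\boldsymbol{x}_{\mathcal{V}^*},\boldsymbol{r}_{\mathcal{A}_{\mathrm{v}}})=u_1 d\bar{w}_0 u_2$ with $u_1,u_2\in U^{\vee}$ and $d=\mathrm{diag}(x_{v_{11}},\dots,x_{v_{nn}})$, so $\mathrm{hw}$ of this element is $d$, which is exactly $\kappa|_{\mathcal{V}^*}$ applied to the point. For the weight identity I would compare the explicit formula for $\mathrm{wt}(b)=t_R$ from Corollary \ref{cor wt matrix in z string coords} (equivalently the ideal-coordinate version, Corollary \ref{cor wt matrix in m ideal coords}) with Givental's diagonal products $\Xi_i=\prod_{v\in\mathcal{D}_i}x_v$ from (\ref{eqn xi for wt map defn})--(\ref{eqn defn gamma and t}). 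Using (\ref{eqn relation in quiver from stars}) and the box relations, every dot-vertex coordinate on the diagonal $\mathcal{D}_i$ is a Laurent monomial in the $d_j$ and the $p_m$ (equivalently the $z_m$, by Lemma \ref{lem coord change for u_1 p_i in terms of z_i}); the product along $\mathcal{D}_i$ telescopes, and one checks that $t_i=\Xi_i/\Xi_{i+1}$ agrees with the entry $(t_R)_{n-i+1,\,n-i+1}$, the index reversal coming from the $\bar{w}_0$-conjugation in Proposition \ref{prop form of t_R using d,u}. This step is pure bookkeeping: one only needs to track which $z_m$ (resp. $p_m$) occurs in which diagonal of the quiver.

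For the superpotential identity I would write $\mathcal{W}(b)=\chi(u_1)+\chi(u_2)$ and treat the two summands separately. Since $u_1=\mathbf{x}^{\vee}_{\mathbf{i}'_0}(\boldsymbol{p})$ is upper unitriangular, an easy induction on the number of root-subgroup factors shows its $(k,k+1)$ entry equals $\sum_{m:\,i'_m=k}p_m$, so $\chi(u_1)=\sum_{k=1}^{n-1}(u_1)_{k,k+1}=\sum_{m=1}^{N}p_m=\sum_{a\in\mathcal{A}_{\mathrm{v}}}r_a$ is the sum of the vertical arrow coordinates. It then remains to prove $\chi(u_2)=\sum_{a\in\mathcal{A}_{\mathrm{h}}}r_a$, the sum of the horizontal ones, after which $\mathcal{W}(b)=\sum_{a\in\mathcal{A}}r_a=\bar{\mathcal{F}}(\boldsymbol{r}_{\mathcal{A}})=\mathcal{F}(\boldsymbol{x}_{\mathcal{V}})$ and the proof is finished. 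To get at $\chi(u_2)$ I would recover $u_2$ from the Chamber Ansatz / twist machinery of Section \ref{sec The ideal coordinates}: $u_2$ is the unique element of $U^{\vee}$ with $u_1 d\bar{w}_0 u_2\in Z$, so applying the Chamber Ansatz to $b=u_1 d\bar{w}_0 u_2$ (or tracking $u_2$ through the twist as in the proof of Proposition \ref{prop form of t_R using d,u}) presents each superdiagonal entry of $u_2$ as a ratio of chamber minors of $b$, which by the monomiality results of Section \ref{subsec Chamber Ansatz minors} is a Laurent monomial in $d$ and $\boldsymbol{p}$; one then matches these monomials against the horizontal arrow coordinates produced by (\ref{eqn relation in quiver from stars}) and the square relations (as one sees in the $n=3$ computation, where $r_{b_{i+1,i+1}}=\frac{d_i}{d_{i+1}}\frac{1}{p}$ for the appropriate vertical coordinate $p$ and the remaining $r_b$'s follow from the box relations). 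Once $\mathbf{i}'_0$ is identified with Rietsch's quiver-column reduced expression, this final matching is precisely \cite[Lemma 9.3]{Rietsch2008}.

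The main obstacle is the computation of $\chi(u_2)$: unlike $u_1$, the element $u_2$ is not presented as a product of root subgroups, so one must either unwind the Chamber Ansatz / twist to obtain its entries explicitly, or import \cite[Lemma 9.3]{Rietsch2008} after carefully pinning down the dictionary between the quiver decoration used here and the one in \cite{Rietsch2008} and checking that the $GL_n$ normalisation agrees with Rietsch's. A secondary point requiring care is the index reversal $i\leftrightarrow n-i+1$ relating $\gamma$ to $\mathrm{wt}$, which must be handled consistently with the $\bar{w}_0$-conjugation appearing in the weight matrix formula.
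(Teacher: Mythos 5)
The paper offers no proof of this lemma at all --- it is imported verbatim from Rietsch's Theorem 9.2 and Lemma 9.3 --- so your direct verification does strictly more than the text, and the overall strategy is sound. The reduction to maps out of $\mathcal{M}$, the one-line argument for $\mathrm{hw}$, the identity $\chi(u_1)=\sum_{a\in\mathcal{A}_{\mathrm{v}}}r_a$ via additivity of superdiagonal entries in a product of $\mathbf{x}^{\vee}_i$'s, and the bookkeeping comparison of $t_R$ (Corollary \ref{cor wt matrix in z string coords}) with $\Xi_i/\Xi_{i+1}$, including the $i\leftrightarrow n-i+1$ reversal from the $\bar{w}_0$-conjugation, are all correct and consistent with how the cited reference proceeds. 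You also correctly isolate $\chi(u_2)=\sum_{a\in\mathcal{A}_{\mathrm{h}}}r_a$ as the genuine content of the superpotential identity; falling back on Rietsch's Lemma 9.3 for that step is exactly what the paper does implicitly by citation.

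One inaccuracy to repair if you carry out the Chamber Ansatz route yourself: the superdiagonal entries of $u_2$ are \emph{not} Laurent monomials. In the $n=3$ example of the paper one has $(u_2)_{23}=\frac{d_1}{d_2}\bigl(\frac{1}{z_3}+\frac{z_2}{z_1z_3^2}\bigr)$, a sum of two monomials which matches \emph{two} distinct horizontal arrow coordinates. What the Chamber Ansatz renders monomial are the parameters $q_m$ in a factorisation $u_2=\mathbf{x}^{\vee}_{j_1}(q_1)\cdots\mathbf{x}^{\vee}_{j_N}(q_N)$ into simple root subgroups; since $\chi(u_2)=\sum_m q_m$ by the same additivity you used for $u_1$, it is these $q_m$, not the matrix entries of $u_2$, that must be matched bijectively with the horizontal arrow coordinates produced by (\ref{eqn relation in quiver from stars}) and the box relations.
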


\begin{figure}%[hb]
\centering
\begin{minipage}[b]{0.47\textwidth}
    \centering
\begin{tikzpicture}[scale=0.85]
    %dots and stars
    \node (41) at (0,0) {$\bullet$};
    \node (31) at (0,2) {$\bullet$};
    \node (21) at (0,4) {$\bullet$};
    \node (11) at (0,6) {$\boldsymbol{*}$};
        \node at (0.3,6.3) {$d_{1}$};
    \node (42) at (2,0) {$\bullet$};
    \node (32) at (2,2) {$\bullet$};
    \node (22) at (2,4) {$\boldsymbol{*}$};
        \node at (2.3,4.3) {$d_{2}$};
    \node (43) at (4,0) {$\bullet$};
    \node (33) at (4,2) {$\boldsymbol{*}$};
        \node at (4.3,2.3) {$d_{3}$};
    \node (44) at (6,0) {$\boldsymbol{*}$};
        \node at (6.3,0.3) {$d_{4}$};

    %arrows
    \draw[->] (41) -- (31);
    \draw[->] (31) -- (21);
    \draw[->] (21) -- (11);
    \draw[->] (42) -- (32);
    \draw[->] (32) -- (22);
    \draw[->] (43) -- (33);

    \draw[->] (22) -- (21);
    \draw[->] (33) -- (32);
    \draw[->] (32) -- (31);
    \draw[->] (44) -- (43);
    \draw[->] (43) -- (42);
    \draw[->] (42) -- (41);

    %arrow labels
    \node at (-0.3,4.8) {\scriptsize{$z_6$}};
    \node at (-0.3,2.8) {\scriptsize{$z_4$}};
    \node at (-0.3,0.8) {\scriptsize{$z_1$}};

    \node at (1.7,2.8) {\scriptsize{$\frac{z_5}{z_6}$}};
    \node at (1.7,0.8) {\scriptsize{$\frac{z_2}{z_4}$}};
    \node at (3.7,0.8) {\scriptsize{$\frac{z_3}{z_5}$}};

    %arrow labels
    \node at (1,3.6) {\scriptsize{$\frac{d_1}{d_2} \frac{1}{z_6}$}};
    \node at (1,1.6) {\scriptsize{$\frac{d_1}{d_2} \frac{z_5}{z_4z_6^2}$}};
    \node at (1,-0.4) {\scriptsize{$\frac{d_1}{d_2} \frac{z_2z_5}{z_1z_4^2z_6^2}$}};

    \node at (3,1.6) {\scriptsize{$\frac{d_2}{d_3} \frac{z_6}{z_5}$}};
    \node at (3,-0.4) {\scriptsize{$\frac{d_2}{d_3} \frac{z_3z_4z_6}{z_2z_5^2}$}};
    \node at (5,-0.4) {\scriptsize{$\frac{d_3}{d_4} \frac{z_5}{z_3}$}};
\end{tikzpicture}
\caption{Quiver decoration when $n=4$, \\ string coordinates} \label{fig Quiver decoration, n=4, z coordinates}
\end{minipage}\hfill
\begin{minipage}[b]{0.47\textwidth}
    \centering
\begin{tikzpicture}[scale=0.85]
    %dots and stars
    \node (41) at (0,0) {$\bullet$};
    \node (31) at (0,2) {$\bullet$};
    \node (21) at (0,4) {$\bullet$};
    \node (11) at (0,6) {$\boldsymbol{*}$};
        \node at (0.3,6.3) {$d_1$};
    \node (42) at (2,0) {$\bullet$};
    \node (32) at (2,2) {$\bullet$};
    \node (22) at (2,4) {$\boldsymbol{*}$};
        \node at (2.3,4.3) {$d_2$};
    \node (43) at (4,0) {$\bullet$};
    \node (33) at (4,2) {$\boldsymbol{*}$};
        \node at (4.3,2.3) {$d_3$};
    \node (44) at (6,0) {$\boldsymbol{*}$};
        \node at (6.3,0.3) {$d_4$};

    %arrows
    \draw[->] (41) -- (31);
    \draw[->] (31) -- (21);
    \draw[->] (21) -- (11);
    \draw[->] (42) -- (32);
    \draw[->] (32) -- (22);
    \draw[->] (43) -- (33);

    \draw[->] (22) -- (21);
    \draw[->] (33) -- (32);
    \draw[->] (32) -- (31);
    \draw[->] (44) -- (43);
    \draw[->] (43) -- (42);
    \draw[->] (42) -- (41);

    %arrow labels
    \node at (-0.3,0.8) {\scriptsize{$m_3$}};
    \node at (-0.3,2.8) {\scriptsize{$m_2$}};
    \node at (-0.3,4.8) {\scriptsize{$m_1$}};

    \node at (1.5,0.8) {\scriptsize{$\frac{m_3m_5}{m_2}$}};
    \node at (1.5,2.8) {\scriptsize{$\frac{m_2m_4}{m_1}$}};
    \node at (3.3,0.8) {\scriptsize{$\frac{m_3m_5m_6}{m_2m_4}$}};

    \node at (1,-0.4) {\scriptsize{$\frac{d_1}{d_2} \frac{m_4m_5}{{m_1}^2m_2}$}};
    \node at (3,-0.4) {\scriptsize{$\frac{d_2}{d_3} \frac{m_1m_6}{m_2{m_4}^2}$}};
    \node at (5,-0.4) {\scriptsize{$\frac{d_3}{d_4} \frac{m_2m_4}{m_3m_5m_6}$}};
    \node at (1,1.6) {\scriptsize{$\frac{d_1}{d_2} \frac{m_4}{{m_1}^2}$}};
    \node at (3,1.6) {\scriptsize{$\frac{d_2}{d_3} \frac{m_1}{m_2m_4}$}};
    \node at (1,3.6) {\scriptsize{$\frac{d_1}{d_2} \frac{1}{m_1}$}};
\end{tikzpicture}
\caption{Quiver decoration when $n=4$, \\ ideal coordinates} \label{fig Quiver decoration, n=4, m coordinates}
\end{minipage}
\end{figure}

To decorate the quiver with the ideal coordinates, we begin by recalling Theorem \ref{thm coord change}, namely that for $k=1,\ldots, n-1$, $a=1,\ldots, n-k$ we have
    $$m_{s_k+a} = \begin{cases}
            z_{1+s_{n-a}} &\text{if } k=1, \\
            \frac{z_{k+s_{n-k-a+1}}}{z_{k-1+s_{n-k-a+1}}} &\text{otherwise}.
            \end{cases}
    $$

In the above quiver decoration we have $\boldsymbol{r}_{\mathcal{A}}=(p_1, \ldots, p_N)$, that is
    $$r_{a_{ij}}=p_{s_j+n-i}=\begin{cases}
        z_{1+s_{n-i}} & \text{if } j=1, \\
        \frac{z_{j+s_{n-i}}}{z_{j-1+s_{n-i+1}}} & \text{otherwise}.
    \end{cases}
    $$
We notice that if $j=1$ then
    $$r_{a_{i1}}=p_{s_1+n-i}=z_{1+s_{n-i}}=m_{s_1+i}.
    $$
We also want to write $r_{a_{ij}}$ in terms of the $m_{s_k+a}$ coordinates when $j>2$. To do this we first note that for $k=2$ we have
    $$m_{s_k+a}m_{s_{k-1}+a+1} = \frac{z_{k+s_{n-k-a+1}}}{z_{k-1+s_{n-k-a+1}}}z_{k-1+s_{n-(k-1)-(a+1)+1}}
    = z_{k+s_{n-k-a+1}}.
    $$
Similarly for $j>2$ we have
    $$m_{s_k+a}m_{s_{k-1}+a+1}
        = \frac{z_{k+s_{n-k-a+1}}}{z_{k-1+s_{n-k-a+1}}}\frac{z_{k-1+s_{n-k-a+1}}}{z_{k-2+s_{n-k-a+1}}} \\
        = \frac{z_{k+s_{n-k-a+1}}}{z_{k-2+s_{n-k-a+1}}}.
    $$
Thus the following product is telescopic:
    $$\prod_{r=0}^{k-1} m_{s_{k-r}+a+r} = z_{k+s_{n-k-a+1}}.
    $$
Now taking quotients of these products allows us to write the vertical arrow coordinates, $r_{a_{ij}}=p_{s_j+n-i}$, in terms of the $m_j$; for $j>2$ we have
    $$r_{a_{ij}}=p_{s_j+n-i}
        =\frac{z_{j+s_{n-i}}}{z_{j-1+s_{n-i+1}}}
        =\frac{z_{j+s_{n-j-(i-j+1)+1}}}{z_{j-1+s_{n-(j-1)-(i-j+1)+1}}}
        =\frac{\prod_{r=0}^{j-1} m_{s_{j-r}+i-j+1+r}}{\prod_{r=0}^{j-2} m_{s_{j-1-r}+i-j+1+r}}.
    $$
This doesn't seem particularly helpful at first glance, however it leads to an iterative description of the new quiver decoration which will be very useful in the proof of the main theorem in Section \ref{sec Givental-type quivers and critical points}, Theorem \ref{thm crit points, sum at vertex is nu_i}. Denoting the numerator of $r_{a_{ij}}$ by $n(r_{a_{ij}})$, for $j=1, \ldots, n-1$ we have
    \begin{equation} \label{eqn rai,j+1 in terms of m's}
    r_{a_{i,j+1}}
        =\frac{\prod_{r=0}^{j} m_{s_{j+1-r}+i-j+r}}{\prod_{r=0}^{j-1} m_{s_{j-r}+i-j+r}}
        =m_{s_{j+1}+i-j}\frac{\prod_{r=0}^{j-1} m_{s_{j-r}+i+1-j+r}}{\prod_{r=0}^{j-1} m_{s_{j-r}+i-j+r}}
        =m_{s_{j+1}+i-j}\frac{n(r_{a_{ij}})}{n(r_{a_{i-1,j}})}. \\
    \end{equation}
For example, in dimension $4$ the quiver decoration is given in Figure \ref{fig Quiver decoration, n=4, m coordinates}.

\subsection{Critical points of the superpotential}

We begin by recalling the highest weight map on the vertex torus:
    $$\kappa :(\mathbb{K}^*)^{\mathcal{V}} \to T^{\vee}, \quad \boldsymbol{x}_{\mathcal{V}} \mapsto \left(x_{v_{ii}}\right)_{i=1, \ldots, n}.
    $$
Now in the fibre over $d\in T^{\vee}$ we have
    $$x_{v_{ii}}=d_i \quad \text{for} \ i=1,\ldots, n.
    $$
The remaining $x_v$ for $v \in \mathcal{V}^{\bullet}$ form a coordinate system on this fibre. In particular we can use these coordinates to compute critical points of the superpotential, as follows:
    $$x_v \frac{\partial \mathcal{F}}{\partial x_v} = \sum_{a:h(a)=v} \frac{x_{h(a)}}{x_{t(a)}} - \sum_{a:t(a)=v} \frac{x_{h(a)}}{x_{t(a)}}.
    $$
Thus the critical point conditions are
    $$\sum_{a:h(a)=v} \frac{x_{h(a)}}{x_{t(a)}} = \sum_{a:t(a)=v} \frac{x_{h(a)}}{x_{t(a)}} \quad \text{for} \ v \in \mathcal{V}^{\bullet}.
    $$
Since we favour working with arrow coordinates, we rewrite these equations:
    \begin{equation}\label{eqn crit pt conditions} \sum_{a:h(a)=v} r_a = \sum_{a:t(a)=v} r_a \quad \text{for} \ v \in \mathcal{V}^{\bullet}.
    \end{equation}
This means we can now use the arrow coordinates in the quiver, and thus the ideal coordinates, to give simple descriptions of both the superpotential and the defining equations of its critical points. In fact, using the quiver decoration in terms of the ideal coordinates, we can take this a step further:

\begin{thm} \label{thm crit points, sum at vertex is nu_i}
If the critical point conditions hold at every dot vertex $v \in \mathcal{V}^{\bullet}$, then the sum of the outgoing arrow coordinates at each dot vertex $v_{ik}$ is given in terms of the ideal coordinates by
    $$ \varpi(v_{ik}) := \sum_{a:t(a)=v_{ik}} r_a = m_{s_k +i-k}.
    $$
\end{thm}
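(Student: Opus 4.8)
The plan is to prove the statement by induction on the column index $k$, moving column by column through the quiver from left to right and, within each column, from bottom to top, using the iterative description of the quiver decoration in Equation (\ref{eqn rai,j+1 in terms of m's}) together with the critical point conditions (\ref{eqn crit pt conditions}). Concretely, I would set up the quantity $\varpi(v_{ik}) = \sum_{a:t(a)=v_{ik}} r_a$ and observe that at a dot vertex $v_{ik}$ the outgoing arrows are the vertical arrow $a_{ik}$ and the horizontal arrow $b_{ik}$ (when the latter exists), so $\varpi(v_{ik}) = r_{a_{ik}} + r_{b_{ik}}$, except on the leftmost column where $\varpi(v_{i1}) = r_{a_{i1}}$ only. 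The critical point condition at $v_{ik}$ says the sum of outgoing arrows equals the sum of incoming arrows, i.e. $\varpi(v_{ik}) = r_{a_{i+1,k}} + r_{b_{i,k+1}}$, where the incoming vertical arrow comes from the vertex directly below and the incoming horizontal arrow from the vertex directly to the right.

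First I would handle the base case $k=1$: here $v_{i1}$ has a single outgoing arrow $a_{i1}$, so $\varpi(v_{i1}) = r_{a_{i1}} = p_{s_1+n-i} = z_{1+s_{n-i}} = m_{s_1+i} = m_{s_1+i-1\cdot 1}$... wait, I should be careful: the claimed formula is $m_{s_k+i-k}$, and for $k=1$ this reads $m_{s_1+i-1}$. Since $m_{s_1+i} = m_{i}$ (as $s_1 = 0$), I need to reconcile indices; the identification $r_{a_{i1}} = m_{s_1+i}$ from Section \ref{subsec The quiver torus as another toric chart on Z} gives $\varpi(v_{i1})$ directly in terms of the $m$-coordinates, and I would simply verify it matches $m_{s_1+i-1}$ by the telescoping relations already recorded in that section (taking $j=1$ in the telescoping product $\prod_{r=0}^{k-1} m_{s_{k-r}+a+r}$). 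The inductive step is the heart of the argument: assuming $\varpi(v_{i,k-1}) = m_{s_{k-1}+i-(k-1)}$ holds for all relevant $i$ in column $k-1$, I would compute $\varpi(v_{ik}) = r_{a_{ik}} + r_{b_{ik}}$. Using the box relations and Equation (\ref{eqn relation in quiver from stars}) to express $r_{b_{ik}}$ in terms of known quantities, and using (\ref{eqn rai,j+1 in terms of m's}) to express $r_{a_{ik}}$ as $m_{s_k+i-(k-1)} \cdot n(r_{a_{i,k-1}})/n(r_{a_{i-1,k-1}})$, I would feed in the critical point equation at $v_{i,k-1}$ to replace $r_{a_{i,k-1}} + r_{b_{i,k-1}}$ (which is $\varpi(v_{i,k-1})$, known by induction) and rearrange. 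The key algebraic input is that $\varpi(v_{ik})$ should emerge as a ratio of the numerators $n(r_{a_{ik}})$ appearing in (\ref{eqn rai,j+1 in terms of m's}), since those numerators are precisely the telescoping products $\prod_{r=0}^{k-1} m_{s_{k-r}+a+r}$, and the difference of two consecutive such products along a column is governed by a single new factor $m_{s_k+i-k}$.

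The main obstacle I anticipate is the bookkeeping in the inductive step: correctly matching the arrow $b_{ik}$ incoming to $v_{i,k-1}$ (horizontal arrows have their \emph{tail} at their index vertex) against the arrow $a_{ik}$ and tracking which vertices are star versus dot near the diagonal, where the pattern of outgoing arrows degenerates. I would need to treat the near-diagonal cases (where $v_{ik}$ has only one outgoing arrow, namely when $i = k+1$ so that the horizontal arrow $b_{ik}$ is the leftmost in its row and the vertex above is a star) carefully and separately, checking that the formula $m_{s_k+i-k}$ still holds there — indeed for $i=k+1$ we want $\varpi(v_{k+1,k}) = m_{s_k+1}$. I expect this to reduce, after substituting the explicit $m$-to-$z$ coordinate change of Theorem \ref{thm coord change} if a direct combinatorial argument proves unwieldy, to the telescoping identities already established in Section \ref{subsec The quiver torus as another toric chart on Z}, so the risk is purely one of index discipline rather than of a genuine new difficulty. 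As a sanity check throughout, I would verify the claim against the $n=4$ quiver decoration in Figure \ref{fig Quiver decoration, n=4, m coordinates}: for instance at $v_{32}$ the outgoing arrows carry $\frac{d_2}{d_3}\frac{m_1}{m_2 m_4}$ and $\frac{m_2 m_4}{m_1}$, and the critical point relations should force these to sum to $m_{s_2 + 1} = m_{n-1} = m_3$.
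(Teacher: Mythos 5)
Your proposal is correct and takes essentially the same route as the paper: the paper also inducts from the first column (where $\varpi(v_{i1})=r_{a_{i-1,1}}=m_{s_1+i-1}$ is immediate) and, for the step, combines the critical point condition at the diagonally adjacent vertex with the box relation and the telescoping numerators $n(r_{a_{ij}})$ of (\ref{eqn rai,j+1 in terms of m's}) to get $\varpi(v_{ik})=\frac{r_{a_{i-1,k}}}{r_{a_{i-1,k-1}}}\,m_{s_{k-1}+i-k}=m_{s_k+i-k}$, exactly the ratio mechanism you identify. The only corrections needed are the index slips you yourself flag: since $h(a_{ij})=v_{ij}$, the outgoing vertical arrow at $v_{ik}$ is $a_{i-1,k}$ (not $a_{ik}$); the previous-column vertex feeding the induction is $v_{i-1,k-1}$ (not $v_{i,k-1}$), which is why the box relation on the square with $v_{ik}$ at its bottom-right corner is what converts the additive critical point relation into the needed multiplicative one; and in your $n=4$ sanity check the outgoing arrows at $v_{32}$ carry $\frac{m_2m_4}{m_1}$ and $\frac{d_1}{d_2}\frac{m_4}{m_1^2}$, summing to $m_{s_2+1}=m_4$ (as $s_2=3$), not $m_3$.
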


\begin{proof}
By construction of the quiver labelling there is only one outgoing arrow at each $v_{i1}$ for $i=2, \ldots, n$, namely $a_{i-1,1}$, and indeed $r_{a_{i-1,1}}= m_{i-1} = m_{s_1+i-1}$. Since we have the desired property for the vertices $v_{i1}$, $i=2, \ldots, n-1$, we proceed by an inductive argument increasing both vertex subscripts simultaneously.

We consider the subquiver given in Figure \ref{fig Critical point conditions in the quiver} and suppose the sum of outgoing arrows at $v_{ik}$ is $m_{s_k +i-k}$. Then by the critical point condition at this vertex we have $m_{s_k +i-k} = r_{a_{ik}}+r_{b_{i,k+1}}$.
\begin{figure}[hb]
\centering
\begin{tikzpicture}[scale=0.9]
    %dots and stars
    \node (31) at (0,0) {$\bullet$};
    \node (21) at (0,1.5) {$\bullet$};
    \node (32) at (1.5,0) {$\bullet$};
    \node (22) at (1.5,1.5) {$\bullet$};
    \node (11) at (0,3) {$\bullet$};
    \node (20) at (-1.5,1.5) {$\bullet$};

    %arrows
    \draw[->] (21) -- (11);
    \draw[->] (31) -- (21);
    \draw[->] (32) -- (22);

    \draw[->] (21) -- (20);
    \draw[->] (22) -- (21);
    \draw[->] (32) -- (31);

    %arrow labels
    \node at (-0.48,2.25) {\scriptsize{$a_{i-1,k}$}};
    \node at (-0.3,0.75) {\scriptsize{$a_{ik}$}};
    \node at (2,0.75) {\scriptsize{$a_{i,k+1}$}};

    \node at (0.75,-0.3) {\scriptsize{$b_{i+1,k+1}$}};
    \node at (0.75,1.7) {\scriptsize{$b_{i,k+1}$}};
    \node at (-0.75,1.7) {\scriptsize{$b_{ik}$}};

    \node at (-0.55,0) {\scriptsize{$v_{i+1,k}$}};
    \node at (0.35,1.25) {\scriptsize{$v_{ik}$}};
    \node at (2.19,0) {\scriptsize{$v_{i+1,k+1}$}};
    \node at (2.07,1.5) {\scriptsize{$v_{i,k+1}$}};
\end{tikzpicture}
\caption{Critical point conditions in the quiver} \label{fig Critical point conditions in the quiver}
\end{figure}
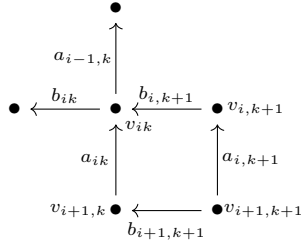

By the definition of the arrow coordinates we have
    $$r_{a_{i-1,k}} = m_{s_k +i-k} \frac{n(r_{a_{i-1,k-1}})}{n(r_{a_{i-2,k-1}})},  \quad
    r_{a_{i,k+1}} = m_{s_{k+1} +i-k} \frac{n(r_{a_{ik}})}{n(r_{a_{i-1,k}})}, \quad
    r_{a_{ik}} = m_{s_k +i+1-k} \frac{n(r_{a_{i,k-1}})}{n(r_{a_{i-1,k-1}})}.
    $$
In particular we see that
    \begin{equation} \label{eqn quotient of arrow coords, crit point in quiver}
    \frac{r_{a_{i,k+1}}}{r_{a_{ik}}} = m_{s_{k+1} +i-k} \frac{n(r_{a_{i-1,k-1}})}{n(r_{a_{i-1,k}})}.
    \end{equation}
The sum of outgoing arrows at the vertex $v_{i+1,k+1}$ is given by
    $$\begin{aligned}
    \varpi(v_{i+1,k+1}) :=\sum_{a:t(a)=v_{i+1,k+1}} r_a &=  r_{a_{i,k+1}} + r_{b_{i+1,k+1}} &\\
        &= r_{a_{i,k+1}} + \frac{r_{a_{i,k+1}}r_{b_{i,k+1}}}{r_{a_{ik}}} & \begin{aligned}[t]\text{by the box relation} \\ r_{a_{ik}}r_{b_{i+1,k+1}} = r_{a_{i,k+1}}r_{b_{i,k+1}} \end{aligned} \\
        &= r_{a_{i,k+1}} + \frac{r_{a_{i,k+1}}}{r_{a_{ik}}}(m_{s_k +i-k} - r_{a_{ik}}) & \text{by the inductive hypothesis}\\
        &= \frac{r_{a_{i,k+1}}}{r_{a_{ik}}}m_{s_k +i-k} & \\
        &= \frac{m_{s_{k+1} +i-k} n(r_{a_{i-1,k-1}})}{n(r_{a_{i-1,k}})} m_{s_k +i-k} & \text{using} \ (\ref{eqn quotient of arrow coords, crit point in quiver}) \\
        &= m_{s_{k+1} +i-k} & \text{by definition of} \ n(r_{a_{i-1,k}}) \\
        &= m_{s_{k+1}+(i+1)-(k+1)}.
    \end{aligned}$$
\end{proof}

\begin{rem}
At first glance, in the above theorem we seem to have lost the information about the highest weight element $d \in T^{\vee}$. However at a critical point this information can be partially recovered from the $\boldsymbol{m}$-coordinates; considering the dot vertices on the bottom wall of the quiver, for each $j=1, \ldots, n-1$ we have
    $$\begin{aligned}
    r_{b_{n, j+1}} &= \varpi(v_{nj}) & \text{by the critical point conditions} \\
        &= m_{s_j+n-j} & \text{by Theorem \ref{thm crit points, sum at vertex is nu_i}}.
    \end{aligned}
    $$
By the quiver decoration, each arrow coordinate $r_{b_{n, j+1}}$ is given by
    $$r_{b_{n, j+1}} = \frac{d_j}{d_{j+1}} \frac{\prod_{i \in I_{j_1}} m_i}{\prod_{i \in I_{j_2}} m_i}
    $$
for some multisets $I_{j_1}, I_{j_2}$ of the integers $1, \ldots, N$. Thus we have
    $$\alpha^{\vee}_j(d)=\frac{d_j}{d_{j+1}} = m_{s_j+n-j} \frac{\prod_{i \in I_{j_2}} m_i}{\prod_{i \in I_{j_1}} m_i}, \quad j=1, \ldots, n-1.
    $$
\end{rem}

We complete this section by tying together the quiver, critical points and our interest in the form of the weight matrix from Sections \ref{subsec The form of the weight matrix} and \ref{sec The ideal coordinates}. Namely it is natural to ask what happens to the weight matrix at critical points.

\begin{prop} \label{prop weight at crit point non trop}
At a critical point in the fibre over $d\in T^{\vee}$, the weight matrix is an $n\times n$ matrix $\mathrm{diag}(c, \ldots, c)$ where
    $$c^n = \prod_{i=1}^n d_i.
    $$
\end{prop}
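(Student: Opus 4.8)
The statement has two parts: that $\mathrm{wt}(b)$ is scalar at a critical point, and that the $n$-th power of the scalar is $\prod_i d_i$. The second part is really a feature of all of $Z$. Indeed, writing $b = u_1 d\bar{w}_0 u_2 = [b]_- t_R$ with $u_1,u_2\in U^{\vee}$ and $[b]_-\in U^{\vee}_-$, we have $\det(b) = \det(d)\det(\bar{w}_0) = \prod_{i=1}^n d_i$, since $\det(\bar{s}_i)=1$ for each $i$ forces $\det(\bar{w}_0)=1$, while on the other hand $\det(b) = \det([b]_-)\det(t_R) = \det(t_R) = \prod_{i=1}^n (t_R)_{ii}$ because $[b]_-$ is unipotent. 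Hence $\prod_{i=1}^n (t_R)_{ii} = \prod_{i=1}^n d_i$ everywhere, and once we know that at a critical point $t_R = \mathrm{diag}(c,\ldots,c)$, the relation $c^n = \prod_i d_i$ is immediate. So the whole content of the proposition is that $t_R$ is scalar at a critical point.

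The plan for this is to combine Corollary \ref{cor wt matrix in m ideal coords} with Theorem \ref{thm crit points, sum at vertex is nu_i}. Set $w_j := (t_R)_{n-j+1,n-j+1}$; by Corollary \ref{cor wt matrix in m ideal coords}, $w_j = d_j \prod_{k=1}^{j-1} m_{s_k+(j-k)}\big/\prod_{r=1}^{n-j} m_{s_j+r}$. By Theorem \ref{thm crit points, sum at vertex is nu_i}, at a critical point every factor $m_{s_k+i-k}$ occurring here equals $\varpi(v_{ik}) = \sum_{a:\,t(a)=v_{ik}} r_a$, the sum of the outgoing arrow coordinates at the dot vertex $v_{ik}$; after this substitution $w_j = x_{v_{jj}}\,\prod_{k=1}^{j-1}\varpi(v_{jk})\big/\prod_{i=j+1}^n\varpi(v_{ij})$, where the numerator runs over the dot vertices of the $j$-th row lying left of the star $v_{jj}$ and the denominator over the dot vertices of the $j$-th column lying below $v_{jj}$. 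It then remains to prove $w_j = w_{j+1}$ for $j=1,\ldots,n-1$. For this I would write each $\varpi(v_{ab})$ in vertex coordinates, its outgoing arrows being the vertical one into $v_{a-1,b}$ and, when $b\ge 2$, the horizontal one into $v_{a,b-1}$, so that $\varpi(v_{ab}) = (x_{v_{a-1,b}} + x_{v_{a,b-1}})/x_{v_{ab}}$ in the interior and $x_{v_{a-1,b}}/x_{v_{ab}}$ on the first column, and then use the critical point equations (\ref{eqn crit pt conditions}), which say that this also equals the sum of the incoming arrow coordinates. In the quotient $w_j/w_{j+1}$ the $x_v$ in the $\varpi$-denominators cancel except along the boundary of the quiver, and the critical point equations collapse the remaining numerators along the row and column flanking $v_{jj}$ and $v_{j+1,j+1}$, and one expects the quotient to telescope to $1$.

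The main obstacle is precisely this last telescoping: one must track the boundary vertices and handle the edge cases $j=1$ (no left neighbours) and the bottom row $i=n$ (no vertical incoming arrows) correctly. I expect to organise it as a double induction -- on $j$, and on the distance along the row or column from the star vertex -- parallel to the induction already carried out in the proof of Theorem \ref{thm crit points, sum at vertex is nu_i}. An alternative route, outside the quiver framework of this paper, is to invoke Rietsch's theorem \cite[non-T-equivariant case of Theorem 4.1]{Rietsch2008}: a critical point of $\mathcal{W}$ corresponds to a lower-triangular Toeplitz matrix, so $b$ is constant along its diagonals, whence $t_R = [b]_0$ is scalar, and the determinant identity of the first paragraph again yields $c^n = \prod_i d_i$. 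I would present the quiver argument as the main proof and record this second route as a remark.
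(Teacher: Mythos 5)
Your first paragraph is fine: the identity $\prod_i (t_R)_{ii}=\prod_i d_i$ does hold on all of $Z$ (the paper gets it equivalently from $\prod_i t_i=\Xi_1=\prod_i d_i$), and your reduction of the proposition to the scalarity of $t_R$ is correct. Your formula $w_j = x_{v_{jj}}\prod_{k<j}\varpi(v_{jk})\big/\prod_{i>j}\varpi(v_{ij})$, obtained by feeding Theorem \ref{thm crit points, sum at vertex is nu_i} into Corollary \ref{cor wt matrix in m ideal coords}, is also correct.

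The gap is that the constancy of $w_j$ in $j$ is the entire content of the proposition, and you leave it at ``one expects the quotient to telescope to $1$,'' explicitly flagging it as the main obstacle. It is not a routine cancellation: already for $n=3$ one has $w_1=x_{v_{31}}$, $w_2=d_1d_2x_{v_{32}}/(x_{v_{21}}(d_2+x_{v_{31}}))$, $w_3=d_3\,x_{v_{21}}(d_2+x_{v_{31}})/(x_{v_{31}}x_{v_{32}})$, and their equality requires combining the critical point conditions at \emph{all} the dot vertices, not just those on the hook through $v_{jj}$; moreover the $\varpi$-denominators of $w_j$ and $w_{j+1}$ live on disjoint sets of vertices except for $v_{j+1,j}$, so they do not cancel against each other in the way you describe. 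The missing ingredient is a closed combinatorial identity along a \emph{diagonal} rather than along a row/column hook: the paper writes $t_i=\Xi_i/\Xi_{i+1}$ with $\Xi_i=\prod_{v\in\mathcal{D}_i}x_v$ and proves $t_{i-1}/t_i=\Xi_{i-1}\Xi_{i+1}/\Xi_i^2=1$ by applying Lemma \ref{lem Jamie's lemma 5.9} (quoted from Judd) to the subquiver of arrows meeting $\mathcal{D}_i$; that lemma, proved by induction along the diagonal using the box relations and the critical conditions, is exactly the ``telescoping'' you would have to establish, so your double induction would in effect have to reprove it. Your fallback via Rietsch's Toeplitz theorem is a legitimate complete alternative (a Toeplitz $b$ is constant on diagonals, so $[b]_0$ is scalar), and as written it is actually the only finished argument in your proposal; the paper instead opts for the self-contained quiver computation, which also works over a general field $\mathbb{K}$.
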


In order to prove this we require the following lemma:
\begin{lem}[{\cite[Lemma 5.9]{Judd2018}}] \label{lem Jamie's lemma 5.9}
Suppose we have a quiver like the one given in Figure \ref{fig Diagonal subquiver}. We attach a variable $r_a$ to each arrow such that the box relations $r_{a_1}r_{a_2}=r_{a_3}r_{a_4}$ hold whenever $a_1, a_2, a_3, a_4$ form a square (see Figure \ref{fig Arrows forming a box}) and the critical point conditions hold at each black vertex. For each box $B_j$, $1\leq j \leq t$, let $O_j = r_{a_1}r_{a_3}$ and $I_j=r_{a_2}r_{a_4}$. Additionally let $K_t=\prod r_a$ where the product is over a (any) path from $v_t$ to $v_0$. Then we have
    $$\prod_{j=1}^t O_j \frac{r_{a_{\mathrm{out}}}}{r_{a_{\mathrm{in}}}} = K_t \quad \text{and} \quad \prod_{j=1}^t I_j \frac{r_{a_{\mathrm{in}}}}{r_{a_{\mathrm{out}}}} = K_t .
    $$
Note this agrees with $\prod_{j=1}^t O_j \prod_{j=1}^t I_j = K_t^2$.
\end{lem}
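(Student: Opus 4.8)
The plan is to prove both displayed identities simultaneously by induction on the number $t$ of boxes in the diagonal strip, following exactly the pattern of the proof of Theorem~\ref{thm crit points, sum at vertex is nu_i}: the box relations let one rewrite quotients of arrow coordinates, while the critical point condition at a black vertex trades the sum of the outgoing arrow coordinates there for the sum of the incoming ones, and this is precisely what makes the product telescope.

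First I would dispose of everything that follows from the box relations alone, with no critical point condition used. Within a single box with arrows $a_1,a_2,a_3,a_4$ and relation $r_{a_1}r_{a_2}=r_{a_3}r_{a_4}$, the two monotone paths across the box have equal weight $k := r_{a_1}r_{a_2} = r_{a_3}r_{a_4}$; hence $O\,I = r_{a_1}r_{a_2}r_{a_3}r_{a_4} = k^2$, and the weight of any path through the strip from $v_t$ to $v_0$ equals $K_t = \prod_{j=1}^t k_j$ independently of the path chosen, since rerouting across a box interchanges its two monotone paths. In particular $\prod_{j=1}^t O_j\prod_{j=1}^t I_j = K_t^2$, which is the consistency claim; so it is enough to prove the first identity $\prod_{j=1}^t O_j\cdot r_{a_{\mathrm{out}}}/r_{a_{\mathrm{in}}} = K_t$, the second then following by division.

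For this first identity I would argue by induction on $t$. The base case $t=1$ is a rearrangement of the box relation of $B_1$ (together with the critical point condition at whichever endpoint of $B_1$ is marked black in Figure~\ref{fig Diagonal subquiver}, if any, which expresses the relevant outgoing-arrow sum in terms of the arrows of $B_1$). For the inductive step I peel off the last box $B_{t+1}$: apply the induction hypothesis to the sub-strip $B_1,\dots,B_t$, now regarding the vertex $w$ shared by $B_t$ and $B_{t+1}$ as its sink endpoint and the arrow of $B_{t+1}$ incident to $w$ and pointing into the strip as the corresponding $a_{\mathrm{in}}$. Because $w$ is interior to the completed strip, its four incident arrows are exactly the two arrows of $B_t$ leaving $w$ and the two arrows of $B_{t+1}$ entering $w$; so the critical point condition at $w$ reads (sum of the two $B_{t+1}$-arrows at $w$) $=$ (sum of the two $B_t$-arrows at $w$). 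Combining this with the box relation of $B_{t+1}$ — precisely the step $\varpi(v_{i+1,k+1}) = r_{a_{i,k+1}}+r_{b_{i+1,k+1}} = \tfrac{r_{a_{i,k+1}}}{r_{a_{ik}}}\varpi(v_{ik})$ used in the proof of Theorem~\ref{thm crit points, sum at vertex is nu_i} — multiplies the sub-strip's factor $r_{a_{\mathrm{out}}}/r_{a_{\mathrm{in}}}$ by the new box data $O_{t+1}$ and $k_{t+1}$ in just the way needed to produce the identity for $t+1$ with the correct new $a_{\mathrm{in}}$.

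The substance of the argument is combinatorial bookkeeping rather than algebra: one must check against Figure~\ref{fig Diagonal subquiver} which concrete arrows of $B_t$ and $B_{t+1}$ play the roles of the outer arrows $a_1,a_3$, the inner arrows $a_2,a_4$, and of $a_{\mathrm{out}}$ and $a_{\mathrm{in}}$ at each stage, and verify that the critical point relation at the newly absorbed vertex $w$ involves only arrows lying in the strip. Once that is pinned down, every remaining manipulation is a one-line rational identity in the arrow coordinates assembled from a box relation and a linear relation $\sum_{\mathrm{in}} r_a = \sum_{\mathrm{out}} r_a$, of exactly the kind already carried out above, so no genuinely new computation is required.
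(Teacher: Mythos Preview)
The paper does not actually prove this lemma: it is quoted verbatim from \cite[Lemma~5.9]{Judd2018} and then used in the proof of Proposition~\ref{prop weight at crit point non trop} without any argument being supplied. So there is no proof in the paper to compare your attempt against.

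That said, your strategy is the right one and can be made to work. The clean version of the telescoping is this. Write $S_0 := r_{a_{\mathrm{out}}}$ and, for $1\le j\le t$, let $S_j := r_{a_1^{(j)}}+r_{a_3^{(j)}}$ be the sum of the two outgoing arrows at $v_j$ inside $B_j$, and let $k_j := r_{a_1^{(j)}}r_{a_2^{(j)}}$ be the common path-weight across $B_j$. The box relation $r_{a_1^{(j)}}r_{a_2^{(j)}}=r_{a_3^{(j)}}r_{a_4^{(j)}}$ gives the single-box identity
\[
O_j\bigl(r_{a_2^{(j)}}+r_{a_4^{(j)}}\bigr)
= r_{a_1^{(j)}}r_{a_3^{(j)}}r_{a_2^{(j)}} + r_{a_1^{(j)}}\,r_{a_3^{(j)}}r_{a_4^{(j)}}
= r_{a_1^{(j)}}r_{a_2^{(j)}}\bigl(r_{a_3^{(j)}}+r_{a_1^{(j)}}\bigr)
= k_j\,S_j,
\]
while the critical point condition at $v_{j-1}$ says $r_{a_2^{(j)}}+r_{a_4^{(j)}}=S_{j-1}$. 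Hence $O_jS_{j-1}=k_jS_j$ for every $j$, and taking the product over $j=1,\dots,t$ gives $\bigl(\prod_j O_j\bigr)\,r_{a_{\mathrm{out}}} = K_t\,S_t = K_t\,r_{a_{\mathrm{in}}}$, the last equality being the critical point condition at $v_t$.

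Your write-up has two loose ends worth fixing. First, in your inductive step you propose to apply the hypothesis to the sub-strip $B_1,\dots,B_t$ with ``the arrow of $B_{t+1}$ incident to $w$ and pointing into the strip'' playing the role of $a_{\mathrm{in}}$; but there are \emph{two} such arrows ($a_2^{(t+1)}$ and $a_4^{(t+1)}$), so the sub-strip is not literally an instance of the lemma's hypotheses. The remedy is to phrase the induction in terms of $S_t$ rather than a single incoming arrow, as above. Second, your base case is underspecified: both endpoints $v_0$ and $v_1$ are black in Figure~\ref{fig Diagonal subquiver}, and you need the critical point condition at each of them ($r_{a_{\mathrm{out}}}=r_{a_2}+r_{a_4}$ and $r_{a_1}+r_{a_3}=r_{a_{\mathrm{in}}}$) together with the box relation, not merely one of them.
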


\begin{figure}[ht]
\centering
\begin{minipage}[b]{0.47\textwidth}
    \centering
    \begin{tikzpicture}[scale=0.9]
        %dots and stars
        \node (11) at (0,6) {$\boldsymbol{\circ}$};
            \node at (0.5,6) {$v_{-1}$};
        \node (22) at (1,5) {$\bullet$};
            \node at (0.6,4.9) {$v_0$};
        \node (33) at (2,4) {$\bullet$};
        \node (44) at (3,3) {$\bullet$};
            \node at (3.5,2.6) {$\ddots$};
        \node (55) at (4,2) {$\bullet$};
        \node (66) at (5,1) {$\bullet$};
            \node at (4.9,0.7) {$v_t$};
        \node (77) at (6,0) {$\boldsymbol{\circ}$};
            \node at (6.5,0) {$v_{t+1}$};

        \node (32) at (1,4) {$\boldsymbol{\circ}$};
        \node (43) at (2,3) {$\boldsymbol{\circ}$};
        \node (65) at (4,1) {$\boldsymbol{\circ}$};

        \node (23) at (2,5) {$\boldsymbol{\circ}$};
        \node (34) at (3,4) {$\boldsymbol{\circ}$};
        \node (56) at (5,2) {$\boldsymbol{\circ}$};

        %arrows
        \draw[->] (22) -- (11);
        \draw[->] (77) -- (66);

        \draw[->] (33) -- (32);
        \draw[->] (32) -- (22);
        \draw[->] (33) -- (23);
        \draw[->] (23) -- (22);

        \draw[->] (44) -- (43);
        \draw[->] (43) -- (33);
        \draw[->] (44) -- (34);
        \draw[->] (34) -- (33);

        \draw[->] (66) -- (65);
        \draw[->] (65) -- (55);
        \draw[->] (66) -- (56);
        \draw[->] (56) -- (55);

        %box labels
        \node at (1.5,4.5) {$B_1$};
        \node at (2.5,3.5) {$B_2$};
        \node at (4.5,1.5) {$B_t$};

        %arrow labels
        \node at (1,5.55) {$a_{\mathrm{out}}$};
        \node at (5.9,0.55) {$a_{\mathrm{in}}$};
    \end{tikzpicture}
\caption{Diagonal subquiver} \label{fig Diagonal subquiver}
\end{minipage} \hfill
\begin{minipage}[b]{0.47\textwidth}
    \centering
    \begin{tikzpicture}
        %dots and stars
        \node (41) at (0,0) {};
        \node (31) at (0,1) {};
        \node (42) at (1,0) {};
        \node (32) at (1,1) {};

        %arrows
        \draw[->] (41) -- (31);
        \draw[->] (42) -- (32);

        \draw[->] (32) -- (31);
        \draw[->] (42) -- (41);

        %arrow labels
        \node at (-0.4,0.5) {$a_2$};
        \node at (1.4,0.5) {$a_3$};

        \node at (0.5,1.2) {$a_4$};
        \node at (0.5,-0.3) {$a_1$};
    \end{tikzpicture}
\caption{Arrows forming a box} \label{fig Arrows forming a box}
\end{minipage}
\end{figure}

\begin{proof}[Proof of Proposition {\ref{prop weight at crit point non trop}}]
We consider the set of all arrows with either head or tail on the $i$th diagonal $\mathcal{D}_i$. These arrows form a subquiver like in Figure \ref{fig Diagonal subquiver}, with $v_t=v_{n, n-i+1}$. Moreover we have
    $$K_t=\frac{x_{v_0}}{x_{v_t}}, \quad r_{a_{\mathrm{in}}} = \frac{x_{v_t}}{x_{v_{t+1}}} , \quad r_{a_{\mathrm{out}}} = \frac{x_{v_{-1}}}{x_{v_0}}, \quad \text{so} \quad \frac{1}{K_t} \frac{r_{a_{\mathrm{out}}}}{r_{a_{\mathrm{in}}}} = \frac{x_{v_{-1}}x_{v_{t+1}}}{x_{v_0}^2}.
    $$
Thus, using Lemma \ref{lem Jamie's lemma 5.9}, we see that
    $$\frac{t_{i-1}}{t_i}=\frac{\Xi_{i+1}\Xi_{i-1}}{\Xi_i^2} = \frac{x_{v_{-1}}x_{v_{t+1}}}{x_{v_0}^2} \prod O_j = \frac{1}{K_t} \prod_{j=1}^t O_j \frac{r_{a_{\mathrm{out}}}}{r_{a_{\mathrm{in}}}} = 1.
    $$

So at a critical point, the weight matrix $\mathrm{diag}(t_1, t_2, \ldots, t_n)$ is given by $\mathrm{diag}(c,c, \ldots, c)$ for some $c$. By taking the determinant we obtain
    $$\prod_{i=1}^n t_i = \prod_{i=1}^n \frac{\Xi_i}{\Xi_{i+1}} = \frac{\Xi_1}{\Xi_{n+1}} = \Xi_1
    $$
recalling $\Xi_{n+1} =1$ by definition. This gives the desired value of $c$ as follows:
    $$c^n = \prod_{i=1}^n t_i = \Xi_1 = \prod_{i=1}^n d_i.
    $$
\end{proof}

\section{The tropical viewpoint} \label{sec The tropical viewpoint}

In this section we recall how, by tropicalisation, we can use the superpotential to obtain polytopes associated to a given highest weight. These polytopes depend on the choice of positive toric chart. The goal in this section is to describe the polytope we get from the ideal coordinates. Additionally we show that for each choice of highest weight, the associated critical point of the superpotential gives rise to a point inside this polytope, which is Judd's tropical critical point \cite{Judd2018} and has a beautiful description in terms of so called ideal fillings.

\subsection{The basics of tropicalisation} \label{subsec The basics of tropicalisation}

In this section we explain the concept of tropicalisation, following an original construction due to Lusztig \cite{Lusztig1994}. In order to do this we work over the field of Generalised Puiseux series, which we will denote by $\mathbf{K}$.

A generalised Puiseux series in a variable $t$ is a series with an exponent set $(\mu_k) = (\mu_0, \mu_1, \mu_2, \ldots ) \in \mathbb{R}$ which is strictly monotone and either finite, or countable and tending to infinity. That is,
    $$(\mu_k) \in \mathrm{MonSeq} = \left\{ A \subset \mathbb{R} \ | \ \mathrm{Cardinality}(A\cap \mathbb{R}_{\leq x}) < \infty \ \text{for arbitrarily large} \ x \in \mathbb{R} \right\} .
    $$
Thus we have
    $$\mathbf{K} = \left\{ c(t) = \sum_{(\mu_k) \in \mathrm{MonSeq}} c_{\mu_k}t^{\mu_k} \ | \ c_{\mu_k} \in \mathbb{C} \right\}.
    $$
The positive part of the field $\mathbf{K}$ is given by
    $$\mathbf{K}_{>0}:= \left\{ c(t)\in \mathbf{K} \ | \ c_{\mu_0} \in \mathbb{R}_{>0}\right\}
    $$
where we may assume that the lowest order term has a non-zero coefficient.

Given a torus $\mathcal{T}$, we note that we may identify $\mathcal{T}(\mathbf{K})=\mathrm{Hom}(M, \mathbf{K}^*)$, where we write $M$ for the character group of $\mathcal{T}$, $X^*(\mathcal{T})$, viewed as an abstract group and written additively. The positive part of $\mathcal{T}(\mathbf{K})$ is defined by those homomorphisms which take values in $\mathbf{K}_{>0}$, namely $\mathcal{T}(\mathbf{K}_{>0})=\mathrm{Hom}(M, \mathbf{K}_{>0})$. For $v \in M$ and $h \in \mathcal{T}(\mathbf{K})$ we will write $\chi^v(h)$ for the associated evaluation $h(v)$ in $\mathbf{K}^*$. We call $\chi^v$ the character associated to $v$.

We call a $\mathbf{K}$-linear combination of the characters $\chi^v$, a Laurent polynomial on $\mathcal{T}$. In addition, a Laurent polynomial is said to be positive if the coefficients of the characters lie in $\mathbf{K}_{>0}$. Now let $\mathcal{T}^{(1)}$, $\mathcal{T}^{(2)}$ be two tori over $\mathbf{K}$. We say that a rational map
    $$\psi: \mathcal{T}^{(1)} \dashrightarrow \mathcal{T}^{(2)}
    $$
is a positive rational map if, for any character $\chi$ of $\mathcal{T}^{(2)}$, the composition $\chi \circ \psi : \mathcal{T}^{(1)} \to \mathbf{K}$ is given by a quotient of positive Laurent polynomials on $\mathcal{T}^{(1)}$.

We now define the tropicalisation of these positive rational maps. Roughly speaking, it captures what happens to the leading term exponents. In order to define tropicalisation we use the natural valuation on $\mathbf{K}$ given by
    $$\mathrm{Val}_{\mathbf{K}} : \mathbf{K} \to \mathbb{R} \cup \{\infty\}, \quad
    \mathrm{Val}_{\mathbf{K}}\left(c(t)\right) = \begin{cases}
    \mu_0 & \text{if} \ c(t)=\sum_{(\mu_k) \in \mathrm{MonSeq}} c_{\mu_k}t^{\mu_k} \neq 0, \\
    \infty & \text{if} \ c(t)=0.
    \end{cases}
    $$

We define an equivalence relation $\sim$ on $\mathcal{T}(\mathbf{K}_{>0})$ using this valuation: we say $h \sim h'$ if and only if $\mathrm{Val}_{\mathbf{K}}(\chi(h))=\mathrm{Val}_{\mathbf{K}}(\chi(h'))$ for all characters $\chi$ of $\mathcal{T}$. Then the tropicalisation of the torus $\mathcal{T}$ is defined to be
    $$\mathrm{Trop}(\mathcal{T}):= T(\mathbf{K}_{>0})/\sim.
    $$
This set inherits the structure of an abelian group from the group structure of $\mathcal{\mathcal{T}}(\mathbf{K}_{>0})$, we denote this as addition.

In practical terms, when $\mathcal{T}=(\mathbf{K}^*)^r$, so that $\mathcal{T}(\mathbf{K}_{>0})=(\mathbf{K}_{>0})^r$, the valuation $\mathrm{Val}_{\mathbf{K}}$ on each coordinate gives an identification
    $$\mathrm{Trop}(\mathcal{T}) \to \mathbb{R}^r, \quad
    \left[ \left( c_1(t), \ldots, c_r(t) \right) \right] \mapsto \left( \mathrm{Val}_{\mathbf{K}}\left( c_1(t) \right), \ldots, \mathrm{Val}_{\mathbf{K}}\left( c_r(t) \right) \right).
    $$
To state this in a coordinate-free way, if $\mathcal{T}$ is a torus with cocharacter lattice $N:=X_*(\mathcal{T})$, then $\mathrm{Trop}(\mathcal{T})$ is identified with $N_{\mathbb{R}}=N\otimes \mathbb{R}$, see for example \cite{JuddRietsch2019}. We note that $N_{\mathbb{R}}$ is also identified with the Lie algebra of the torus taken over $\mathbb{R}$, for example $\mathrm{Trop}(T^{\vee})=\mathfrak{h}^*_{\mathbb{R}}$ (c.f. Section \ref{subsec Notation and definitions}).

We make the convention that if the coordinates of our torus $\mathcal{T}\cong(\mathbf{K}^*)^r$ are labelled by Roman letters, then the corresponding coordinates on $\mathrm{Trop}(\mathcal{T}) \cong \mathbb{R}^r$ are labelled by the associated Greek letters. In addition, by $(\mathbf{K}^*)^r_{\boldsymbol{b}}$ we mean $(\mathbf{K}^*)^r$ with coordinates $(b_1, \ldots, b_r)$, and similarly for $\mathbb{R}^N_{\boldsymbol{\zeta}}$, etc.

Suppose that $\mathcal{T}^{(1)}$, $\mathcal{T}^{(2)}$ are two tori over $\mathbf{K}$ and $\psi: \mathcal{T}^{(1)} \dashrightarrow \mathcal{T}^{(2)}$ is a positive rational map. The map
    $$\psi(\mathbf{K}_{>0}) : \mathcal{T}^{(1)}(\mathbf{K}_{>0}) \to \mathcal{T}^{(2)}(\mathbf{K}_{>0})
    $$
is well-defined and compatible with the equivalence relation $\sim$ (using the positivity of the leading terms). The tropicalisation $\mathrm{Trop}(\psi)$ is then defined to be the resulting map
    $$\mathrm{Trop}(\psi) : \mathrm{Trop}(\mathcal{T}^{(1)}) \to \mathrm{Trop}(\mathcal{T}^{(2)})
    $$
between equivalence classes. It is piecewise-linear with respect to the linear structures on the $\mathrm{Trop}(\mathcal{T}^{(i)})$.

In the case of a variety $X$ with a `positive atlas' consisting of torus charts related by positive birational maps (see \cite{FockGoncharov2006}, \cite{BerensteinKazhdan2007}), there is a well-defined positive part $X(\mathbf{K}_{>0})$ and tropical version $\mathrm{Trop}(X)$, which comes with a tropical atlas whose tropical charts $\mathrm{Trop}(X)\to \mathbb{R}^r$ are related by piecewise-linear maps. $\mathrm{Trop}(X)$ in this more general setting is a space with a piecewise-linear structure.

\begin{ex}
Let $\mathcal{T}=(\mathbf{K}^*)^2_{\boldsymbol{b}}$ and consider the following map\footnote{
This is the superpotential for $\mathbb{CP}^2$ (see \cite{EguchiHoriXiong1997}).
}:
    $$\psi:\mathcal{T} \to \mathbf{K}, \quad \psi(b_1,b_2) = b_1 + b_2 + \frac{t^3}{b_1b_2}.
    $$
We may consider $\psi$ as a positive birational map $\mathcal{T} \dashrightarrow \mathbf{K}^*$, and the corresponding map $\mathrm{Trop}(\psi) : \mathrm{Trop}(\mathcal{T}) \to \mathbb{R}$ is given in terms of the natural coordinates $\beta_1, \beta_2$ on $\mathrm{Trop}(\mathcal{T}) \cong \mathbb{R}^2$ by
    $$\mathrm{Trop}(\psi)(\beta_1, \beta_2) = \min\{ \beta_1, \beta_2, 3-\beta_1-\beta_2 \}.
    $$
In practice, we may think of tropicalisation as replacing addition by $\min$ and replacing multiplication by addition.
\end{ex}

\subsection{Constructing polytopes} \label{subsec Constructing polytopes}

In this section we return to the Landau-Ginzburg model for $G/B$, defined in Section \ref{subsec Landau-Ginzburg models} as the pair $(Z, \mathcal{W})$. Working now over the field of generalised Puiseux series, $\mathbf{K}$, there is a well-defined notion of the totally positive part of $Z(\mathbf{K})$, denoted by $Z(\mathbf{K}_{>0})$. It is defined, for a given torus chart on $Z(\mathbf{K})$, by the subset where the characters take values in $\mathbf{K}_{>0}$. Moreover, each of the string, ideal and quiver torus charts mentioned in previous sections, gives an isomorphism
    \begin{equation} \label{eqn toric chart isom}
    T^{\vee}(\mathbf{K}_{>0}) \times (\mathbf{K}_{>0})^N \xrightarrow{\sim} Z(\mathbf{K}_{>0})
    \end{equation}
where we consider $T^{\vee}(\mathbf{K}_{>0})$ to be the highest weight torus.

We will now restrict our attention to a fibre of the highest weight map (see Section \ref{subsec Landau-Ginzburg models}). To do so, we observe that since a dominant integral weight $\lambda \in X^*(T)^+$ is a cocharacter of $T^{\vee}$, we can define $t^{\lambda} \in T^{\vee}(\mathbf{K}_{>0})$ via the condition $\chi(t^{\lambda}) = t^{\langle \lambda, \chi\rangle}$ for $\chi \in X^*(T^{\vee})$.
Extending %(\ref{eqn bilinear form pair lattices})
$\mathbb{R}$-bilinearly to the perfect pairing
    $$ \langle \ , \ \rangle :  X^*(T)_{\mathbb{R}} \times  X^*(T^{\vee})_{\mathbb{R}} \to \mathbb{R},
    $$
we have that $t^{\lambda}$ is well defined for all $\lambda \in X^*(T)_{\mathbb{R}}$, by the same formula. We therefore do not require that $\lambda$ be integral, though we continue to be interested in those $\lambda$ which are dominant, that is $\lambda\in X^*(T)^+_{\mathbb{R}}$.
This allows us, for a dominant weight $\lambda\in X^*(T)^+_{\mathbb{R}}$, to define
    $$Z_{t^\lambda}(\mathbf{K}):= \{b \in Z(\mathbf{K}) \ | \ \mathrm{hw}(b)=t^\lambda \}.
    $$
We denote the restriction of the superpotential to this fibre by
    $$\mathcal{W}_{t^{\lambda}}: Z_{t^\lambda}(\mathbf{K}) \to \mathbf{K}.
    $$

For a fixed element $t^{\lambda} \in T^{\vee}(\mathbf{K}_{>0})$ of the highest weight torus, the isomorphisms (\ref{eqn toric chart isom}) for the string and ideal toric charts restrict to
    $$\begin{aligned}
    \phi_{t^{\lambda},\boldsymbol{z}} &: (\mathbf{K}_{>0})^N \to Z_{t^{\lambda}}(\mathbf{K}_{>0}), \\
    \phi_{t^{\lambda},\boldsymbol{m}} &: (\mathbf{K}_{>0})^N \to Z_{t^{\lambda}}(\mathbf{K}_{>0}) \\
    \end{aligned}
    $$
respectively, with coordinates denoted by $(z_1, \ldots, z_N)$ and $(m_1, \ldots, m_N)$.
These toric charts may be considered as defining a positive atlas for $Z_{t^{\lambda}}(\mathbf{K}_{>0})$.
We denote the respective compositions of $\phi_{t^{\lambda},\boldsymbol{z}}$ and $\phi_{t^{\lambda},\boldsymbol{m}}$ with the superpotential $\mathcal{W}_{t^{\lambda}}$, by
    $$\begin{aligned}
    \mathcal{W}_{t^{\lambda},\boldsymbol{z}} &: (\mathbf{K}_{>0})^N \to \mathbf{K}_{>0}, \\
    \mathcal{W}_{t^{\lambda},\boldsymbol{m}} &: (\mathbf{K}_{>0})^N \to \mathbf{K}_{>0} \\
    \end{aligned}
    $$
and observe that both are positive rational maps. We denote their tropicalisations respectively by
    $$\begin{aligned}
    \mathrm{Trop}\left(\mathcal{W}_{t^{\lambda},\boldsymbol{z}}\right) &: \mathbb{R}^N_{\boldsymbol{\zeta}} \to \mathbb{R}, \\
    \mathrm{Trop}\left(\mathcal{W}_{t^{\lambda},\boldsymbol{m}}\right) &: \mathbb{R}^N_{\boldsymbol{\mu}} \to \mathbb{R}. \\
    \end{aligned}
    $$

We may associate convex polytopes to our tropical superpotentials, defined as follows:
    $$\begin{aligned}
    \mathcal{P}_{\lambda,\boldsymbol{\zeta}} &:= \left\{ \boldsymbol{\alpha} \in \mathbb{R}^N_{\boldsymbol{\zeta}} \ | \ \mathrm{Trop}\left(\mathcal{W}_{t^{\lambda},\boldsymbol{z}}\right)(\boldsymbol{\alpha}) \geq 0 \right\}, \\
    \mathcal{P}_{\lambda,\boldsymbol{\mu}} &:= \left\{ \boldsymbol{\alpha} \in \mathbb{R}^N_{\boldsymbol{\mu}} \ | \ \mathrm{Trop}\left(\mathcal{W}_{t^{\lambda},\boldsymbol{m}}\right)(\boldsymbol{\alpha}) \geq 0 \right\}. \\
    \end{aligned}
    $$

To motivate the definition of these polytopes, first recall the string toric chart, $\varphi_{\mathbf{i}}$, for an arbitrary reduced expression $\mathbf{i}$, defined by (\ref{eqn string coord chart defn}) in Section \ref{subsec The string coordinates}. Using this we have generalisations
    $$\begin{aligned}
    \phi^{\mathbf{i}}_{t^{\lambda},\boldsymbol{z}} &: (\mathbf{K}_{>0})^N \to Z_{t^{\lambda}}(\mathbf{K}_{>0}), \\
    \mathcal{W}^{\mathbf{i}}_{t^{\lambda},\boldsymbol{z}} &: (\mathbf{K}_{>0})^N \to \mathbf{K}_{>0}
    \end{aligned}
    $$
of the maps above, such that $\phi_{t^{\lambda},\boldsymbol{z}} = \phi^{\mathbf{i}_0}_{t^{\lambda},\boldsymbol{z}}$ and $\mathcal{W}_{t^{\lambda},\boldsymbol{z}}=\mathcal{W}^{\mathbf{i}_0}_{t^{\lambda},\boldsymbol{z}}$. With this notation we have the following theorem:

\begin{thm}[{\cite[Theorem 4.1]{Judd2018}}] \label{thm string polytope from string coords}
Consider a general reduced expression $\mathbf{i}$ for $\bar{w}_0$, and the superpotential for $GL_n/B$ written in the associated string coordinates, namely $\mathcal{W}^{\mathbf{i}}_{t^{\lambda},\boldsymbol{z}}$. Then the polytope
    $$\mathcal{P}^{\mathbf{i}}_{\lambda,\boldsymbol{\zeta}} = \left\{ \boldsymbol{\alpha} \in \mathbb{R}^N_{\boldsymbol{\zeta}} \ \big| \ \mathrm{Trop}\left(\mathcal{W}^{\mathbf{i}}_{t^{\lambda},\boldsymbol{z}}\right)(\boldsymbol{\alpha}) \geq 0 \right\}
    $$
is the string polytope associated to $\mathbf{i}$, $\mathrm{String}_{\mathbf{i}}(\lambda)$.
\end{thm}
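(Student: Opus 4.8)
The plan is to reduce the statement to a known identification of the string polytope with a tropical superpotential polytope in a chart of a related type, and then to track the coordinate change between the two descriptions. Concretely, Berenstein--Kazhdan and Judd already express the string polytope $\mathrm{String}_{\mathbf{i}}(\lambda)$ as the nonnegativity locus of a certain ``decoration function'' (equivalently, the tropicalised superpotential of $G/B$) written in a parametrisation of the unipotent radical. The content of Theorem \ref{thm string polytope from string coords} is that \emph{our} string coordinates $\boldsymbol{z}$, built in Section \ref{subsec The string coordinates} via the composition $\Phi \circ \tau \circ (\mathrm{id}, \mathbf{x}_{-\mathbf{i}}^{\vee})$, are precisely the coordinates in which that decoration function becomes Littelmann's string function, so that its tropicalisation cuts out exactly the Littelmann string polytope. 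So the first step is to recall the precise form of Littelmann's string polytope $\mathrm{String}_{\mathbf{i}}(\lambda)$ as a set of inequalities, and the precise form of the superpotential $\mathcal{W}^{\mathbf{i}}_{t^\lambda, \boldsymbol{z}}$ obtained from the definitions in Section \ref{subsec The string coordinates} (the example in Example \ref{ex dim 3 z coords} is the $n=3$ model for what this looks like).

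Next I would set up the dictionary. The superpotential $\mathcal{W}$ on $Z$ is $\chi(u_1) + \chi(u_2)$ where $b = u_1 d \bar{w}_0 u_2$; the first summand $\chi(u_1)$ contributes the ``free'' linear terms and the second $\chi(u_2)$ contributes the ``$\lambda$-twisted'' terms, since $u_2$ is determined by the condition $b \in Z$ and hence depends on $d = t^\lambda$. Tropicalising term by term — replacing $+$ by $\min$ and products by sums, as in the example computations in Section \ref{subsec The basics of tropicalisation} — each monomial of $\chi(u_1)$ and $\chi(u_2)$ becomes an affine-linear function of $\boldsymbol{\zeta}$, and the condition $\mathrm{Trop}(\mathcal{W}^{\mathbf{i}}_{t^\lambda,\boldsymbol{z}})(\boldsymbol{\alpha}) \geq 0$ unpacks to the assertion that \emph{every} such affine-linear function is $\geq 0$. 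The key claim to verify is that this finite list of inequalities coincides, variable by variable, with Littelmann's defining inequalities for $\mathrm{String}_{\mathbf{i}}(\lambda)$: the ``untwisted'' inequalities $\zeta_k \geq 0$-type conditions (coming from $\chi(u_1)$) should match the string-cone inequalities of Berenstein--Littelmann--Zelevinsky, and the ``twisted'' ones (coming from $\chi(u_2)$, carrying the $\langle \lambda, \alpha^\vee\rangle$ terms) should match the $\lambda$-dependent upper-bound inequalities.

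The mechanism that makes both sides match is the Chamber Ansatz / twist-map identity already invoked in this paper: the entries of $u_2$, and equivalently the chamber minors of $u_1$, are monomials in $\boldsymbol{z}$ (this is exactly the content of Lemma \ref{lem our Chamber Ansatz minors are monomial} and the computations of Lemma \ref{lem form of u_1 and b factorisations}), so no tropical $\min$'s are hidden inside individual coordinate expressions and the tropicalisation genuinely is term-by-term on $\chi(u_1) + \chi(u_2)$. Thus the proof is: (i) write $\mathcal{W}^{\mathbf{i}}_{t^\lambda,\boldsymbol{z}} = \chi(u_1) + \chi(u_2)$ explicitly as a positive Laurent polynomial in $\boldsymbol{z}$, using that the relevant minors are monomial; (ii) tropicalise, obtaining $\mathrm{Trop}(\mathcal{W}^{\mathbf{i}}_{t^\lambda,\boldsymbol{z}})(\boldsymbol{\alpha}) = \min$ over an explicit finite set of affine-linear forms; (iii) identify each form with a wall of $\mathrm{String}_{\mathbf{i}}(\lambda)$, citing the BK/Littelmann description; (iv) conclude $\mathcal{P}^{\mathbf{i}}_{\lambda,\boldsymbol{\zeta}} = \mathrm{String}_{\mathbf{i}}(\lambda)$.

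The main obstacle I anticipate is step (iii): matching conventions. Littelmann's string parametrisation and Berenstein--Kazhdan's geometric crystal parametrisation differ by a twist (indeed by precisely the twist map $\eta^{w_0,e}$ and involution $\iota$ that appear in our definition of $\varphi_{\mathbf{i}}$), and by possible reversals of the order of the reduced word; getting the indexing of walls to line up — particularly the $\lambda$-dependent walls coming from $u_2$ — requires care, and is really where the specific construction of our string coordinates via $\tau = \iota \circ \eta^{w_0,e}$ is doing its work. One clean way to handle this is to quote the comparison directly: Judd's Theorem 4.1 (in the $SL_n$ case) establishes exactly this identification, and the only thing to check is that passing from $SL_n$ to $GL_n$ adds only the overall determinant/central direction, which does not affect the superpotential (it is a function of simple-root coordinates only, by the remark following Proposition \ref{prop weight at crit point non trop}) nor the string inequalities; so the $GL_n$ statement follows formally from the $SL_n$ one together with this central-torus bookkeeping.
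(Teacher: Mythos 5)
The paper does not prove this statement at all: it is quoted verbatim as \cite[Theorem 4.1]{Judd2018}, so there is no internal argument to compare against. Your closing paragraph --- invoke Judd's $SL_n$ result and observe that passing to $GL_n$ only adds the central direction, which affects neither $\mathcal{W}^{\mathbf{i}}_{t^\lambda,\boldsymbol{z}}$ (which depends on $d=t^\lambda$ only through the ratios $d_i/d_{i+1}=t^{\langle\lambda,\alpha_i^\vee\rangle}$) nor the string inequalities --- is exactly the justification the paper is implicitly relying on, and it is correct. Your steps (i)--(iv) are then a reconstruction of Judd's own proof rather than something this paper needs; they are plausible in outline, but two details deserve care. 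First, the ``no hidden $\min$'s'' point does not need Lemma \ref{lem our Chamber Ansatz minors are monomial}: that lemma concerns the chamber minors arising in the $\mathbf{i}_0$/$\mathbf{i}_0'$ coordinate change, not general $\mathbf{i}$, and in any case the tropicalisation of a positive Laurent polynomial is by definition the minimum over its monomials, so term-by-term tropicalisation of $\chi(u_1)+\chi(u_2)$ is automatic once positivity of the chart is known (individual entries of $u_1$, such as $z_1+z_2/z_3$ in Example \ref{ex dim 3 z coords}, are genuinely not monomial). Second, the matching in step (iii) between the monomials of $\chi(u_1)$ (string-cone walls) and of $\chi(u_2)$ ($\lambda$-dependent walls) is the entire substance of Judd's theorem and of the Berenstein--Kazhdan comparison; as you say, it hinges on the specific twist $\tau=\iota\circ\eta^{w_0,e}$, and you are right to route it through the citation rather than re-derive it.
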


\begin{rem}
The polytopes $\mathcal{P}_{\lambda,\boldsymbol{\zeta}}$ and $\mathcal{P}_{\lambda,\boldsymbol{\mu}}$ are simply linear transformations of each other.

If instead we were to take the toric chart given by the vertex torus, then the resulting polytope would be the respective Gelfand-Tsetlin polytope for $\lambda$. Since the quiver torus is so closely related to the vertex torus, the coordinates of the quiver toric chart provide a bridge between the string and Gelfand-Tsetlin polytopes. In particular, we see that the tropicalisation of the coordinate change from the string to the vertex coordinates defines an affine map between these two polytopes.
\end{rem}

\subsection{Tropical critical points and the weight map} \label{subsec Tropical critical points and the weight map}

We recall the critical point conditions of the superpotential given in (\ref{eqn crit pt conditions}) which, working over $\mathbf{K}_{>0}$, define critical points of $\mathcal{W}$ in the fibres $Z_{t^\lambda}(\mathbf{K}_{>0})$. Judd showed in the $SL_n$ case that for each dominant integral weight $\lambda$, there is in fact only one critical point that lies in $Z_{t^{\lambda}}(\mathbf{K}_{>0})$ (see \cite[Section 5]{Judd2018}). We refer to this unique point as the positive critical point of $\mathcal{W}_{t^{\lambda}}$, denoted $p_{\lambda}$. Judd's statement extends to the $GL_n$ case that we are considering here, with the same proof. This also follows from the more general result of Judd and Rietsch in \cite{JuddRietsch2019}, moreover the assumption on $\lambda$ to be integral can be dropped, therefore we also have a unique positive critical point of $\mathcal{W}_{t^{\lambda}}$ in $Z_{t^{\lambda}}(\mathbf{K}_{>0})$ for any dominant $\lambda \in \mathbb{R}^n$. We use the same notation, $p_{\lambda}$, for this point. In addition we will use the term dominant weight loosely, to mean $\lambda \in \mathbb{R}^n$ such that $\lambda_1 \geq \lambda_2 \geq \cdots \geq \lambda_n$, and say dominant integral weight if, in addition, $\lambda \in \mathbb{Z}^n$.

This critical point $p_{\lambda} \in Z_{t^{\lambda}}(\mathbf{K}_{>0})$ defines a point $p_{\lambda}^{\mathrm{trop}} \in \mathrm{Trop}(Z_{t^{\lambda}})$, called the tropical critical point of $\mathcal{W}_{t^{\lambda}}$. Explicitly, using a positive chart (such as $\phi_{t^{\lambda},\boldsymbol{z}}$ or $\phi_{t^{\lambda},\boldsymbol{m}}$) we apply the valuation $\mathrm{Val}_{\mathbf{K}}$ to every coordinate of $p_{\lambda}$. This gives rise to the corresponding point ($p_{\lambda, \boldsymbol{\zeta}}^{\mathrm{trop}}$ or $p_{\lambda, \boldsymbol{\mu}}^{\mathrm{trop}}$ respectively) in the associated tropical chart $\mathrm{Trop}(Z_{t^{\lambda}}) \to \mathbb{R}^N$.
Moreover, for a choice of positive chart the tropical critical point lies in the interior of the respective superpotential polytope, for example, $p_{\lambda, \boldsymbol{\zeta}}^{\mathrm{trop}} \in \mathcal{P}_{\lambda,\boldsymbol{\zeta}}$ and $p_{\lambda, \boldsymbol{\mu}}^{\mathrm{trop}} \in \mathcal{P}_{\lambda,\boldsymbol{\mu}}$. This is implicit in Judd's work in \cite{Judd2018} but is also true more generally, with an explicit statement given by Judd and Rietsch in \cite[Theorem 1.2]{JuddRietsch2019}.

We also have the tropicalisation of the weight map, $\mathrm{wt}:Z_{t^{\lambda}}\to T^{\vee}$ defined in Section \ref{subsec Landau-Ginzburg models}, which can be interpreted as a kind of projection
    $$\mathrm{Trop}(\mathrm{wt}) : \mathrm{Trop}(Z_{t^{\lambda}}) \to \mathfrak{h}^*_{\mathbb{R}}.
    $$
In particular, in the case of integral $\lambda$, the image under this projection of either superpotential polytope, $\mathcal{P}_{\lambda,\boldsymbol{\zeta}}$ or $\mathcal{P}_{\lambda,\boldsymbol{\mu}}$, is exactly the weight polytope. We therefore generalise the standard definition of the weight polytope to be the projection of the superpotential polytope under $\mathrm{Trop}(\mathrm{wt})$. This extended definition holds for all dominant weights $\lambda$.

In the $SL_n$ case, Judd proved that $\mathrm{Trop}(\mathrm{wt})\left(p_{\lambda}^{\mathrm{trop}}\right)=0$ (see \cite[Theorem 5.1]{Judd2018}). Working more generally in the $GL_n$ case, we obtain that $\mathrm{Trop}(\mathrm{wt})\left(p_{\lambda}^{\mathrm{trop}}\right)$, the image of the tropical critical point under this weight projection, is in fact the centre of mass of the weight polytope:
\begin{cor}[Corollary of Proposition {\ref{prop weight at crit point non trop}}] \label{cor weight matrix at trop crit point}
Given a dominant weight $\lambda$, the weight matrix at the critical point in the fibre over $t^{\lambda} \in T^{\vee}(\mathbf{K}_{>0})$ is an $n\times n$ matrix $\mathrm{diag}\left(t^{\ell}, \ldots, t^{\ell}\right)$ where
    $$\ell = \frac{1}{n}\sum_{i=1}^n \lambda_i.
    $$
\end{cor}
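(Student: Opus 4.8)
The plan is to deduce Corollary \ref{cor weight matrix at trop crit point} directly from Proposition \ref{prop weight at crit point non trop} by specialising to the Puiseux series setting and applying the valuation. Proposition \ref{prop weight at crit point non trop} tells us that at a critical point in the fibre over $d \in T^{\vee}$, the weight matrix is $\mathrm{diag}(c,\ldots,c)$ with $c^n = \prod_{i=1}^n d_i$. Everything in Section \ref{sec Givental-type quivers and critical points} is stated over a field $\mathbb{K}$ containing a positive semifield, and $\mathbf{K}$ (generalised Puiseux series) is such a field with positive part $\mathbf{K}_{>0}$; so the proposition applies verbatim with $d$ replaced by $t^{\lambda}$. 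Thus at the positive critical point $p_{\lambda} \in Z_{t^{\lambda}}(\mathbf{K}_{>0})$ the weight matrix is $\mathrm{diag}(c,\ldots,c)$ for some $c \in \mathbf{K}_{>0}$ satisfying $c^n = \prod_{i=1}^n (t^{\lambda})_{ii}$.

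The second step is to identify $\prod_{i=1}^n (t^{\lambda})_{ii}$. By definition $t^{\lambda} \in T^{\vee}(\mathbf{K}_{>0})$ is characterised by $\chi(t^\lambda) = t^{\langle \lambda, \chi \rangle}$ for $\chi \in X^*(T^{\vee})$; taking $\chi = \epsilon_i$ gives $(t^{\lambda})_{ii} = t^{\lambda_i}$, so $\prod_{i=1}^n (t^{\lambda})_{ii} = t^{\sum_i \lambda_i}$. Hence $c^n = t^{\sum_i \lambda_i}$, and since $c \in \mathbf{K}_{>0}$ the unique positive $n$-th root is $c = t^{(1/n)\sum_i \lambda_i} = t^{\ell}$ with $\ell = \frac1n \sum_{i=1}^n \lambda_i$. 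This gives the weight matrix $\mathrm{diag}(t^\ell, \ldots, t^\ell)$ exactly as claimed. (One should remark that Proposition \ref{prop weight at crit point non trop} as stated concerns the weight map $\gamma$ on the quiver/vertex torus, but by the Lemma of \cite{Rietsch2008} quoted just before Section \ref{subsec The quiver torus as another toric chart on Z}, $\mathrm{wt}\circ\bar\theta_{\mathbf{i}'_0} = \gamma\circ\mathrm{pr}$, so the weight matrix of $b \in Z$ at a critical point agrees with the quiver-side computation.)

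Finally, to make contact with the tropical critical point: applying $\mathrm{Val}_{\mathbf{K}}$ coordinate-wise sends $p_\lambda$ to $p_\lambda^{\mathrm{trop}}$ and the computation above shows $\mathrm{wt}(p_\lambda) = \mathrm{diag}(t^\ell,\ldots,t^\ell)$, so $\mathrm{Trop}(\mathrm{wt})(p_\lambda^{\mathrm{trop}})$ is the point of $\mathfrak{h}^*_{\mathbb{R}}$ with every diagonal coordinate equal to $\ell$ — i.e. the centre of mass of the weight polytope, recovering and generalising Judd's $SL_n$ statement $\mathrm{Trop}(\mathrm{wt})(p_\lambda^{\mathrm{trop}}) = 0$ (which is the case $\sum \lambda_i = 0$).

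I do not anticipate a serious obstacle here: the corollary is essentially a direct specialisation of Proposition \ref{prop weight at crit point non trop}. The only point requiring a line of care is the bookkeeping that the "$d$" of the proposition becomes "$t^\lambda$" and that taking the positive $n$-th root in $\mathbf{K}_{>0}$ is unambiguous — both routine. The mild subtlety worth spelling out is that the proposition's proof goes through the quiver charts, so one invokes the compatibility $\mathrm{wt}\circ\bar\theta_{\mathbf{i}'_0} = \gamma\circ\mathrm{pr}$ to transfer the conclusion back to $\mathrm{wt}$ on $Z_{t^\lambda}$.
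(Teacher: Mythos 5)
Your proposal is correct and is exactly the argument the paper intends: the corollary is stated as an immediate consequence of Proposition \ref{prop weight at crit point non trop} (the paper supplies no further proof), obtained by setting $d = t^{\lambda}$ so that $c^n = t^{\sum_i \lambda_i}$ and taking the unique positive $n$-th root $c = t^{\ell}$ in $\mathbf{K}_{>0}$. Your extra remarks on transferring from the quiver weight map $\gamma$ back to $\mathrm{wt}$ on $Z$, and on the tropical interpretation, are sound bookkeeping but not needed beyond what the paper already takes for granted.
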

\hfill\qedsymbol

\begin{ex}[Dimension 3] \label{ex nu'_i coords in dim 3} Recalling Example \ref{ex dim 3 z coords} and working now over the field of generalised Puiseux series, $\mathbf{K}$, we have
    $$\begin{aligned}
    b &= \begin{pmatrix} 1 & z_3 & z_2 \\ & 1 & z_1 +\frac{z_2}{z_3} \\ & & 1\end{pmatrix}
        \begin{pmatrix} d_1& &  \\ & d_2 & \\ & & d_3 \end{pmatrix}
        \begin{pmatrix} & & 1 \\ & -1 & \\ 1 & & \end{pmatrix}
        \begin{pmatrix} 1 & \frac{d_2}{d_3} \frac{z_3}{z_2} & \frac{d_1}{d_3}\frac{1}{z_1z_3} \\ & 1 & \frac{d_1}{d_2}\left(\frac{1}{z_3}+\frac{z_2}{z_1z_3^2}\right) \\ & & 1\end{pmatrix} \\
    &=\begin{pmatrix} d_3 z_2 & & \\ d_3 \left(z_1+\frac{z_2}{z_3} \right) & d_2 \frac{z_1z_3}{z_2} & \\ d_3 & d_2 \frac{z_3}{z_2} & d_1 \frac{1}{z_1z_3} \end{pmatrix}.
    \end{aligned}$$
Additionally, in reference to the previous section, we give the quiver for this coordinate system in Figure \ref{fig quiver n=3 z coords}.
\begin{figure}[ht]
    \centering
    \begin{tikzpicture}[scale=0.85]
        %dots and stars
        \node (31) at (0,0) {$\bullet$};
        \node (21) at (0,2) {$\bullet$};
        \node (11) at (0,4) {$\boldsymbol{*}$};
            \node at (0.3,4.3) {$d_1$};
        \node (32) at (2,0) {$\bullet$};
        \node (22) at (2,2) {$\boldsymbol{*}$};
            \node at (2.3,2.3) {$d_2$};
        \node (33) at (4,0) {$\boldsymbol{*}$};
            \node at (4.3,0.3) {$d_3$};

        %arrows
        \draw[->] (31) -- (21);
        \draw[->] (21) -- (11);
        \draw[->] (32) -- (22);

        \draw[->] (22) -- (21);
        \draw[->] (33) -- (32);
        \draw[->] (32) -- (31);

        %arrow labels
        \node at (-0.3,0.8) {\scriptsize{$z_1$}};
        \node at (-0.3,2.8) {\scriptsize{$z_3$}};
        \node at (1.7,0.8) {\scriptsize{$\frac{z_2}{z_3}$}};

        \node at (1,-0.4) {\scriptsize{$\frac{d_1}{d_2}\frac{z_2}{z_1z_3^2}$}};
        \node at (3,-0.4) {\scriptsize{$\frac{d_2}{d_3}\frac{z_3}{z_2}$}};
        \node at (1,1.6) {\scriptsize{$\frac{d_1}{d_2}\frac{1}{z_3}$}};
    \end{tikzpicture}
\caption{Quiver decoration when $n=3$, string coordinates} \label{fig quiver n=3 z coords}
\end{figure}
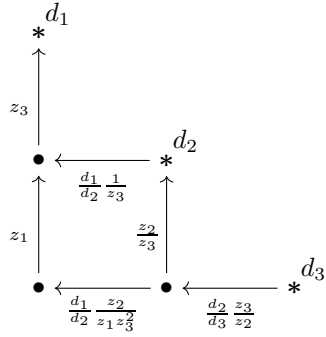
We recall that we can use the quiver to read off the superpotential. It is the sum of the arrow coordinates and is the same map as given in Example \ref{ex dim 3 z coords}, obtained from $b$ via the formula in that section:
    $$\mathcal{W}(d,\boldsymbol{z}) = z_1 +\frac{z_2}{z_3} + z_3 + \frac{d_1}{d_2}\left(\frac{1}{z_3} + \frac{z_2}{z_1z_3^2}\right) + \frac{d_2}{d_3}\frac{z_3}{z_2}.
    $$

Now in order to obtain a polytope from $\mathcal{W}$, we need to tropicalise. To do so we take our highest weight torus element $d$ to be $t^{\lambda}\in T^{\vee}(\mathbf{K}_{>0})$, with $\lambda=(\lambda_1 \geq \lambda_2 \geq \lambda_3)$, that is $t^{\lambda}=\mathrm{diag}(t^{\lambda_1}, t^{\lambda_2}, t^{\lambda_3})$. Then our tropical superpotential is
    \begin{multline*}
    \mathrm{Trop}\left(\mathcal{W}_{t^{\lambda},\boldsymbol{z}}\right) (\zeta_1,\zeta_2,\zeta_3)
    = \min\left\{\zeta_1, \zeta_2-\zeta_3, \zeta_3, \lambda_1 - \lambda_2 - \zeta_3, \lambda_1 - \lambda_2 - \zeta_1 + \zeta_2 - 2\zeta_3, \right. \\ \left. \lambda_2 - \lambda_3 - \zeta_2 + \zeta_3 \right\}.
    \end{multline*}
The corresponding polytope, $\mathcal{P}_{\lambda,\boldsymbol{\zeta}} = \left\{ \boldsymbol{\zeta} \in \mathbb{R}^N \ | \ \mathrm{Trop}\left(\mathcal{W}_{t^{\lambda},\boldsymbol{z}}\right)(\boldsymbol{\zeta}) \geq 0 \right\}$, is then cut out by the following inequalities:
    $$\begin{aligned} 0 \leq &\zeta_1 \leq \lambda_1 - \lambda_2 + \zeta_2 - 2\zeta_3, \\
    \zeta_3 \leq &\zeta_2 \leq \lambda_2 - \lambda_3 + \zeta_3, \\
    0 \leq &\zeta_3 \leq \lambda_1 - \lambda_2.
    \end{aligned}$$
This polytope is given in Figure \ref{fig polytope n=3 z coords} for $\lambda=(2,1,-1)$.
\begin{figure}[ht!]
\centering
\begin{minipage}[b]{0.47\textwidth}
    \centering
    \begin{tikzpicture}[scale=1]
        \draw[black!50, ->] (-0.4,0.2) -- (4,-2); % x axis
            \node[black!50] at (4,-1.72) {\scriptsize{$\zeta_1$}};
        \draw[black!50, ->] (-0.5,-0.1) -- (5,1); % y axis
            \node[black!50] at (4.9,1.21) {\scriptsize{$\zeta_2$}};
        \draw[black!50, ->] (0,-0.4) -- (0,2); % z axis
            \node[black!50] at (-0.28,1.9) {\scriptsize{$\zeta_3$}};
            % \node[black!60] at (-0.2, 0.3) {O};

        \draw[thick, dashed, rounded corners=0.5] (0,0) -- (2,0.4) -- (3,1.6) -- (1,1.2) -- cycle;
        \draw[thick, dashed, rounded corners=0.5] (2,0.4) -- (4.4,-0.8) -- (4.6,0.8) -- (3,1.6) --  cycle;
        \draw[thick, rounded corners=0.5] (1,1.2) -- (3,1.6) -- (4.6,0.8) -- cycle;
        \draw[thick, rounded corners=0.5] (0,0) -- (0.8,-0.4) -- (1,1.2) -- cycle;
        \draw[thick, rounded corners=0.5] (0.8,-0.4) -- (4.4,-0.8) -- (4.6,0.8) -- (1,1.2) -- cycle;

        \filldraw[blue] (0,0) circle (1pt);
        \filldraw (2,0.4) circle (1pt);
        \filldraw (3,1.6) circle (1pt);
        \filldraw (1,1.2) circle (1pt);
        \filldraw (0.8,-0.4) circle (1pt);
        \filldraw (2.8,0) circle (1pt);
        \filldraw (3.6,-0.4) circle (1pt);
        \filldraw (4.4,-0.8) circle (1pt);
        \filldraw[red] (4.6,0.8) circle (1pt);

        \node at (0,-1.19) {\tiny$\begin{aligned} &(0,0,0) \\ &(\lambda_1-\lambda_2,0,0) \\ &(0,\lambda_2-\lambda_3,0) \\ &(\lambda_1-\lambda_2,\lambda_2-\lambda_3,0) \\ &(\lambda_2-\lambda_3, \lambda_2-\lambda_3, 0) \\ &(\lambda_1-\lambda_3,\lambda_2-\lambda_3,0) \end{aligned}$};
            \draw[<-, >=stealth, densely dashed, black!50, rounded corners=4] (-0.03,-0.05) -- (-0.14,-0.3) -- (-0.35,-0.4);
            \draw[<-, >=stealth, densely dashed, black!50, rounded corners=5] (0.77,-0.45) -- (0.66,-0.65) -- (0.41,-0.72);
            \draw[<-, >=stealth, densely dashed, black!50, rounded corners=25] (1.98,0.35) -- (1.6,-0.7) -- (0.4,-1.03);
            \draw[<-, >=stealth, densely dashed, black!50, rounded corners=21] (2.77,-0.05) -- (2.2,-1) -- (1.15,-1.35);
            \draw[<-, >=stealth, densely dashed, black!50, rounded corners=26] (3.56,-0.45) -- (2.8,-1.25) -- (1.16,-1.66);
            \draw[<-, >=stealth, densely dashed, black!50, rounded corners=28] (4.34,-0.84) -- (3,-1.65) -- (1.16,-1.97);
        \node at (1.73,2) {\tiny$\begin{aligned} &(\lambda_2-\lambda_3,\lambda_1-\lambda_3,\lambda_1-\lambda_2) \\ &(0,\lambda_1-\lambda_3,\lambda_1-\lambda_2) \\ &(0,\lambda_1-\lambda_2,\lambda_1-\lambda_2) \end{aligned}$};
            \draw[<-, >=stealth, densely dashed, black!50, rounded corners=18] (4.57,0.86) -- (4,1.8) -- (3.25,2.3);
            \draw[<-, >=stealth, densely dashed, black!50, rounded corners=5] (2.97,1.65) -- (2.76,1.9) -- (2.5,2);
            \draw[<-, >=stealth, densely dashed, black!50, rounded corners=7] (0.95,1.22) -- (0.4,1.27) -- (-0.05,1.5) -- (0.18,1.66);
    \end{tikzpicture}
\caption{Superpotential polytope $\mathcal{P}_{\lambda,\boldsymbol{\zeta}}$ for $n=3$ and $\lambda=(2,1,-1)$ (string coordinates)} \label{fig polytope n=3 z coords}
\end{minipage} \hfill
\begin{minipage}[b]{0.47\textwidth}
    \centering
    \begin{tikzpicture}[scale=0.55]
        %horizontal
        \draw[black!50, densely dotted] (-3,0) -- (3,0);
        \draw[black!50, densely dotted] (-3,0.87) -- (3,0.87);
        \draw[black!50, densely dotted] (-3,-0.87) -- (3,-0.87);
        \draw[black!50, densely dotted] (-3,1.74) -- (3,1.74);
        \draw[black!50, densely dotted] (-3,-1.74) -- (3,-1.74);
        \draw[black!50, densely dotted] (-3,2.61) -- (3,2.61);
        \draw[black!50, densely dotted] (-3,-2.61) -- (3,-2.61);
        % up slant
        \draw[black!50, densely dotted] (-3,1.74) -- (-2.5,2.61);
        \draw[black!50, densely dotted] (-3,0) -- (-1.5,2.61);
        \draw[black!50, densely dotted] (-3,-1.74) -- (-0.5,2.61);
        \draw[black!50, densely dotted] (-2.5,-2.61) -- (0.5,2.61);
        \draw[black!50, densely dotted] (-1.5,-2.61) -- (1.5,2.61);
        \draw[black!50, densely dotted] (-0.5,-2.61) -- (2.5,2.61);
        \draw[black!50, densely dotted] (0.5,-2.61) -- (3,1.74);
        \draw[black!50, densely dotted] (1.5,-2.61) -- (3,0);
        \draw[black!50, densely dotted] (2.5,-2.61) -- (3,-1.74);
        % down slant
        \draw[black!50, densely dotted] (-3,-1.74) -- (-2.5,-2.61);
        \draw[black!50, densely dotted] (-3,0) -- (-1.5,-2.61);
        \draw[black!50, densely dotted] (-3,1.74) -- (-0.5,-2.61);
        \draw[black!50, densely dotted] (-2.5,2.61) -- (0.5,-2.61);
        \draw[black!50, densely dotted] (-1.5,2.61) -- (1.5,-2.61);
        \draw[black!50, densely dotted] (-0.5,2.61) -- (2.5,-2.61);
        \draw[black!50, densely dotted] (0.5,2.61) -- (3,-1.74);
        \draw[black!50, densely dotted] (1.5,2.61) -- (3,0);
        \draw[black!50, densely dotted] (2.5,2.61) -- (3,1.74);

            \node[black!60] at (0.15,0.09) {\tiny{$0$}};

        \draw (-2.5,-0.87) -- (-2.5,0.87);
        \draw (-2.5,-0.87) -- (2,1.74);
        \draw (-2.5,-0.87) -- (0.5,-2.61);
        \draw (-2.5,0.87) -- (0.5,2.61);
        \draw (-2.5,0.87) -- (2,-1.74);
        \draw (0.5,2.61) -- (0.5,-2.61);
        \draw (0.5,2.61) -- (2,1.74);
        \draw (2,1.74) -- (2,-1.74);
        \draw (0.5,-2.61) -- (2,-1.74);

        \filldraw[blue] (-2.5,-0.87) circle (1.5pt);
        \filldraw (-2.5,0.87) circle (1.5pt);
        \filldraw (-1,0) circle (1.5pt);
        \filldraw (0.5,0.87) circle (1.5pt);
        \filldraw (0.5,-0.87) circle (1.5pt);
        \filldraw (0.5,2.61) circle (1.5pt);
        \filldraw (0.5,-2.61) circle (1.5pt);
        \filldraw (2,-1.74) circle (1.5pt);
        \filldraw[red] (2,1.74) circle (1.5pt);

        \node at (5.25,0.5) {\tiny$(\lambda_2,\lambda_2,\lambda_1-\lambda_2+\lambda_3)$};
            \draw[<-, >=stealth, densely dashed, black!50, rounded corners=13] (0.6,0.8) -- (1.5,0.45) -- (3,0.5);
        \node at (5.25,0) {\tiny$(\lambda_1-\lambda_2+\lambda_3,\lambda_2,\lambda_2)$};
            \draw[<-, >=stealth, densely dashed, black!50, rounded corners=20] (-0.89,0.005) -- (0.5,-0.25) -- (3,0);
        \node at (5.25,-0.5) {\tiny$(\lambda_2,\lambda_1-\lambda_2+\lambda_3,\lambda_2)$};
            \draw[<-, >=stealth, densely dashed, black!50, rounded corners=13] (0.6,-0.8) -- (1.75,-0.45) -- (3,-0.5);

        \node at (4.2,2.5) {\tiny$(\lambda_2,\lambda_1,\lambda_3)$};
            \draw[<-, >=stealth, densely dashed, black!50, rounded corners=12] (0.7,2.55) -- (1.8,2.35) -- (3,2.5);
        \node at (4.2,2) {\tiny$(\lambda_1,\lambda_2,\lambda_3)$};
            \draw[<-, >=stealth, densely dashed, black!50, rounded corners=4] (2.1,1.8) -- (2.7,2) -- (2.9,2);
        \node at (4.2,1.5) {\tiny$(\lambda_3,\lambda_1,\lambda_2)$};
            \draw[<-, >=stealth, densely dashed, black!50, rounded corners=27] (-2.37,0.86) -- (0.5,1.6) -- (3,1.5);
        \node at (4.2,-1.5) {\tiny$(\lambda_3,\lambda_2,\lambda_1)$};
            \draw[<-, >=stealth, densely dashed, black!50, rounded corners=27] (-2.37,-0.86) -- (0.5,-1.6) -- (3,-1.5);
        \node at (4.2,-2) {\tiny$(\lambda_1,\lambda_3,\lambda_2)$};
            \draw[<-, >=stealth, densely dashed, black!50, rounded corners=4] (2.1,-1.8) -- (2.7,-2) -- (2.9,-2);
        \node at (4.2,-2.5) {\tiny$(\lambda_2,\lambda_3,\lambda_1)$};
            \draw[<-, >=stealth, densely dashed, black!50, rounded corners=12] (0.7,-2.55) -- (1.8,-2.35) -- (3,-2.5);
    \end{tikzpicture}
\caption{Projection of superpotential polytope $\mathcal{P}_{\lambda,\boldsymbol{\zeta}}$ onto weight lattice, $\lambda=(2,1,-1)$} \label{fig Projection of superpotential polytope onto weight lattice lambda=(2,1,-1)}
\end{minipage}
\end{figure}

Finally, recalling the weight matrix
    $$\begin{pmatrix} d_3z_2 & & \\ & d_2\frac{z_1z_3}{z_2} & \\ & & d_1\frac{1}{z_1z_3} \end{pmatrix}
    $$
we see that a point $\left(\zeta_1, \zeta_2, \zeta_3\right)$ in the polytope has weight
    $$\left(\lambda_3 + \zeta_2, \lambda_2 + \zeta_1 - \zeta_2 + \zeta_3, \lambda_1 - \zeta_1 - \zeta_3 \right).
    $$
In particular the weight projection, given for $\lambda=(2,1,-1)$ in Figure \ref{fig Projection of superpotential polytope onto weight lattice lambda=(2,1,-1)}, acts on the vertices and distinguished points as described in Table \ref{tab Vertices and distinguished points, and their corresponding weights, string coordinates}.
\begin{table}[ht!]
\centering
\begin{tabular}{c | c}
    Regular vertices & Weight \\
    \hline\hline
    $(0, 0, 0)$
        & $(\lambda_3, \lambda_2, \lambda_1)$ \\ \hline
    $(0, \lambda_2-\lambda_3, 0)$
        & $(\lambda_2, \lambda_3, \lambda_1)$ \\ \hline
    $(\lambda_1-\lambda_2, 0, 0)$
        & $(\lambda_3, \lambda_1, \lambda_2)$ \\ \hline
    $(\lambda_1-\lambda_3, \lambda_2-\lambda_3, 0)$
        & $(\lambda_2, \lambda_1, \lambda_3)$ \\ \hline
    $(0, \lambda_1-\lambda_3, \lambda_1-\lambda_2)$
        & $(\lambda_1, \lambda_3, \lambda_2)$ \\ \hline \vspace{0.3cm}
    $(\lambda_2-\lambda_3, \lambda_1-\lambda_3, \lambda_1-\lambda_2)$
        & $(\lambda_1, \lambda_2, \lambda_3)$ \\
    Irregular vertex and distinguished points & Weight \\
    \hline\hline
    $(0, \lambda_1-\lambda_2, \lambda_1-\lambda_2)$
        & $(\lambda_1-\lambda_2+\lambda_3, \lambda_2, \lambda_2)$ \\ \hline
    $(\lambda_1-\lambda_2, \lambda_2-\lambda_3, 0)$
        & $(\lambda_2, \lambda_1-\lambda_2+\lambda_3, \lambda_2)$ \\ \hline
    $(\lambda_2-\lambda_3, \lambda_2-\lambda_3, 0)$
        & $(\lambda_2, \lambda_2, \lambda_1-\lambda_2+\lambda_3)$ \\ \hline
\end{tabular}
\caption{Vertices and distinguished points, and their corresponding weights, string coordinates} \label{tab Vertices and distinguished points, and their corresponding weights, string coordinates}
\end{table}

\end{ex}

\begin{ex}
For comparison, we now run through the previous example using the ideal coordinates instead. We begin by recalling the matrix $b$:
    $$b=\mathbf{y}_{\mathbf{i}_0}^{\vee}\left(\frac{1}{m_1}, \frac{1}{m_2}, \frac{1}{m_3} \right)
        \begin{pmatrix} d_3 m_2m_3 & & \\ & d_2 \frac{m_1}{m_3} & \\ & & d_1 \frac{1}{m_1m_2} \end{pmatrix}.
    $$
In this case, the quiver is given in Figure \ref{fig quiver n=3 m coords}.
\begin{figure}[ht]
\centering
\begin{tikzpicture}[scale=0.85]
    %dots and stars
    \node (31) at (0,0) {$\bullet$};
    \node (21) at (0,2) {$\bullet$};
    \node (11) at (0,4) {$\boldsymbol{*}$};
        \node at (0.3,4.3) {$d_1$};
    \node (32) at (2,0) {$\bullet$};
    \node (22) at (2,2) {$\boldsymbol{*}$};
        \node at (2.3,2.3) {$d_2$};
    \node (33) at (4,0) {$\boldsymbol{*}$};
        \node at (4.3,0.3) {$d_3$};

    %arrows
    \draw[->] (31) -- (21);
    \draw[->] (21) -- (11);
    \draw[->] (32) -- (22);

    \draw[->] (22) -- (21);
    \draw[->] (33) -- (32);
    \draw[->] (32) -- (31);

    %arrow labels
    \node at (-0.3,0.8) {\scriptsize{$m_2$}};
    \node at (-0.3,2.8) {\scriptsize{$m_1$}};
    \node at (1.5,0.8) {\scriptsize{$\frac{m_2m_3}{m_1}$}};

    \node at (1,-0.4) {\scriptsize{$\frac{d_1}{d_2}\frac{m_3}{m_1^2}$}};
    \node at (3,-0.4) {\scriptsize{$\frac{d_2}{d_3}\frac{m_1}{m_2m_3}$}};
    \node at (1,1.6) {\scriptsize{$\frac{d_1}{d_2}\frac{1}{m_1}$}};
\end{tikzpicture}
\caption{Quiver decoration when $n=3$, ideal coordinates} \label{fig quiver n=3 m coords}
\end{figure}

The superpotential is given by
    $$\mathcal{W}(d, \boldsymbol{m}) = m_1 + m_2 + \frac{m_2m_3}{m_1} + \frac{d_2}{d_3} \frac{m_1}{m_2m_3} + \frac{d_1}{d_2} \left(\frac{m_3}{{m_1}^2}+\frac{1}{m_1}\right).
    $$
We again take our torus element $d$ to be $t^{\lambda}\in T^{\vee}(\mathbf{K}_{>0})$, with $\lambda=(\lambda_1 \geq \lambda_2 \geq \lambda_3)$. Our tropical superpotential is given by
    \begin{multline*}\mathrm{Trop}\left(\mathcal{W}_{t^{\lambda},\boldsymbol{m}}\right) (\mu_1,\mu_2,\mu_3)
    = \min\left\{\mu_1, \mu_2, -\mu_1+\mu_2+\mu_3, \lambda_2-\lambda_3 +\mu_1-\mu_2-\mu_3, \right. \\ \left. \lambda_1-\lambda_2 -2\mu_1+\mu_3, \lambda_1-\lambda_2 -\mu_1 \right\}.
    \end{multline*}
The corresponding polytope $\mathcal{P}_{\lambda,\boldsymbol{\mu}}$ is cut out by
    $$\begin{aligned} 0 &\leq \mu_1 \leq \lambda_1-\lambda_2, \\
    0 &\leq \mu_2, \\
    0 &\leq  -\mu_1 +\mu_2 +\mu_3 \leq \lambda_2-\lambda_3, \\
    2\mu_1-\mu_3 &\leq \lambda_1-\lambda_2.
    \end{aligned}$$
This polytope is given in Figure \ref{fig polytope n=3 m coords} for $\lambda=(2,1,-1)$.
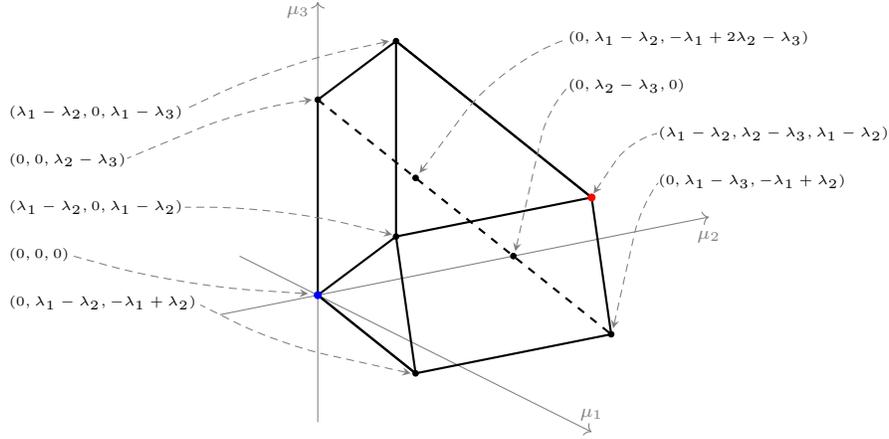
\begin{figure}[ht]
\centering
\begin{tikzpicture}[scale=1.3]
    \draw[black!50, ->] (-0.8,0.4) -- (2.8,-1.4); % x axis
        \node[black!50] at (2.8,-1.22) {\scriptsize{$\mu_1$}};
    \draw[black!50, ->] (-1,-0.2) -- (4,0.8); % y axis
        \node[black!50] at (4,0.6) {\scriptsize{$\mu_2$}};
    \draw[black!50, ->] (0,-1.3) -- (0,3); % z axis
        \node[black!50] at (-0.2,2.9) {\scriptsize{$\mu_3$}};
        % \node[black!60] at (-0.2, 0.3) {O};

    \draw[thick, dashed, rounded corners=0.5] (0,2) -- (3,-0.4);

    \draw[thick, rounded corners=0.5] (0,0) -- (0,2) -- (0.8,2.6) -- (2.8,1) -- (3,-0.4) --  (1,-0.8) -- cycle;
    \draw[thick, rounded corners=0.5] (0.8,0.6) -- (0,0) -- (1,-0.8) -- cycle;
    \draw[thick, rounded corners=0.5] (0.8,0.6) -- (2.8,1) -- (0.8,2.6) -- cycle;

    \filldraw[blue] (0,0) circle (1pt);
    \filldraw (0.8,0.6) circle (0.8pt);
    \filldraw (0,2) circle (0.8pt);
    \filldraw (0.8,2.6) circle (0.8pt);
    \filldraw (1,-0.8) circle (0.8pt);
    \filldraw (1,1.2) circle (0.8pt);
    \filldraw (2,0.4) circle (0.8pt);
    \filldraw (3,-0.4) circle (0.8pt);
    \filldraw[red] (2.8,1) circle (1pt);
    % \filldraw[yellow] (0.9,0.4) circle (1pt);

    \node at (-2.2,0.9) {\tiny$\begin{aligned} &(\lambda_1-\lambda_2, 0, \lambda_1-\lambda_3) \\ \\  &(0, 0, \lambda_2-\lambda_3) \\ \\ &(\lambda_1-\lambda_2, 0, \lambda_1-\lambda_2) \\ \\ &(0,0,0) \\ \\ &(0, \lambda_1-\lambda_2, -\lambda_1+\lambda_2) \end{aligned}$};
        \draw[<-, >=stealth, densely dashed, black!50, rounded corners=25] (0.73,2.6) -- (-0.41,2.35) -- (-1.36,1.9);
        \draw[<-, >=stealth, densely dashed, black!50, rounded corners=19] (-0.06,2) -- (-0.75,1.9) -- (-1.95,1.4);
        \draw[<-, >=stealth, densely dashed, black!50, rounded corners=26] (0.76,0.62) -- (-0.4,0.9) -- (-1.36,0.9);
        \draw[<-, >=stealth, densely dashed, black!50, rounded corners=27] (-0.1,0.015) -- (-1.4,0.1) -- (-2.5,0.4);
        \draw[<-, >=stealth, densely dashed, black!50, rounded corners=18] (0.95,-0.8) -- (-0.4,-0.5) -- (-1.2,-0.1);
    \node at (4.2,1.9) {\tiny$\begin{aligned} &(0,\lambda_1-\lambda_2, -\lambda_1+2\lambda_2-\lambda_3) \\ \\  &(0, \lambda_2-\lambda_3,0) \\ \\ &\hspace{1.2cm}(\lambda_1-\lambda_2, \lambda_2-\lambda_3, \lambda_1-\lambda_2) \\ \\ &\hspace{1.2cm}(0, \lambda_1-\lambda_3, -\lambda_1+\lambda_2) \end{aligned}$};
        \draw[<-, >=stealth, densely dashed, black!50, rounded corners=24] (1.04,1.25) -- (1.9,2.4) -- (2.55,2.62);
        \draw[<-, >=stealth, densely dashed, black!50, rounded corners=10] (2.02,0.45) -- (2.35,1.9) -- (2.55,2.12);
        \draw[<-, >=stealth, densely dashed, black!50, rounded corners=10] (2.82,1.04) -- (3.2,1.6) -- (3.45,1.65);
        \draw[<-, >=stealth, densely dashed, black!50, rounded corners=10] (3.02,-0.34) -- (3.3,0.9) -- (3.48,1.15);
\end{tikzpicture}
\caption{Superpotential polytope $\mathcal{P}_{\lambda,\boldsymbol{\mu}}$ for $n=3$ and $\lambda=(2,1,-1)$ (ideal coordinates)} \label{fig polytope n=3 m coords}
\end{figure}

Similar to the previous example, using the weight matrix
    $$\begin{pmatrix} d_3 m_2m_3 & & \\ & d_2 \frac{m_1}{m_3} & \\ & & d_1 \frac{1}{m_1m_2} \end{pmatrix}
    $$
we see that a point $(\mu_1, \mu_2, \mu_3)$ in the polytope has weight
    $$\left(\lambda_3 + \mu_2 + \mu_3, \lambda_2 + \mu_1 - \mu_3, \lambda_1 - \mu_1 - \mu_2 \right).
    $$
The weight projection acts on the vertices and distinguished points as described in Table \ref{tab Vertices and distinguished points, and their corresponding weights, ideal coordinates}.
\begin{table}[ht!]
\centering
\begin{tabular}{c | c}
    Regular vertices & Weight \\
    \hline\hline
    $(0,0,0)$ & $(\lambda_3, \lambda_2, \lambda_1)$ \\ \hline
    $(0, 0, \lambda_2-\lambda_3)$ & $(\lambda_2, \lambda_3, \lambda_1)$ \\ \hline
    $(0, \lambda_1-\lambda_2, -\lambda_1+\lambda_2)$ & $(\lambda_3, \lambda_1, \lambda_2)$ \\ \hline
    $(0, \lambda_1-\lambda_3, -\lambda_1+\lambda_2)$ & $(\lambda_2, \lambda_1, \lambda_3)$ \\ \hline
    $(\lambda_1-\lambda_2, 0, \lambda_1-\lambda_3)$ & $(\lambda_1, \lambda_3, \lambda_2)$ \\ \hline\vspace{0.3cm}
    $(\lambda_1-\lambda_2, \lambda_2-\lambda_3, \lambda_1-\lambda_2)$ & $(\lambda_1, \lambda_2, \lambda_3)$ \\
    Irregular vertex and distinguished points & Weight \\
    \hline\hline
    $(\lambda_1-\lambda_2, 0, \lambda_1-\lambda_2)$ & $(\lambda_1-\lambda_2+\lambda_3, \lambda_2, \lambda_2)$ \\ \hline
    $(0, \lambda_1-\lambda_2, -\lambda_1+2\lambda_2-\lambda_3)$ & $(\lambda_2, \lambda_1-\lambda_2+\lambda_3, \lambda_2)$ \\ \hline
    $(0, \lambda_2-\lambda_3, 0)$ & $(\lambda_2, \lambda_2, \lambda_1-\lambda_2+\lambda_3)$ \\
\end{tabular}
\caption{Vertices and distinguished points, and their corresponding weights, ideal coordinates} \label{tab Vertices and distinguished points, and their corresponding weights, ideal coordinates}
\end{table}
\end{ex}

\subsection{Ideal fillings}

In the previous section we saw that we could tropicalise the critical point to obtain a unique point in the superpotential polytope. In \cite[Proposition 5.6]{Judd2018}, Judd shows that we obtain the same point by first tropicalising the critical point conditions and then looking for solutions of this new system. In the same paper he relates this point to a new combinatorial object he introduces: ideal fillings.

In this section we generalise this relation from the $SL_n$ case to the $GL_n$ case. In order to do so we first extend Judd's definition of ideal fillings to be suitable for working with $GL_n$ and then describe the tropical critical point conditions.

The benefit of considering ideal fillings will be a better description of the tropical critical point, and thus also the preimage of the weight polytope centre of mass under the weight projection.

\begin{defn}
Take a grid of $n(n-1)/2$ boxes in upper triangular form and assign a non-negative real number to each box. This is called a filling and written as $\{n_{ij}\}_{1\leq i<j \leq n}$.

A filling is said to be ideal if $n_{ij}=\max\{n_{i+1, j}, n_{i, j-1} \} $ for $j-i\geq 2$ and is integral if all the $n_{ij}$ are integral.
\end{defn}

For an example with $n=4$, see Figure \ref{Ideal filling for n 4}. We note that an ideal filling is completely determined by the entries in the first diagonal since $n_{ij}= \max_{i\leq k\leq j-1}\{n_{k , k+1}\}$.

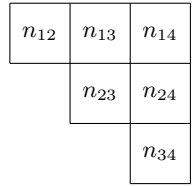
\begin{figure}[ht]
\centering
\begin{tikzpicture}
    %box lines
    %horizontal
    \draw (0,0) -- (2.4,0);
    \draw (0,-0.8) -- (2.4,-0.8);
    \draw (0.8,-1.6) -- (2.4,-1.6);
    \draw (1.6,-2.4) -- (2.4,-2.4);

    % \draw (3.2,0) -- (4,0);
    % \draw (3.2,-0.8) -- (4,-0.8);
    % \draw (3.2,-1.6) -- (4,-1.6);
    % \draw (3.2,-2.4) -- (4,-2.4);

    % \draw (3.2,-3.2) -- (4,-3.2);
    % \draw (3.2,-4) -- (4,-4);

    %vertical
    \draw (0,0) -- (0,-0.8);
    \draw (0.8,0) -- (0.8,-1.6);
    \draw (1.6,0) -- (1.6,-2.4);
    \draw (2.4,0) -- (2.4,-2.4);

    % \draw (3.2,0) -- (3.2,-2.4);
    % \draw (4,0) -- (4,-2.4);

    % \draw (3.2,-3.2) -- (3.2,-4);
    % \draw (4,-3.2) -- (4,-4);

    %nij labels
    \node at (0.4,-0.4) {\small{$n_{12}$}};

    \node at (1.2,-0.4) {\small{$n_{13}$}};
    \node at (1.2,-1.2) {\small{$n_{23}$}};

    \node at (2,-0.4) {\small{$n_{14}$}};
    \node at (2,-1.2) {\small{$n_{24}$}};
    \node at (2,-2) {\small{$n_{34}$}};

    % \node at (3.6,-0.4) {\small{$n_{1n}$}};
    % \node at (3.6,-1.2) {\small{$n_{2n}$}};
    % \node at (3.6,-2) {\small{$n_{3n}$}};
    % \node at (3.6,-3.6) {\scriptsize{$n_{n-1, n}$}};
\end{tikzpicture}
\caption{Ideal filling for $n=4$} \label{Ideal filling for n 4}
\end{figure}

For a dominant integral weight $\lambda$ of $SL_n$, Judd in \cite{Judd2018} defined an ideal filling for $\lambda$ to be an ideal filling $\{n_{ij}\}_{1\leq i<j \leq n}$ such that $\sum n_{ij}\alpha_{ij} = \lambda$. Unfortunately, although this definition is suitable when working with $SL_n$, it is not sufficient for $GL_n$. We take the following generalisation:

\begin{defn}\label{def ideal filling for lambda GLn version}
We say that $\{n_{ij}\}_{1\leq i<j \leq n}$ is an ideal filling for a dominant weight $\lambda$ of $GL_n$, if it is an ideal filling and $\sum n_{ij}\alpha_{ij} + \ell \sum \epsilon_i  = \lambda$, where $\ell := \frac1n \sum \lambda_i$.
\end{defn}

It is worth noting that this is the same $\ell$ comes up in the weight matrix at a critical point (see Corollary \ref{cor weight matrix at trop crit point}).

Returning our attention to critical points, we recall the conditions given in (\ref{eqn crit pt conditions}), which define critical points in the fibre over some $d\in T^{\vee}$:
    $$ \sum_{a:h(a)=v} r_a = \sum_{a:t(a)=v} r_a \quad \text{for} \ v \in \mathcal{V}^{\bullet}.
    $$
Working over the field of generalised Puiseux series we consider critical points of the superpotential in the fibre over some $t^{\lambda}\in T^{\vee}(\mathbf{K}_{>0})$ for a dominant weight $\lambda$. % \in {X^*(T)}^+$.
Tropicalising the above expression, and writing $\rho_a:=\mathrm{Val}_{\mathbf{K}}(r_a)$ following our notational convention, we obtain the tropical critical point conditions:
    \begin{equation} \label{eqn trop crit point conds}
    \min_{a:h(a)=v}\{\rho_a\} = \min_{a:t(a)=v}\{\rho_a\} \quad \text{for} \ v \in \mathcal{V}^{\bullet}.
    \end{equation}
This system has a unique solution (\cite[Proposition 5.6]{Judd2018}), given by the valuation of the critical point. We will often refer to (\ref{eqn trop crit point conds}) as the tropical critical point conditions for $\lambda$, or with highest weight $\lambda$, to highlight the representation theoretic connection.

With these definitions in mind we now give the main result of this section. It is a generalisation of a proposition given by Judd in \cite{Judd2018}, extending it from the $SL_n$ case to the $GL_n$ case.

\begin{prop}[Generalisation of {\cite[Proposition 6.2]{Judd2018}}] \label{prop GLn version of Jamie's 6.2}
Let $\lambda$ be a dominant weight of $GL_n$ and $\ell := \frac1n \sum \lambda_i$. Then we have a bijective correspondence:
    $$\begin{Bmatrix} \text{solutions to the tropical critical}\\ \text{conditions (\ref{eqn trop crit point conds}) with highest weight } \lambda \end{Bmatrix} \leftrightarrow \begin{Bmatrix} \text{ideal fillings } \{n_{ij}\}_{1\leq i<j \leq n} \text{ for }\lambda, \\ \text{i.e. such that } \sum n_{ij}\alpha_{ij} + \ell \sum \epsilon_i  = \lambda \end{Bmatrix}
    $$
\end{prop}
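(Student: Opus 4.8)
The plan is to establish the bijection by explicitly constructing maps in both directions and checking they are mutually inverse, using the quiver combinatorics developed in Section~\ref{sec Givental-type quivers and critical points}. The natural bridge is Theorem~\ref{thm crit points, sum at vertex is nu_i}, which tells us that at a critical point the sum of outgoing arrows at $v_{ik}$ equals $m_{s_k+i-k}$, together with the description of the weight matrix at a critical point from Proposition~\ref{prop weight at crit point non trop} and its tropical Corollary~\ref{cor weight matrix at trop crit point}.

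First I would recall that a solution to the tropical critical point conditions (\ref{eqn trop crit point conds}) is, by (\cite[Proposition 5.6]{Judd2018}), exactly the valuation of the positive critical point $p_\lambda$, hence determines a point $\boldsymbol{\mu} = (\mu_1,\dots,\mu_N) \in \mathbb{R}^N$ via the ideal chart. The correspondence should send such a solution to the filling defined (on the first diagonal) by $n_{k,k+1} := \mu_{s_k+1}$ for $k=1,\dots,n-1$, and then extended to all boxes by the rule $n_{ij} := \max_{i\le r\le j-1}\{n_{r,r+1}\}$, which by construction produces an ideal filling. The key claim to verify is that this extension is forced: I would use the iterative quiver relation (\ref{eqn rai,j+1 in terms of m's}) together with the tropical critical point conditions to show that $\mathrm{Val}_{\mathbf{K}}$ applied to the arrow coordinates gives $\rho_{a_{ij}}$ in terms of the $\mu$'s in such a way that the sum-at-a-vertex quantities $\mathrm{Val}_{\mathbf{K}}(\varpi(v_{ik})) = \mu_{s_k+i-k}$ (the tropicalisation of Theorem~\ref{thm crit points, sum at vertex is nu_i}, noting $\min$ replaces $+$), and then reading off that the $\mu_{s_k+a}$ for $a\ge 2$ are exactly the maxima that define the ideal filling condition. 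This is where most of the work lies.

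Next I would check the weight constraint, i.e. that the filling produced satisfies $\sum n_{ij}\alpha_{ij} + \ell\sum\epsilon_i = \lambda$ with $\ell = \frac1n\sum\lambda_i$. The input here is Corollary~\ref{cor weight matrix at trop crit point}: at the critical point over $t^\lambda$ the weight matrix is $\mathrm{diag}(t^\ell,\dots,t^\ell)$, so $\mathrm{Trop}(\mathrm{wt})(p_\lambda^{\mathrm{trop}}) = (\ell,\dots,\ell)$. Combining this with the explicit formula for the weight matrix in ideal coordinates from Corollary~\ref{cor wt matrix in m ideal coords} — whose tropicalisation expresses $\ell$ as $\lambda_{n-j+1}$ plus a $\mathbb{Z}$-linear combination of the $\mu$'s — and re-indexing these $\mu$'s as entries $n_{ij}$ of the filling, I would read off precisely the identity $\lambda = \sum n_{ij}\alpha_{ij} + \ell\sum\epsilon_i$ componentwise. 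Essentially the weight map formula \emph{is} the relation $\sum n_{ij}\alpha_{ij} = \lambda - \ell\sum\epsilon_i$ once written in ideal coordinates, so this step amounts to an index-matching bookkeeping.

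For the inverse direction, given an ideal filling $\{n_{ij}\}$ for $\lambda$, I would set $\mu_{s_k+a} := n_{k,k+a}$ (so the first-diagonal entries give the $\mu_{s_k+1}$ and the maximum rule $n_{ij}=\max_r n_{r,r+1}$ translates to $\mu_{s_k+a} = \max$ of suitable first-diagonal $\mu$'s), then verify that this $\boldsymbol{\mu}$ satisfies the tropical critical conditions (\ref{eqn trop crit point conds}). Here I would use the quiver: the valuations $\rho_{a_{ij}}$ are determined from the $\mu$'s by tropicalising (\ref{eqn rai,j+1 in terms of m's}) and (\ref{eqn relation in quiver from stars}), the highest weight contribution being fixed by $t^\lambda$ and $\ell$, and then check the $\min$-equalities at each dot vertex directly from the ideal-filling max-property — this is the tropical analogue of the computation in the proof of Theorem~\ref{thm crit points, sum at vertex is nu_i}, run in reverse. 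Finally I would note the two constructions are visibly inverse since both are given by the same re-indexing $\mu_{s_k+a} \leftrightarrow n_{k,k+a}$, and uniqueness of the tropical critical point solution guarantees nothing is lost. The main obstacle throughout is the careful index translation between the linear order $1,\dots,N$ on the $\mu$-coordinates (via $s_k$) and the two-dimensional box indices $(i,j)$ of the filling, and confirming that the max-rule and the tropical vertex equations genuinely match up rather than merely resembling each other.
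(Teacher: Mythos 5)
Your overall architecture — two explicit maps, one in each direction, checked to be mutually inverse via the re-indexing $\mu_{s_i+j-i}\leftrightarrow n_{ij}$ — is the same as the paper's, and your treatment of the weight constraint is a legitimate variant: where the paper proves a vertex-coordinate identity by induction over the quiver (Lemma \ref{lem Generalisation of Judd's Lemma 6.9}), you read the relation $\sum n_{ij}\alpha_{ij}+\ell\sum\epsilon_i=\lambda$ directly off the tropicalisation of the weight matrix formula (Corollary \ref{cor wt matrix in m ideal coords}) together with Corollary \ref{cor weight matrix at trop crit point}; that works, at the price of routing through the genuine (non-tropical) critical point via Judd's uniqueness result rather than staying purely combinatorial.

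However, there is a genuine gap at the heart of both directions. Tropicalising Theorem \ref{thm crit points, sum at vertex is nu_i} only gives you the identification $\min_{a:t(a)=v_{ji}}\rho_a=\mu_{s_i+j-i}$, i.e.\ that the $\mu$'s are the vertex minima $\pi(v_{ji})$. It does \emph{not} give you the max-rule $\pi(v_{ji})=\max\{\pi(v_{j,i+1}),\pi(v_{j-1,i})\}$, which is exactly the statement that the resulting filling is ideal; your phrase ``reading off that the $\mu_{s_k+a}$ for $a\geq 2$ are exactly the maxima'' is where the proof should be, and the iterative relation (\ref{eqn rai,j+1 in terms of m's}) alone does not supply it. What is needed is a separate induction over the quiver using the tropical box relations $\rho_a+\rho_b=\rho_c+\rho_d$, establishing the monotonicity $\pi(h(a))\leq\pi(t(a))$ along vertical arrows and $\pi(t(b))\leq\pi(h(b))$ along horizontal ones and then the two-case analysis $\rho_a\leq\rho_c$ versus $\rho_a\geq\rho_c$ — this is the paper's Lemma \ref{lem trop crit pt is an ideal filling}. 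Symmetrically, in the inverse direction ``check the $\min$-equalities at each dot vertex directly from the ideal-filling max-property'' hides the combinatorial dichotomy of Lemma \ref{lem Hh is Hh or Hv is Hv} (for an ideal filling, at each box either $\bar{H}^v_{i,j+1}=\bar{H}^v_{ij}$ or $\bar{H}^h_{i,j+1}=\bar{H}^h_{i+1,j+1}$), without which the incoming and outgoing minima at a dot vertex need not agree. Until these two lemmas (or equivalents) are actually proved, the proposal identifies the correct target but does not establish the bijection.
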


\begin{proof} Judd, in his proof of \cite[Proposition 6.2]{Judd2018}, defines a pair of maps between the two sets which are inverse to each other. We will follow the majority of his proof. Consequently, it suffices to simply give an outline which highlights the necessary generalisations. The exception to this is a new proof that the filling we construct (from the solutions to the tropical critical point conditions) is indeed an ideal filling.

\medskip

\noindent
\textbf{Map from ideal fillings for $\lambda$ to solutions to the tropical critical conditions.}

Let $\{n_{ij}\}_{1\leq i<j\leq n}$ be an ideal filling for $\lambda$. For each pair $(i,j)$ such that $1\leq i\leq j\leq n$, we define two sums of entries of the ideal filling; roughly speaking, those $n_{il}$ strictly to the right of $n_{ij}$ and those $n_{lj}$ strictly above $n_{ij}$ respectively:
    \begin{equation} \label{eqn Hh Hv definitions}
    H^h_{ij}:= \sum_{l>j}n_{il}, \quad H^v_{ij}:= \sum_{l<i}n_{lj}.
    \end{equation}
Making the first adaption of Judd's proof; for $\ell = \frac{1}{n} \sum \lambda_i$, we define a map from ideal fillings for $\lambda$ to tropical vertex coordinates of the quiver as follows:
    $$\delta_{v_{ji}}:=H^h_{ij}-H^v_{ij}+\ell.
    $$
We need to show that this defines a solution to the tropical critical conditions for $\lambda$.

Of note, the addition of $\ell$ in the above definition doesn't affect the tropical arrow coordinates. Indeed, computing the corresponding vertical arrow coordinates for $1\leq i \leq j < n$, and the horizontal arrow coordinates for $1\leq i < j \leq n$, we respectively obtain
    $$\delta_{v_{ji}}-\delta_{v_{j+1,i}}=H^v_{i+1,j+1} - H^v_{ij}
    \quad \text{and} \quad
    \delta_{v_{ji}}-\delta_{v_{j,i+1}}=H^h_{i,j-1} - H^h_{i+1,j}.
    $$
Both of these are $\geq 0$, so it follows that the point lies in $\left\{\mathrm{Trop}(\mathcal{W}_{t^{\lambda}})\geq 0\right\}$. Additionally, we see it will lie in the fibre over $\lambda$ as follows: for $\epsilon^{\vee}_k \in X^*(T^{\vee})$ we have
    $$\begin{aligned}
    \lambda_k = \left\langle \lambda, \epsilon^{\vee}_k \right\rangle
        &= \bigg\langle \sum_{1\leq i < j \leq n} n_{ij}(\epsilon_i-\epsilon_j) + \ell \sum_{1\leq i \leq n}\epsilon_i , \ \epsilon^{\vee}_k \ \bigg\rangle & \text{since } \{n_{ij}\} \text{ is an ideal filling for } \lambda \\
    &= H^h_{kk}-H^v_{kk}+\ell = \delta_{v_{kk}}.
    \end{aligned}
    $$

It remains to show that the point we have defined satisfies the tropical critical point conditions. Following Judd, we require a lemma:
\begin{lem}[{\cite[Lemma 6.7]{Judd2018}}] \label{lem Hh is Hh or Hv is Hv}
For $1 \leq i < j \leq n$, write $\bar{H}^h_{ij}:= H^h_{ij}+n_{ij}$, $\bar{H}^v_{ij}:= H^v_{ij} + n_{ij}$.
Then if $j-i\geq 1$, either
    $$\bar{H}^v_{i,j+1}=\bar{H}^v_{ij} \quad \text{or} \quad \bar{H}^h_{i,j+1}=\bar{H}^h_{i+1,j+1}
    $$
or both are true. Hence we have $\min\{ \bar{H}^v_{i,j+1} - \bar{H}^v_{ij}, \bar{H}^h_{i,j+1}-\bar{H}^h_{i+1,j+1} \} = 0$.
\end{lem}
We may use this lemma directly and so omit the proof. Let $v_{ji} \in \mathcal{V}^{\bullet}$ with $1<i<j<n$, that is, $v_{ji}$ doesn't lie on either wall of the quiver. Then the minimum over incoming arrows at $v_{ji}$ is
    $$\min\{ H^v_{i+1,j+1} - H^v_{ij}, H^h_{i,j-1}-H^h_{i+1,j} \} = n_{ij} + \min\{ \bar{H}^v_{i,j+1} - \bar{H}^v_{ij}, \bar{H}^h_{i,j+1}-\bar{H}^h_{i+1,j+1} \} = n_{ij}.
    $$
Similarly the minimum over outgoing arrows at $v_{ji}$ is
    $$\min\{ H^v_{i+1,j} - H^v_{i,j-1}, H^h_{i-1,j-1}-H^h_{ij} \} = n_{ij} + \min\{ \bar{H}^v_{i-1,j} - \bar{H}^v_{i-1,j-1}, \bar{H}^h_{i-1,j}-\bar{H}^h_{ij} \} = n_{ij}.
    $$
Thus the tropical critical point conditions are satisfied in this case. Finally, if $v_{ij}$ lies on the left wall there is only one outgoing arrow, $H^v_{2,j}-H^v_{1,j-1}=n_{1j}$, and if it lies on the bottom wall there is only one incoming arrow, $H^h_{i,n-1}-H^h_{i+1,n}=n_{in}$. Thus our point is indeed a tropical critical point for $\lambda$, as required.

\medskip

\noindent
\textbf{Map from solutions to the tropical critical conditions to ideal fillings.}

Suppose $(\rho_a)_{\ \in \mathcal{A}}$ is a solution to the tropical critical conditions for $\lambda$. Then for $v \in \mathcal{V}^{\bullet}$ we define the map
    $$\pi : \mathcal{V}^{\bullet} \to \mathbb{R}, \quad \pi(v):=\min_{a:h(a)=v}\{\rho_a\}
    .
    $$
We will first deviate from Judd's work to give an alternative proof that $\{ n_{ij}=\pi(v_{ij}) \}_{1\leq i < j \leq n}$ defines an ideal filling. Then we follow his proof to see that this is an ideal filling for $\lambda$.

\begin{lem} \label{lem trop crit pt is an ideal filling}
At a tropical critical point, the filling $\{n_{ij} = \pi(v_{ji})\}$ is an ideal filling. That is, if we have the following sub-diagram
\begin{center}
\begin{tikzpicture}
    %dots and stars
    \node (31) at (0,0) {$\bullet$};
        \node at (-0.2,-0.2) {\scriptsize{$v$}};
    \node (21) at (0,1) {$\bullet$};
        \node at (-0.2,1.2) {\scriptsize{$w$}};
    \node (32) at (1,0) {$\bullet$};
        \node at (1.2,-0.2) {\scriptsize{$u$}};

    %arrows
    \draw[->] (31) -- (21);
    \draw[->] (32) -- (31);

    %arrow labels
    \node at (-0.2,0.5) {\scriptsize{$a$}};
    \node at (0.5,-0.2) {\scriptsize{$b$}};
\end{tikzpicture}
\end{center}
then we must have $\pi(v)=\max\{ \pi(u), \pi(w)\}$.
\end{lem}

\begin{proof}
We show by induction that $\pi(t(b))\leq \pi(h(b))$ for each horizontal arrow $b$ and we have $\pi(h(a))\leq \pi(t(a))$ for each vertical arrow $a$.

First we consider the arrows in the bottom and left hand walls, described in Figure \ref{fig Arrows in the bottom and left hand walls of the quiver}.
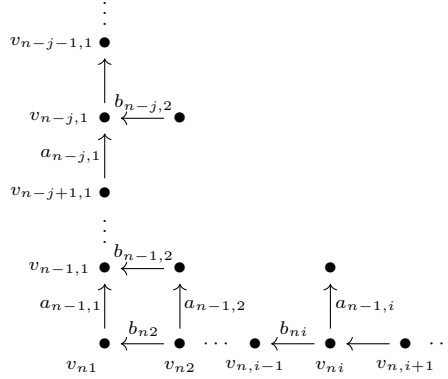
\begin{figure}[ht!]
\centering
\begin{tikzpicture}
    %dots and stars
    \node (n1) at (0,0) {$\bullet$};
        \node at (-0.3,-0.3) {\scriptsize{$v_{n1}$}};
    \node (n2) at (1,0) {$\bullet$};
        \node at (1,-0.3) {\scriptsize{$v_{n2}$}};
            \node at (1.5,0) {\scriptsize{$\cdots$}};
    \node (ni-1) at (2,0) {$\bullet$};
        \node at (2,-0.3) {\scriptsize{$v_{n,i-1}$}};
    \node (ni) at (3,0) {$\bullet$};
        \node at (3,-0.3) {\scriptsize{$v_{ni}$}};
    \node (ni+1) at (4,0) {$\bullet$};
        \node at (4,-0.3) {\scriptsize{$v_{n,i+1}$}};
        \node at (4.5,0) {\scriptsize{$\cdots$}};

    \node (n-11) at (0,1) {$\bullet$};
        \node at (-0.6,1) {\scriptsize{$v_{n-1,1}$}};
        \node at (0,1.6) {\scriptsize{$\vdots$}};
    \node (n-j+11) at (0,2) {$\bullet$};
        \node at (-0.7,2) {\scriptsize{$v_{n-j+1,1}$}};
    \node (n-j1) at (0,3) {$\bullet$};
        \node at (-0.6,3) {\scriptsize{$v_{n-j,1}$}};
    \node (n-j-11) at (0,4) {$\bullet$};
        \node at (-0.7,4) {\scriptsize{$v_{n-j-1,1}$}};
        \node at (0,4.5) {\scriptsize{$\vdots$}};

    \node (n-12) at (1,1) {$\bullet$};
    \node (n-1i) at (3,1) {$\bullet$};

    \node (n-j2) at (1,3) {$\bullet$};

    %arrows
    \draw[->] (ni+1) -- (ni);
    \draw[->] (ni) -- (ni-1);
    \draw[->] (n2) -- (n1);

    \draw[->] (n-12) -- (n-11);
    \draw[->] (n-j2) -- (n-j1);

    \draw[->] (n1) -- (n-11);
    \draw[->] (n-j+11) -- (n-j1);
    \draw[->] (n-j1) -- (n-j-11);

    \draw[->] (n2) -- (n-12);
    \draw[->] (ni) -- (n-1i);

    %arrow labels
    \node at (-0.45,0.5) {\scriptsize{$a_{n-1,1}$}};
    \node at (1.47,0.5) {\scriptsize{$a_{n-1,2}$}};
    \node at (3.47,0.5) {\scriptsize{$a_{n-1,i}$}};

    \node at (-0.45,2.5) {\scriptsize{$a_{n-j,1}$}};

    \node at (0.52,0.2) {\scriptsize{$b_{n2}$}};
    \node at (0.52,1.2) {\scriptsize{$b_{n-1,2}$}};
    \node at (0.52,3.2) {\scriptsize{$b_{n-j,2}$}};

    \node at (2.52,0.2) {\scriptsize{$b_{ni}$}};
\end{tikzpicture}
\caption{Arrows in the bottom and left hand walls of the quiver} \label{fig Arrows in the bottom and left hand walls of the quiver}
\end{figure}

We recall that the tropical critical point conditions hold, namely $\min_{a:h(a)=v}\{\rho_a\} = \min_{a:t(a)=v}\{\rho_a\}$ for all dot vertices $v \in \mathcal{V}^{\bullet}$. Then by considering the outgoing arrows at $v_{ni}$ for $i=2, \ldots, n-1$, we see that
    $$\pi(v_{ni}) = \min\{ \rho_{a_{n-1,i}}, \rho_{b_{ni}} \} \leq \rho_{b_{ni}} = \pi(v_{n,i-1}).
    $$
Similarly considering the incoming arrows at $v_{n-j,1}$ for $j=1, \ldots, n-2$, we have
    $$\pi(v_{n-j,1}) = \min\{ \rho_{a_{n-j,1}}, \rho_{b_{n-j,2}} \} \leq \rho_{a_{n-j,1}} = \pi(v_{n-j+1,1}).
    $$

For the inductive step we will show that if we have a sub-diagram like the one in Figure \ref{fig Subquiver for the proof of Lem trop crit pt is an ideal filling},
such that $\pi(u)\leq\pi(v)$ and $\pi(w) \leq \pi(v)$, then $\pi(x)\leq\pi(w)$ and $\pi(x) \leq \pi(u)$.
\begin{figure}[hb!]
\centering
\begin{tikzpicture}
    %dots and stars
    \node (31) at (0,0) {$\bullet$};
        \node at (-0.2,-0.2) {\scriptsize{$v$}};
    \node (21) at (0,1) {$\bullet$};
        \node at (-0.2,1.2) {\scriptsize{$w$}};
    \node (32) at (1,0) {$\bullet$};
        \node at (1.2,-0.2) {\scriptsize{$u$}};
    \node (22) at (1,1) {$\bullet$};
        \node at (1.2,1.2) {\scriptsize{$x$}};

    %arrows
    \draw[->] (31) -- (21);
    \draw[->] (32) -- (22);

    \draw[->] (22) -- (21);
    \draw[->] (32) -- (31);

    %arrow labels
    \node at (-0.2,0.5) {\scriptsize{$a$}};
    \node at (1.2,0.5) {\scriptsize{$d$}};

    \node at (0.5,-0.2) {\scriptsize{$b$}};
    \node at (0.5,1.2) {\scriptsize{$c$}};
\end{tikzpicture}
\caption{Subquiver for the proof of Lemma \ref{lem trop crit pt is an ideal filling}} \label{fig Subquiver for the proof of Lem trop crit pt is an ideal filling}
\end{figure}
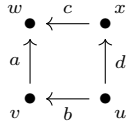

If $\rho_a \leq \rho_c$ then by the tropical box relation $\rho_a+\rho_b=\rho_c+\rho_d$ we have $\rho_d \leq \rho_b$. This means that $\pi(w)=\rho_a$ and $\pi(u)=\rho_d$, so
    \begin{equation} \label{eqn a leq c trop crit pt is ideal filling} \pi(x) \leq \rho_d = \pi(u) \leq \pi(v) \leq \rho_a = \pi(w).
    \end{equation}
Similarly if $\rho_a\geq \rho_c$ then we have $\rho_d \geq \rho_b$. This means that $\pi(w)=\rho_c$ and $\pi(u)=\rho_b$, so
    \begin{equation} \label{eqn a geq c trop crit pt is ideal filling} \pi(x) \leq \rho_c = \pi(w) \leq \pi(v) \leq \rho_b = \pi(u).
    \end{equation}
In both cases we have $\pi(x)\leq\pi(w)$ and $\pi(x) \leq \pi(u)$ as desired.

Finally we note that $\rho_a\leq \rho_c$ implies $\pi(v)=\pi(w) \geq \pi(u)$ since by (\ref{eqn a leq c trop crit pt is ideal filling}) and the inductive assumption we have
    $$ \pi(u)\leq \pi(v) \leq \pi(w) \leq \pi(v).
    $$
Similarly if $\rho_a \geq \rho_c$ then $\pi(v)=\pi(u) \geq \pi(w)$ as a consequence of (\ref{eqn a geq c trop crit pt is ideal filling}).
This completes the proof of Lemma \ref{lem trop crit pt is an ideal filling}.
\end{proof}

Now, again following Judd, we will show that $\{ n_{ij}=\pi(v_{ij}) \}_{1\leq i < j \leq n}$ is an ideal filling for $\lambda$. To do this, we need the vertex coordinates of the quiver at the tropical critical point, which we denote by $(\delta_v)_{v \in \mathcal{V}}$. In particular we notice that at the bottom left vertex we have
    \begin{align*}
    \delta_{v_{n1}}
    &= \mathrm{Val}_{\mathbf{K}}\left(\frac{\Xi_n}{\Xi_{n+1}}\right) &\text{by definition of } \Xi_i \text{ given in (\ref{eqn xi for wt map defn}), and noting } \Xi_{n+1}=1 \\
    &= \mathrm{Val}_{\mathbf{K}}(t_n) & \text{recalling the $t_i$ defined in (\ref{eqn defn gamma and t})} \\
    &= \ell & \text{by Corollary \ref{cor weight matrix at trop crit point}.}
    \end{align*}
We require a slight generalisation here:
\begin{lem}[Generalisation of {\cite[Lemma 6.9]{Judd2018}}] \label{lem Generalisation of Judd's Lemma 6.9}
For $v \in \mathcal{V}$ we write $\mathrm{bel}(v)$ and $\mathrm{lef}(v)$ for the sets of vertices directly below and directly to the left of $v$ respectively. Then at a tropical critical point we have
    $$\delta_v=\sum_{w \in \mathrm{bel}(v)} \pi(w) - \sum_{w \in \mathrm{lef}(v)} \pi(w) +\ell.
    $$
\end{lem}

\begin{proof}
The proof is by induction on the horizontal and vertical arrows. The initial case is the bottom left vertex, which we have already seen to take the value $\ell$ at critical points, as required. Since the horizontal and vertical inductive steps are similar it suffices to only consider the horizontal case; if we take the subquiver in Figure \ref{fig one arrow quiver}
\begin{figure}[hb!]
\centering
\begin{tikzpicture}
    %dots and stars
    \node (31) at (0,0) {$\bullet$};
        \node at (-0.2,-0.2) {\scriptsize{$v$}};
    \node (32) at (1,0) {$\bullet$};
        \node at (1.2,-0.2) {\scriptsize{$w$}};

    %arrows
    \draw[->] (32) -- (31);

    %arrow labels
    \node at (0.5,-0.2) {\scriptsize{$c$}};
\end{tikzpicture}
\caption{Subquiver for induction, horizontal case} \label{fig one arrow quiver}
\end{figure}
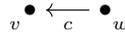
such that the relation in the statement of the lemma holds for $v$, then we need to show it also holds for $w$. To do this, we consider the subquiver in Figure \ref{fig Subquiver for the proof of Lem Generalisation of Judd's Lemma 6.9}.
\begin{figure}[hb!]
\centering
\begin{tikzpicture}
    %dots and stars
    \node (n-j1) at (0,3) {$\bullet$};
        \node at (-0.3,3) {\scriptsize{$v_0$}};
    \node (n-j+11) at (0,2) {$\bullet$};
        \node at (-0.3,2) {\scriptsize{$v_1$}};
        \node at (0,1.6) {\scriptsize{$\vdots$}};
    \node (n-11) at (0,1) {$\bullet$};
        \node at (-0.5,1) {\scriptsize{$v_{m-1}$}};
    \node (n1) at (0,0) {$\bullet$};
        \node at (-0.35,0) {\scriptsize{$v_m$}};

    \node (n-j2) at (1,3) {$\bullet$};
        \node at (1.3,3) {\scriptsize{$v'_0$}};
    \node (n-j+12) at (1,2) {$\bullet$};
        \node at (1.3,2) {\scriptsize{$v'_1$}};
    \node at (1,1.6) {\scriptsize{$\vdots$}};
    \node (n-12) at (1,1) {$\bullet$};
        \node at (1.5,1) {\scriptsize{$v'_{m-1}$}};
    \node (n2) at (1,0) {$\bullet$};
        \node at (1.35,0) {\scriptsize{$v'_m$}};

    %horizontal arrows
    \draw[->] (n-j2) -- (n-j1);
    \draw[->] (n-j+12) -- (n-j+11);
    \draw[->] (n-12) -- (n-11);
    \draw[->] (n2) -- (n1);

    %vertical arrows
    \draw[->] (n-j+11) -- (n-j1);
    \draw[->] (n1) -- (n-11);

    \draw[->] (n-j+12) -- (n-j2);
    \draw[->] (n2) -- (n-12);

    %arrow labels
    \node at (0.5,3.2) {\scriptsize{$c_0$}};
    \node at (0.5,2.2) {\scriptsize{$c_1$}};
    \node at (0.5,1.2) {\scriptsize{$c_{m-1}$}};
    \node at (0.5,0.2) {\scriptsize{$c_m$}};
\end{tikzpicture}
\caption{Subquiver for the proof of Lemma \ref{lem Generalisation of Judd's Lemma 6.9}} \label{fig Subquiver for the proof of Lem Generalisation of Judd's Lemma 6.9}
\end{figure}
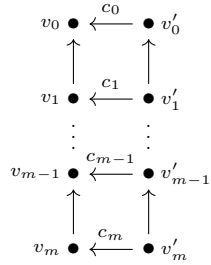

We suppose this is part of the full diagram which depicts a solution to the tropical critical conditions, and that $c_m$ lies on the bottom wall of this full quiver. Then, following Judd, we claim:
    \begin{equation}\label{eqn sum pi quiver vertices claim}
    \pi(v_0)+\pi(v_1)+ \cdots +\pi(v_m) = \rho_{c_0} +\pi(v'_1)+ \cdots +\pi(v'_m).
    \end{equation}
This can be proved by induction and, since it is unaffected by our addition of $\ell$ in the statement of Lemma \ref{lem Generalisation of Judd's Lemma 6.9}, we refer the reader to \cite{Judd2018} for the details.

Now, returning to Figure \ref{fig one arrow quiver}, we use the identity (\ref{eqn sum pi quiver vertices claim}) with $v_0=v$, $c_0=c$ and $v_0'=w$ to see that we have
    $$\pi(v)+ \sum_{u \in \mathrm{bel}(v)} \pi(u) = \rho_c + \sum_{u \in \mathrm{bel}(w)} \pi(u).
    $$
Using this we obtain the desired result, namely that the relation in the lemma holds for $w$:
    \begin{align*}
    \sum_{u \in \mathrm{bel}(w)} \pi(u) - \sum_{u \in \mathrm{lef}(w)} \pi(u) +\ell
    &= -\rho_c + \sum_{u \in \mathrm{bel}(v)} \pi(u) +\pi(v) - \sum_{u \in \mathrm{lef}(w)} \pi(u) +\ell \\
    &= -\rho_c + \sum_{u \in \mathrm{bel}(v)} \pi(u) - \sum_{u \in \mathrm{lef}(v)} \pi(u) +\ell \\
    &= -\rho_c + \delta_v \\
    &= \delta_w.
    \end{align*}
\end{proof}

Using this lemma we see that at a tropical critical point, the ideal filling $\{ n_{ij}=\pi(v_{ij}) \}$ is an ideal filling for $\lambda$:
    $$\begin{aligned}
    \bigg\langle \sum_{1\leq i < j \leq n} n_{ij}\alpha_{ij} +\ell \sum_{1\leq i \leq n}\epsilon_i, \ \epsilon^{\vee}_k \bigg\rangle
    &=  \sum_{k < j \leq n} n_{kj} - \sum_{1\leq i < k} n_{ik} +\ell \\
    &= \sum_{k < l \leq n} \pi(v_{lk}) - \sum_{1\leq l < k} \pi(v_{kl}) + \ell \\
    &= \sum_{w \in \mathrm{bel}(v_{kk})} \pi(w) - \sum_{w \in \mathrm{lef}(v_{kk})} \pi(w) + \ell \\
    &= \delta_{v_{kk}} = \lambda_k.
    \end{aligned}$$

To complete the proof of Proposition \ref{prop GLn version of Jamie's 6.2}, we note that the maps defined above are inverse to each other by construction.
\end{proof}

\begin{rem}
The bijection in the above proposition (\ref{prop GLn version of Jamie's 6.2}) preserves integrality if $\lambda$ and $\ell$ are both integral.
This proposition also implies that there is a unique ideal filling for $\lambda$ due to the uniqueness of the tropical critical point.
\end{rem}

\begin{cor} \label{cor nu'_i coords ideal filling for lambda}
For a dominant weight $\lambda$, let the positive critical point $p_{\lambda} \in Z_{t^{\lambda}}(\mathbf{K}_{>0})$ of $\mathcal{W}_{t^{\lambda}}$ be written in the ideal coordinates $(m_1, \ldots, m_N)$. Then the valuations $\mu_k=\mathrm{Val}_{\mathbf{K}}(m_k)$ defining the tropical critical point, $p_{\lambda, \boldsymbol{\mu}}^{\mathrm{trop}}$, give rise to an ideal filling $\left\{n_{ij}= \mu_{s_i+j-i} \right\}_{1\leq i < j \leq n}$ for $\lambda$ (where we recall the definition of $s_i$ given in Section \ref{sec The ideal coordinates}).
\end{cor}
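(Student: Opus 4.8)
The plan is to read off the corollary as the confluence of two results already established: Theorem~\ref{thm crit points, sum at vertex is nu_i}, which expresses the $\boldsymbol m$-coordinates of a critical point as sums of arrow coordinates over quiver vertices, and Proposition~\ref{prop GLn version of Jamie's 6.2}, which matches such sums of (tropical) arrow coordinates with ideal fillings. First I would note that the positive critical point $p_\lambda$ lies in $Z_{t^\lambda}(\mathbf{K}_{>0})$, so when it is written in the quiver toric chart of Section~\ref{subsec The quiver torus as another toric chart on Z} (equivalently, via the ideal coordinates $(m_1,\dots,m_N)$), all of its arrow coordinates $r_a$ lie in $\mathbf{K}_{>0}$. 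Setting $\rho_a := \mathrm{Val}_{\mathbf{K}}(r_a)$ then yields a tuple which, by the discussion following~(\ref{eqn trop crit point conds}), is the unique solution of the tropical critical point conditions for $\lambda$. By the second half of the proof of Proposition~\ref{prop GLn version of Jamie's 6.2}, the filling $\{ n_{ij} = \pi(v_{ji}) \}_{1 \le i < j \le n}$, with $\pi(v_{ji}) := \min_{a:h(a)=v_{ji}}\{\rho_a\}$, is therefore an ideal filling for $\lambda$. So the only thing left to prove is the identification
$$\pi(v_{ji}) = \mu_{s_i + j - i} \qquad \text{for } 1 \le i < j \le n,$$
which then exhibits this ideal filling as precisely the one in the statement.

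For that identification I would argue as follows. Since $p_\lambda$ is a critical point, Theorem~\ref{thm crit points, sum at vertex is nu_i} applies at the dot vertex $v_{ji}$ (where $j > i$) and gives
$$\varpi(v_{ji}) = \sum_{a:t(a)=v_{ji}} r_a = m_{s_i + j - i}.$$
Every $r_a$ occurring in this finite sum lies in $\mathbf{K}_{>0}$, so there is no cancellation of leading terms and $\mathrm{Val}_{\mathbf{K}}\big(\varpi(v_{ji})\big) = \min_{a:t(a)=v_{ji}}\{\rho_a\}$. On the other hand $\mathrm{Val}_{\mathbf{K}}(m_{s_i+j-i}) = \mu_{s_i+j-i}$ by the definition of the coordinates $\mu_k$. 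Finally, because $(\rho_a)$ satisfies the tropical critical point conditions~(\ref{eqn trop crit point conds}), we have $\min_{a:t(a)=v_{ji}}\{\rho_a\} = \min_{a:h(a)=v_{ji}}\{\rho_a\} = \pi(v_{ji})$. Chaining these three equalities gives $\pi(v_{ji}) = \mu_{s_i+j-i}$, and hence $\{ n_{ij} = \mu_{s_i+j-i} \}_{1 \le i < j \le n}$ is an ideal filling for $\lambda$, as claimed.

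I do not expect a genuine obstacle here, since the corollary is essentially the statement that the two constructions (Theorem~\ref{thm crit points, sum at vertex is nu_i} on one side, the maps in the proof of Proposition~\ref{prop GLn version of Jamie's 6.2} on the other) agree at the positive critical point. The one point requiring a little care is the interchange of $\mathrm{Val}_{\mathbf{K}}$ with the sum $\sum_{a:t(a)=v_{ji}} r_a$: this is legitimate precisely because $p_\lambda$ is the \emph{positive} critical point, so each summand has positive leading coefficient and no cancellation occurs. A secondary bookkeeping point is to remain consistent about the transpose in the vertex indexing --- the quiver vertex $v_{ji}$ corresponds to the box $(i,j)$ of the filling --- so that the index $s_i + j - i$ produced by Theorem~\ref{thm crit points, sum at vertex is nu_i} lands on $n_{ij}$ exactly as stated, with $s_i$ as defined in Section~\ref{sec The ideal coordinates}.
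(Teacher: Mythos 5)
Your argument is correct and is essentially the paper's own proof: both combine Theorem \ref{thm crit points, sum at vertex is nu_i} with Proposition \ref{prop GLn version of Jamie's 6.2} and conclude via the chain $\pi(v_{ji}) = \mathrm{Val}_{\mathbf{K}}(\varpi(v_{ji})) = \mathrm{Val}_{\mathbf{K}}(m_{s_i+j-i}) = \mu_{s_i+j-i}$. You merely make explicit two steps the paper leaves implicit (that positivity of the summands lets the valuation pass through the sum as a minimum, and that the tropical critical point conditions equate the incoming and outgoing minima), which is a faithful elaboration rather than a different route.
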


\begin{proof}
By Theorem \ref{thm crit points, sum at vertex is nu_i}, at a critical point we have
    $$ \varpi(v_{ji}) = \sum_{a:t(a)=v_{ji}} r_a = m_{s_i +j-i}.$$
Thus by Proposition \ref{prop GLn version of Jamie's 6.2} we see that
    $$n_{ij} = \pi(v_{ji}) =\mathrm{Val}_{\mathbf{K}}(\varpi(v_{ji})) = \mathrm{Val}_{\mathbf{K}}(m_{s_i +j-i}) = \mu_{s_i+j-i}
    $$
defines an ideal filling for $\lambda$.
\end{proof}

\begin{ex} \label{ex trop crit pt from ideal filling dim 3}

By Proposition \ref{prop GLn version of Jamie's 6.2} we have a one to one correspondence between solutions to the tropical critical conditions and ideal fillings for $\lambda = (\lambda_1\geq\lambda_2\geq\lambda_3)$. In this example we show that given an ideal filling in dimension 3, imposing the condition that this ideal filling is an ideal filling for $\lambda$ is the same as restricting our attention to those points with weight $(\ell, \ell, \ell)$ where $\ell=\frac13 \sum \lambda_i$ (see Corollary \ref{cor weight matrix at trop crit point}). Moreover, this will aid our geometric intuition.

\begin{figure}[ht!]
\centering
    \begin{minipage}[b]{0.45\linewidth}
    \centering
    \begin{tikzpicture}
        %box lines
        %horizontal
        \draw (0,0) -- (1.6,0);
        \draw (0,-0.8) -- (1.6,-0.8);
        \draw (0.8,-1.6) -- (1.6,-1.6);

        %vertical
        \draw (0,0) -- (0,-0.8);
        \draw (0.8,0) -- (0.8,-1.6);
        \draw (1.6,0) -- (1.6,-1.6);

        %nij labels
        \node at (0.4,-0.4) {\small{$n_{12}$}};

        \node at (1.2,-0.4) {\small{$n_{13}$}};
        \node at (1.2,-1.2) {\small{$n_{23}$}};
    \end{tikzpicture}
    \end{minipage}
\hspace{-1cm}
    \begin{minipage}[b]{0.45\linewidth}
    \centering
    \begin{tikzpicture}
        %box lines
        %horizontal
        \draw (0,0) -- (1.6,0);
        \draw (0,-0.8) -- (1.6,-0.8);
        \draw (0.8,-1.6) -- (1.6,-1.6);

        %vertical
        \draw (0,0) -- (0,-0.8);
        \draw (0.8,0) -- (0.8,-1.6);
        \draw (1.6,0) -- (1.6,-1.6);

        %nij labels
        \node at (0.4,-0.4) {\small{$\beta_1$}};

        \node at (1.2,-0.4) {\small{$\beta_2$}};
        \node at (1.2,-1.2) {\small{$\beta_3$}};
    \end{tikzpicture}
    \end{minipage}
\caption{Fillings in dimension $3$} \label{fig Fillings in dimension 3}
\end{figure}
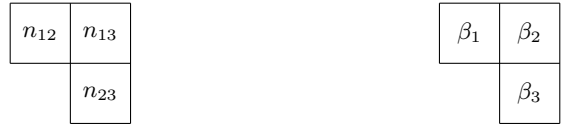

The filling in dimension $3$ is given in Figure \ref{fig Fillings in dimension 3}, we will write $\beta_1=n_{12}$, $\beta_2=n_{13}$, $\beta_3=n_{23}$.
The ideal filling condition $\beta_2 = \max\{\beta_1, \beta_3\}$ defines the following piecewise-linear subspace:
    \begin{equation} \label{eqn dim 3 ideal filling subpace}
    \{\beta_2=\beta_1\geq \beta_3\}\cup \{\beta_2=\beta_3\geq \beta_1\}.
    \end{equation}
The condition that the ideal filling is an ideal filling for $\lambda$ is the following:
    $$
    \lambda = \sum n_{ij}\alpha_{ij} + \ell \sum \epsilon_i \nonumber = \beta_1 \alpha_{12} + \beta_2 \alpha_{13} + \beta_3\alpha_{23} + \ell  \sum \epsilon_i \nonumber = \left( \ell + \beta_1+\beta_2, \ell -\beta_1+\beta_3, \ell -\beta_2-\beta_3 \right).
    $$
This gives a further set of constraints on the $c_i$. These additional constraints may also be obtained by setting the weight of a point $(\beta_1,\beta_2,\beta_3)$ equal to the weight of the positive critical point;
    $$(\lambda_3 + \beta_2 + \beta_3, \lambda_2 + \beta_1 - \beta_3, \lambda_1 - \beta_1 - \beta_2 )=( \ell, \ell, \ell ).$$
Intersecting this condition with (\ref{eqn dim 3 ideal filling subpace}) we find exactly two possibilities for the tropical critical point, depending on which of $\lambda_1-\lambda_2$ or $\lambda_2-\lambda_3$ is greater. By Proposition \ref{prop GLn version of Jamie's 6.2} this point lies within the superpotential polytope. We obtain:
\begin{itemize}
    \item If $\beta_1=\beta_2$ then $\lambda_1- 2\beta_1 = \ell$ and $\lambda_3+\beta_1+\beta_3 = \lambda_2 +\beta_1 - \beta_3$. So
    $$(\beta_1,\beta_2,\beta_3)=\left( \frac16 \left( 2\lambda_1 -\lambda_2-\lambda_3 \right), \frac16 \left( 2\lambda_1 -\lambda_2-\lambda_3 \right), \frac12 \left(\lambda_2-\lambda_3\right) \right).$$

    \item If $\beta_2=\beta_3$ then $\lambda_3 + 2\beta_2 = \ell $ and $ \lambda_2+\beta_1-\beta_2 = \lambda_1 -\beta_1 - \beta_2$. So
    $$(\beta_1,\beta_2,\beta_3)=\left( \frac12 \left(\lambda_1-\lambda_2\right), \frac16 \left( \lambda_1 +\lambda_2-2\lambda_3 \right), \frac16 \left( \lambda_1 +\lambda_2-2\lambda_3 \right) \right).
    $$
\end{itemize}

Computing the tropical critical point coordinates in this way is much quicker and easier than solving the simultaneous equations given by the tropical critical point conditions. Additionally it is now unsurprising that there is only one ideal filling for $\lambda$ in dimension $3$, since we are intersecting a the piecewise-linear 2-dimensional subspace and a line.

Alternatively we could obtain the same result using the proof of Proposition \ref{prop GLn version of Jamie's 6.2}.
The benefit is that, in addition to the tropical critical point, we would also gain vertex coordinates (and thus arrow coordinates) for the quiver. Unfortunately however, since this approach is algorithmic we lose some of the more visual interpretation.
\end{ex}

\subsection{A family of ideal polytopes} \label{subsec A family of ideal polytopes}

In this section we define a family of polytopes which contains the ideal polytope $\mathcal{P}_{\lambda,\boldsymbol{\mu}}$. We do this by extending the definition of $\mathcal{P}_{\lambda,\boldsymbol{\mu}}$ to general reduced expressions $\mathbf{i}$ for $\bar{w}_0$.

Firstly, recalling the construction of the ideal coordinates defined in Section \ref{sec The ideal coordinates}, we begin our generalisation by by taking an arbitrary reduced expression $\mathbf{i}=(i_1, \ldots, i_N)$ for $w_0$ and considering the map
    \begin{equation*}
    \tilde{\psi}_{\mathbf{i}} : \left(\mathbb{K}^*\right)^N \times T^{\vee} \to Z\,, \quad \left(\left(m'_1, \ldots, m'_N\right), t_R\right) \mapsto \mathbf{y}_{i_1}^{\vee}\left(\frac{1}{m'_1}\right) \cdots \mathbf{y}_{i_N}^{\vee}\left(\frac{1}{m'_N}\right) t_R.
    \end{equation*}
We again wish to work with the highest weight as opposed to the weight, that is, coordinates $(d,\boldsymbol{m}')$ instead of $(\boldsymbol{m}',t_R)$, however this requires us to develop our description of the weight map. To do so it will be better to index the coordinates $\boldsymbol{m}'$ by positive roots as follows:

We recall that any reduced expression $\mathbf{i}=(i_1, \ldots, i_N)$ for $w_0$ determines an ordering on the set of positive roots $R_+$ by
    \begin{equation} \label{eqn ordering on R plus}
    \alpha^{\mathbf{i}}_j=\begin{cases}
    \alpha_{i_1} & \text{for } j=1, \\
    s_{i_1}\cdots s_{i_{j-1}}\alpha_{i_{j}} & \text{for } j=2, \ldots N.
    \end{cases}
    \end{equation}
This ordering has the property that whenever $\alpha,  \beta \in R_+$ are positive roots such that $\alpha + \beta \in R_+$, then $\alpha + \beta$ must occur in between $\alpha$ and $\beta$.
We use the ordering on $R_+$ defined by $\mathbf{i}$ to identify $\left(\mathbb{K}^*\right)^N$ with $\left(\mathbb{K}^*\right)^{R_+}$, namely $m'_j=m'_{\alpha^{\mathbf{i}}_j}$. We will write $\left(\mathbb{K}^*\right)^N_{\mathbf{i}}$, $\left(\mathbb{K}^*\right)^{R_+}_{\mathbf{i}}$ when we need to explicitly state which reduced expression we are using.

We also recall the classic arrangement of positive roots $\alpha_{ij}=\epsilon_i-\epsilon_j$, $i<j$, similar to a strictly upper triangular matrix:
    \begin{equation} \label{eqn alpha ij arrangement}
    \begin{matrix}
    \alpha_{12} & \alpha_{13} & \cdots & \alpha_{1,n-1} & \alpha_{1n} \\
    & \alpha_{23} & \cdots & \alpha_{2,n-1} & \alpha_{2n} \\
    & & \ddots & \vdots & \vdots \\
    & & & \alpha_{n-2,n-1} & \alpha_{n-2,n} \\
    & & & & \alpha_{n-1,n}
    \end{matrix}
    \end{equation}
In particular, this takes the same form as (ideal) fillings, for which we have the natural bijective correspondence
    \begin{equation} \label{eqn rel nij alpha ij}
    n_{ij} \leftrightarrow \alpha_{ij}.
    \end{equation}
We note that if $\alpha,  \beta \in R_+$ are positive roots such that $\alpha + \beta \in R_+$, then we must have $\alpha+\beta$ appearing either to the right of $\alpha$ and above $\beta$, or to the right of $\beta$ and above $\alpha$. This is a consequence of the fact that if $\alpha=\alpha_{ij}$ and $\beta=\alpha_{kl}$ then, in order for their sum $\alpha+\beta=\alpha_{ij}+\alpha_{kl}$ to be a positive root, we must have either $i=l$ or $j=k$ (resulting in $\alpha+\beta=\alpha_{kj}$ or $\alpha+\beta=\alpha_{il}$ respectively).

\begin{defn}[Universal weight map] \label{defn universal weight}
We define a map $t_R : T^{\vee} \times \left(\mathbb{K}^*\right)^{R_+} \to T^{\vee}$ by taking $t_R(d,\boldsymbol{m}')$ to be the $n\times n$ diagonal matrix with entries
    $$ \left(t_R(d,\boldsymbol{m}')\right)_{n-j+1,n-j+1} :
        = \frac{d_{j} \prod_{l=1}^{j-1} m'_{\alpha_{lj}}}{\prod_{l=j}^{n-1} m'_{\alpha_{j,l+1}}}.
    $$
We will refer to the matrix $t_R(d,\boldsymbol{m}')$ as the universal weight matrix.
\end{defn}

\begin{ex} \label{ex univ wt matrix n4}
When $n=4$ we have
    $$t_R(d,\boldsymbol{m}')=\begin{pmatrix}
    d_4 m'_{\alpha_{14}}m'_{\alpha_{24}}m'_{\alpha_{34}} & & & \\
    & d_3 \frac{m'_{\alpha_{13}}m'_{\alpha_{23}}}{m'_{\alpha_{34}}} & & \\
    & & d_2 \frac{m'_{\alpha_{12}}}{m'_{\alpha_{23}}m'_{\alpha_{24}}} & \\
    & & & d_1 \frac{1}{m'_{\alpha_{12}}m'_{\alpha_{13}}m'_{\alpha_{14}}}
    \end{pmatrix}.
    $$
\end{ex}

Now returning to our generalisation of the ideal coordinates, and recalling the construction given at the start Section \ref{sec The ideal coordinates}, we take a reduced expression $\mathbf{i}=(i_1, \ldots, i_N)$ for $w_0$ and define the ideal chart for $\mathbf{i}$ to be
    \begin{equation*} \label{eqn defn ideal chart arbitrary i}
    \psi_{\mathbf{i}} : T^{\vee} \times \left(\mathbb{K}^*\right)^{R_+} \to Z\,, \quad
    \left(d, \left(m'_{\alpha^{\mathbf{i}}_1}, \ldots, m'_{\alpha^{\mathbf{i}}_N} \right)\right) \mapsto
    \mathbf{y}_{i_1}^{\vee}\left(\frac{1}{m'_{\alpha^{\mathbf{i}}_1}}\right) \cdots \mathbf{y}_{i_N}^{\vee}\left(\frac{1}{m'_{\alpha^{\mathbf{i}}_N}}\right) t_R(d,\boldsymbol{m}')
    \end{equation*}
where $t_R(d,\boldsymbol{m}')$ is the universal weight matrix.

\begin{prop}
The universal weight map is independent of the choice of reduced expression $\mathbf{i}$ for $w_0$.
\end{prop}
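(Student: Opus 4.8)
The plan is to establish the concrete content behind the statement: that for \emph{every} reduced expression $\mathbf{i}$ for $w_0$ the pulled-back weight map $\mathrm{wt}\circ\psi_{\mathbf{i}}$ equals the single map $(d,\boldsymbol{m}')\mapsto t_R(d,\boldsymbol{m}')$ of Definition \ref{defn universal weight}. Since the right-hand side of that formula is written purely in terms of $d$ and the coordinates $m'_{\alpha_{ij}}$ indexed directly by the classical arrangement of positive roots \eqref{eqn alpha ij arrangement}, with no reference to $\mathbf{i}$ or to the ordering $\alpha^{\mathbf{i}}_1,\dots,\alpha^{\mathbf{i}}_N$ in which the one-parameter subgroup factors are arranged, this immediately yields the proposition.

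The key observation is nearly a tautology once the decomposition is set up correctly. Each factor $\mathbf{y}_i^{\vee}(z)$ is lower-triangular unipotent, so the product $g:=\mathbf{y}_{i_1}^{\vee}\!\left(1/m'_{\alpha^{\mathbf{i}}_1}\right)\cdots\mathbf{y}_{i_N}^{\vee}\!\left(1/m'_{\alpha^{\mathbf{i}}_N}\right)$ lies in $U_-^{\vee}$. Hence, for $b:=\psi_{\mathbf{i}}(d,\boldsymbol{m}')$, the defining expression $b=g\cdot t_R(d,\boldsymbol{m}')$ is already a factorisation with left factor in $U_-^{\vee}$ and right factor in $T^{\vee}$; by uniqueness of such a factorisation inside the lower-triangular Borel $B_-^{\vee}$, it must be the decomposition $b=[b]_-[b]_0$ used to define the weight map in Section \ref{subsec Landau-Ginzburg models}. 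Therefore $[b]_-=g$ and $\mathrm{wt}(b)=[b]_0=t_R(d,\boldsymbol{m}')$, an identity that uses nothing about $\mathbf{i}$ beyond the fact that it labels the factors of $g$. Before applying $\mathrm{wt}$ one does need $b\in Z$: this is where the hypothesis that $\mathbf{i}$ is reduced enters, through the standard fact underlying the Chamber Ansatz that $g\in U_-^{\vee}\cap B^{\vee}\bar{w}_0B^{\vee}$ whenever $\mathbf{i}$ is a reduced word for $w_0$ and the parameters are nonzero, together with the observation that right multiplication by the diagonal matrix $t_R(d,\boldsymbol{m}')\in T^{\vee}\subset B^{\vee}$ preserves the Bruhat cell $B^{\vee}\bar{w}_0B^{\vee}$; hence $b=g\cdot t_R(d,\boldsymbol{m}')\in B_-^{\vee}\cap B^{\vee}\bar{w}_0B^{\vee}=Z$, and the computation above applies uniformly in $\mathbf{i}$.

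Finally I would record the consistency check that for $\mathbf{i}=\mathbf{i}_0$ this recovers the ideal chart of Section \ref{sec The ideal coordinates}: under the relabelling $m'_{\alpha_{ij}}\leftrightarrow m_{s_i+j-i}$ the formula of Definition \ref{defn universal weight} specialises, by a direct substitution, to the weight-matrix formula of Corollary \ref{cor wt matrix in m ideal coords}, and Lemma \ref{lem form of u_1 and b factorisations} shows $\psi_{\mathbf{i}_0}(d,\boldsymbol{m})$ indeed has the required product form; this pins down the \emph{universal} normalisation as the correct one. The only real obstacle is bookkeeping: keeping the two indexings of the coordinates straight --- by position in the root arrangement \eqref{eqn alpha ij arrangement} versus by the order $\alpha^{\mathbf{i}}_1,\dots,\alpha^{\mathbf{i}}_N$ in which the $\mathbf{y}$-factors appear --- and confirming that Definition \ref{defn universal weight} is phrased solely in terms of the former, which is precisely why it sees no $\mathbf{i}$-dependence.
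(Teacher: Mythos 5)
Your argument establishes only that $\mathrm{wt}(\psi_{\mathbf{i}}(d,\boldsymbol{m}'))=t_R(d,\boldsymbol{m}')$, which holds by construction of the chart (the unipotent prefactor lies in $U_-^{\vee}$, so the $U_-^{\vee}\times T^{\vee}$ decomposition is the one you wrote down). That is not the content of the proposition. The gap is the claim that, because Definition \ref{defn universal weight} is phrased in the coordinates $m'_{\alpha_{ij}}$ "with no reference to $\mathbf{i}$", independence is immediate. The coordinates themselves depend on $\mathbf{i}$: the number $m'_{\alpha}$ attached to a positive root $\alpha$ is read off from the factorisation of the unipotent part according to the word $\mathbf{i}$, and when the same group element is refactorised along a braid-equivalent word the parameters change non-trivially,
$$m''_{\alpha}=\frac{m'_{\alpha+\beta}(m'_{\alpha}+m'_{\beta})}{m'_{\beta}},\qquad m''_{\alpha+\beta}=\frac{m'_{\alpha}m'_{\beta}}{m'_{\alpha}+m'_{\beta}},\qquad m''_{\beta}=\frac{m'_{\alpha+\beta}(m'_{\alpha}+m'_{\beta})}{m'_{\alpha}},$$
as in (\ref{eqn alpha beta coord change iji jij}). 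So labelling the coordinates by roots does not make them canonical, and the actual assertion is that the \emph{value} $t_R(d,\boldsymbol{m}')$ is invariant under these non-monomial transition maps. Without this, $\psi_{\mathbf{i}}(d,\boldsymbol{m}')$ and $\psi_{\mathbf{i}'}(d,\boldsymbol{m}'')$ with matching unipotent parts could differ by a torus factor, the parameter $d$ would fail to be the highest weight of the point in the new chart, and the comparison of the polytopes $\mathcal{P}^{\mathbf{i}}_{\lambda}$ and of the tropical critical points over a common fibre $Z_{t^{\lambda}}$ would collapse.

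The paper's proof supplies exactly what you omit. First it anchors the formula for $\mathbf{i}_0$ against Corollary \ref{cor wt matrix in m ideal coords} (your closing "consistency check" does sketch this part). Then it reduces to a single move $i,j,i\leftrightarrow j,i,j$, observes that the combinations $m'_{\alpha}m'_{\alpha+\beta}$, $m'_{\beta}m'_{\alpha+\beta}$ and $m'_{\alpha}/m'_{\beta}$ are preserved by the coordinate change, and then uses the placement of $\alpha$, $\beta$, $\alpha+\beta$ in the triangular arrangement (\ref{eqn alpha ij arrangement}) to check, entry by entry, that each diagonal entry of $t_R$ only ever involves these invariant combinations. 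That case analysis is the substance of the proposition and is entirely absent from your proposal.
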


\begin{proof}
Consider the reduced expression $\mathbf{i}_0= (1,2, \ldots, n-1, 1,2 \ldots, n-2, \ldots, 1, 2, 1)$. We will begin by showing that the description of the weight matrix given in Corollary \ref{cor wt matrix in m ideal coords} is the same as the universal weight matrix under the identification $m_j=m'_{\alpha^{\mathbf{i}_0}_j}$.

It is well known that the ordering (\ref{eqn ordering on R plus}) on $R_+$ given by $\mathbf{i}_0$ is
    $$\alpha_{12}, \ \alpha_{13}, \ \ldots, \  \alpha_{1n}, \ \alpha_{23}, \ \alpha_{24}, \ \ldots, \alpha_{2n}, \ \ldots, \ \alpha_{n-2,n-1}, \ \alpha_{n-2,n}, \ \alpha_{n-1,n}.
    $$
In particular, we see that $\alpha_{ij}$ appears in the $(s_i+j-i)$-th place in this sequence, where we recall the definition
    $$s_i := \sum_{k=1}^{i-1}(n-k).
    $$
This also follows from Corollary \ref{cor nu'_i coords ideal filling for lambda} and the correspondence (\ref{eqn rel nij alpha ij}). Thus $\alpha^{\mathbf{i}_0}_{s_i+(j-i)} = \alpha_{ij}$, and so $m'_{\alpha_{ij}} = m'_{\alpha^{\mathbf{i}_0}_{s_i+(j-i)}} = m_{s_i+(j-i)}$. Then recalling the description of the weight matrix from Corollary \ref{cor wt matrix in m ideal coords}, for $j=1, \ldots, n$ we have
    $$\begin{aligned}
    \left(t_R(d,\boldsymbol{m})\right)_{n-j+1, n-j+1}
        &= \frac{d_j \prod\limits_{k=1, \ldots,j-1} m_{s_k+(j-k)}}{\prod\limits_{r=1, \ldots, n-j} m_{s_j+r}} & \text{where } r=k-j \\
        &= \frac{d_j \prod_{k=1}^{j-1} m_{s_k+(j-k)} }{\prod_{k=j}^{n-1} m_{s_j+(k+1-j)}} \\
        &= \frac{d_j \prod_{k=1}^{j-1} m'_{\alpha_{kj}} }{\prod_{k=j}^{n-1} m'_{\alpha_{j,k+1}}} \\
        &= \left(t_R(d,\boldsymbol{m}')\right)_{n-j+1,n-j+1}
    \end{aligned}
    $$
and so the two descriptions agree for $\mathbf{i}=\mathbf{i}_0$.

Next we recall that any two reduced expressions $\mathbf{i}$ and $\mathbf{i}'$ for $w_0$, are related by a sequence of transformations
    \begin{alignat}{2} \label{eqn iji jij swap}
    i,j,i &\leftrightarrow j,i,j \qquad & \text{if } |i-j|=1, \\
    i,j &\leftrightarrow j,i & \text{if } |i-j|\geq 2. \nonumber
    \end{alignat}
It suffices to show that the form of the universal weight matrix is invariant under one of the transformations of the first type, (\ref{eqn iji jij swap}).

Suppose $\mathbf{i}$ and $\mathbf{i}'$ are two reduced expressions for $w_0$ which are related by a single transformation (\ref{eqn iji jij swap}) in positions $k-1, k, k+1$. Then their respective sequences of positive roots are
    \begin{equation} \label{eqn pos root sequences iji jij}
    \begin{aligned}
    \left( \alpha^{\mathbf{i}}_j \right) &= \left( \alpha^{\mathbf{i}}_1, \ldots, \alpha^{\mathbf{i}}_{k-2}, \alpha, \alpha+ \beta, \beta, \alpha^{\mathbf{i}}_{k+2}, \ldots, \alpha^{\mathbf{i}}_N  \right), \\
    \left( \alpha^{\mathbf{i}'}_j \right) &= \left( \alpha^{\mathbf{i}}_1, \ldots, \alpha^{\mathbf{i}}_{k-2}, \beta, \alpha+ \beta, \alpha, \alpha^{\mathbf{i}}_{k+2}, \ldots, \alpha^{\mathbf{i}}_N  \right).
    \end{aligned}
    \end{equation}
If $\boldsymbol{m}'$ and $\boldsymbol{m}''$ are such that $m'_j = m'_{\alpha^{\mathbf{i}}_j}$ and $m''_j = m''_{\alpha^{\mathbf{i}'}_j}$ respectively, then as a consequence of (\ref{eqn pos root sequences iji jij}) we must necessarily have
    $$\mathbf{y}_{\mathbf{i}_{k-1}}^{\vee}\left(\frac{1}{m'_{\alpha}}\right) \mathbf{y}_{\mathbf{i}_{k}}^{\vee} \left(\frac{1}{m'_{\alpha+\beta}}\right) \mathbf{y}_{\mathbf{i}_{k+1}}^{\vee}\left(\frac{1}{m'_{\beta}} \right) = \mathbf{y}_{\mathbf{i}_{k-1}}^{\vee}\left(\frac{1}{m''_{\beta}}\right) \mathbf{y}_{\mathbf{i}_{k}}^{\vee} \left(\frac{1}{m''_{\alpha+\beta}}\right) \mathbf{y}_{\mathbf{i}_{k+1}}^{\vee}\left(\frac{1}{m''_{\alpha}} \right).
    $$
Written explicitly this gives
    $$
    \begin{pmatrix} 1 & & & & & & \\ & \ddots & & & & & \\ & & 1 & & & & \\ & & \frac{m'_{\alpha}+m'_{\beta}}{m'_{\alpha}m'_{\beta}} & 1 & & & \\ & & \frac{1}{m'_{\alpha+\beta}m'_{\beta}} & \frac{1}{m'_{\alpha+\beta}} & 1 & & \\ & & & & & \ddots & \\ & & & & & & 1 \end{pmatrix}
    =\begin{pmatrix} 1 & & & & & & \\ & \ddots & & & & & \\ & & 1 & & & & \\ & & \frac{1}{m''_{\alpha+\beta}} & 1 & & & \\ & & \frac{1}{m''_{\beta}m''_{\alpha+\beta}} & \frac{m''_{\beta}+m''_{\alpha}}{m''_{\beta}m''_{\alpha}} & 1 & & \\ & & & & & \ddots & \\ & & & & & & 1 \end{pmatrix}
    $$
which defines the coordinate change:
    \begin{equation} \label{eqn alpha beta coord change iji jij}
    m''_{\alpha} = \frac{m'_{\alpha+\beta}(m'_{\alpha}+m'_{\beta})}{m'_{\beta}}, \quad m''_{\alpha+\beta} = \frac{m'_{\alpha}m'_{\beta}}{m'_{\alpha}+m'_{\beta}}, \quad m''_{\beta} = \frac{m'_{\alpha+\beta}(m'_{\alpha}+m'_{\beta})}{m'_{\alpha}},
    \end{equation}
with $m''_{\alpha^{\mathbf{i}}_j}=m'_{\alpha^{\mathbf{i}}_j}$ for all other coordinates.
In particular we have
    \begin{equation} \label{eqn prod and quo cood change m' m''}
    m''_{\alpha}m''_{\alpha+\beta}=m'_{\alpha}m'_{\alpha+\beta}, \quad m''_{\beta}m''_{\alpha+\beta} = m'_{\beta}m'_{\alpha+\beta}, \quad \frac{m''_{\alpha}}{m''_{\beta}}=\frac{m'_{\alpha}}{m'_{\beta}}.
    \end{equation}

It remains to show that the form of the universal weight matrix is unaltered by this coordinate change. Recalling the definition of the universal weight matrix (\ref{defn universal weight}) we notice that the product in the numerator (resp. denominator) has exactly one term $m'_{\alpha_{ij}}$ for every $\alpha_{ij}$ from the $(j-1)$-th column (resp. $j$-th row) of the arrangement (\ref{eqn alpha ij arrangement}).
We recall also that the root $\alpha+\beta \in R_+$ must lie either to the right of $\alpha$ and above $\beta$, or to the right of $\beta$ and above $\alpha$ in the arrangement (\ref{eqn alpha ij arrangement}). It follows then that for every $\left(t_R(d,\boldsymbol{m}')\right)_{ii}$, at most one of $m'_{\alpha}$ or $m'_{\beta}$ can appear in each of the two products in the description of this matrix entry, and the coordinate $m'_{\alpha+\beta}$ can appear in at most one of the two products.
We note that it is impossible for $m'_{\alpha}$ or $m'_{\beta}$ to appear in one of the products in $\left(t_R(d,\boldsymbol{m}')\right)_{11}$ or $\left(t_R(d,\boldsymbol{m}')\right)_{nn}$ without $m'_{\alpha+\beta}$ also appearing.

Suppose $m'_{\alpha}$ and $m'_{\alpha+\beta}$ both appear in the numerator of $\left(t_R(d,\boldsymbol{m}')\right)_{n-j+1,n-j+1}$ (the proof starting with these terms in the denominator follows similarly). Then by definition $\alpha=\alpha_{lj}$ and $\alpha+\beta=\alpha_{kj}$ for some $k<l<j$. Since $\alpha+\beta$ is a positive root we must have $\beta=\alpha_{kl}$, and so we see that $m'_{\beta}$ cannot appear in this matrix entry. Consequently, by (\ref{eqn prod and quo cood change m' m''}), the form of this matrix entry is unaffected by the coordinate change.

Now suppose $m'_{\alpha}$ appears in the numerator of $\left(t_R(d,\boldsymbol{m}')\right)_{n-j+1,n-j+1}$, but $m'_{\alpha+\beta}$ does not (the proof starting with $m'_{\alpha}$ in the denominator follows similarly). Then by definition $\alpha=\alpha_{lj}$ for some $l$. Since $m'_{\alpha+\beta}$ does not appear in the numerator but $\alpha+\beta$ is a positive root, we must have $\alpha+\beta=\alpha_{lk}$ with $k> j$. Consequently $\beta=\alpha_{jk}$ and so $m'_{\beta}$ must appear in the denominator of this matrix entry. Thus by (\ref{eqn prod and quo cood change m' m''}), the form of this matrix entry is unaffected by the coordinate change.
\end{proof}

Since we wish to define ideal polytopes corresponding to different reduced expressions $\mathbf{i}$ for $w_0$, we use the toric chart $\psi_{\mathbf{i}}$ to generalise two of the maps given in Section \ref{subsec Constructing polytopes}, namely we take
    $$\begin{aligned}
    \phi^{\mathbf{i}}_{t^{\lambda},\boldsymbol{m}'} &: (\mathbf{K}_{>0})^{R_+} \to Z_{t^{\lambda}}(\mathbf{K}_{>0}), \\
    \mathcal{W}^{\mathbf{i}}_{t^{\lambda},\boldsymbol{m}'} &: (\mathbf{K}_{>0})^{R_+} \to \mathbf{K}_{>0}
    \end{aligned}
    $$
such that $\phi_{t^{\lambda},\boldsymbol{m}'} = \phi^{\mathbf{i}_0}_{t^{\lambda},\boldsymbol{m}'}$ and $\mathcal{W}_{t^{\lambda},\boldsymbol{m}'}=\mathcal{W}^{\mathbf{i}_0}_{t^{\lambda},\boldsymbol{m}'}$, where $\lambda$ is a dominant weight. With this notation we are ready to construct our new polytopes; given an arbitrary reduced expression $\mathbf{i}$ for $w_0$, and the associated superpotential $\mathcal{W}^{\mathbf{i}}_{t^{\lambda},\boldsymbol{m}'}$ for $GL_n/B$, we define
    $$\mathcal{P}^{\mathbf{i}}_{\lambda,\boldsymbol{\mu}'} = \left\{ \boldsymbol{\alpha} \in \mathbb{R}^N_{\boldsymbol{\mu}'} \ \big| \ \mathrm{Trop}\left(\mathcal{W}^{\mathbf{i}}_{t^{\lambda},\boldsymbol{m}'}\right)(\boldsymbol{\alpha}) \geq 0 \right\}.
    $$

For the particular reduced expression $\mathbf{i}_0$, we obtain the ideal polytope from Section \ref{subsec Constructing polytopes}, namely $\mathcal{P}^{\mathbf{i}_0}_{\lambda,\boldsymbol{\mu}'} = \mathcal{P}_{\lambda,\boldsymbol{\mu}}$. Moreover we have already seen that this polytope $\mathcal{P}^{\mathbf{i}_0}_{\lambda,\boldsymbol{\mu}'}$ is simply a linear transformation of the string polytope $\mathrm{String}_{\mathbf{i}_0}(\lambda)=\mathcal{P}_{\lambda,\boldsymbol{\zeta}}$. However in general the families of string and ideal polytopes diverge:
\begin{prop} \label{prop pos birat map between toric charts for i}
Given a reduced expression $\mathbf{i}$ for $w_0$, there is a positive birational map of tori transforming the ideal coordinate chart for $\mathbf{i}$ into the string coordinate chart for $\mathbf{i}$:
    $$\begin{tikzcd}[column sep=1.5em]
    T^{\vee}\times\left(\mathbb{K}^*\right)^{R_+} \arrow{dr}[swap]{\psi_{\mathbf{i}}} \arrow[dashed]{rr}{\vartheta} && T^{\vee}\times\left(\mathbb{K}^*\right)^{N} \arrow{dl}{\varphi_{\mathbf{i}}} \\
    & Z
    \end{tikzcd}
    $$
\end{prop}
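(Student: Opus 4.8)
The plan is to note first that, since both $\psi_{\mathbf{i}}$ and $\varphi_{\mathbf{i}}$ are birational toric charts on $Z$, the only candidate for $\vartheta$ is $\vartheta=\varphi_{\mathbf{i}}^{-1}\circ\psi_{\mathbf{i}}$, so the entire content of the statement is that this composite is a positive birational map. I would establish this by routing through the distinguished reduced expression $\mathbf{i}_0$ and writing
\[
\vartheta \;=\; \bigl(\varphi_{\mathbf{i}}^{-1}\circ\varphi_{\mathbf{i}_0}\bigr)\;\circ\;\bigl(\varphi_{\mathbf{i}_0}^{-1}\circ\psi_{\mathbf{i}_0}\bigr)\;\circ\;\bigl(\psi_{\mathbf{i}_0}^{-1}\circ\psi_{\mathbf{i}}\bigr),
\]
then checking the three factors separately; since a composite of positive birational maps is again positive birational, this suffices. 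Each factor is the identity on the $T^{\vee}$ factor — directly from the definitions for the middle one, and for the outer two because the braid and commutation moves relating reduced expressions never involve the highest-weight coordinate $d$ — so only the behaviour on the $(\mathbb{K}^*)^{R_+}$, respectively $(\mathbb{K}^*)^{N}$, factors needs to be analysed.

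For the middle factor I would invoke Theorem~\ref{thm coord change} directly: $\varphi_{\mathbf{i}_0}^{-1}\circ\psi_{\mathbf{i}_0}$ carries the ideal coordinates $\boldsymbol{m}$ to the string coordinates $\boldsymbol{z}$ by the displayed Laurent-monomial formulas there, and conversely each $z_j$ is a Laurent monomial in the $m_k$; a monomial change of coordinates is subtraction-free in both directions, hence positive birational. For the factor $\psi_{\mathbf{i}_0}^{-1}\circ\psi_{\mathbf{i}}$ I would use that any two reduced expressions for $w_0$ are linked by a chain of the moves (\ref{eqn iji jij swap}), reducing to a single move $\mathbf{i}\leftrightarrow\mathbf{i}'$. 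By the independence of the universal weight map (the preceding proposition), $\psi_{\mathbf{i}}$ and $\psi_{\mathbf{i}'}$ share the same weight-matrix factor $t_R(d,\boldsymbol{m}')$, so $\psi_{\mathbf{i}'}^{-1}\circ\psi_{\mathbf{i}}$ is precisely the change of $\mathbf{y}^{\vee}$-parameters: the identity for a commutation move, and for a braid move the subtraction-free map (\ref{eqn alpha beta coord change iji jij}), whose inverse has the same shape by the $\mathbf{i}\leftrightarrow\mathbf{i}'$ symmetry of the move. Composing along the chain gives positivity of this factor.

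The remaining factor $\varphi_{\mathbf{i}}^{-1}\circ\varphi_{\mathbf{i}_0}$ is where I expect the main obstacle to lie, in the sense that it is the only one not obtained by reading off an already-displayed monomial or subtraction-free formula. The key observation I would use is that, by (\ref{eqn string coord chart defn}), $\varphi_{\mathbf{i}}=\Phi\circ\tau\circ(\mathrm{id},\mathbf{x}_{-\mathbf{i}}^{\vee})$ with $\tau$ and $\Phi$ independent of $\mathbf{i}$; these cancel, so on the $(\mathbb{K}^*)^{N}$ factor $\varphi_{\mathbf{i}}^{-1}\circ\varphi_{\mathbf{i}_0}$ reduces to the change-of-reduced-word transition map $(\mathbf{x}_{-\mathbf{i}}^{\vee})^{-1}\circ\mathbf{x}_{-\mathbf{i}_0}^{\vee}$ for the factorisation parametrisation of $B_-^{\vee}\cap U^{\vee}\bar{w}_0 U^{\vee}$. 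That such transition maps are positive birational is classical — it is exactly the positivity built into the string-coordinate framework underlying Theorem~\ref{thm string polytope from string coords} — and, if a self-contained argument is wanted, it can be re-derived from the braid and commutation relations for the $\mathbf{x}_{-i}^{\vee}$ one-parameter subgroups in the same way as the coordinate change (\ref{eqn alpha beta coord change iji jij}) was obtained on the ideal side. Assembling the three positive factors then yields the required positive birational map $\vartheta$.
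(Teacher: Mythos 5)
Your proposal is correct and follows essentially the same route as the paper: the paper's proof defines $\vartheta$ as exactly the composite you write, factoring through $\mathbf{i}_0$, using Theorem \ref{thm coord change} for the middle (monomial) factor, coordinate changes of the form (\ref{eqn alpha beta coord change iji jij}) along a chain of braid/commutation moves for the ideal-side factor, and the known positivity of the string-side transition maps for the last factor. Your additional remarks — that $\vartheta$ is forced to be $\varphi_{\mathbf{i}}^{-1}\circ\psi_{\mathbf{i}}$, and that the string-side factor reduces to the classical change-of-reduced-word transition map because $\tau$ and $\Phi$ are independent of $\mathbf{i}$ — are correct elaborations of what the paper leaves implicit.
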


\begin{proof}
We recall the specific reduced expression
    $$\mathbf{i}_0 := (1,2, \ldots, n-1, 1,2 \ldots, n-2, \ldots, 1, 2, 1)
    $$
for $w_0$ used earlier, and define the map $\vartheta: T^{\vee}\times\left(\mathbb{K}^*\right)^{R_+}_{\mathbf{i}} \dashrightarrow T^{\vee}\times\left(\mathbb{K}^*\right)^{N}_{\mathbf{i}}$ to be the following composition:
    $$\begin{tikzcd}[row sep=0.3em]
    T^{\vee}\times\left(\mathbb{K}^*\right)^{R_+}_{\mathbf{i}} \arrow{r}{}
        & T^{\vee}\times\left(\mathbb{K}^*\right)^{N}_{\mathbf{i}} \arrow[dashed]{r}{}
        & T^{\vee}\times\left(\mathbb{K}^*\right)^{N}_{\mathbf{i}_0} \arrow{r}{}
        & T^{\vee}\times\left(\mathbb{K}^*\right)^{N}_{\mathbf{i}_0} \arrow[dashed]{r}{}
        & T^{\vee}\times\left(\mathbb{K}^*\right)^{N}_{\mathbf{i}} \\
    \left(d,\boldsymbol{m}'_{\alpha^{\mathbf{i}}}\right) \arrow[mapsto]{r}{}
        & \left(d,\boldsymbol{m}'\right) \arrow[mapsto]{r}{}
        & \left(d,\boldsymbol{m}\right) \arrow[mapsto]{r}{}
        & \left(d,\boldsymbol{z}\right) \arrow[mapsto]{r}{}
        & \left(d,\boldsymbol{z}'\right)
    \end{tikzcd}
    $$
The first map simply describes the identification between $T^{\vee}\times\left(\mathbb{K}^*\right)^{R_+}_{\mathbf{i}}$ and $T^{\vee}\times\left(\mathbb{K}^*\right)^{N}_{\mathbf{i}}$, given by the ordering (\ref{eqn ordering on R plus}) on the set of positive roots $R_+$ defined by $\mathbf{i}$, that is $m'_j=m'_{\alpha^{\mathbf{i}}_j}$. The third map is the coordinate change given in Theorem \ref{thm coord change} between the ideal and string coordinates for $\mathbf{i}_0$.

The second and fourth maps in the above composition are the necessary coordinate changes such that $\psi_{\mathbf{i}}(d, \boldsymbol{m}') = \psi(d, \boldsymbol{m})$ and $\varphi_{\mathbf{i}_0}(d, \boldsymbol{z}) = \varphi_{\mathbf{i}}(d, \boldsymbol{z}')$. The second map is given by compositions of coordinate changes similar to (\ref{eqn alpha beta coord change iji jij}) and the fourth map is defined analogously. Both are known to be positive rational maps, but in general, not isomorphisms of tori for arbitrary reduced expressions.
\end{proof}

\begin{ex}
We let $n=4$ and take $\mathbf{i}=(1,2,3,2,1,2)$, recalling that $\mathbf{i}_0=(1,2,3,1,2,1)$. This gives the ordering
    $$\alpha^{\mathbf{i}}_1=\alpha_{12} , \quad
    \alpha^{\mathbf{i}}_2=\alpha_{13} , \quad
    \alpha^{\mathbf{i}}_3=\alpha_{14} , \quad
    \alpha^{\mathbf{i}}_4=\alpha_{34} , \quad
    \alpha^{\mathbf{i}}_5=\alpha_{24} , \quad
    \alpha^{\mathbf{i}}_6=\alpha_{23}.
    $$

The coordinate changes we require, firstly between $\boldsymbol{m}'$ and $\boldsymbol{m}$, secondly between $\boldsymbol{m}$ and $\boldsymbol{z}$ (given by Theorem \ref{thm coord change}), and thirdly between $\boldsymbol{z}$ and $\boldsymbol{z}'$ are respectively as follows:
    $$\begin{aligned}
    m'_1 &= m_1 & m_1 &= z_6 \hspace{1.5cm}& z_1 &= z'_1 \\
    m'_2 &= m_2 & m_2 &= z_4 & z_2 &= z'_2 \\
    m'_3 &= m_3 & m_3 &= z_1 & z_3 &= z'_3 \\
    m'_4 &= \frac{m_5(m_4+m_6)}{m_4} \hspace{1.5cm} & m_4 &= \frac{z_5}{z_4} & z_4 &= \frac{z'_5z'_6}{z'_4z'_6+z'_5} \\
    m'_5 &= \frac{m_4m_5}{m_4+m_6} & m_5 &= \frac{z_2}{z_1} & z_5 &= z'_4z'_6 \\
    m'_6 &= \frac{m_5(m_4+m_6)}{m_6} & m_6 &= \frac{z_3}{z_2} & z_6 &= \frac{z'_4z'_6+z'_5}{z'_6} \\
    \end{aligned}
    $$
Combining the coordinate changes we have
    $$\begin{aligned}
    m'_{\alpha_{12}} & %= m'_{\alpha^{\mathbf{i}}_1} = m'_1
    = \frac{z'_4z'_6+z'_5}{z'_6} \hspace{1.5cm}&
        m'_{\alpha_{34}} & %= m'_{\alpha^{\mathbf{i}}_4} = m'_4
        = \frac{z'_2}{z'_1} + \frac{z'_3z'_5}{z'_1z'_4(z'_4z'_6+z'_5)} \\
    m'_{\alpha_{13}} & %= m'_{\alpha^{\mathbf{i}}_2} = m'_2
    = \frac{z'_5z'_6}{z'_4z'_6+z'_5} &
        m'_{\alpha_{24}} & %= m'_{\alpha^{\mathbf{i}}_5} = m'_5
        = \frac{z'_3z'_4(z'_4z'_6+z'_5)}{z'_2z'_4(z'_4z'_6+z'_5)+z'_3z'_5} \\
    m'_{\alpha_{14}} & %= m'_{\alpha^{\mathbf{i}}_3} = m'_3
    = z'_1 &
        m'_{\alpha_{23}} & %= m'_{\alpha^{\mathbf{i}}_6} = m'_6
        = \frac{z'_2z'_4z'_6}{z'_1z'_3} + \frac{z'_5z'_6}{z'_1(z'_4z'_6+z'_5)} \\
    \end{aligned}
    $$
\end{ex}

It follows from this example that, in general, given some dominant weight $\lambda$ and two reduced expressions $\mathbf{i}, \mathbf{i}'$ for $w_0$, the two polytopes $\mathcal{P}^{\mathbf{i}}_{\lambda,\boldsymbol{\mu}'}$ and $\mathcal{P}^{\mathbf{i}'}_{\lambda,\boldsymbol{\mu}''}$ are related by a piecewise-linear map. However in contrast, the respective tropical critical points $p_{\lambda, \boldsymbol{\mu}'}^{\mathrm{trop}}$ and $p_{\lambda, \boldsymbol{\mu}''}^{\mathrm{trop}}$ coincide:

\begin{prop} \label{prop trop crit pt independent of i}
For a given dominant weight $\lambda$, the tropical critical point $p_{\lambda, \boldsymbol{\mu}'}^{\mathrm{trop}}$ is independent of the choice of reduced expression $\mathbf{i}$ for $w_0$.
\end{prop}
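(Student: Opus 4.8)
The plan is to reduce the claim to the combinatorial statement already established in Section~\ref{subsec Ideal fillings}, namely that the tropical critical point, read in \emph{any} ideal chart, corresponds under Proposition~\ref{prop GLn version of Jamie's 6.2} and Corollary~\ref{cor nu'_i coords ideal filling for lambda} to \emph{the} (unique) ideal filling for $\lambda$. Concretely, I would argue as follows. Fix a dominant weight $\lambda$ and the positive critical point $p_\lambda \in Z_{t^\lambda}(\mathbf{K}_{>0})$, which exists and is unique by the discussion preceding Corollary~\ref{cor weight matrix at trop crit point}. For a reduced expression $\mathbf{i}$ write $p_\lambda$ in the ideal chart $\psi_{\mathbf{i}}$, obtaining coordinates $(m'_{\alpha^{\mathbf{i}}_1},\ldots,m'_{\alpha^{\mathbf{i}}_N})$, and let $\mu'_{\alpha}=\mathrm{Val}_{\mathbf K}(m'_\alpha)$. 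The key point is that the arrow coordinates $r_a$ of the Givental quiver are determined by $p_\lambda$ alone (they are the entries of the factorisation of $b=\psi_{\mathbf i}(d,\boldsymbol m')$ into simple-root subgroups together with the $\alpha^\vee_i(d)$, via the map $\theta$ of Section~\ref{subsec The quiver torus as another toric chart on Z}, and this map does not reference $\mathbf i$); hence the tropical arrow coordinates $\rho_a$, and therefore $\pi(v_{ji})=\min_{a:h(a)=v_{ji}}\{\rho_a\}$, depend only on $\lambda$, not on $\mathbf i$.

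Next I would invoke Theorem~\ref{thm crit points, sum at vertex is nu_i} in the chart $\psi_{\mathbf{i}}$: at the critical point the outgoing-arrow sum at $v_{ji}$ equals $m'_{\alpha^{\mathbf i}_{s_i+j-i}}$ — but here one must be slightly careful, since Theorem~\ref{thm crit points, sum at vertex is nu_i} was proved using the specific decorated quiver attached to $\mathbf i_0$. The clean way around this is to use Corollary~\ref{cor nu'_i coords ideal filling for lambda} directly: for every reduced expression $\mathbf i$ the proof of Proposition~\ref{prop GLn version of Jamie's 6.2} produces, from the tropical critical point conditions (which live on the quiver and are $\mathbf i$-independent), a single ideal filling $\{n_{ij}=\pi(v_{ji})\}$ for $\lambda$, and the valuations $\mu'_{\alpha^{\mathbf i}_j}$ of the ideal coordinates are precisely the entries of this filling under the bijection $\alpha_{ij}\leftrightarrow\alpha^{\mathbf i}_j$. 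Since the filling $\{n_{ij}\}$ is intrinsic to $\lambda$ (it is the image of $p_\lambda$, equivalently the unique ideal filling for $\lambda$ by the remark after Proposition~\ref{prop GLn version of Jamie's 6.2}), the vector $(\mu'_{\alpha^{\mathbf i}_1},\ldots,\mu'_{\alpha^{\mathbf i}_N})$ is, for \emph{every} $\mathbf i$, just a relabelling of this one fixed filling according to the ordering \eqref{eqn ordering on R plus}; that is exactly the assertion that $p^{\mathrm{trop}}_{\lambda,\boldsymbol{\mu}'}$ is independent of $\mathbf i$.

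To make the last step rigorous I would spell out the compatibility of the two indexing conventions: the ordering on $R_+$ induced by $\mathbf i$ is the one recorded in \eqref{eqn ordering on R plus}, and for each box $\alpha_{ij}$ in the arrangement \eqref{eqn alpha ij arrangement} the ideal coordinate $m'_{\alpha_{ij}}$ is attached to the quiver vertex $v_{ji}$ — this correspondence was established for $\mathbf i_0$ in the proof of the previous Proposition and, being a statement about which vertex receives which root-label, transports to general $\mathbf i$ through the coordinate changes of type \eqref{eqn alpha beta coord change iji jij} exactly because those changes are the tropicalisations of the mutations relating the charts $\psi_{\mathbf i}$ (cf.\ Proposition~\ref{prop pos birat map between toric charts for i}). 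Granting that, the identity $\mu'_{\alpha_{ij}}=n_{ij}=\pi(v_{ji})$ holds for all $\mathbf i$, with right-hand side $\mathbf i$-independent, which finishes the proof.

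\textbf{Main obstacle.} The delicate part is justifying that $\pi(v_{ji})$ — or equivalently the family of tropical arrow coordinates $\rho_a$ — genuinely depends only on $p_\lambda$ and not on the chart used to compute it, i.e.\ that the quiver/arrow picture is canonically attached to the point $b\in Z$ rather than to a choice of factorisation. One must check that the two routes from $Z_{t^\lambda}(\mathbf K_{>0})$ to the arrow torus, via $\psi_{\mathbf i}$ for different $\mathbf i$, land on the same arrow coordinates; this is essentially the content of the commuting triangle in Proposition~\ref{prop pos birat map between toric charts for i} together with the lemma of Rietsch quoted after Figure~\ref{fig Quiver decoration, n=4, z coordinates} ($\mathcal W\circ\bar\theta=\mathcal F\circ\mathrm{pr}$, etc.), and it needs the uniqueness of $u_2$ in the definition of $\Phi$. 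Once that invariance is in place, everything else is bookkeeping with the bijection of Proposition~\ref{prop GLn version of Jamie's 6.2}.
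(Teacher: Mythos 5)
Your proposal assembles the right ingredients but omits the load-bearing step. The paper's proof reduces to a single braid move $\mathbf{i}\leftrightarrow\mathbf{i}'$, writes down the coordinate change (\ref{eqn alpha beta coord change iji jij}) between the two ideal charts, tropicalises it to obtain the piecewise-linear map (\ref{eqn trop coord change mu' mu''}) (e.g.\ $\mu''_{\alpha}=\mu'_{\alpha+\beta}+\min\{\mu'_{\alpha},\mu'_{\beta}\}-\mu'_{\beta}$), and then observes that on the locus where the valuations form an ideal filling one has $\mu'_{\alpha+\beta}=\max\{\mu'_{\alpha},\mu'_{\beta}\}$ (because $\alpha+\beta$ sits to the right of one of $\alpha,\beta$ and above the other in the arrangement (\ref{eqn alpha ij arrangement})), which makes this map the identity; starting from $\mathbf{i}_0$, where Corollary \ref{cor nu'_i coords ideal filling for lambda} supplies the ideal-filling property, induction along braid moves finishes the argument. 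You never perform this computation. Instead you assert that for \emph{every} $\mathbf{i}$ the valuations $\mu'_{\alpha^{\mathbf{i}}_j}$ equal the quiver entries $\pi(v_{ji})$ of the unique ideal filling for $\lambda$. That assertion is essentially the proposition itself: Theorem \ref{thm crit points, sum at vertex is nu_i} and Corollary \ref{cor nu'_i coords ideal filling for lambda} are proved only for $\mathbf{i}_0$ (the quiver decoration is built from the specific factorisation by $\mathbf{x}^{\vee}_{\mathbf{i}'_0}$ together with the coordinate change of Theorem \ref{thm coord change}), and your proposed ``clean way around'' simply re-invokes that same $\mathbf{i}_0$-specific corollary.

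The claim in your final paragraph that the vertex/root-label correspondence ``transports to general $\mathbf{i}$ through the coordinate changes of type (\ref{eqn alpha beta coord change iji jij})'' is false as a statement about the charts: the tropicalised change is a genuinely nontrivial piecewise-linear map, so two ideal charts do \emph{not} attach the same number to a given positive root at a general point of the polytope. Only at the tropical critical point, and only because of the $\max$-relation forced by the ideal-filling structure, does the map collapse to the identity; verifying that collapse is the entire content of the proof and is the step your proposal leaves out. (Your observation that the quiver arrow valuations $\rho_a$ depend only on $b$ and not on $\mathbf{i}$ is correct but does not by itself tie the ideal coordinates for general $\mathbf{i}$ to those arrow coordinates.)
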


\begin{proof}
It suffices to consider two reduced expressions $\mathbf{i}, \mathbf{i}'$ for $w_0$ that are related by a single transformation (\ref{eqn iji jij swap})
in positions $k-1,k,k+1$. Then the respective sequences of positive roots are given by (\ref{eqn pos root sequences iji jij}) and the coordinate change is given by (\ref{eqn alpha beta coord change iji jij}). We see that the tropical coordinate change is given by
    \begin{equation} \label{eqn trop coord change mu' mu''}
    \begin{aligned}
    \mu''_{\alpha} &= \mu'_{\alpha+\beta} + \min\{ \mu'_{\alpha}, \mu'_{\beta}\} - \mu'_{\beta}, \\
    \mu''_{\alpha+\beta} &= \mu'_{\alpha}+\mu'_{\beta} - \min\{ \mu'_{\alpha}, \mu'_{\beta}\}, \\
    \mu''_{\beta} &= \mu'_{\alpha+\beta} + \min\{ \mu'_{\alpha}, \mu'_{\beta}\} - \mu'_{\alpha}. \\
    \end{aligned}
    \end{equation}

Recall that the positive root $\alpha+\beta$ must appear either to the right of $\alpha$ and above $\beta$, or to the right of $\beta$ and above $\alpha$ in the arrangement (\ref{eqn alpha ij arrangement}). Consequently $\mu'_{\alpha+\beta}$ must appear either to the right of $\mu'_{\alpha}$ and above $\mu'_{\beta}$, or to the right of $\mu'_{\beta}$ and above $\mu'_{\alpha}$ in the filling
    $$\left\{ n_{ij}=\mu'_{\alpha_{ij}}=\mathrm{Val}_{\mathbf{K}}\left(m'_{\alpha_{ij}}\right)\right\}_{1\leq i < j \leq n}$$
(c.f. Corollary \ref{cor nu'_i coords ideal filling for lambda}), and similarly for $\mu''_{\alpha+\beta}$ in the respective filling. If we suppose that our filling is an ideal filling (for $\lambda$), then it follows that
    $$\mu'_{\alpha+\beta} = \max\{ \mu'_{\alpha}, \mu'_{\beta} \}.
    $$
Thus, by considering the tropical coordinate change (\ref{eqn trop coord change mu' mu''}), we see that at a critical point
    $$\begin{aligned}
    \mu''_{\alpha} &= \mu'_{\alpha+\beta} + \min\{ \mu'_{\alpha}, \mu'_{\beta}\} - \mu'_{\beta} = \mu'_{\alpha} , \\
    \mu''_{\alpha+\beta} &= \mu'_{\alpha}+\mu'_{\beta} - \min\{ \mu'_{\alpha}, \mu'_{\beta}\} %= \max\{\mu'_{\alpha},\mu'_{\beta}\}
        = \mu'_{\alpha+\beta} , \\
    \mu''_{\beta} &= \mu'_{\alpha+\beta} + \min\{ \mu'_{\alpha}, \mu'_{\beta}\} - \mu'_{\alpha} = \mu'_{\beta}. \\
    \end{aligned}
    $$
It follows that if we index our coordinates by positive roots then the tropical critical point is independent of the choice of reduced expression for $w_0$.
\end{proof}

\addcontentsline{toc}{section}{References}
\bibliographystyle{plain}
\bibliography{Ideal_Polytopes_for_Representations_of_GLnC_Teresa_Ludenbach}

\begin{thebibliography}{10}

\bibitem{ZainabMastersThesis}
Zainab Al-Sultani.
\newblock Totally positive toeplitz matrices.
\newblock {\em King's College London MSc Thesis}, 2013.

\bibitem{BerensteinFominZelevinsky1996}
Arkady Berenstein, Sergey Fomin, and Andrei Zelevinsky.
\newblock Parametrizations of canonical bases and totally positive matrices.
\newblock {\em Adv. Math.}, 122(1):49--149, 1996.

\bibitem{BerensteinKazhdan2000}
Arkady Berenstein and David Kazhdan.
\newblock Geometric and unipotent crystals.
\newblock Number Special Volume, Part I, pages 188--236. 2000.
\newblock GAFA 2000 (Tel Aviv, 1999).

\bibitem{BerensteinKazhdan2007}
Arkady Berenstein and David Kazhdan.
\newblock Geometric and unipotent crystals. {II}. {F}rom unipotent bicrystals
  to crystal bases.
\newblock In {\em Quantum groups}, volume 433 of {\em Contemp. Math.}, pages
  13--88. Amer. Math. Soc., Providence, RI, 2007.

\bibitem{ChhaibiThesis13}
Reda Chhaibi.
\newblock Littelmann path model for geometric crystals, whittaker functions on
  lie groups and brownian motion.
\newblock 2013.
\newblock arXiv:1302.0902.

\bibitem{EguchiHoriXiong1997}
Tohru Eguchi, Kentaro Hori, and Chuan-Sheng Xiong.
\newblock Gravitational quantum cohomology.
\newblock {\em Internat. J. Modern Phys. A}, 12(9):1743--1782, 1997.

\bibitem{FockGoncharov2006}
Vladimir Fock and Alexander Goncharov.
\newblock Moduli spaces of local systems and higher {T}eichm\"{u}ller theory.
\newblock {\em Publ. Math. Inst. Hautes \'{E}tudes Sci.}, (103):1--211, 2006.

\bibitem{FockGoncharov2009}
Vladimir~V. Fock and Alexander~B. Goncharov.
\newblock Cluster ensembles, quantization and the dilogarithm.
\newblock {\em Ann. Sci. \'{E}c. Norm. Sup\'{e}r. (4)}, 42(6):865--930, 2009.

\bibitem{FominZelevinsky1999}
Sergey Fomin and Andrei Zelevinsky.
\newblock Double {B}ruhat cells and total positivity.
\newblock {\em J. Amer. Math. Soc.}, 12(2):335--380, 1999.

\bibitem{GelfandTsetlin1950}
I.M. Gelfand and M.L. Tsetlin.
\newblock {Finite-dimensional representations of the group of unimodular
  matrices}.
\newblock {\em Dokl. Akad. Nauk SSSR}, 71:825--828, 1950.

\bibitem{Givental1997}
Alexander Givental.
\newblock Stationary phase integrals, quantum {T}oda lattices, flag manifolds
  and the mirror conjecture.
\newblock In {\em Topics in singularity theory}, volume 180 of {\em Amer. Math.
  Soc. Transl. Ser. 2}, pages 103--115. Amer. Math. Soc., Providence, RI, 1997.

\bibitem{JoeKim2003}
Dosang Joe and Bumsig Kim.
\newblock Equivariant mirrors and the {V}irasoro conjecture for flag manifolds.
\newblock {\em Int. Math. Res. Not.}, (15):859--882, 2003.

\bibitem{Judd2018}
Jamie Judd.
\newblock Tropical critical points of the superpotential of a flag variety.
\newblock {\em J. Algebra}, 497:102--142, 2018.

\bibitem{JuddRietsch2019}
Jamie Judd and Konstanze Rietsch.
\newblock The tropical critical point and mirror symmetry.
\newblock 2019.
\newblock arXiv:1911.04463v2.

\bibitem{Kashiwara1991}
Masaki Kashiwara.
\newblock On crystal bases of the {$Q$}-analogue of universal enveloping
  algebras.
\newblock {\em Duke Math. J.}, 63(2):465--516, 1991.

\bibitem{KazhdanLustig1980}
David Kazhdan and George Lusztig.
\newblock Schubert varieties and {P}oincar\'{e} duality.
\newblock In {\em Geometry of the {L}aplace operator ({P}roc. {S}ympos. {P}ure
  {M}ath., {U}niv. {H}awaii, {H}onolulu, {H}awaii, 1979)}, Proc. Sympos. Pure
  Math., XXXVI, pages 185--203. Amer. Math. Soc., Providence, R.I., 1980.

\bibitem{Littelmann1998}
Peter Littelmann.
\newblock Cones, crystals, and patterns.
\newblock {\em Transform. Groups}, 3(2):145--179, 1998.

\bibitem{Luszig1990}
George Lusztig.
\newblock Canonical bases arising from quantized enveloping algebras.
\newblock {\em J. Amer. Math. Soc.}, 3(2):447--498, 1990.

\bibitem{Lusztig1990_2}
George Lusztig.
\newblock Canonical bases arising from quantized enveloping algebras. {II}.
\newblock Number 102, pages 175--201 (1991). 1990.
\newblock Common trends in mathematics and quantum field theories (Kyoto,
  1990).

\bibitem{Lusztig1994}
George Lusztig.
\newblock Total positivity in reductive groups.
\newblock In {\em Lie theory and geometry}, volume 123 of {\em Progr. Math.},
  pages 531--568. Birkh\"{a}user Boston, Boston, MA, 1994.

\bibitem{MarshRietsch2004}
R.~J. Marsh and K.~Rietsch.
\newblock Parametrizations of flag varieties.
\newblock {\em Represent. Theory}, 8:212--242, 2004.

\bibitem{Rietsch2008}
Konstanze Rietsch.
\newblock A mirror symmetric construction of {$qH^\ast_T(G/P)_{(q)}$}.
\newblock {\em Adv. Math.}, 217(6):2401--2442, 2008.

\bibitem{RietschWilliams2019}
Konstanze Rietsch and Lauren Williams.
\newblock Newton-{O}kounkov bodies, cluster duality, and mirror symmetry for
  {G}rassmannians.
\newblock {\em Duke Math. J.}, 168(18):3437--3527, 2019.

\bibitem{SpringerLAG}
T.A. Springer.
\newblock {\em Linear algebraic groups}.
\newblock Modern Birkh\"{a}user Classics. Birkh\"{a}user Boston, Inc., Boston,
  MA, second edition, 2009.

\end{thebibliography}

\end{document}